\documentclass[11pt]{amsart}

\usepackage{graphicx}
\usepackage{color}
\usepackage{amssymb}
\usepackage{enumitem}
\usepackage{hyperref}
\newcommand{\unYoel}[1]{}

\newcommand{\R}{\mathbb{R}}
\newcommand{\C}{\mathbb{C}}

\newcommand{\vectornorm}[1]{\left\|#1\right\|}

\DeclareMathOperator{\inj}{inj}

\setlist[enumerate,1]{label={(\alph*)}}
\setlist[enumerate,2]{label={(\roman*)}}

\newtheorem{tm}{Theorem}[section]

\newtheorem{lm}[tm]{Lemma}
\newtheorem{cy}[tm]{Corollary}

\theoremstyle{definition}
\newtheorem{df}[tm]{Definition}

\newtheorem{as}{Assumption}[tm]

\theoremstyle{remark}
\newtheorem{rem}[tm]{Remark}

\title{A thick-thin decomposition of $J$-holomorphic curves}
\author{Yoel Groman}

\begin{document}

\maketitle
\tableofcontents
\begin{abstract}
We show the existence of a thick thin decomposition of the domain of a pseudo holomorphic curve with boundary. The geometry of the thick part is bounded uniformly in the energy. Furthermore, in the thick part, there is a uniform bound on the differential which is exponential in the energy. The thin part consists of annuli of small energy the number of which is at most linear in the energy and genus. The decomposition can be seen as a quantitative version of Gromov compactness which applies before passing to the limit.
\end{abstract}
\section{Introduction}

A basic tool in the study of the moduli space $\mathcal{M}_g$ of compact Riemann surfaces of genus $g\geq 2$ is the thick thin decomposition of hyperbolic structures. Namely, for $\Sigma\in\mathcal{M}_g,$ let $h$ be the unique conformal metric $h$ of constant curvature $-1$. For $x\in\Sigma$ denote by $\inj(\Sigma,x;h)$ the radius of injectivity of $(\Sigma,h)$ at $x.$ Write
\begin{gather}
Thick(\Sigma;h):=\{x\in\Sigma|\inj(\Sigma,x;h)\geq\sinh^{-1}(1)\}\notag\\
Thin(\Sigma;h):=\{x\in\Sigma|\inj(\Sigma,x;h)<\sinh^{-1}(1)\}\notag.
\end{gather}
Then $Thin$ consists of at most $3g-3$ disjoint cylinders and the components of $Thick$ have geometry that is bounded uniformly in $g$. Among other things, the thick thin decomposition provides an intuitive picture of the Deligne Mumford compactification of $\mathcal{M}_g.$

This paper is concerned with an analogous construction for the moduli spaces $\mathcal{M}_g(M,J;A)$ of $J$-holomorphic curves of genus $g$ in a symplectic manifold $(M,\omega)$ representing $A\in H_2(M;\mathbb{Z})$ with $J$ an $\omega$-tame almost complex structure $J$. Namely, for
\[
(u:\Sigma\to M)\in \mathcal{M}_g(M,J;A),
\]
we construct a decomposition
\[
\Sigma=Thick(\Sigma;u)\cup Thin(\Sigma;u).
\]
$Thin(\Sigma;u)$ consists of  disjoint annuli and cylinders whose number is proportional to $g+\int_\Sigma u^*\omega.$ With respect to the standard cylindrical metric on $Thin(\Sigma;u)$, $|du|$ decays exponentially in the distance from $\partial Thin(\Sigma;u).$ The components of $Thick(\Sigma;u)$, once properly normalized, have uniformly bounded geometry with the bounds exponential in the energy of $u$. Furthermore, on $Thick$ there is a bound on $|du|$ which is exponential in the energy. We construct an analogous decomposition for bordered $J$-holomorphic curves with boundary in a Lagrangian submanifold $L.$ This time, it is the complex double of the domain which is decomposed. Our thick thin decomposition  is related to Gromov compactness in the same way the hyperbolic thick thin decomposition of Riemann surfaces is related to the Deligne-Mumford compactification.
\subsection{The main result}
To formulate the result more precisely, we introduce the following definitions.
\begin{df}
Let $\Sigma$ be a closed Riemann surface and let $h$ be a conformal metric of constant curvature on $\Sigma$. A \textbf{geodesic annulus} in $\Sigma$ is a doubly connected subset of the form
\[
A(r_1,r_2,p;h)=\{y\in\Sigma|r_1<d_h(y,p) <r_2\},
\]
for some $p\in\Sigma$, and $0<r_1<r_2< \inj(p;h)$. For a simple closed geodesic $\gamma$ in $\Sigma,$ let $R_{\gamma}$ be the width of a geodesic tubular neighborhood of $\gamma$. Suppose $\gamma$ is oriented with unit normal $v$. A \textbf{geodesic cylinder} in $\Sigma$ is a doubly connected subset of the form
\[
C(r_1,r_2,\gamma;h)=\{y=\exp rv_p|p\in\gamma,r\in(r_1,r_2)\},
\]
for some
\[
-R_{\gamma}\leq r_1<r_2\leq R_{\gamma}.
\]
A \textbf{bubble decomposition} of $\Sigma$ is a collection of geodesic annuli and geodesic cylinders in $\Sigma$ with pairwise disjoint closures. Write
\begin{align}
Thin(\mathcal{B})&:=\bigcup_{I\in \mathcal{B}}I,\notag\\
Thick(\mathcal{B})&:=\Sigma\setminus Thin(\mathcal{B})\notag.
\end{align}

\begin{figure}[h]
\centering
\includegraphics[scale=0.6]{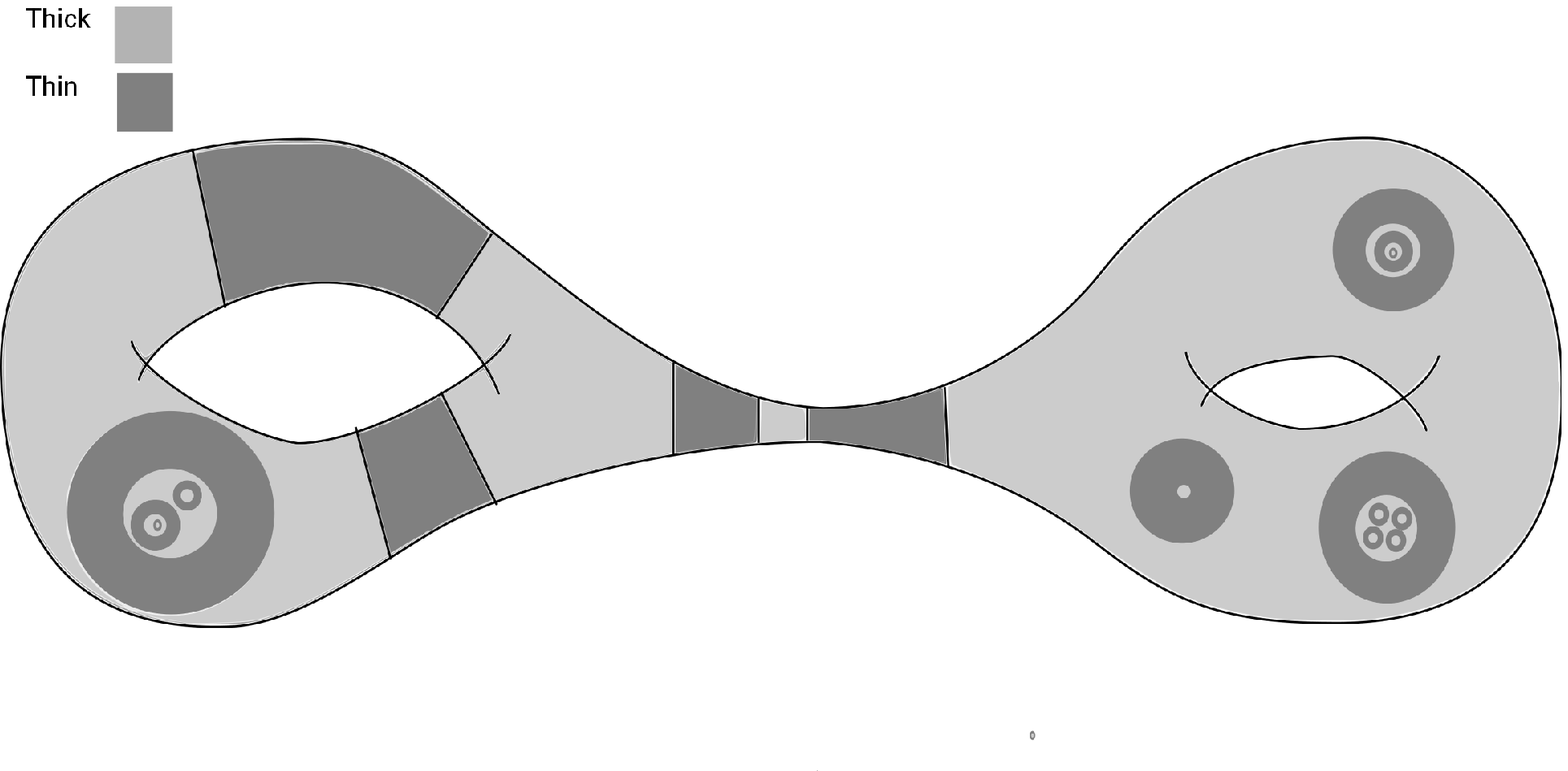}
\caption{}\label{Fig1}
\end{figure}

For a bubble decomposition $\mathcal{B}$, let $V_{\mathcal{B}}$ denote a finite set with a bijection
\[
V_{\mathcal{B}}\to \pi_0(Thick(\mathcal{B})),\qquad v\mapsto \Sigma_v.
\]
Here, $\pi_0(\cdot)$ denoting the set of connected components.
\end{df}

\begin{df}\label{dfBubDecEst}
Let $\Sigma$ be a closed Riemann surface, let $h$ be a conformal metric of constant curvature on $\Sigma$.  Denote by $\nu_h$ the volume form on $\Sigma$. Let $\mu$ be a measure on $\Sigma$ which is absolutely continuous with respect to any smooth volume form on $\Sigma$, and denote by
\[
\frac{d\mu}{d\nu_h}
\]
the Radon-Nikodym derivative of $\mu$ with respect to $\nu_h.$ Let $a,b,\delta>0.$ A \textbf{$(\mu,h)$-adapted bubble decomposition} $\mathcal{B}$ with constants $a,b,\delta,$ is a bubble decomposition satisfying the following estimates.
\begin{enumerate}
\item \label{dfBubDecEst11}\textbf{Exponential decay in the thin part.} For any $I\in\mathcal{B},$ denote by $Mod(I)$ the modulus of $I$ and by $h_{st}$ the unique conformal metric such that $(I,h_{st})$ is isometric to $(0, Mod(I))\times S^1 $. Then for any $p\in I$,
\begin{equation}\label{eqExpDec}
\frac{d\mu}{d\nu_{h_{st}}}(p)\leq ae^{-bd_{h_{st}}(p,\partial I)}.
\end{equation}
\item\label{dfBubDecEst13}\textbf{Bounded geometry and derivative.} For any $v\in  V_{\mathcal{B}}$ let $g_v$ be the genus of $\Sigma_v$ and $d_v$ its diameter with respect to $h$. Let $n_v:=|\pi_0(\partial\Sigma_v)|$ and $\mu_v:=\mu(\Sigma_v)$. Let
    \[
    s_v:=\frac{2(g_v+1)}{d_v}
    \]
   and define $h_v:=s_v^2h|_{\Sigma_v}.$ Then the following hold.
\begin{enumerate}
\item
\begin{equation}\label{eqThickDerEst}
\sup_{p\in \Sigma_v}\frac{d\mu}{d\nu_{h_v}}(p)\leq a e^{b(\mu_v+n_v)}.
\end{equation}
\item\label{boundgeo2}
\[
\inf_{p\in \Sigma_v}\inj(\Sigma_v,p;h_v)\geq ae^{-b(\mu_v+n_v)}\footnote{See Remark~\ref{remInjBoundary} below for the definition of $\inj$ for surfaces with boundary.}.
\]
\item For any component $\gamma$ of $\partial\Sigma_v$,
\[
\ell(\gamma;h_v)>ae^{-b(\mu_v+n_v)}.
\]
\item\label{boundgeo4}
For any two distinct components $\gamma_1$ and $\gamma_2$ of $\partial\Sigma_v$,
\[
d_{h_v}(\gamma_1,\gamma_2)\geq ae^{-b(\mu_v+n_v)}
\]
\end{enumerate}
\item\label{dfBubDecEst12}\textbf{Stability.} For any $v\in V_{\mathcal{B}}$ we have either
\[
\mu_v\geq \delta,
\]
 or
\[
2genus(\Sigma_v)+|\pi_0(\partial\Sigma_v)|\geq 3.
\]
\end{enumerate}

\end{df}
\begin{rem}
Note that because of the restriction to constant curvature metrics, only in the genus 0 case does the property of $(\mu,h)$-adaptedness depend on $h$. In the other cases it would be more proper to talk of $\mu$-adaptedness.
\end{rem}

\begin{rem}\label{rmfin}
Note that the stability condition implies
\[
|\pi_0(Thick(\mathcal{B}))|\leq 2g+2\frac{\mu(\Sigma_v)}{\delta}-3,
\]
and a similar estimate for $|\pi_0(Thin(\mathcal{B}))|.$
\end{rem}

\begin{rem}\label{rmcpt}
Fix an $E>0$ and a $g\in\mathbb{N}$. The bounds~\ref{boundgeo2}-\ref{boundgeo4} of Definition~\ref{dfBubDecEst}\ref{dfBubDecEst13} imply that in the space of Riemannian manifolds with boundary equipped with the $C^{\infty}$ topology, there exists a compact subset $K=K(g,E,a,b,\delta)$ with the following significance. For all measured Riemann surfaces $(\Sigma,\mu)$ with $genus(\Sigma)\leq g$ and $\mu(\Sigma)\leq E,$ any constant curvature metric $h$ on $\Sigma,$ and any $(\mu,h)$-adapted bubble decomposition $\mathcal{B}$ of $\Sigma$, the components of $Thick(\mathcal{B})$ belong to $K.$ This follows from Theorem 3.3.1 in \cite{AMK}.
\end{rem}

We now state the main result. Let $(M,\omega)$ be a compact symplectic manifold and $J$ an $\omega$-tame almost complex structure. For a Riemann surface $\Sigma$ and a $J$-holomorphic curve
\[
u:\Sigma\to M,
\]
and for any subset $U\subset\Sigma$, write
\[
\mu_u(U):=\int_Uu^*\omega.
\]

\begin{tm}\label{tmBubDecEst1}
Let $M$ be compact. Let $\mathcal{F}$ be the family of closed non-constant J-holomorphic curves in $M$. Then for every $(\Sigma,u)\in \mathcal{F}$ there is a conformal metric $h$ of constant curvature on $\Sigma$ and a $(\mu_u,h)$-adapted bubble decomposition $\mathcal{B}_u$ of $\Sigma$ with constants depending on $\mathcal{F}$ only.
\end{tm}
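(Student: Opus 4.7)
The plan is to produce $\mathcal{B}_u$ in two stages: first handle the ``topological'' thin parts coming from short closed geodesics of $(\Sigma,h)$, then iteratively excise geodesic annuli around points where the energy of $u$ concentrates. Take $h$ to be the unique conformal metric of constant curvature on $\Sigma$ (hyperbolic for $g\geq 2$, flat of unit area for $g=1$, round of unit area for $g=0$). For $g\geq 2$, the Margulis/collar lemma produces at most $3g-3$ disjoint standard tubular neighborhoods of the short simple closed geodesics; include these as the initial geodesic cylinders of $\mathcal{B}_u$.

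The key analytic input for handling the remaining energy concentration is the mean value / $\varepsilon$-regularity estimate for $J$-holomorphic maps: there exist constants $\hbar,C>0$ depending only on $(M,\omega,J)$ such that any $J$-holomorphic $v:D\to M$ with $\int_D v^*\omega<\hbar$ satisfies $|dv|(0)\leq C(\int_D v^*\omega)^{1/2}$. Consequently, the set of ``bad'' points $p\in\Sigma$ at which $|du|_{h_v}$ exceeds a chosen threshold is finite, with cardinality bounded by $\mu_u(\Sigma)/\hbar$. Around each such point, a pigeonhole argument on logarithmically spaced radii produces a concentric geodesic annulus of large modulus and small energy: among $N$ disjoint such annuli, at least one carries energy $\leq E/N$, and taking $N$ depending only on $\hbar$ and the desired decay rate suffices. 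Add these annuli to $\mathcal{B}_u$.

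On each cylinder or annulus of $\mathcal{B}_u$ the energy is small by construction, and the classical exponential decay estimate for $J$-holomorphic cylinders of small energy in the standard cylindrical metric yields condition~\ref{dfBubDecEst11}. After the excision step, every bad point is surrounded by a thin annulus, so the remaining thick components satisfy the derivative bound~\eqref{eqThickDerEst} by $\varepsilon$-regularity applied in the rescaled metric $h_v$. The bounded-geometry estimates \ref{boundgeo2}--\ref{boundgeo4} follow from the collar lemma and the Mumford-type compactness recalled in Remark~\ref{rmcpt}. The stability condition \ref{dfBubDecEst12} is arranged by merging any unstable component of $Thick(\mathcal{B}_u)$ with an adjacent piece of $\mathcal{B}_u$, which only strengthens the other estimates.

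I expect the main obstacle to lie in achieving the \emph{exponential}-in-energy dependence in \eqref{eqThickDerEst}. The $\varepsilon$-regularity constant is universal, but the number of nested bubbling iterations grows with the energy, and each iteration can multiply the effective derivative bound. The stability condition caps the depth of iteration linearly in $\mu_v+n_v$, so composing the resulting multiplicative factors produces the $e^{b(\mu_v+n_v)}$ bound. Keeping track of how the scaling factor $s_v=2(g_v+1)/d_v$ interacts with the collar widths and the chosen logarithmic scales, so that all constants remain uniform across $\mathcal{F}$, is the most delicate bookkeeping step.
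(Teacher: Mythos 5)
Your high-level architecture (use a mean-value estimate and an exponential-decay estimate for small-energy cylinders; excise thin annuli around concentration points; control the thick part) is the same as the paper's, which reduces everything to the ``gradient inequality'' and ``cylinder inequality'' of Definition~\ref{dfThTh}. However, there are several genuine gaps.

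First, the genus-zero case cannot be dispatched with the phrase ``round of unit area.'' Remark~\ref{remSphChoice} gives an explicit counterexample: for a reparametrized sequence $u_n=u\circ\psi_n$ with $\psi_n(z)=nz$, no single round metric yields uniform constants. The paper devotes Lemma~\ref{LmSpSphMet} to choosing a Fubini--Study metric adapted to $\mu_u$ (case-splitting on whether bubbling occurs and where the density concentrates). Without this step the derivative bound \eqref{eqThickDerEst} fails already for genus-zero curves.

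Second, your first stage (``include the Margulis collars as the initial geodesic cylinders'') is incorrect as stated. Membership in $\mathcal{B}$ forces the exponential-decay condition~\ref{dfBubDecEst11}, which in turn forces $\mu_u(I)$ to be small; a short closed geodesic can perfectly well carry a collar with $\mu_u$ of order $1$. The paper does not distinguish ``topological'' thin parts from ``energy'' thin parts: instead it defines a single class of admissible annuli (Definition~\ref{dfAdmAnn}, covering both geodesic annuli around points and sub-cylinders of collars) and declares a long neck to be any admissible annulus of large modulus, small $\mu$, and $\mu$-stable complement. Collars with too much energy simply fail the long-neck test and are absorbed into the thick part.

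Third, you do not address how the excised annuli are made pairwise disjoint, which is required for $\mathcal{B}$ to be a bubble decomposition at all, and how they are chosen maximally so that the thick part contains \emph{no} long necks (which is what drives the derivative estimate). Two concentration points whose small-radius discs overlap give rise to two annuli that are neither nested nor disjoint. The paper spends considerable effort here: the equivalence relation $\sim$ on long necks (Lemma~\ref{lmSimEq}), the existence of a maximal-modulus representative in each class, and the essential-disjointness theorem (Theorem~\ref{tmEssDisj}) are precisely what make the final collection a bona fide bubble decomposition and simultaneously guarantee the maximality needed for the thick estimates. Relatedly, ``the set of bad points is finite with cardinality bounded by $\mu_u(\Sigma)/\hbar$'' is not literally correct --- the superlevel set of $|du|$ is open, hence infinite when nonempty; what you can bound is the number of disjoint small discs carrying energy $\geq\hbar$, and turning that into a well-defined finite set of excision centers is exactly the point of the paper's machinery.

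Finally, ``merging any unstable component of the thick part with an adjacent piece of $\mathcal{B}_u$'' is not an operation on bubble decompositions in general: the union of a thick component and a geodesic annulus need not be a geodesic annulus or cylinder, and removing an element of $\mathcal{B}$ can destroy the no-long-necks property of the enlarged thick component, reopening the derivative estimate. The paper instead builds stability into the definition of a long neck (the complement of a long neck must be $\mu$-stable) so that it never needs to be fixed after the fact (Lemma~\ref{lmBubDecompmustab} and the stability step in the proof of Theorem~\ref{tmMaxBubDecom}). You correctly identify the exponential-in-energy bound as the subtle point, and your sketch of the mechanism (logarithmically spaced annuli, pigeonhole) matches the paper's ``Idea of the proof,'' but the precise statement you need is Lemma~\ref{lmNoLoN}, which requires the maximality of the excision, not merely its existence.
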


\begin{rem}\label{remSphChoice}
If we were to allow an arbitrary constant curvature metric in the genus 0 case, a simple counterexample to the theorem could be obtained as follows. Let $h$ be the standard metric on $S^2 =\C\cup\{\infty\}$, let $u:S^2\to M$ be a non-constant $J$-holomorphic curve, let $\psi_n:S^2\to S^2$ be given by $\psi(z)=nz$ for any $z\in\C\subset S^2$ and let $u_n= u_n\circ\psi_n$. Then there are no uniformly $(\mu_u,h)$-adapted bubble decompositions for this sequence.
\end{rem}

\subsection{Curves with boundary}

\begin{df}\label{dfCplxDbl}
 For any Riemann surface $\Sigma=(\Sigma,j)$, write $\overline{\Sigma}:=(\Sigma,-j)$. The \textbf{complex double} is the Riemann surface
 \[
 \Sigma_{\C} :=\Sigma\cup\overline{\Sigma},
  \]
 where the surfaces are glued together along the boundary by the identity. The complex structure on $\Sigma_{\C}$ is the unique one which coincides with $j$ and with $-j$ when restricted suitably. $\Sigma_{\C}$ is endowed with a natural anti-holomorphic involution and for any $z\in\Sigma_{\C}$ we denote by $\overline{z}$ the image of $z$ under this involution.
\end{df}

\begin{df}
Let $\Sigma$ be a connected Riemann surface. A subset $S\subset\Sigma_{\C}$ is said to be \textbf{clean} if either $S=\overline{S}$ or $S\cap \overline{S}=\emptyset$.
\end{df}

\begin{df}
A bubble decomposition of $\Sigma_\C$ is said to be conjugation invariant if and only if all $I\in\mathcal{B}$ are clean and
\[
I\in \mathcal{B}\qquad\Rightarrow\qquad \overline{I}\in\mathcal{B}.
\]
\end{df}

\begin{tm}\label{tmBubDecEst2}
Let $\mathcal{F}$ be the family of non-constant $J$-holomorphic curves in $M$ with boundary in a compact Lagrangian submanifold $L$. Then for every $(\Sigma,u)\in \mathcal{F}$ there is a conjugation invariant conformal constant curvature metric $h$ on $\Sigma_\C$ and a conjugation invariant $(\mu_u,h)$-adapted bubble decomposition $\mathcal{B}_u$ of $\Sigma_\C$ with constants depending on $\mathcal{F}$ only.
\end{tm}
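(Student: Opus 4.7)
The plan is to deduce Theorem~\ref{tmBubDecEst2} from a conjugation-equivariant execution of the argument proving Theorem~\ref{tmBubDecEst1}. Given $(u\colon\Sigma\to M)\in\mathcal{F}$, let $\sigma\colon\Sigma_\C\to\Sigma_\C$ denote the anti-holomorphic involution, and extend $\mu_u$ to a $\sigma$-invariant Borel measure on $\Sigma_\C$ by $\mu_u|_{\overline\Sigma}:=\sigma_*(u^*\omega)$. Choose $h$ to be a $\sigma$-invariant conformal metric of constant curvature on $\Sigma_\C$; in the hyperbolic case such $h$ is unique, so $\sigma$-invariance is automatic, and in the remaining cases a $\sigma$-invariant representative exists because $\sigma$ is a conformal involution with nonempty fixed locus $\partial\Sigma$.

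The proof of Theorem~\ref{tmBubDecEst1} uses the map $u$ only through $\mu_u$ together with a local monotonicity estimate for the area of $J$-holomorphic curves. For the doubled measure on $\Sigma_\C$, the same estimate holds: on disks contained in the interior of $\Sigma$ or of $\overline\Sigma$ by the classical monotonicity, and on disks meeting $\partial\Sigma$ by the monotonicity for $J$-holomorphic curves with Lagrangian boundary, which is uniform over $\mathcal{F}$ since $L$ is compact and $J$ is $\omega$-tame. Running the construction of Theorem~\ref{tmBubDecEst1} on the triple $(\Sigma_\C,\mu_u,h)$ therefore produces a $(\mu_u,h)$-adapted bubble decomposition with constants depending only on $\mathcal{F}$.

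To force the output to be conjugation invariant, I would perform every selection step $\sigma$-equivariantly: whenever a geodesic annulus $A(r_1,r_2,p;h)$ or cylinder $C(r_1,r_2,\gamma;h)$ is adjoined to the thin part, its image under $\sigma$ is adjoined simultaneously. If $I=\sigma(I)$, which happens precisely when $p\in\partial\Sigma$ or $\sigma(\gamma)=\gamma$, then $I$ is clean automatically. If $I$ and $\sigma(I)$ are disjoint, both are adjoined as a clean pair. Since $h$ and $\mu_u$ are both $\sigma$-invariant, the adaptedness estimates in Definition~\ref{dfBubDecEst} hold for $I$ if and only if they hold for $\sigma(I)$, so this symmetrization is compatible with the selection rules.

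The main obstacle is the remaining case where $I$ and $\sigma(I)$ are distinct but intersect, which can occur for annuli lying close to but not crossing $\partial\Sigma$. In that case I would replace the pair $\{I,\sigma(I)\}$ by a single $\sigma$-invariant geodesic annulus whose center is moved to the nearest point of $\partial\Sigma$ and whose radii are adjusted to contain both annuli. Verifying that the exponential decay bound \eqref{eqExpDec} and the geometric bounds of item~\ref{dfBubDecEst13} of Definition~\ref{dfBubDecEst} persist through this replacement with constants depending only on $\mathcal{F}$ is the technical heart of the argument, and once again reduces to the Lagrangian-boundary monotonicity together with the compactness of $L$.
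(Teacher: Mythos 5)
Your overall strategy---double the curve, pull back the measure, work on $\Sigma_\C$ with a conjugation-invariant constant curvature metric, and control everything through area/energy estimates for $J$-holomorphic maps with Lagrangian boundary---matches the paper's starting point. The paper also verifies (by citing Theorem 2.8 of [GS13]) that the reflected measure $\mu_u$ on $\Sigma_\C$ satisfies the abstract ``thick-thin measure'' hypotheses of Definition~\ref{dfThTh} uniformly over $\mathcal{F}$. However, you should be aware that what is needed there is not just local monotonicity but the \emph{gradient inequality} and the \emph{cylinder inequality} with Lagrangian boundary, and these are packaged in the abstract notion so that Theorem~\ref{tmBubDecEst} can be invoked directly in both the closed and bordered cases.

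The substantive divergence---and the gap---is in how conjugation invariance is obtained. You propose to run the closed-case construction and then symmetrize the output: whenever an annulus $I$ is selected, adjoin $\sigma(I)$, and in the bad case where $I\neq\sigma(I)$ but $I\cap\sigma(I)\neq\emptyset$, replace the pair by a single conjugation-invariant annulus with recentered $p\in\partial\Sigma$ and enlarged radii. The paper does something structurally different: it restricts \emph{from the outset} to ``clean'' annuli (Definition~\ref{dfAdmAnn} requires $I$ clean; Lemma~\ref{lmCleanannB} shows that for an admissible trivial annulus $I=B\setminus B'$ both $B$ and $B'$ are clean), so the case $I\neq\sigma(I)$, $I\cap\sigma(I)\neq\emptyset$ simply never occurs. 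The cleanness calculus of Lemmas~\ref{lmCleanUnion},~\ref{lmCleanMC},~\ref{lmCleanConjEmb}, and the verification that the auxiliary annuli $M(I_1,I_2)$, $m(I_1,I_2)$ are clean (Lemma~\ref{lmI1sharpI2}), are all there precisely to make the equivalence-class / maximal-modulus selection in Theorem~\ref{tmMaxBubDecom} go through conjugation-equivariantly without any after-the-fact repair. Your post-hoc merge, by contrast, faces a problem you do not address: after replacing $\{I,\sigma(I)\}$ by a larger conjugation-invariant annulus $J$, $J$ may fail to be disjoint (or essentially disjoint) from other members of the decomposition, and may also fail the stability condition on the complementary components; resolving these conflicts can cascade, and there is no obvious termination or bound on the loss in the constants. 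Acknowledging that this is ``the technical heart'' is honest, but as it stands the proposal omits exactly the part of the argument the paper spends most of Sections 2--5 building machinery to avoid.
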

\subsection{The non-compact setting}
\begin{df}
For any Riemannian manifold $X$ with sub-manifold $Y$ and $\epsilon>0$, we say that $Y$ is $\epsilon$-Lipschitz if
\[
\frac{d_X(x,y)}{\min\{1,d_Y(x,y)\}} \geq\epsilon\qquad \forall x\neq y\in Y.
\]
We say that $Y$ is Lipschitz if there is an $\epsilon$ such that $Y$ is $\epsilon$-Lipschitz.
\end{df}
Denote by $g_J$ the symmetrization of the positive definite form $\omega(\cdot,J\cdot).$ Denote by $R$ the curvature of $g_J$, by $B$ the second fundamental form of $L$ with respect to $g_J$ and for any tensor $T$ on $M$ or $L$ let $\|T\|_n$ denote the $C^n$ norm of $T$ with respect to $g_J$.
\begin{df}\label{BoundCond}
 Let $S$ be a family of compact Riemann surfaces, possibly with boundary. We say that the data of $S$ together with $(M,\omega,L,J)$ comprise a \textbf{bounded setting} if $M$ and $L$ are complete with respect to $g_J$ and one of the following holds.
\begin{enumerate}
\item \label{BoundCond2}$L=\emptyset$ and
\[
\max\left\{\vectornorm{R},\vectornorm{J}_2,\frac1{\inj(M;g_J)}\right\}<\infty.
\]
\item\label{BoundCond3}
$L$ is Lipschitz and
\[
\max\left\{\vectornorm{R}_2,\vectornorm{J}_2,\vectornorm{B}_2,\frac1{\inj(M;g_J)}\right\}<\infty.
\]
\item\label{BoundCond4}
Each connected component $L'$ of $L$ is Lipschitz and
\[
\max\left\{\vectornorm{R}_2,\vectornorm{J}_2,\vectornorm{B}_2,\frac1{\inj(M;g_J)}\right\}<\infty.
\]
Furthermore, there is an $\epsilon>0$ such that for each $(u,\Sigma)\in\mathcal{F}$, there is a conformal metric $h$ of constant curvature $0,\pm 1,$ of unit area in case of zero curvature, such that $\partial\Sigma$ is totally geodesic and $\epsilon$-Lipschitz.

\end{enumerate}
\end{df}

\begin{tm}\label{tmBubDecEst3}
Let $\mathcal{F}$ be the family of non-constant $J$-holomorphic curves in $M$ with boundary in $L$ and domain in a set $S$ of Riemann surfaces such that $S$ and $(M,\omega,J,L)$ comprise a bounded setting. Then for every $(\Sigma,u)\in \mathcal{F},$ there is a conjugation invariant conformal constant curvature metric $h$ on $\Sigma_\C$ and a conjugation invariant $(\mu_u,h)$-adapted bubble decomposition $\mathcal{B}_u$ of $\Sigma_\C$ with constants depending on $\mathcal{F}$ only.
\end{tm}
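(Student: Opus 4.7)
The plan is to reduce Theorem~\ref{tmBubDecEst3} to Theorems~\ref{tmBubDecEst1} and~\ref{tmBubDecEst2} by verifying that the analytic ingredients which in those proofs were supplied by the compactness of $M$ (and $L$) are furnished, in the present generality, by the bounded setting hypotheses of Definition~\ref{BoundCond}. The essential ingredients are a monotonicity inequality for $J$-holomorphic curves with or without boundary on $L$, a mean value ($\epsilon$-regularity) estimate for $|du|$ in terms of local energy, an isoperimetric inequality for small-energy discs and half-discs, and the bounded-geometry precompactness statement invoked in Remark~\ref{rmcpt}. All of these are local estimates on the target whose constants depend only on $\vectornorm{R}_n$, $\vectornorm{J}_2$, $\vectornorm{B}_n$ for small $n$, and on $\inj(M;g_J)^{-1}$, and therefore remain uniform over $\mathcal{F}$ under Definition~\ref{BoundCond}.

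Given these uniform inputs, case~\ref{BoundCond2} follows at once: the iterative construction of the thin and thick parts in the proof of Theorem~\ref{tmBubDecEst1} never uses the global structure of $M$ beyond the four estimates above, so one simply copies the argument with the new constants. For the bordered cases~\ref{BoundCond3} and~\ref{BoundCond4}, I would imitate the proof of Theorem~\ref{tmBubDecEst2} and work on the complex double $\Sigma_\C$. The Lipschitz assumption on the components of $L$, combined with the bound on $\vectornorm{B}_2$ and on $\inj(M;g_J)^{-1}$, ensures that the reflection of $g_J$ across $L$ is tame enough for monotonicity, mean value, and isoperimetric estimates to survive across $L$; equivalently, the doubled map $u\cup\bar u$ can be treated as a pseudo-holomorphic curve in a doubled target of bounded geometry. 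In case~\ref{BoundCond4} the given constant-curvature metric $h$ with totally geodesic, $\epsilon$-Lipschitz boundary is used directly as the conformal metric on $\Sigma$, and its reflection across $\partial\Sigma$ defines the conjugation invariant metric on $\Sigma_\C$; the $\epsilon$-Lipschitz hypothesis on $\partial\Sigma$ is precisely what keeps the geometry of $\Sigma_\C$ in a neighbourhood of $\partial\Sigma$ under uniform control, so that the extraction of geodesic annuli and cylinders can be performed conjugation invariantly without degenerating at the fixed locus of the involution.

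The main obstacle I anticipate is tracing through the proofs of Theorems~\ref{tmBubDecEst1} and~\ref{tmBubDecEst2} and confirming that no step silently uses global compactness of $M$ in a way not recovered by Definition~\ref{BoundCond}. The most delicate point is the exponential-in-energy bound \eqref{eqThickDerEst} on $|du|$ in the thick part: in the compact case it arises from iterating the mean value inequality a number of times controlled by the energy inside a bounded region of $M$, and in the bounded setting the analogous step requires monotonicity together with the uniform lower bound on $\inj(M;g_J)$ to produce a cover of $u(\Sigma_v)$ by small balls whose cardinality is linear in $\mu_v$. Once this covering estimate is in place the remainder of the construction goes through as in the compact case, and the resulting bubble decomposition is conjugation invariant by construction.
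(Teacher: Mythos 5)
Your high-level strategy --- reduce to the already-established construction by checking that the local analytic inputs remain uniform under the bounded-setting hypotheses of Definition~\ref{BoundCond} --- is indeed the route the paper takes. In fact, the paper proves Theorems~\ref{tmBubDecEst1},~\ref{tmBubDecEst2} and~\ref{tmBubDecEst3} in one stroke, by citing an external reference to show that $\mathcal{M}=\{(\Sigma,\mu_u)\}$ is uniformly thick thin (i.e.\ satisfies the gradient and cylinder inequalities of Definition~\ref{dfThTh} with uniform constants) and then applying the abstract Theorem~\ref{tmBubDecEst}. So the reduction is not ``Theorem~\ref{tmBubDecEst3} to Theorems~\ref{tmBubDecEst1} and~\ref{tmBubDecEst2}'' (they share one proof), but ``all three to Theorem~\ref{tmBubDecEst} via uniform thick-thinness.'' Once the two inequalities of Definition~\ref{dfThTh} are in hand, everything in the bubble-decomposition construction is purely a statement about the domain Riemann surface and the measure $\mu$, and the target $M$ plays no further role. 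Your list of ingredients (monotonicity, isoperimetric, bounded-geometry precompactness) therefore overshoots what actually needs checking: Remark~\ref{rmcpt}, for instance, is a statement about Riemannian compactness of the domains and is independent of the target geometry.

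This points to a genuine misconception in your paragraph on the exponential derivative bound \eqref{eqThickDerEst}. You anticipate needing a covering of $u(\Sigma_v)$ by small balls in $M$ whose cardinality is linear in $\mu_v$. No such covering appears in the paper, and none is needed: the derivative bound is established entirely on the domain side (Lemma~\ref{cyDerEst}, following the sketch in the ``Idea of the proof'' subsection) by observing that a point of large density $d$ produces a small-energy annulus of modulus roughly $\log d$ which, by maximality, cannot be a long neck --- hence its modulus, and therefore $\log d$, is bounded linearly in $\mu_v+n_v$. No step there touches the geometry of $M$ beyond the two local inequalities. The ``most delicate point'' you flag is therefore not a point of delicacy at all.

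Finally, the proposal skips the trivial cases where Assumption~\ref{as1} fails. These are not vacuous in the bounded setting: for $genus(\Sigma_\C)=1$ with $\mu(\Sigma_\C)\leq\delta_2$, the paper separately establishes stability via the monotonicity inequality together with the uniform lower bound on $\inj(M;g_J)$ (a non-constant $J$-holomorphic torus cannot fit inside a convex ball, so its energy is bounded below), and then bounds $\mathrm{Diam}(\Sigma_\C;h)$ by a modulus argument using Lemma~\ref{ExpCylDEst}. Your sketch does not account for this case, and in a proof of Theorem~\ref{tmBubDecEst3} alone (where no global compactness hypothesis provides an energy lower bound for free) this is a real gap.
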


\subsection{Relation to Gromov compactness}
 Fix an $E>0$ and a $g\in\mathbb{N}$. Then for all $u$ such that $genus(\Sigma)\leq g$ and $\mu_u(\Sigma)\leq E$, the components of $Thick(\mathcal{B}_u)$ are elements of $K$, where $K$ is as in Remark~\ref{rmcpt}. Furthermore, by Remark~\ref{rmfin}, $|\pi_0(Thick(\mathcal{B}_u)|$ is bounded uniformly in the set of all such $u$. By conformality we have that $|du|^2_h=\frac{d\mu_u}{d\nu_h}$.  Together with estimate \eqref{eqThickDerEst} and elliptic regularity, we obtain $C^{\infty}$ compactness of the restriction of $J$-holomorphic curves to their thick parts.

To see what happens in the thin part, let us elaborate on the geometric meaning of Definition~\ref{dfBubDecEst}\ref{dfBubDecEst11}. Let $I$ be an open cylinder, let $u:I\to M$ be $J$-holomorphic and Let
\[
\psi:I_L:=(-L,L)\times S^1\rightarrow I
\]
be a biholomorphism. Let
\[
h_{cone}=\sqrt{a}e^{-\frac{b}{2}(L-|r|)}h_{st}.
\]
Then for $r\neq 0,$ $h_{cone}$ is a conformal metric on $I_L$ whose shape is as an approximate cone as in the left of Figure~\ref{FigCone}. By inequality~\eqref{eqExpDec},
\begin{align}\label{EqConederEst}
\frac{d\mu_{u\circ\psi}}{d\nu_{h_{cone}}}=\frac{d\mu_{u\circ\psi}}{d\nu_{h_{st}}}\frac{d\nu_{h_{st}}}{d\nu_{h_{cone}}}\leq 1.
\end{align}
As $L\rightarrow\infty$ the approximate cones converge to an actual cone. See Figure~\ref{FigCone}.
\begin{figure}[h]
\centering
\includegraphics[scale=0.5]{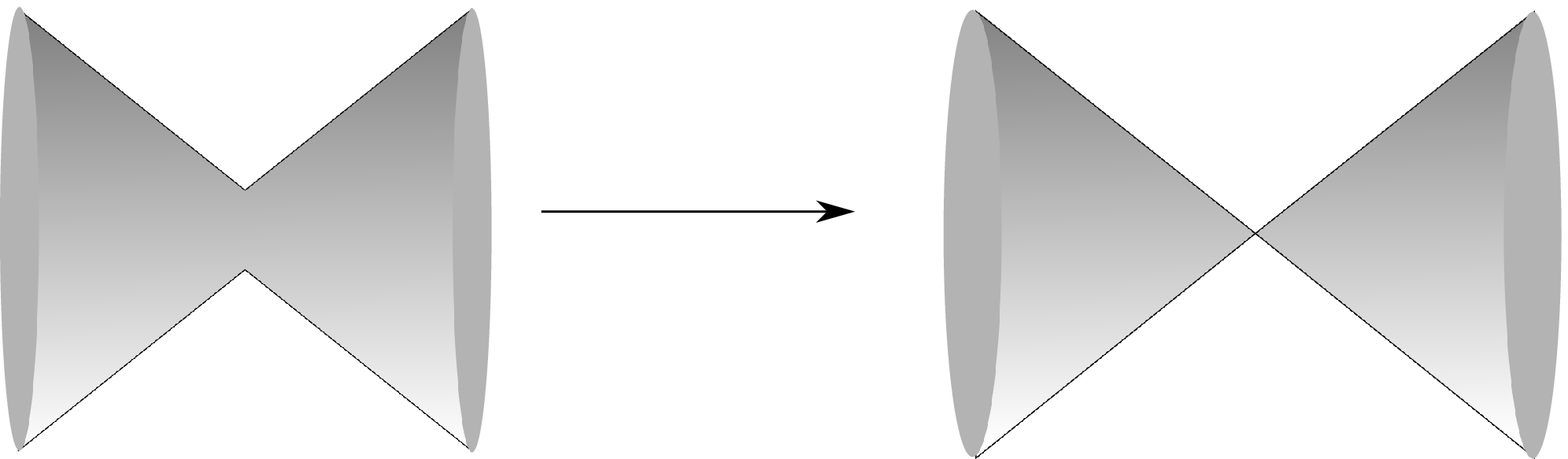}
\caption{}\label{FigCone}
\end{figure}
Gromov's compactness theorem is a consequence of this discussion, of elliptic regularity and of removal of singularities. Use of convergence theory of Riemannian manifolds in the context of Gromov compactness appears also in \cite{Pa} and \cite{SiTi}.

\subsection{The thin part}
The specification of the thin part of a $J$-holomorphic curve $u:\Sigma\to M$ is more involved then that of a hyperbolic surface. Furthermore, as a subset of $\Sigma$ it appears to involve some choices which have to be made for each $u$. However, the combinatorial structure of the thick thin decomposition, e.g. the number of components of $Thin,$ is independent of any such choices. For simplicity we describe the thin part of a closed $J$-holomorphic curve $u:\Sigma\to M$.

We recall the cylinder inequality \cite[Lemma 4.7.3]{MS2}. Let $I_a:=[-a,a]\times S^1$. The cylinder inequality states that there are constants $\delta$ and $c$ such that for any $J$-holomorphic map $u:I_a\to M$ we have
\begin{equation}\label{IntCylIneq}
\mu_u(I_a)\leq\delta\qquad\Rightarrow\qquad \mu_u(I_{a-t})\leq e^{-ct}\mu_u(I_a),
\end{equation}
for $t\in[\log 2,a]$.

\begin{df}\label{dfPrelimLN}
An $L$-long neck is a geodesic cylinder or annulus $I\subset\Sigma$ such that $Mod(I)\geq 4L$, $\mu_u(I)\leq\delta/6$ and each component $A$ of  $\Sigma\setminus I$ is stable in the sense that one of the following conditions holds:
\begin{enumerate}
\item
$\mu(A)\geq\delta$;
\item
$2\,genus(A) + |\pi_0(\partial A)|\geq 3.$
\end{enumerate}

\end{df}

For $L$ large enough we define an equivalence relation on the set $LN$ of $L$-long necks as follows. Suppose $I_1,I_2\in LN$. Then $I_1\sim I_2$ if and only if there exists an annulus $I$, not necessarily geodesic, such that $I_1$ and $I_2$ are nontrivially embedded in $I$ and $\mu(I)\leq \delta/2.$ That $\sim$ is indeed equivalence relation for $L$ large enough follows from the cylinder inequality and the stability condition. See Lemma~\ref{lmSimEq} below. Furthermore, each equivalence class is shown to contain an element of maximal modulus.

Pick an element $A_c$ of maximal modulus from each $\sim$-equivalence class $c$ and let $L_c=\frac12Mod(A_c)$. There is a biholomorphism
\[
f_c:A_c\to I_{\frac12Mod(A_c)}=[-L_c,L_c]\times S^1,
\]
unique up to automorphisms of the cylinder. The components of the thin part are the annuli $f_{c}^{-1}(I_{L_{c}-L}).$ These are shown in the text to be disjoint for $L$ large enough but chosen independently of the curve. There does not appear to be a unique maximal element in each equivalence class. Hence the choices referred to at the beginning of the subsection.

\subsection{Idea of the proof}
Let $\mathcal{B}$ be the set of components of the thin part as outlined in the previous subsection. In the text we show that $Thick(\mathcal{B})$ contains no long necks. It turns out that when $genus(\Sigma)>0,$ this implies that $\mathcal{B}$ is $(\mu_u,h)$-adapted. Let us sketch for example how to obtain the derivative estimate in $Thick(\mathcal{B}).$

For this, recall the gradient inequality \cite[Lemma 4.3.1]{MS2} which says that there is a constant $\delta'>\delta$ such that for any ball $B_r(p)\subset\Sigma$ we have
\[
\mu_u(B_r(p))\leq \delta'\qquad\Rightarrow\qquad \frac{d\mu_u}{d\nu_h}(p)\leq\frac{1}{r^2}\mu_u(B_r(p)).
\]

Let $v\in V_{\mathcal{B}}.$ Suppose for concreteness that $\Sigma_v$ is a geodesic disk $D=B_1(z;h_v)\subset\Sigma$. In this paragraph all quantities are measured with respect to $h_v$, so we omit it from the notation.  Let $p\in\Sigma_v$ be a point where the derivative obtains its maximum. Using the gradient inequality and the construction of $\mathcal{B}$ there is an a priori bound on the derivative in the annulus $B_{1}(z)\setminus B_{1/2}(z)$. Suppose $p\in B_{1/2}(z)$ and let $d=\frac{d\mu_u}{d\nu_{h_v}}(p)$. Suppose $d>4$ and consider the annulus $A=B_{1/2}(p)\setminus B_{1/d}(p)$. Then $\Sigma\setminus A$ is stable in the sense appearing in Definition~\ref{dfPrelimLN}. Indeed,  the gradient inequality implies
\begin{align}\label{DefDenseDisc}
\mu_u(B_{1/d}(p))\geq\delta'.
\end{align}
The component $\Sigma\setminus B_{1/2}(p)$ is clearly stable by the assumption on the genus. Since $D$ is free of long necks, for any $\frac1{d}<r_1<r_2<\frac12$ such that
\[
\log{r_2/r_1}>L,
\]
we must have
\[
\mu_u\left(B_{r_2}(p)\setminus B_{r_1}(p)\right)>\delta/6.
\]
In particular,
\[
\log d\leq \frac{6L}{\delta}\mu_u(A)\leq \frac{6L}{\delta}\mu_u(\Sigma_v),
\]
which is just inequality~\eqref{eqThickDerEst}.
\subsection{Acknowledgements}
The author would like to thank his PhD advisor J. Solomon for countless valuable comments and suggestions and for helpful criticism. The author is grateful to the Azrieli foundation for the award of an Azrieli fellowship. The author was partially supported by ERC Starting Grant 337560.

\section{Preliminaries}\label{SecConfGeom}

\subsection{Annuli}
\begin{df}\label{DfModAx}
A \textbf{standard annulus} $I$ is a surface of the form $K\times S^1$ with $K\subset\R$ an interval which may be open, closed or half closed. We denote by $h_{st}$ the product metric on $I$ which assigns to $S^1$ the length $2\pi$. We let $j_{st}$ be the complex structure induced on $I$ by $h_{st}$ and the product orientation on $I$. We take $Mod(I):=|K|$, where $|\cdot|$ denote the Lebesgue measure. An \textbf{Annulus} $(I,j)$ is a doubly connected surface with complex structure $j$. Up to translation it is bi-holomorphic to a unique standard annulus $I_{st}$. We define $Mod(I,j):=Mod(I_{st})$. When the complex structure is clear from the context we omit it.

Let $(I,j)$ be an annulus and let $h$ be a conformal Riemannian metric on $I$. We call global cylindrical coordinates $(\rho,\theta)$ on $I$, with
\[
a \leq \rho \leq b, \qquad 0 \leq \theta < 2\pi,
\]
\textbf{axially symmetric} if
\begin{equation}\label{eq:axsy}
h =d\rho^2+h_{\theta}(\rho)^2d\theta^2.
\end{equation}
We say $h$ is \textbf{axially symmetric} if $I$ has axially symmetric coordinates.
In this case, the conformal length of $I$ is given by
\begin{equation}\label{ModEq}
Mod(I,j)=\int_a^b\frac1{h_{\theta}(\rho)}d\rho.
\end{equation}
\end{df}

\begin{df}\label{dfSubCyinlder}
Let $I$ be an annulus and let $L=Mod(I)$. Suppose $L<\infty$. Then there is a biholomorphism $f:K\times S^1\rightarrow I$ with $K$ an interval whose infimum is the origin. The map $f$ is unique up to a rotation and a holomorphic reflection. A \textbf{sub-cylinder} of $I$ is a subset of the form
\[
f(K'\times S^1),
\]
with $K'\subset K$ an interval.  For $a\leq b\in K$ we write
\[
S(a,b;I):=f([a,b]\times S^1)\subset I.
\]
We also define
\[
C(a,b;I):=S(a,L-b;I),
\]
for $a,b$ in the appropriate range. Note that composing $f$ with a holomorphic reflection of $K\times S^1$ replaces $S(a,b)$ with $S(L-b,L-a)$. When applying the above notations we shall be careful to remove this ambiguity. On the other hand, the notation $C(a,a;I)$ is well defined. Denote by $K^c$ the closure of $K$ and by $I^c$ the closure of $I$. It is convenient to extend the above definitions to $a,b\in K^c$ by defining $S(a,b;I):=S(a,b;I^c)\cap I$ and $C(a,b;I^c):=C(a,b;I)\cap I^c$.
\end{df}

\begin{df}\label{DfCofRad}
Let $U$ be a Riemann surface biholomorphic to the unit disk $D_1$. Let $h$ be a conformal metric on $U$ and let $z\in U$. Then there is a biholomorphism $\phi:U\rightarrow D_1$ with $\phi(z)=0,$ unique up to rotation. The \textbf{conformal radius of $U$ viewed from $z$} is defined to be
\[
r_{conf}(U,z;h):=1/\|d\phi(z)\|_h.
\]

\end{df}

Note that $r_{conf}(U,z;h)$ is not conformally invariant, since it depends on the metric at $z$. However, let $\nu_h$ denote the volume form of $h,$ let $\mu$ be an absolutely continuous measure on $U$. Then the expression $\frac{d\mu(z)}{d\nu_{h}}r_{conf}^2(z)$ is conformally invariant.

The cases of interest for us will be conformal radii of geodesic disks with metrics of constant curvature $K,$ viewed from their center.
In these cases, the metric can be written in polar coordinates as
\begin{equation}\label{eq:pch}
h = d\rho^2 + h_\theta^2(\rho) d\theta^2,
\end{equation}
where
\begin{equation}\label{eq:forhth}
h_\theta(\rho) = \begin{cases}
\sinh(\rho), & K = -1, \\
\rho, & K = 0, \\
\sin(\rho), & K = 1.
\end{cases}
\end{equation}
So, the conformal radius of $B_r(p)$ viewed from $p$ is given by
\[
r_{conf} = \exp(f(r))
\]
where $f$ is the function defined by
\[
f'(r) = \frac{1}{h_\theta(r)}, \qquad f(r) = \log(r) + O(r) \text{  as $r \to 0$}.
\]
More explicitly,
\begin{equation}\label{eq:f}
f(r) = \log(r) + \int_0^r \left(\frac{1}{h_{\theta}(\rho)} - \frac{1}{\rho} \right)d\rho.
\end{equation}
It follows from equation~\eqref{eq:f} that
\begin{equation}\label{eq:bdrc1}
r_{conf} \geq r, \qquad K = 0,1,
\end{equation}
and for any $\kappa$ there exists a constant $c > 0$ such that
\begin{equation}\label{eq:bdrc2}
r_{conf} \geq c r, \qquad K = -1, \; r < \kappa.
\end{equation}

\subsection{Collars}
For later reference we include a statement of the thick thin decomposition for surfaces of genus $g>1$. In the following we assume the surfaces are endowed with their unique metric $h$ of constant curvature $-1.$

\begin{tm}\label{TmThTh}\cite[4.1.1]{Bu}
Let $\Sigma$ be a compact Riemann surface of genus $g\geq2$, and let $\gamma_1,...,\gamma_m$ be pairwise disjoint simple closed geodesics on $\Sigma$. Then the following hold:
\begin{enumerate}
\item
$m\leq 3g-3$.
\item
There exist simple closed geodesics $\gamma_{m+1},...,\gamma_{3g-3},$ which, together 	 with $\gamma_1,...,\gamma_m$, decompose $\Sigma$ into pairs of pants.
\item\label{it:coll}
The collars
\begin{equation}
\mathcal{C}(\gamma_i)=\{p\in \Sigma|dist(p,\gamma_i)\leq w(\gamma_i)\}\notag
\end{equation}
of widths
\begin{equation}		 w(\gamma_i)=\sinh^{-1}\left(1/\sinh\left(\frac1{2}\ell(\gamma_i)\right)\right)\notag
\end{equation}
are pairwise disjoint for $i=1,...,3g-3$.
\item\label{it:spco}
Each $\mathcal{C}(\gamma_i)$ is isometric to the cylinder $[-w(\gamma_i),w(\gamma_i)]\times S^1$ with the Riemannian metric
\[
d\rho^2+\frac{\ell^2(\gamma_i)\cosh^2(\rho)}{4\pi^2} d\theta^2.
\]
\end{enumerate}
\end{tm}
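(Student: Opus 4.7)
The plan is to work in the universal cover $\mathbb{H}^2$, combining a topological pants-decomposition argument with Fermi coordinates along lifts of the $\gamma_i$ and a sharp hyperbolic trigonometric inequality; this is the classical Keen collar argument, as presented in \cite{Bu}.

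For (a) and (b), I would use the topological fact that a maximal collection of pairwise disjoint, essential, pairwise non-isotopic simple closed curves on a closed orientable surface of genus $g \geq 2$ has exactly $3g - 3$ elements and decomposes the surface into pairs of pants. Each $\gamma_i$ is essential (a simple closed geodesic on a hyperbolic surface is not null-homotopic) and distinct $\gamma_i$ lie in distinct free homotopy classes (by uniqueness of geodesic representatives on a hyperbolic surface), giving $m \leq 3g - 3$. To obtain (b), extend the $\gamma_i$ to a maximal family of disjoint simple closed curves on $\Sigma$ and replace each added curve by its unique geodesic representative; the representatives remain pairwise disjoint and simple by the bigon criterion.

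For (d), I would introduce Fermi coordinates $(\rho, s)$ along a fixed lift $\tilde\gamma_i \subset \mathbb{H}^2$ of $\gamma_i$, where $s$ is arclength on $\tilde\gamma_i$ and $\rho$ is signed normal distance along the orthogonal geodesic. The stabilizer of $\tilde\gamma_i$ in $\pi_1(\Sigma)$ is generated by a hyperbolic translation $T$ of length $\ell(\gamma_i)$, and a direct computation in the upper half-plane (placing $\tilde\gamma_i$ as the imaginary axis) yields the hyperbolic metric
\[
d\rho^2 + \cosh^2(\rho)\, ds^2.
\]
Substituting $s = \ell(\gamma_i)\theta/(2\pi)$ and quotienting the strip $\{|\rho| \leq w(\gamma_i)\}$ by $\langle T \rangle$ produces the asserted metric on $[-w(\gamma_i), w(\gamma_i)] \times S^1$, provided the strip embeds into $\Sigma$.

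The main content is (c). It suffices to show that for any two distinct lifts $\tilde\alpha, \tilde\beta \subset \mathbb{H}^2$ of geodesics $\alpha, \beta$ in the family,
\[
d(\tilde\alpha, \tilde\beta) \geq w(\alpha) + w(\beta).
\]
Because $\pi_1(\Sigma)$ acts cocompactly without parabolics, any two such distinct lifts are ultraparallel and admit a unique common perpendicular realizing their distance. The decisive step is a sharp hyperbolic trigonometric inequality, derived by analyzing the right-angled hexagon (or Saccheri quadrilateral in the case $\alpha = \beta$, with $\tilde\beta = T(\tilde\alpha)$) formed by $\tilde\alpha$, $\tilde\beta$, and their common perpendicular together with the axis translates under the stabilizer generators. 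Applied with $\tilde\beta$ a nontrivial deck translate of $\tilde\alpha$ outside its stabilizer, this inequality embeds the individual collar $\mathcal{C}(\gamma_i)$; applied to lifts of distinct $\gamma_i, \gamma_j$, it gives pairwise disjointness of $\mathcal{C}(\gamma_i)$ and $\mathcal{C}(\gamma_j)$. I expect this sharp trigonometric inequality to be the main obstacle; once it is established, the remaining parts of the theorem reduce to the topological counting and the Fermi-coordinate calculation above.
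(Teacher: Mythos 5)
This statement is cited by the paper directly from Buser's book \cite[4.1.1]{Bu} with no proof given, so there is no in-paper argument to compare against. Your outline does follow the standard route of the cited reference: counting and pants decomposition for parts (a) and (b), Fermi coordinates for (d), and hyperbolic trigonometry of right-angled hexagons for (c). These pieces are correct and correctly ordered, and the observation that distinct simple closed geodesics are automatically non-isotopic (by uniqueness of geodesic representatives) and that geodesic representatives of disjoint curves remain disjoint is exactly what is needed for (b).

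The genuine gap is in (c), which is the whole quantitative content of the theorem: you reduce it to a claimed inequality $d(\tilde\alpha,\tilde\beta) \geq w(\alpha) + w(\beta)$ for distinct lifts, and then say only that ``a sharp hyperbolic trigonometric inequality'' derived from right-angled hexagons gives this, explicitly flagging it as the main obstacle. But the specific formula $w(\gamma) = \sinh^{-1}\bigl(1/\sinh(\tfrac12\ell(\gamma))\bigr)$ is not something one can wave at: it comes from the right-angled hexagon identity $\cosh c = \sinh a \sinh b \cosh \gamma - \cosh a \cosh b$ applied to half a pair of pants (half-lengths $\ell(\gamma_i)/2$ on the seams), giving $\sinh(w_i)\sinh(\ell_i/2) \geq 1$ with equality when the opposite boundary degenerates, and one must also handle the case of two half-collars on the \emph{same} boundary geodesic inside a single pair of pants. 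Without writing that computation out, you have the scaffolding of Keen's lemma but not the lemma itself. I would either cite the hexagon formula and carry through the two or three lines of trigonometry, or, as the paper does, simply cite \cite[4.1.1]{Bu} and move on.
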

Denote by $\inj(p;h)$ the radius of injectivity of $\Sigma$ at $p\in \Sigma$, i.e. the supremum of all $r$ such that $B_r(p)$ is an embedded disk.
\begin{tm}\label{TmThTh2}\cite[4.1.6]{Bu}
	Let $\beta_1,...,\beta_k$ be the set of all simple closed geodesics of length $\leq \sinh^{-1}1$ on $\Sigma$. Then $k\leq3g-3$ and the following hold.
	\begin{enumerate}
		\item The geodesics $\beta_1,...,\beta_k$ are pairwise disjoint.
		\item \label{it:lbd} $\inj(p;h) > \sinh^{-1}1$ for all $p\in \Sigma-(\mathcal{C}(\beta_1)\cup...\cup\mathcal{C}(\beta_k))$.
		\item \label{it:fml} If $p\in\mathcal{C}(\beta_i)$,and $d=dist(p,\partial \mathcal{C}(\beta_i))$, then
\begin{align}\label{injEq}
			\sinh (\inj(p;h)) =\cosh\frac1{2}\ell(\beta_i)\cosh d-\sinh d.
		\end{align}
	\end{enumerate}
\end{tm}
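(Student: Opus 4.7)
The plan is to combine the collar lemma (Theorem~\ref{TmThTh}) with explicit hyperbolic trigonometry in the universal cover, following the classical argument in Buser's book. I will first establish pairwise disjointness of the $\beta_i$ (whence the bound $k \leq 3g-3$ follows from Theorem~\ref{TmThTh}(a)); then derive the displacement formula in (iii) by direct computation; and finally deduce (ii) as a boundary case of that formula together with the same minimality analysis.

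Suppose for contradiction that $\beta_i \cap \beta_j \neq \emptyset$ for some $i \neq j$. Applying Theorem~\ref{TmThTh}(c),(d) to the singleton family $\{\beta_i\}$ produces a collar $\mathcal{C}(\beta_i)$ of width $w(\beta_i) = \sinh^{-1}(1/\sinh(\ell(\beta_i)/2))$ isometric to a standard warped cylinder. Any simple closed geodesic crossing $\beta_i$ transversally must contain a geodesic arc joining the two boundary components of $\mathcal{C}(\beta_i)$, so $\ell(\beta_j) \geq 2 w(\beta_i)$. A direct monotonicity check shows that the hypothesis $\ell(\beta_i) \leq \sinh^{-1}(1)$ forces $2 w(\beta_i) > \sinh^{-1}(1) \geq \ell(\beta_j)$, a contradiction.

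For (iii), lift to the universal cover $\mathbb{H}^2 \to \Sigma$. Let $T$ be the hyperbolic deck transformation corresponding to $\beta_i$; its axis $A$ is a lift of $\beta_i$ and its translation length is $\ell(\beta_i)$. For any lift $\tilde p$ of $p$, set $\delta := d(\tilde p, A)$. The standard displacement identity for a hyperbolic isometry reads
\[
\sinh\!\left(\tfrac{1}{2}\, d(\tilde p, T\tilde p)\right) = \cosh(\delta)\, \sinh(\ell(\beta_i)/2).
\]
Substituting $\delta = w(\beta_i) - d$ together with $\sinh w(\beta_i) = 1/\sinh(\ell(\beta_i)/2)$ and $\cosh w(\beta_i) = \coth(\ell(\beta_i)/2)$, and expanding via the hyperbolic addition formula for $\cosh(w(\beta_i) - d)$, one finds that the right-hand side collapses to $\cosh(\ell(\beta_i)/2)\cosh d - \sinh d$, matching the right-hand side of \eqref{injEq}, with $\frac{1}{2} d(\tilde p, T \tilde p)$ in place of $\inj(p;h)$.

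The main obstacle is then the identification $\inj(p;h) = \tfrac{1}{2} d(\tilde p, T\tilde p)$, i.e.\ showing that $T^{\pm 1}$ realize the minimum of the displacement over nontrivial deck transformations. The key ancillary fact is that any closed geodesic on $\Sigma$ of length less than $2\sinh^{-1}(1)$ is simple, a standard consequence of the Margulis lemma on hyperbolic surfaces. Any competing shorter loop at $p$ therefore comes from a simple closed geodesic $\gamma$, which by part (i) either equals $\beta_i$ or has its own collar $\mathcal{C}(\gamma)$ disjoint from $\mathcal{C}(\beta_i)$. In the latter case, collar disjointness forces $d(p, \gamma) > w(\gamma)$, and the displacement identity yields a loop strictly longer than the $T$-loop. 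Part (ii) then follows as a corollary: setting $d = 0$ in \eqref{injEq} gives $\sinh(\inj(p;h)) = \cosh(\ell(\beta_i)/2) > 1$ on $\partial\mathcal{C}(\beta_i)$, and the same case analysis extends the strict inequality $\inj(p;h) > \sinh^{-1}(1)$ to all of $\Sigma \setminus \bigcup_i \mathcal{C}(\beta_i)$.
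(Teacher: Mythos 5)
The statement you are proving is a cited result (Buser, Theorem 4.1.6); the paper itself gives no proof, so there is nothing to compare approaches against. Your reduction of part (iii) to the displacement identity is correct, and the pairwise-disjointness argument for (i) is fine. However, the heart of the matter is the step you yourself flag as ``the main obstacle'' --- proving that the deck transformation $T$ realizes the minimal displacement at $\tilde p$ --- and the sketch you give for it has a real gap.

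First, your appeal to ``short loops come from simple geodesics'' needs the competing loop to have length below $2\sinh^{-1}(1)$. But the $T$-loop itself can be as long as $2\sinh^{-1}(\cosh(\tfrac{1}{2}\ell(\beta_i)))$ (achieved at $d=0$), which exceeds $2\sinh^{-1}(1)$ when $\ell(\beta_i)$ is close to the threshold. So a putative competitor could have length in between, and the simplicity claim is not immediate.

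Second, and more seriously, even granting that the competitor $\gamma$ is a simple geodesic disjoint from $\beta_i$ with disjoint collar, the estimate $d(p,\gamma) > w(\gamma)$ only gives $\sinh\bigl(\tfrac{1}{2}d(\tilde p,S\tilde p)\bigr) > \cosh(\ell(\gamma)/2)$. You need to beat $\sinh\bigl(\tfrac{1}{2}d(\tilde p,T\tilde p)\bigr) = \cosh(\ell(\beta_i)/2)\cosh d - \sinh d$, which at $d=0$ equals $\cosh(\ell(\beta_i)/2)$. If $\ell(\gamma) < \ell(\beta_i)$ these two bounds do \emph{not} settle the comparison; even the sharper estimate $d(p,\gamma) \geq d + w(\gamma)$ yields $\cosh d\cosh(\ell(\gamma)/2)+\sinh d$, and the wanted inequality $\cosh d\,(\cosh(\ell(\gamma)/2)-\cosh(\ell(\beta_i)/2))+2\sinh d \geq 0$ fails for $d$ small. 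A correct proof must use more of the actual geometry --- e.g.\ the right-angled-hexagon identities controlling how far apart adjacent collars really are, or a more global argument along the lines of Buser's --- and cannot be concluded from collar disjointness alone. As written, the minimality step does not close, and since part (ii) is deduced from part (iii) the same gap propagates there.
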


\begin{df}\label{dfCNeighbGeo}
Let $\Sigma$ be a closed Riemann surface. Let $h$ be a metric of constant curvature $0,\pm 1$ on $\Sigma$. Let $\gamma\subset \Sigma$ be a simple closed geodesic in $\Sigma$. In Theorem \ref{TmThTh}, $\mathcal{C}(\gamma)$ was defined when $genus(\Sigma)>1$. We extend the definition to the case $genus(\Sigma)\leq1$ by letting
\[
\rho_{max}=\max_{\{p\in\Sigma\}}d(p,\gamma),
\]
and
\[
\mathcal{C}(\gamma)=\{p\in\Sigma|d(p,\gamma)<\rho_{max}\}.
\]

When $genus(\Sigma)=0$, this is the sphere with two antipodes removed. It is also easy to verify that when $genus(\Sigma)=1$, this is a torus with a geodesic parallel to $\gamma$ removed. Global cylindrical coordinates $\rho$ and $\theta$ are defined on $\mathcal{C}(\gamma)$ in the same way as for $genus(\Sigma)>1$. Namely, $\rho(p)=d(p,\gamma;h)$ for any $p\in\mathcal{C}(\gamma)$ and $\theta$ maps lines of constant $\rho$ to $S^1$ isometrically up to multiplication with an overall constant.
\end{df}
\subsection{Cleanness}
\begin{lm}\label{lmCleanUnion}
Let $\Sigma$ be a Riemann surface and let $I_1$ and $I_2$ be clean subsets of $\Sigma_{\C}$. Then $I_1\cup I_2$ is clean if and only if at least one of the following holds:
\begin{enumerate}
\item\label{lmCleanUnioncond1}
$I_1$ and $I_2$ are conjugation invariant.
\item\label{lmCleanUnioncond2}
$I_i\cap\overline{I}_j=\emptyset$ for $1\leq i,j\leq2.$
\item\label{lmCleanUnioncond3}
$I_1=\overline{I}_2$.
\item\label{lmCleanUnioncond4}
$I_1\subset I_2$ or $I_2 \subset I_1.$
\end{enumerate}
\end{lm}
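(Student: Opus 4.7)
My plan is to prove the lemma by pure set-theoretic case analysis, using the definition of cleanness for $I_1 \cup I_2$ and then the cleanness of each $I_i$ individually. Sufficiency of each of the four listed conditions is almost immediate: if both $I_i$ are conjugation invariant then so is their union; if $I_1 = \overline{I}_2$ then $I_1 \cup I_2 = I_2 \cup \overline{I}_2$ is conjugation invariant; if one contains the other, the union is a single clean set; and if all four cross-intersections $I_i \cap \overline{I}_j$ are empty, then $(I_1 \cup I_2) \cap (\overline{I}_1 \cup \overline{I}_2) = \emptyset$, so the union is clean of the disjoint type. So I would dispatch these four verifications in one short paragraph.

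For necessity, I would start from the assumption that $I_1 \cup I_2$ is clean and split on which of the two alternatives in the definition applies. If $(I_1 \cup I_2) \cap \overline{I_1 \cup I_2} = \emptyset$, then expanding $\overline{I_1 \cup I_2} = \overline{I}_1 \cup \overline{I}_2$ and distributing the intersection immediately yields condition \ref{lmCleanUnioncond2}. The substantive part of the argument is the other alternative, namely when $I_1 \cup I_2$ is conjugation invariant, i.e.
\[
I_1 \cup I_2 = \overline{I}_1 \cup \overline{I}_2.
\]

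Here I would subdivide according to how many of $I_1, I_2$ are conjugation invariant, using cleanness to ensure the alternative is $I_i \cap \overline{I}_i = \emptyset$. If both are conjugation invariant, we are in case \ref{lmCleanUnioncond1}. If exactly one, say $I_1 = \overline{I}_1$ and $I_2 \cap \overline{I}_2 = \emptyset$, then for any $x \in I_2$ the hypothesis forces $x \in \overline{I}_1 \cup \overline{I}_2 = I_1 \cup \overline{I}_2$, and the disjointness $I_2 \cap \overline{I}_2 = \emptyset$ rules out the second possibility, so $I_2 \subset I_1$, giving \ref{lmCleanUnioncond4}; symmetrically in the other sub-subcase. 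Finally, if neither is invariant, the same chase applied to $x \in I_1$ forces $x \in \overline{I}_2$, yielding $I_1 \subset \overline{I}_2$, and applied to $x \in I_2$ yields $I_2 \subset \overline{I}_1$, i.e. $\overline{I}_2 \subset I_1$; combining gives $I_1 = \overline{I}_2$, which is \ref{lmCleanUnioncond3}.

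There is no real obstacle: the argument is a finite case split, and the only place one needs to be a little careful is in the last subcase, where both inclusions must be extracted and combined by taking conjugates. I would organize the write-up so that sufficiency is one short paragraph and necessity is structured as an outer dichotomy (disjoint vs.\ invariant) with the invariant branch further split by the number of conjugation-invariant $I_i$'s; this keeps the logic transparent and avoids redundant verifications.
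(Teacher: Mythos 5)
Your proof is correct and takes essentially the same elementary case-analysis approach as the paper, differing only in the organization of the case split: you first dichotomize on whether $I_1\cup I_2$ is of disjoint type or conjugation-invariant type and then count how many of $I_1,I_2$ are themselves conjugation invariant, whereas the paper splits first on whether $I_1$ is conjugation invariant. The substantive element-chasing steps (using cleanness of each $I_i$ to force an element of one set into the conjugate of the other) are identical in both arguments.
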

\begin{proof}
If condition~\ref{lmCleanUnioncond1} holds then $I_1\cup I_2$ is conjugation invariant and therefore clean. If condition~\ref{lmCleanUnioncond2} holds then
\[
I_1\cup I_2\cap\overline{I_1\cup I_2}=\bigcup_{1\leq i,j\leq 2}I_i\cap \overline{I}_j=\emptyset.
\]
So, $I_1\cup I_2$ is again clean. That conditions~\ref{lmCleanUnioncond3} and~\ref{lmCleanUnioncond4} imply cleanness of $I_1\cup I_2$ is obvious.

Conversely, suppose $I_1\cup I_2$ is clean. We divide into the case where $I_1$ is conjugation invariant and the case where it is not. If $I_1$ is conjugation invariant then in particular $I_1\cup I_2\cap\overline{I_1\cup I_2}\neq\emptyset$. So, by cleanness, $I_1\cup I_2=\overline{I_1\cup I_2}$. Now, if condition \ref{lmCleanUnioncond4} holds we are done. So we may assume that $I_2\setminus I_1\neq\emptyset$. Let $p\in I_2\setminus I_1$. Then, since $I_1=\overline{I_1}$, ${p}\in\overline{I_1\cup I_2}\setminus\overline{I}_1\subset\overline{I}_2$. In particular, $I_2\cap\overline{I}_2\neq\emptyset$. By cleanness of $I_2$ this implies $I_2$ is also conjugation invariant, so condition~\ref{lmCleanUnioncond1} holds.

Next we consider the case where $I_1$ is not conjugation invariant. If $I_2$ is conjugation invariant, exchanging the roles of $I_1$ and $I_2$ in the previous paragraph we deduce that condition~\ref{lmCleanUnioncond4} holds and we are done. Suppose now that $I_2$ is not conjugation invariant and consider $I_1\cup I_2$.  If $I_1\cup I_2$ is conjugation invariant, cleanness and non conjugation invariance of $I_1$ and $I_2$ imply that
\[
I_1\subset\overline{I}_2\setminus \overline{I_1}\subset \overline{I}_2
\]
and, similarly, $I_2\subset\overline{I}_1$. By conjugation invariance of the inclusion of sets, this implies Condition \ref{lmCleanUnioncond3}. If, on the other hand, $I_1\cup I_2$ is not conjugation invariant, cleanness implies that $I_1\cup I_2\cap\overline{I_1\cup I_2}=\emptyset$. So Condition \ref{lmCleanUnioncond2} holds.
\end{proof}

\begin{lm}\label{lmCleanMC}
Let $\Sigma$ be a connected Riemann surface with non-empty boundary. Let $g=genus(\Sigma_{\C})$ and let $\gamma\subset\Sigma_{\C}$ be a simple closed geodesic in $\Sigma_\C$.
\begin{enumerate}
\item
For $g\geq 2$, $\mathcal{C}(\gamma)$\footnote{See Definition \ref{dfCNeighbGeo}.} is clean if $\ell(\gamma)<2\sinh^{-1}(1)$.
\item\label{lmCleanMC2}
For $g\leq1$, $\mathcal{C}(\gamma)$ is clean if and only if either $\gamma\subset\partial\Sigma$, or
\[
\gamma\cap\partial\Sigma\neq\emptyset
\]
 and $\gamma\perp\partial\Sigma$.
\end{enumerate}

\end{lm}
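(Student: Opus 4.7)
The key observation to use is that the antiholomorphic involution $\sigma:\Sigma_\C\to\Sigma_\C$ is an isometry of the conjugation-invariant constant-curvature metric $h$, and hence sends $\gamma$ to the simple closed geodesic $\overline{\gamma}=\sigma(\gamma)$ of the same length. A direct unpacking of the definitions gives $\overline{\mathcal{C}(\gamma)}=\sigma(\mathcal{C}(\gamma))=\mathcal{C}(\overline{\gamma})$, so cleanness of $\mathcal{C}(\gamma)$ reduces to either $\mathcal{C}(\gamma)=\mathcal{C}(\overline{\gamma})$ or $\mathcal{C}(\gamma)\cap\mathcal{C}(\overline{\gamma})=\emptyset$.

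For part (1), the plan is to invoke the standard strong collar lemma: two distinct simple closed geodesics $\alpha,\beta$ on a hyperbolic surface that intersect transversally satisfy $\sinh(\ell(\alpha)/2)\sinh(\ell(\beta)/2)\geq 1$, as one sees by measuring the chord cut out of $\beta$ by the collar of $\alpha$. With $\ell(\gamma)=\ell(\overline{\gamma})<2\sinh^{-1}(1)$ this rules out a transverse intersection, so $\gamma$ and $\overline{\gamma}$ must be either equal or disjoint. In the equal case, cleanness is immediate. In the disjoint case, one extends $\{\gamma,\overline{\gamma}\}$ to a pants decomposition of $\Sigma_\C$ and applies Theorem~\ref{TmThTh}\ref{it:coll} to conclude $\mathcal{C}(\gamma)\cap\mathcal{C}(\overline{\gamma})=\emptyset$.

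For part (2), observe first that $\Sigma_\C\setminus\mathcal{C}(\gamma)$ is either a pair of points ($g=0$) or a single parallel closed geodesic ($g=1$), and in either case has empty interior. Hence $\mathcal{C}(\gamma)\cap\mathcal{C}(\overline{\gamma})$ is open and dense, hence nonempty, so cleanness forces $\mathcal{C}(\gamma)=\mathcal{C}(\overline{\gamma})$. A short case analysis shows the cut locus determines $\gamma$ uniquely: on $S^2$ a pair of antipodal points is the polar pair of a unique great circle, while on a flat torus a closed geodesic is the cut locus of a unique parallel geodesic at maximal distance. Thus $\mathcal{C}(\gamma)=\mathcal{C}(\overline{\gamma})$ is equivalent to $\sigma(\gamma)=\gamma$ setwise. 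Finally, since $\sigma|_\gamma$ is an isometric involution of $\gamma\cong S^1$, it is either the identity, in which case $\gamma\subset\partial\Sigma$, or a reflection with two antipodal fixed points on $\gamma$, which must then lie in $\partial\Sigma$. At such a fixed point $q$, the $(-1)$-eigenspace of $d\sigma_q$ is the line normal to $T_q\partial\Sigma$, and because the reflection $\sigma|_\gamma$ acts as $-\id$ on $T_q\gamma$, this forces $T_q\gamma$ to coincide with the normal line, i.e.\ $\gamma\perp\partial\Sigma$. Both alternatives conversely yield $\sigma(\gamma)=\gamma$ and hence cleanness, closing the equivalence.

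I expect the main obstacle to be the verification in part (2) that $\gamma$ is determined by its cut locus in the flat-torus case, which requires pinning down both the slope and the position of the parallel cut-locus geodesic; the $S^2$ case is entirely elementary, and part (1) reduces to a single citation of the strong collar lemma.
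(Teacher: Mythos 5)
Your argument follows the same route as the paper's: reduce cleanness of $\mathcal{C}(\gamma)$ to a condition on $\gamma$ and $\overline{\gamma}$, use the collar theorem together with the length bound for transversally intersecting geodesics (your ``strong collar lemma'' is exactly \cite[4.1.2]{Bu}, the reference the paper invokes) for $g\geq 2$, and for $g\leq 1$ observe that $\mathcal{C}(\gamma)$ is open and dense so cleanness forces conjugation invariance, then reduce to $\gamma=\overline{\gamma}$ and do a tangent-space computation at a fixed point.

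One small omission in your part~(2): isometric involutions of the circle fall into three classes, not two---the identity, reflections, and the fixed-point-free half-turn. You treat only the first two. The half-turn case must be ruled out before you can assert that $\sigma|_\gamma$ has fixed points on $\partial\Sigma$; the paper does this by noting that if $\gamma\cap\partial\Sigma=\emptyset$ then $\gamma$, being connected, lies entirely in one component of $\Sigma_\C\setminus\partial\Sigma$, whence $\sigma(\gamma)$ lies in the other component and $\sigma(\gamma)\neq\gamma$. This is a one-line patch, but as written your case analysis of $\sigma|_\gamma$ is incomplete. Your explicit justification that $\gamma$ is recovered from $\mathcal{C}(\gamma)$ (the cut-locus uniqueness for $S^2$ and the flat torus) is a genuine improvement in rigor over the paper's terse ``since conjugation is an isometry.''
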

\begin{proof}
\begin{enumerate}
\item
Since conjugation is an isometry we have that $\overline{\gamma}$ is also a simple closed geodesic, $\ell(\gamma)=\ell(\overline{\gamma})$ and $\overline{\mathcal{C}(\gamma)}=\mathcal{C}(\overline{\gamma})$. From Theorem \ref{TmThTh}\ref{it:coll} it therefore follows that $\mathcal{C}(\gamma)\cap\overline{\mathcal{C}(\gamma)}\neq\emptyset$ if and only if $\gamma\cap\overline{\gamma}\neq\emptyset$. Thus it suffices to prove that $\gamma$ is clean for $\gamma$ short enough. Suppose $\gamma\cap\overline{\gamma}\neq\emptyset$. If $\gamma\neq\overline{\gamma}$ then $\gamma$ intersects $\overline{\gamma}$ transversally. Therefore, by \cite[4.1.2]{Bu}, $\ell(\gamma)\geq 2\sinh^{-1}(1)$.
\item
By definition of $\mathcal{C}(\gamma)$ for this case, it is open and dense in $\Sigma$. Therefore, we  always have $\mathcal{C}(\gamma)\cap\overline{\mathcal{C}(\gamma)}\neq\emptyset$. Thus, $\mathcal{C}(\gamma)$ is clean if and only if $\mathcal{C}(\gamma)$ is conjugation invariant. That is, since conjugation is an isometry, if and only if $\gamma=\overline{\gamma}$. We claim that this is equivalent to the condition of the Lemma. For this it suffices to show that if $\gamma\not \subset\partial\Sigma,$ then $\gamma=\overline{\gamma}$ if and only if $\gamma\cap\partial\Sigma\neq\emptyset$ and $\gamma\perp\partial\Sigma$.

Indeed, if $\gamma\cap\partial\Sigma=\emptyset,$ then since $\gamma$ is connected it is contained in one component $\Sigma\setminus\partial\Sigma$ and is thus not conjugation invariant. So we assume $\gamma \cap \partial \Sigma \neq \emptyset.$ Let $p\in\gamma\cap\partial\Sigma$ and let $v$ be a vector tangent to $\gamma$ at $p$. Since $p$ is fixed under conjugation and since both $\gamma$ and $\overline{\gamma}$ are geodesics, we have that $\gamma=\overline{\gamma}$ if and only if $\overline{v}$ is also tangent to $\gamma$ at $p$. But $\gamma$ intersects $\partial\Sigma$ transversally since they are distinct geodesics. Therefore $v\neq\overline{v}$. Since $T_p\gamma$ is one dimensional it follows that $\overline{v}$ is tangent to $\gamma$ if and only if $\overline{v}=-v$. That is, $\overline v$ is tangent to $\gamma$ if and only if $v$ points in the direction of the imaginary axis in $T_p\Sigma_{\C}$. Since $T_p\partial\Sigma$ is the real axis, the claim follows.
\end{enumerate}
\end{proof}

\begin{lm}\label{lmCleanConjEmb}
Let $\Sigma$ be a Riemann surface. Let $I_1$ and $I_2$ be doubly connected and clean subsets of $\Sigma_{\C}$ which do not contain a component of $\partial\Sigma$. Suppose  $I_1\hookrightarrow I_2$ homologically nontrivially. Then $I_2$ is conjugation invariant if and only if $I_1$ is conjugation invariant.
\end{lm}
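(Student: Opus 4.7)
The argument divides into the two implications. For the forward direction, if $I_1$ is conjugation invariant then $I_1 = \overline{I_1} \subset I_2 \cap \overline{I_2}$, so $I_2 \cap \overline{I_2}$ is nonempty and cleanness of $I_2$ forces $I_2 = \overline{I_2}$. This half uses neither homological nontriviality nor the exclusion of full boundary components. (The degenerate cases in which $\Sigma$ is disconnected or $\partial\Sigma = \emptyset$ are handled by restricting to the component of $\Sigma_{\C}$ containing $I_2$ and noting that when $\partial\Sigma = \emptyset$ no nonempty connected subset can be conjugation invariant, so both sides of the iff are vacuously false; thus we may assume $\Sigma$ is connected with nonempty boundary.)

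For the reverse direction, suppose $I_2 = \overline{I_2}$ but, toward contradiction, $I_1 \cap \overline{I_1} = \emptyset$, the alternative allowed by cleanness of $I_1$. A point $p \in I_1 \cap \partial\Sigma$ would be fixed by conjugation and hence lie in $\overline{I_1}$, contradicting disjointness; therefore $I_1 \subset \Sigma_{\C} \setminus \partial\Sigma$. Since this complement has the two components $\mathrm{int}(\Sigma)$ and $\mathrm{int}(\overline{\Sigma})$ and $I_1$ is connected, $I_1$ is contained in $I_2 \cap \mathrm{int}(\Sigma)$, up to relabeling.

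The crux is a topological analysis of $I_2$ as an open annulus equipped with the anti-holomorphic involution $c|_{I_2}$ whose fixed locus is $I_2 \cap \partial\Sigma$. Up to biholomorphism, anti-holomorphic involutions of an open annulus fall into three types: $z \mapsto e^{i\alpha}\bar z$ with fixed set a pair of arcs running between the two ends; $z \mapsto R/\bar z$ with fixed set a single closed loop; or the fixed-point-free $z \mapsto -R/\bar z$. The fixed-point-free case is ruled out because a connected conjugation-invariant subset cannot be contained in a single component of $\Sigma_{\C} \setminus \partial\Sigma$. The single-loop case is ruled out by the hypothesis: a closed loop in the fixed locus is a connected closed $1$-submanifold of $\partial\Sigma$ and therefore equals an entire component of $\partial\Sigma$, which $I_2$ is assumed not to contain. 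Only the first type remains, and deleting its fixed locus from $I_2$ produces two simply connected ``half-annuli'' swapped by conjugation, namely $I_2 \cap \mathrm{int}(\Sigma)$ and $I_2 \cap \mathrm{int}(\overline{\Sigma})$.

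The contradiction is then immediate: the inclusion $I_1 \hookrightarrow I_2$ factors through the simply connected set $I_2 \cap \mathrm{int}(\Sigma)$, so it induces the zero map on $H_1$, contradicting homological nontriviality. The main obstacle in executing this plan is the classification step, in particular the justification that the fixed locus must be a pair of arcs; this relies on the standard list of anti-holomorphic involutions of open annuli together with the elementary fact that a closed connected $1$-submanifold without boundary inside a $1$-manifold is a full component. The remainder of the argument is the same clean/unclean dichotomy used in the proof of Lemma~\ref{lmCleanUnion}.
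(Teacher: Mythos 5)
Your proof is correct, and it takes a genuinely different route from the paper's. Both arguments hinge on the same intermediate fact: if $I_2$ is conjugation invariant, clean, and does not contain a full component of $\partial\Sigma$, then each component of $I_2 \setminus \partial\Sigma$ is simply connected. The paper establishes this via a Mayer--Vietoris argument applied to the decomposition of $I_2$ along $I_2 \cap \partial\Sigma$, using that each component of $I_2 \cap \partial\Sigma$ is a contractible arc (not a circle) and that the two halves are swapped, hence homotopy equivalent, so $b_1(I_2) \geq 2$ would follow from non-simple-connectedness of a half. You instead classify anti-holomorphic involutions of an annulus into reflection, inversion, and antipodal types, rule out the latter two using cleanness and the no-full-boundary-component hypothesis, and read off simple connectedness of the halves from the reflection normal form. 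Your approach is more concrete and makes the conformal geometry explicit; the paper's is purely topological and handles at one stroke all doubly connected domains (including modulus $0$ or $\infty$), whereas your classification as stated is the standard one for finite-modulus round annuli $\{1 < |z| < R\}$ and would need a short supplementary check for $D^*$ and $\C^*$ (where the reflection type still prevails, with fewer types available). Your forward implication is also cleaner than the paper's: you observe directly that $I_1 = \overline{I_1} \subset I_2 \cap \overline{I_2} \neq \emptyset$ forces $I_2 = \overline{I_2}$ by cleanness, making it transparent that neither the homological hypothesis nor the boundary exclusion is needed for that direction. Incidentally, the paper's proof as printed has the roles of $I_1$ and $I_2$ swapped relative to the statement (it writes ``$I_2 \subset I_1$'' and ``$I_2$ embedded in $I_1$''), which your presentation avoids.
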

\begin{proof}
First we claim that $I_1$ is conjugation invariant if and only if each component of $I_1\setminus\partial\Sigma$ is simply connected. Assume $I_1$ is not conjugation invariant. Then since $I_1$ is clean, we have $I_1\setminus\partial\Sigma=I_1$. So, $I_1$ is the only connected component and is not simply connected. Conversely, assume $I_1$ is conjugation invariant. Suppose by contradiction that for one component $A$ of ${\Sigma}_{\C}\setminus\partial\Sigma$, there is a component of $I_1\cap A$ that is not simply connected. Since $I_1\cap A$ is isometric to $I_1\cap\overline{A}$, it is homotopy equivalent to it. Since $I_1$ does not contain any component of $\partial\Sigma$, each component of $I_1\cap\partial\Sigma$ is contractible.
Thus the Mayer Vietoris sequence implies that $I_1$ is at least two connected. This is a contradiction.

Now we prove the lemma. Assume $I_1$ is not conjugation invariant. Then since $I_1$ is clean, we have $I_1\cap \overline{I}_1=\emptyset$. Since $I_2\subset I_1$, this implies $I_2\cap\overline{I}_2=\emptyset$. Conversely, assume by contradiction that $I_1$ is conjugation invariant and $I_2$ is not. Let $A$ be the connected component of $\Sigma_{\C}$ containing $I_2$. $I_2$ is then contained in $I_1\cap A$ which by the previous paragraph is simply connected. This contradicts the fact that $I_2$ is embedded non-trivially in $I_1$.
\end{proof}
\section{Thick thin measure}
For the rest of the discussion, fix constants $c_1,c_2,c_3,\delta_1,\delta_2>0$ such that $c_3\leq 1$ and that $\delta_2<\frac12\delta_1$.

\begin{df}\label{dfThTh}
Let $(\Sigma,j)$ be a Riemann surface, possibly bordered. Let $\mu$ be a finite measure on $\Sigma$ and extend $\mu$ to a measure on $\Sigma_{\C}$ by reflection. That is,
\[
\mu(U):=\mu(\overline{U}),
\]
for $U\subset\overline{\Sigma}$  a measurable set. Suppose further that $\mu$ is absolutely continuous and has a continuous density $\frac{d\mu}{d\nu_h}$,  where $h$ is any Riemannian metric on $\Sigma_{\C}$.

The measure $\mu$  will be called \textbf{thick thin} if it satisfies the following two conditions.
\begin{enumerate}
\item \textbf{gradient inequality}. Let $U\subset\Sigma_{\C}$ be biholomorphic to the unit disk such that $U \cap \partial\Sigma$ is connected, and let $z\in U$. Then for any conformal metric $h$ on $(\Sigma_{\C},j)$,
\begin{align}\label{GradEq}
 \mu(U)<\delta_1\quad \Rightarrow \quad &\frac{d\mu}{d\nu_h}(z)\leq c_1\frac{\mu(U)}{r_{conf}^2},
\end{align}
where $r_{conf}=r_{conf}(U,z;h)$.
\item \textbf{cylinder inequality}. Let $I\subset\Sigma_{\C}$ be clean and doubly connected such that $Mod(I)> 2c_2$. Then for all $t\in \left(c_2,\frac1{2}Mod(I)\right)$ we have,
\begin{gather}
\notag \mu(I)<\delta_2 \qquad \Rightarrow \qquad\mu(C(t,t;I))\leq e^{-c_3t}\mu(I)\notag.
\end{gather}
\end{enumerate}
\end{df}

A family of measured Riemann surfaces which are thick thin with respect to given constants $c_i$, $\delta_i$ will be referred to as a uniformly thick thin family.

\begin{rem}\label{rmGrad}
Let $\mu$ be a thick thin measure on $\Sigma,$ and let $h$ be a conformal metric of constant curvature $K = 0,\pm 1$ on $\Sigma_\C$. By inequalities~\eqref{eq:bdrc1} and~\eqref{eq:bdrc2}, there is a constant $c'_1$ depending linearly on $c_1$ such that for any $z\in\Sigma$ and $r\in(0,\min(\sinh^{-1}(1),\inj(\Sigma;h,z)))$,
\begin{align}
\mu(B_r(z;h))<\delta_1\Rightarrow &\frac{d\mu}{d\nu_h}(z)\leq c'_1\frac{\mu(B_r(z;h))}{r^2}.
\end{align}
\end{rem}

Let $\Sigma,\mu$ and $h$ be as in Remark~\ref{rmGrad}. For any point $z \in \Sigma_\C$, let $d= \frac{d\mu}{d\nu_h}(z)$ and let
\begin{align}\label{EqArDee}
r_d:=\sqrt{\frac{c'_1\delta_1}{d}}.
\end{align}
\begin{lm}\label{rmGradApp}
Suppose
\begin{equation*}
r_d \in (0,\min(\sinh^{-1}(1),\inj(\Sigma;h,z))).
\end{equation*}

Then
\[
\mu(B_{r_d}(z;h))\geq\delta_1.
\]

Moreover, we have
\[
\frac{d\mu}{d\nu_h}(z) \leq c_1' \frac{\mu(B_r(z;h))}{r^2}, \qquad r \leq r_d.
\]
\end{lm}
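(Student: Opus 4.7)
The plan is to deduce both claims directly from Remark~\ref{rmGrad} together with the definition of $r_d$.

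For the first assertion, I would argue by contradiction. Suppose $\mu(B_{r_d}(z;h)) < \delta_1$. By hypothesis $r_d \in (0,\min(\sinh^{-1}(1),\inj(\Sigma;h,z)))$, so Remark~\ref{rmGrad} applies and gives
\[
d = \frac{d\mu}{d\nu_h}(z) \leq c'_1 \frac{\mu(B_{r_d}(z;h))}{r_d^2} < c'_1 \frac{\delta_1}{r_d^2}.
\]
Substituting $r_d^2 = c'_1\delta_1/d$ on the right yields $d < d$, a contradiction. Hence $\mu(B_{r_d}(z;h)) \geq \delta_1$.

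For the second assertion, fix $r \leq r_d$ and split into two cases. If $\mu(B_r(z;h)) < \delta_1$, then Remark~\ref{rmGrad} applies directly (the hypothesis $r < \min(\sinh^{-1}(1),\inj)$ is inherited from $r \leq r_d$), giving the desired bound. If instead $\mu(B_r(z;h)) \geq \delta_1$, then from $r \leq r_d$ we have $r^2 \leq c'_1\delta_1/d$, so
\[
d \leq \frac{c'_1\delta_1}{r^2} \leq c'_1 \frac{\mu(B_r(z;h))}{r^2},
\]
again as required.

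There is no real obstacle here, the lemma is essentially a restatement of Remark~\ref{rmGrad} once one observes that $r_d$ is calibrated precisely to make the two bounds match. The only mild subtlety is ensuring that the radius at which one evaluates the gradient inequality stays within the injectivity radius and below $\sinh^{-1}(1)$, which is guaranteed for all $r \leq r_d$ by the standing hypothesis on $r_d$.
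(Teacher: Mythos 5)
Your proof is correct and matches the paper's approach; the paper simply states that the lemma ``is immediate from the gradient inequality,'' and your two-case argument is exactly the straightforward unpacking of that assertion using Remark~\ref{rmGrad} and the calibration $r_d^2 = c_1'\delta_1/d$.
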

\begin{proof}
This is immediate from the gradient inequality.
\end{proof}
To simplify our formulas, we  scale $\mu$ so that $c'_1\delta_1=1$.

We denote by $\mathcal{M}=\mathcal{M}(c_1,c_2,c_3,\delta_1,\delta_2)$ the family of measured Riemann surfaces $(\Sigma,j,\mu)$ such that $\mu$ is thick-thin.

\begin{lm}\label{ExpCylDEst}
There is a constant $a$ with the following significance. Let $(\Sigma,\mu)\in\mathcal{M}$. Let $I\subset\Sigma_{\C}$ be clean and doubly connected, and let $h = h_{st}.$ Suppose $\mu(I)<\delta_2$. Let $z\in C(c_2+\pi,c_2+\pi;I)$ be a point with cylindrical coordinates
\[
(\rho,\theta)\in\left[-\frac1{2}Mod(I)+c_2+\pi,\frac1{2}Mod(I)-c_2-\pi\right]\times S^1.
\]
Then,
\begin{align}
 \frac{d\mu}{d\nu_{h}}(z)<ae^{-c_3(\frac1{2}Mod(I)-|\rho|)}\mu(I).
\end{align}
\end{lm}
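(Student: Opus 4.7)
My plan is to combine the two defining inequalities of a thick-thin measure: I will use the cylinder inequality to bound the $\mu$-mass of a small $h_{st}$-ball around $z$ by a quantity exponentially small in $\tfrac12 Mod(I) - |\rho|$, and then apply the gradient inequality to convert that mass bound into a pointwise density bound at $z$.

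Fix a small auxiliary radius $r_0 < \pi$ for which Remark~\ref{rmGrad} applies, say $r_0 := \sinh^{-1}(1)$. Since the flat cylinder $(I,h_{st})$ has injectivity radius $\pi$ at every interior point, the ball $B_{r_0}(z; h_{st})$ is embedded in $I$ and sits inside the sub-cylinder $S_z := \{(\rho',\theta') \in I : |\rho' - \rho| \leq r_0\}$. In the coordinate $r' \in [0, Mod(I)]$ of Definition~\ref{dfSubCyinlder}, $z$ sits at $r'(z) = \rho + \tfrac12 Mod(I)$, so $S_z = C(r'(z) - r_0,\, Mod(I) - r'(z) - r_0;\, I) \subset C(t,t;I)$ with $t := \tfrac12 Mod(I) - |\rho| - r_0$. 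The hypothesis $|\rho| \leq \tfrac12 Mod(I) - c_2 - \pi$ combined with $r_0 < \pi$ forces $t > c_2$, which is exactly the range required by the cylinder inequality, and so
\[
\mu(B_{r_0}(z; h_{st})) \leq \mu(C(t,t;I)) \leq e^{-c_3 t}\, \mu(I).
\]

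Since $\mu(B_{r_0}(z)) \leq \mu(I) < \delta_2 < \delta_1$, Remark~\ref{rmGrad} applies at $z$ with radius $r_0$, yielding
\[
\frac{d\mu}{d\nu_{h_{st}}}(z) \leq \frac{c_1'}{r_0^2}\, \mu(B_{r_0}(z;h_{st})) \leq \frac{c_1'\, e^{c_3 r_0}}{r_0^2}\, e^{-c_3(\frac12 Mod(I) - |\rho|)}\, \mu(I),
\]
so the claim holds with $a := c_1'\, e^{c_3 r_0}/r_0^2$. The only non-routine aspect of the argument is the coordinate bookkeeping: translating between the centered $(\rho,\theta)$ coordinates of the statement and the $C(a,b;I)$ notation of Definition~\ref{dfSubCyinlder}, and confirming that the resulting parameter $t$ lies in the admissible range $(c_2, \tfrac12 Mod(I))$ of the cylinder inequality.
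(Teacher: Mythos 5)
Your proof is correct and takes essentially the same route as the paper's: both apply the gradient inequality to a small embedded ball around $z$ and then bound that ball's $\mu$-mass via the cylinder inequality on the centered sub-cylinder containing it (the paper uses radius $\pi$, you use $\sinh^{-1}(1)$; the difference only affects the constant $a$). One minor citation point: Remark~\ref{rmGrad} is stated for the global constant-curvature metric on $\Sigma_\C$ rather than for $h_{st}$ on $I$, but since $r_{conf}=r$ exactly for a flat metric, invoking the gradient inequality of Definition~\ref{dfThTh} directly with $U=B_{r_0}(z;h_{st})$ yields the same bound with $c_1$ in place of $c_1'$.
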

\begin{proof}
Combining the gradient inequality and the cylinder inequality,
\begin{align}
 \frac{d\mu}{d\nu_{h}}(z)&\leq \frac{c_1}{\pi^2}\mu\left([\rho-\pi,\rho+\pi]\times S^1\right)\\&\leq \frac{c_1}{\pi^2}\mu\left([-|\rho|-\pi,|\rho|+\pi]\times S^1\right)\notag\\&\leq \frac{c_1}{\pi^2}e^{-c_3(\frac1{2}Mod(I)-\pi-|\rho|)}\mu(I)\notag.
\end{align}
\end{proof}

\begin{df}\label{dfmuStable}
Let $(\Sigma,\mu)\in \mathcal{M}$. A connected compact sub-manifold with boundary $A\subset\Sigma_\C$ is said to be \textbf{$\mu$-stable} if one of the following holds:
\begin{enumerate}
\item
$\mu(A)\geq\delta_1/2$;
\item
$\#\pi_0(\partial A)\geq 2$ and $\mu(A)\geq\delta_2/6;$
\item
$
2\,genus(A) + \#\pi_0(\partial A)\geq 3.
$
\end{enumerate}
A compact sub-manifold with boundary $A\subset\Sigma_C$ is said to be $\mu$-stable if each of its connected components is $\mu$-stable.
\end{df}
\section{Preparation}

\subsection{Choice of spherical metric}
As explained in Remark~\ref{remSphChoice}, the genus $0$ case requires a non-trivial choice of Fubini-Study metric. This is done in the following lemma.

\begin{lm}\label{LmSpSphMet}
There is a constant $K_0$ with the following property. For each $(\Sigma,\mu) \in \mathcal M$ with $genus(\Sigma_{\C})=0$ and $\mu(\Sigma_\C)\neq 0,$ there exists a conjugation invariant unit curvature Fubini-Study metric, $h,$ on $\Sigma_{\C}$ such that one of the following conditions holds.
\begin{enumerate}
\item\label{LmSpSphMet0}
 \[
 \sup_{\Sigma_\C}\frac{d\mu}{d\nu_h}\leq K_0.
\]
\item\label{LmSpSphMet1}
There is a point $q\in\Sigma$ such that
\[
\mu(B_{\pi/2}(q;h))\geq\delta_1
\]
 and
 \[
 \frac{d\mu}{d\nu_h}\Big|_{B_{\pi/2}(q;h)}\leq K_0.
\]
 Furthermore, if $\partial\Sigma\neq\emptyset,$ then $q\in\partial\Sigma$.
\item\label{LmSpSphMet2}
$\partial\Sigma\neq\emptyset$. Use $h$ to identify $\Sigma_\C$ with the standard sphere in such a way that $\partial\Sigma$ is identified with the equator and let $q$ be the north pole. Letting $d:=\frac{d\mu}{d\nu_h}(q),$ we have $r_{d} \leq\pi/4$, and
\[
\sup_{x\in B_{r_d}(q)}\frac{d\mu}{d\nu_h}(x)\leq K_0d.
\]
\end{enumerate}
Furthermore, for any disc $D\subset\Sigma_\C$ of radius $\min\left\{\sqrt{\frac{\delta_1/2}{\pi K_0}},\pi/4\right\}$, the complement $\Sigma_\C\backslash D$ is stable.
\end{lm}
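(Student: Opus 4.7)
Parametrize conjugation-invariant unit-curvature Fubini--Study metrics on $\Sigma_\C\cong S^2$ as $h=\psi^*h_0$ for a fixed reference $h_0$ (with $\partial\Sigma$ chosen as the equator in the bordered case) and a conjugation-commuting M\"obius transformation $\psi$ --- unrestricted in $PSL(2,\C)$ when $\partial\Sigma=\emptyset$, and restricted to $PSL(2,\R)$ (equator-preserving) otherwise. The three alternatives in the lemma correspond to whether $\mu$ admits a ``concentration point'' that can be M\"obius-rescaled to the canonical hemisphere scale $B_{\pi/2}$, and, in the bordered case, whether that point can be placed on $\partial\Sigma$.

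\textbf{Closed case.} Let $K_0$ be a constant to be chosen. If $\sup_{\Sigma_\C}\frac{d\mu}{d\nu_{h_0}}\leq K_0$, take $h=h_0$ and conclusion~\ref{LmSpSphMet0} holds. Otherwise pick $p^*\in\Sigma$ where $\frac{d\mu}{d\nu_{h_0}}$ achieves, up to a factor of $2$, its supremum $d$; set $r_d:=1/\sqrt d$. By Lemma~\ref{rmGradApp}, $\mu(B_{r_d}(p^*;h_0))\geq\delta_1$, and enlarging $K_0$ we may assume $r_d\leq\pi/4$. For $\partial\Sigma=\emptyset$, let $\psi$ be the M\"obius dilation fixing $p^*$ with $\psi(B_{r_d}(p^*;h_0))=B_{\pi/2}(p^*;h_0)$; in stereographic coordinates at $p^*$ this is $z\mapsto z/\tan(r_d/2)$. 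Setting $h:=\psi^*h_0$ and $q:=p^*$ gives $B_{\pi/2}(q;h)=B_{r_d}(p^*;h_0)$, hence $\mu(B_{\pi/2}(q;h))\geq\delta_1$; the explicit Fubini--Study conformal factor is uniformly comparable to $\lambda^2\asymp d$ throughout this ball with $\lambda:=1/\tan(r_d/2)$, so combined with the maximality $\frac{d\mu}{d\nu_{h_0}}\leq d$ we obtain $\frac{d\mu}{d\nu_h}\leq K_0$ on $B_{\pi/2}(q;h)$ --- conclusion~\ref{LmSpSphMet1}.

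\textbf{Bordered case.} Work in the upper half-plane chart and let $s:=d_{h_0}(p^*,\partial\Sigma)$. When $s\lesssim r_d$, pick $q\in\partial\Sigma$ of $h_0$-distance $s$ from $p^*$, observe $B_{r_d}(p^*;h_0)\subset B_{2r_d}(q;h_0)$, and take $\psi\in PSL(2,\R)$ to be the real M\"obius dilation fixing $q$ that sends $B_{2r_d}(q;h_0)$ onto $B_{\pi/2}(q;h_0)$; the mass and density estimates of the closed case propagate with only a constant loss, yielding conclusion~\ref{LmSpSphMet1} with $q\in\partial\Sigma$. When $s\gtrsim r_d$, no element of $PSL(2,\R)$ can shrink $B_{r_d}(p^*;h_0)$ to a round hemisphere without moving $\partial\Sigma$, so instead take $\psi$ to be the hyperbolic isometry of the upper half-plane sending $p^*$ to the standard north pole of $h_0$. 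For the resulting $h=\psi^*h_0$, the density $d':=\frac{d\mu}{d\nu_h}(q)$ at $q:=p^*$ satisfies $r_{d'}\leq\pi/4$ by our choice of $K_0$, and the local estimate $\frac{d\mu}{d\nu_h}|_{B_{r_{d'}}(q)}\leq K_0 d'$ follows from the near-constancy of the conformal factor on so small a ball together with the gradient inequality at nearby points --- conclusion~\ref{LmSpSphMet2}.

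\textbf{Stability and main obstacle.} In each of the three cases the density is bounded by $K_0$ on a region large enough to contain any round disc $D$ of radius $r_0:=\min\{\sqrt{(\delta_1/2)/(\pi K_0)},\pi/4\}$, so $\mu(D)\leq K_0\pi r_0^2\leq\delta_1/2$; combined with $\mu(\Sigma_\C)\geq\delta_1$ in the non-trivial cases (inherited from the $\delta_1$-ball lower bound), this gives $\mu(\Sigma_\C\setminus D)\geq\delta_1/2$, which is the stability condition of Definition~\ref{dfmuStable}. The hardest step is the bordered-case dichotomy at threshold $s\sim r_d$: on the boundary side one must uniformly control $\frac{d\mu}{d\nu_h}$ on $B_{\pi/2}(q;h)$ at points of $B_{2r_d}(q;h_0)$ lying on either side of $\partial\Sigma$, where the scaling M\"obius interacts nontrivially with the equator; on the interior side one must extract the clean constant $\pi/4$ from the absence of any nontrivial dilation in $PSL(2,\R)$ fixing the equator.
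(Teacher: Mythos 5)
Your overall strategy matches the paper's: choose a reference Fubini--Study metric, locate a density-maximizing point $p^*$, declare case~(a) if the max is small, and otherwise rescale by a conjugation-commuting M\"obius transformation, with a dichotomy (in the bordered case) between $p^*$ close to $\partial\Sigma$ (giving case~(b) with $q$ on the boundary) and $p^*$ far from $\partial\Sigma$ (giving case~(c)). The paper realizes the same dichotomy via $r := d(p^*,\partial\Sigma) < 2 r_{d_0}$ versus $r\geq 2 r_{d_0}$, uses the midpoint of the $p^*$--$\overline{p^*}$ geodesic (equivalently, the closest point on $\partial\Sigma$) as the base point $p_1$, and uses the explicit stereographic dilations $z\mapsto z/\tan((r+r_{d_0})/2)$ and $z\mapsto z/\tan(r/2)$ respectively. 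So up to cosmetics, you and the paper agree on the construction.

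There is, however, a genuine gap in your stability argument for case~(c). You claim that ``in each of the three cases the density is bounded by $K_0$ on a region large enough to contain any round disc $D$ of radius $r_0$,'' and conclude $\mu(D)\leq K_0\pi r_0^2\leq\delta_1/2$. This is false in case~(c): the density is controlled only on $B_{r_d}(q)$, and the bound there is $K_0\,d$, not $K_0$, with $d$ possibly arbitrarily large. Away from the two poles $q,\overline{q}$, the rescaled density $\frac{d\mu}{d\nu_h}$ is not controlled at all (one checks from $\frac{d\mu}{d\nu_h} = \|d\psi\|_{h_0}^{-2}\frac{d\mu}{d\nu_{h_0}}\leq\cot^2(r/2)\,d_0 \asymp d_0^2$ near the antipode of the base point, which grows with the energy concentration). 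Thus $\mu(D)$ can exceed $\delta_1/2$ for a disc $D$ near the equator, and your conclusion fails. The correct argument (the paper's) is different in structure: since $r_d\leq\pi/4$ and the radius of $D$ is $\leq\pi/4$, $D$ can meet at most one of $B_{r_d}(q;h)$ and $B_{r_d}(\overline q;h)$, and by Lemma~\ref{rmGradApp} and conjugation-invariance of $\mu$, each of these balls carries $\mu$-mass $\geq\delta_1$; hence the complement $\Sigma_\C\setminus D$ contains a full $\delta_1$-ball. You need to replace your estimate by this disjointness argument in case~(c).

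Separately, your case~(c) derivative estimate is asserted by appeal to ``near-constancy of the conformal factor on so small a ball,'' but the ball $B_{r_{d'}}(q;h)$ has $h$-radius up to $\pi/4$ and is \emph{not} small in the target metric; the conformal factor $\|d\psi\|_{h_0}^{-2}$ varies by a genuine (though bounded) multiplicative factor across it. Establishing the uniform bound $\frac{\|d\psi\|_{h_0}^2(p)}{\inf_{B_{r_{d'}}(q;h)}\|d\psi\|_{h_0}^2}\leq K_0$ requires the explicit monotonicity of $\|d\psi\|_{h_0}$ in the radial variable and the bound $\rho(x_0)/r\leq C'$ for the farthest preimage point $x_0$, which in turn uses $r_{d'}=r_{d_0}/\sin r\leq\pi/4$; this last inequality comes from the dichotomy threshold $r\geq 2r_{d_0}$ and the monotonicity of $r/(2\sin r)$, not merely from ``our choice of $K_0$'' as you state. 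Your sketch omits all of this.
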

\begin{rem}\label{RemPreCCl}
When $\partial\Sigma=\emptyset$ we have that $\Sigma_\C$ has two components: $\Sigma$ and $\overline{\Sigma}$. The conjugate component is if no interest. In the sequel we shall avoid talking about $\Sigma_\C$ in the closed context.
\end{rem}
\begin{rem}
The three cases are correspond to the quantitative counterparts of the possible behaviors of the Gromov limit of a sequence of genus 0 curves:
\begin{enumerate}
\item
 No bubbling.
 \item
Bubbling off of spheres or disks without the boundary degenerating.
 \item
 Bubbling in which the boundary degenerates to a point.
 \end{enumerate}
\end{rem}
\begin{proof}
Choose initially any conjugation invariant unit curvature metric $h_0$ on $\Sigma_\C$. Let $p\in\Sigma$ be a point where the maximum of $\frac{d\mu}{d\nu_{h_0}}$ is obtained, and let
\[
d_0:=\frac{d\mu}{d\nu_{h_0}}(p).
\]
If $r_{d_0}>\pi/6$, then the estimate
\[
\frac{d\mu}{d\nu_{h_0}}\leq \frac{36}{\pi^2},
\]
holds globally. Thus, condition~\ref{LmSpSphMet0} holds with $K_0=\frac{36}{\pi^2}$.

Assume
\begin{equation}\label{LmSpSphAssu}
r_{d_0}\leq\pi/6.
\end{equation}
If $\partial\Sigma=\emptyset,$ let $p_1:=p$. Otherwise, let $p_1$ be the midpoint of a length minimizing geodesic which connects $p$ and $\overline{p}.$ Let $p_2$ be the antipode of $p_1$ with respect to $h_0$. With respect to $h_0$, let
\[
(\rho,\theta):\Sigma_\C\setminus\{p_2\}\to\C
\]
be geodesic polar coordinates centered at $p_1$. In case $\partial\Sigma=\emptyset,$ assume further that $\partial\Sigma\setminus\{p_2\}$ is given by $\{\theta=0\}.$ Let $\phi:\Sigma_\C\setminus \{p_2\}\to \C$  be stereographic projection. Explicitly, in polar coordinates $\phi$ is given by
\begin{equation}
(\rho,\theta)\mapsto \tan\frac{\rho}{2}e^{i\theta}.
\end{equation}
Note that $p$ and $\overline{p}$ are mapped by $\phi$ to the imaginary axis.

Let $r=d(p,p_1;h_0)$. Suppose first that
\begin{equation}\label{r0leqrd}
r<2r_{d_0}.
\end{equation}
We prove that condition \ref{LmSpSphMet1} holds. Let $\chi:\C\to\C$ be the map
\[
z\mapsto \frac{z}{\tan\left(\frac{r+r_{d_0}}{2}\right)}.
\]
Let  $\psi:\Sigma_\C\to\Sigma_\C$ be the holomorphic map defined by
\[
\psi\big|_{\Sigma_\C\setminus{p_2}}=\phi^{-1}\circ\chi\circ\phi,
\]
and let $h_1:=\psi^*{h_0}$. Note that the change of metric from $h_0$ to $h_1$  scales the disc of radius $r+r_{d_0}$ around $p_1$ to become the hemisphere centered at $p_1.$ In particular, $B_{r_{d_0}}(p;h_0)\subset B_{\pi/2}(p_1;h_1)$. So, by Lemma~\ref{rmGradApp},
\[
\mu(B_{\pi/2}(p_1;h_1))\geq\delta_1.
\]
We show now that the energy density is bounded on the hemisphere centered at $p_1$, uniformly in $\mathcal{M}$. First note that for any $z\in\Sigma_\C$,
\begin{align}\label{eqderpsest}
\frac{d\mu}{d\nu_{h_1}}(z)=\frac{d\mu}{d{\nu_{{h_0}}}}(z)\frac{d\nu_{h_0}}{d\nu_{h_1}}(z)={\|d\psi\|^{-2}_{h_0}(z)}\frac{d\mu}{d\nu_{h_0}}(z).
\end{align}
A computation gives
\begin{align}\label{eqdpsicalc}
\|d\psi\|_{h_0}(x)=\frac{\tan{\frac{r+r_{d_0}}{2}}}{\cos^2(\rho(x)/2)\tan^2{\frac{r+r_{d_0}}{2}}+ {\sin^2(\rho(x)/2)}}.
\end{align}

Assumptions~\eqref{LmSpSphAssu} and~\eqref{r0leqrd} imply that $\|d\psi\|_{h_0}^{-1}$ increases with distance from $p_1$ on the ball ${B_{\pi/2}(p_1;h_1)}$. In particular,
\begin{equation}\label{eq:sin}
\sup_{B_{\pi/2}(p_1;h_1)} \|d\psi\|_{h_0}^{-1} = \sin(r + r_{d_0}).
\end{equation}
Using equations \eqref{r0leqrd}, \eqref{eqderpsest}, and the definition of $r_{d_0}$, we get
\[
\sup_{B_{\pi/2}(p_1;h_1)} \frac{d\mu}{d\nu_{h_1}}\leq K_0,
\]
for an appropriate constant $K_0$ which is independent of $\mu$. This is condition~\ref{LmSpSphMet1} with $h=h_1$ and $q=p_1.$

Now suppose
\begin{equation}\label{eqrogeqrd}
r\geq 2r_{d_0}.
\end{equation}
Let $\chi:\C\to\C$ be the map
\[
z\mapsto \frac{z}{\tan(\frac{r}{2})}.
\]
Let  $\psi:\Sigma_\C\to\Sigma_\C$ be the holomorphic map defined by
\[
\psi\big|_{\Sigma_\C\setminus\{p_2\}}=\phi^{-1}\circ\chi\circ\phi,
\]
and let $h_1:=\psi^*{h_0}$. Write $d_2:=\frac{d\mu}{d\nu_{h_1}}(p)$, $A:=B_{r_{d_2}}(p;h_1)$, and
\[
C:=\frac{\|d\psi\|_{h_0}^2(p)}{\inf_{w\in A} \|d\psi\|_{h_0}^2(w)}.
\]
Then we have the bound
\[
\frac{d\mu}{d\nu_{h_1}}\Big|_A\leq Cd_2.
\]

Note that $\|d\psi\|_{h_0}$ is obtained by substituting $r$ in place of $r+r_{d_0}$ in equation~\eqref{eq:sin}, and that $r\leq\pi/2$. Therefore, $\|d\psi\|_{h_0}$ is decreasing for $\rho(x)\in[0,\pi].$ Let $x_0$ be the point which maximizes $\rho(x)$ on $A$. One computes that
\[
C=\frac{\cos^2(\rho(x_0)/2)}{2\cos^2{r/2}}+\frac{{\sin^2(\rho(x_0)/2)}}{2{\sin^2(r/2)}}.
\]
To bound $C$ it suffices to bound the ratio $\frac{\rho(x_0)}{r}.$ By direct computation,
\[
\rho(x_0)= 2\tan^{-1}\left(\tan {\frac{r}{2}}\tan{\frac{\pi/2+r_{d_2}}{2}}\right).
\]
Note now that $r_{d_2}=\frac{r_{d_0}}{\sin r}$. Using assumption~\eqref{eqrogeqrd} and the fact the function $r \mapsto \frac{r}{2\sin r}$ is monotone increasing for $0<r<\pi$ and that $r\leq\pi/2$,  we conclude that $r_{d_2}\leq\pi/4$. Thus, $\frac{\rho(x_0)}{r}\leq C'$ for some uniform constant $C'$. There is therefore an a priori constant $K_0$ bounding $C$. This gives condition~\ref{LmSpSphMet2} with $h=h_1$ and $q=p.$

We prove the last part of the claim. Suppose condition~\ref{LmSpSphMet0} holds. As is well known, the gradient inequality implies
\[
\mu(\Sigma_\C)\neq 0\Rightarrow \mu(\Sigma_\C)\geq\delta_1.
\]
It is straightforward to verify that $\mu(D)\leq\delta_1/2$, implying the claim. If condition~\ref{LmSpSphMet1} holds, one similarly verifies that
\[
\mu(D\cap B_{\pi/2}(q;h))\leq\delta_1/2.
\]
So, for each component\footnote{See remark~\ref{RemPreCCl}.} $\Sigma'$ of $\Sigma_\C,$
\[
\mu(\Sigma'\setminus D)\geq\mu(B_{\pi/2}(q;h)\setminus D)\geq\delta_1/2.
\]
Suppose now that condition~\ref{LmSpSphMet2} holds. Then $D$ meets at most one of the discs $B_1=B_{r_d}(q;h)$ and $B_2=B_{r_d}(\overline{q};h)$. By Lemma~\ref{rmGradApp}, $\mu(B_i)\geq\delta_1$ for $i=1,2$.
\end{proof}

\subsection{Admissible annuli}
From now to Section~\ref{SEcMuAdapt}, we fix a $(\Sigma,\mu)\in\mathcal{M}$. However, all constants are that appear in the sequel are independent of $\Sigma$ and $\mu$. Let
\[
\Sigma'=\begin{cases}
\Sigma, & \partial\Sigma=\emptyset,\\
\Sigma_\C, & \partial\Sigma\neq\emptyset.
\end{cases}
\]
If $genus(\Sigma')\geq 2$, let $h$ be the unique conformal metric of constant curvature $-1$ on $\Sigma'$. If $genus(\Sigma')=1$, let $h$ be the unique conformal metric of constant curvature $0$ and of unit area. Finally, if $genus(\Sigma')=0$, let $h$ be a conformal metric of constant curvature 1 which satisfies the property of Lemma \ref{LmSpSphMet}.

Our goal in the following three sections is to construct a $(\mu,h)$-adapted bubble decomposition of $\Sigma_\C$. We make the following assumption
\begin{as}\label{as1}
$(\Sigma,\mu)$ satisfies one of the following:
\begin{enumerate}
\item
$genus(\Sigma')=0$ and $h$ does not satisfy condition \ref{LmSpSphMet0} in Lemma \ref{LmSpSphMet}.
\item
\label{eq:deg}
\begin{equation}
genus(\Sigma')=1,~ and~\mu(\Sigma')>\delta_2\notag.
\end{equation}
 \item
 \begin{equation}
 genus(\Sigma')>1\notag.
 \end{equation}
\end{enumerate}
\end{as}
The cases not covered Assumption~\ref{as1} are referred to as the \textbf{trivial cases}. The genus 0 trivial case automatically admits a $(\mu,h)$-adapted bubble decomposition and  requires no treatment. The trivial genus 1 case will be treated separately in the proof of Theorem~\ref{tmBubDecEst1}.

We need to partially break the symmetry in the cases of genus 0 and 1 in Definition \ref{dfAdmAnn} below. For this we introduce some notation. Suppose $genus(\Sigma')=0$. If in Lemma \ref{LmSpSphMet} condition \ref{LmSpSphMet1} holds for $h$, let $q$ be the point given there and let $\tilde{\Sigma}:=\Sigma'\setminus\{q\}$. If, instead, condition~\ref{LmSpSphMet2} holds, let $q$ be the point given there and let  $\tilde{\Sigma}:=\Sigma'\setminus\{q,\overline{q}\}$. For $genus(\Sigma')\geq 1$, let $\tilde{\Sigma}:=\Sigma'$.

Suppose now $genus(\tilde{\Sigma})=1$. Our normalization of $h$ implies there is at most one element of $H_1(\tilde{\Sigma};\mathbb{Z})$ which is represented by a simple geodesic of length less than 1. Suppose such a class exists, and denote it by $A$. Pick closed geodesics $\alpha_0$ and $\alpha_1$ representing $A$ as follows. If $\partial\Sigma\neq\emptyset$ and the components of $\partial\Sigma$ represent $A$, let $\alpha_0$ and $\alpha_1$ be the components of $\partial\Sigma$. Otherwise,  let $I_0$ be a sub-cylinder maximizing the modulus among all the subcylinders $I$ such that $\mu(I)=\delta_2/6$ and each component of $\partial I$ represents $A$. Fix a biholomorphism
\[
f:I\to \left[-\frac12Mod(I_0),\frac12Mod(I_0)\right],
\]
and let $\alpha_0:=f^{-1}(\{0\}\times S^1)$. Let $\alpha_1$ be the geodesic whose image is $\tilde{\Sigma}\setminus\mathcal{C}(\alpha_0)$. $I_0$ will play a role in the construction of the bubble decomposition. We therefore define it also when $\partial\Sigma\neq\emptyset.$ In this case, define $I_0\subset \mathcal{C}(\alpha_0)$ to be a conjugation invariant sub-cylinder containing $\alpha_0$ and satisfying $\mu(I_0)=\delta_2/6$.

\begin{df}\label{dfAdmAnn}
An admissible annulus is a doubly connected clean open $I\subset\tilde{\Sigma}$ of one of the following forms:
\begin{enumerate}
\item\label{dfAdmAnn1}  There is a point $z\in\tilde{\Sigma}$ and positive reals $r\in(0,\frac1{3}\inj(\tilde{\Sigma};h,z)]$ and $r'\in(0,\frac1{5}r]$ such that $I= A(r,r',z;h)$. Furthermore, $I$ is contractible in $\tilde{\Sigma}$\footnote{This is of course redundant when $genus(\tilde{\Sigma})>0.$}.
\item
In case $genus(\tilde{\Sigma})=1$ assume $\alpha_1$ is defined. In case $genus(\tilde{\Sigma})=0$ assume $\partial\Sigma\neq \emptyset.$ There is a simple closed geodesic $\gamma\subset \tilde{\Sigma}$ satisfying
\[
\begin{cases}
\ell(\gamma)<2\sinh^{-1}(1), & genus(\tilde{\Sigma})> 1,\\
\gamma=\alpha_1, & genus(\tilde{\Sigma})= 1,\\
\gamma=\partial\Sigma, & genus(\tilde{\Sigma})=0
\end{cases}
\]
such that $I$ is an open sub-cylinder\footnote{See Definition~\ref{dfSubCyinlder}} of $\mathcal{C}(\gamma)$\footnote{See Definition~\ref{dfCNeighbGeo}}.
\end{enumerate}
If $I$ is of the type~\ref{dfAdmAnn1} it will be referred to as a trivial admissible annulus. Otherwise, it will be referred to a non trivial admissible annulus. We will also use the term admissible cylinder for nontrivial annuli. We denote by $\mathcal{A}_h$ the collection of admissible annuli both trivial and non trivial.

When $genus(\tilde{\Sigma})=1$ and $\alpha_0$ is defined, we will also use the notation $\hat{\mathcal{A}}_h$ for the union of $\mathcal{A}_h$ with the set of sub-cylinders of $\mathcal{C}(\alpha_0)$. In all other cases, $\hat{\mathcal{A}}_h:=\mathcal{A}_h.$
\end{df}

\begin{rem}\label{remAdmMot}
Note that an admissible trivial annulus is uniquely representable as the difference between two discs in $\tilde{\Sigma}$. Henceforth, whenever we represent an admissible annulus $I$ as the difference $I=B\setminus B'$, it is intended that $B'\subset \tilde{\Sigma}$.
\end{rem}
\begin{rem}\label{COrientConv}
Recall our notation $C(a,b;I)$ for an annulus $I$ and reals $a,b$. When $a\neq b$ this notation is well defined only up to a holomorphic reflection since it depends on the choice of holomorphic parametrization
\[
(\rho,\theta):\left[0,Mod(I)\right]\times S^1\to I.
\]
We adopt the convention that for a trivial annulus $I=B\setminus B'$, $\rho$ increases as the distance to the center of $B_1$ increases. For nontrivial annuli we assume that for each simple closed geodesic we fixed a choice of holomorphic parametrization of $\mathcal{C}(\gamma)$ by
\[
 \left[0,Mod(\mathcal{C}(\gamma))\right]\times S^1
\]
once and for all. This induces a choice for all the admissible nontrivial annuli.
\end{rem}

\subsection{Topological relatedness}
\begin{df}\label{dfTopRel}
Let $I_1,I_2\in\mathcal{A}_h$. We say that $I_1$ and $I_2$ are \textbf{topologically related} if there exists a doubly connected clean $I\subset\tilde{\Sigma}$ such that both $I_1$ and $I_2$ are nontrivially embedded in $I$.
\end{df}

\begin{tm}\label{lmRelChar}
Let $I_1,I_2\in\mathcal{A}_h$. $I_1$ and $I_2$ are topologically related if and only if one of the following holds:
\begin{enumerate}
\item\label{lmRelChar1} There is a simple closed geodesic $\gamma$ such that $\mathcal{C}(\gamma)$ is clean and both $I_1$ and $I_2$ are sub-cylinders of $\mathcal{C}(\gamma)$. Furthermore, when $genus(\tilde{\Sigma})=1$, we have that $\alpha_1$ is defined and that $\gamma=\alpha_1$. When $genus(\tilde{\Sigma})=0$, we have that $\partial\Sigma\neq\emptyset$ and $\gamma=\partial\Sigma$.
\item\label{lmRelChar2} There are concentric geodesic discs $B'_i\subset B_i\subset\tilde{\Sigma}$ such that $I_i=B_i\backslash B'_i,$ for $i=1,2$. Furthermore,
    \begin{enumerate}
    \item
    $B'_1\cap B'_2\neq\emptyset$,
    \item
     $I_1\cup I_2$ is clean.
    \end{enumerate}
 \end{enumerate}
 \end{tm}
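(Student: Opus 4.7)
The plan is to prove the equivalence in two directions, splitting the forward direction by the triviality type of $I_1$ and $I_2$, and the backward direction by the two given cases. For the ``if'' direction, in case~\ref{lmRelChar1} take $I := \mathcal{C}(\gamma)$; this is doubly connected (Definition~\ref{dfCNeighbGeo}), clean by hypothesis, and both $I_i$ embed nontrivially as sub-cylinders. In case~\ref{lmRelChar2}, set $I := (B_1 \cup B_2) \setminus \overline{B_1' \cap B_2'}$. The size constraints of Definition~\ref{dfAdmAnn} make each $B_i$ geodesically convex, so $B_1 \cap B_2$ is convex and nonempty; hence $B_1 \cup B_2$ is a topological disk by Mayer--Vietoris, and similarly $\overline{B_1' \cap B_2'}$ is a closed disk, so removing it from the open disk produces an annulus. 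Both $I_i \subset I$, and their cores wind once around $\overline{B_1' \cap B_2'} \supset B_i'$, so the embeddings are nontrivial. Cleanness of $I$ reduces to the hypothesis that $I_1 \cup I_2$ is clean and a direct verification using Lemma~\ref{lmCleanUnion}.

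For the ``only if'' direction, suppose $I_1, I_2$ are nontrivially embedded in a clean doubly connected $I \subset \tilde{\Sigma}$. Consider first the case where at least one $I_i$, say $I_1$, is a nontrivial admissible annulus, hence a sub-cylinder of $\mathcal{C}(\gamma_1)$. Its core is freely homotopic in $\tilde{\Sigma}$ to $\gamma_1$; nontriviality of the embedding propagates this homotopy to the core of $I$, hence to the core of $I_2$. A trivial $I_2$ would have a contractible core, contradicting nontriviality of $\gamma_1$, so $I_2$ is also a sub-cylinder of $\mathcal{C}(\gamma_2)$. Uniqueness of the geodesic representative in a free homotopy class (in genus $>1$), or the singling-out of $\alpha_1$ (resp.~$\partial\Sigma$) in the other genera, forces $\gamma_1 = \gamma_2 =: \gamma$, matching the constraints on $\gamma$ in case~\ref{lmRelChar1}, with cleanness of $\mathcal{C}(\gamma)$ provided by Lemma~\ref{lmCleanMC}. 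When both $I_i$ are trivial, their cores are contractible in $\tilde{\Sigma}$, forcing the core of $I$ to be contractible too, so $I$ lies in a topological disk $D \subset \tilde{\Sigma}$. Within $D$, the two cores are isotopic simple closed curves enclosing the inner disk $D_c$ of $I$; a Jordan-decomposition argument identifies $B_i'$ as the disk bounded by $\partial B_i'$ on the $D_c$-side, giving $D_c \subset B_i'$ and hence $B_1' \cap B_2' \supset D_c \neq \emptyset$. Cleanness of $I_1 \cup I_2$ follows from cleanness of $I$ via Lemma~\ref{lmCleanUnion}: when $I \cap \overline{I} = \emptyset$, condition~\ref{lmCleanUnioncond2} applies directly, since $I_i \cap \overline{I_j} \subset I \cap \overline{I} = \emptyset$; when $I = \overline{I}$, a short case analysis on the conjugation invariance of each $I_i$ completes the argument.

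The main technical obstacle is the orientation claim in the trivial-trivial case, namely that $\partial B_i'$ faces $D_c$ rather than the outer complement $D \setminus B_i$. The alternative orientation would force $B_i'$ to contain $D \setminus B_i$, which is incompatible with the size constraints $r \leq \frac{1}{3}\inj(\tilde{\Sigma};h,z)$ and $r' \leq r/5$ of Definition~\ref{dfAdmAnn}. This geometric check, together with the cleanness bookkeeping in case~\ref{lmRelChar2} of the ``if'' direction and in the trivial-trivial subcase of the ``only if'' direction, is where the argument requires the most care.
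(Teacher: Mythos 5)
Your proposal follows essentially the same route as the paper: the forward direction takes $I=\mathcal{C}(\gamma)$ in case~\ref{lmRelChar1} and $I = (B_1\cup B_2)\setminus(B'_1\cap B'_2)$ in case~\ref{lmRelChar2} (the paper packages the latter in Lemma~\ref{lmRelDCon}, which you essentially re-derive inline via Lemma~\ref{lmBallsContr}); the backward direction uses the same dichotomy (both $I_i$ trivially or both nontrivially embedded, read off from the homology of $I$), the same uniqueness-of-geodesic argument in the nontrivial case, and the same cleanness bookkeeping via Lemmas~\ref{lmCleanUnion} and~\ref{lmCleanConjEmb}.

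However, the one step you flag as the ``main technical obstacle'' is handled incorrectly. You claim the orientation fact $D_c \subset B'_i$ is enforced by the size constraints $r\le\tfrac13\inj$ and $r'\le r/5$ of Definition~\ref{dfAdmAnn}. It is not: those constraints play no role here, and the asserted implication ``the alternative would force $B'_i\supset D\setminus B_i$'' does not follow (indeed $B'_i\subset B_i$, so $B'_i$ cannot meet $D\setminus B_i$ at all; the alternative hypothesis simply does not lead there). The correct reason is purely topological and is the one the paper invokes: under Assumption~\ref{as1}, $\tilde{\Sigma}$ is never a sphere, so for any embedded circle the complement has at most one disc component. Since $\partial B'_i$ is essential in the annulus $I$ and hence separates $\partial D_c$ from the outer boundary of $I$, the two sides of $\partial B'_i$ in $\tilde{\Sigma}$ are $D_c\cup(\text{sub-annulus of }I)$, which is a disc, and its complement, which has Euler characteristic $\chi(\tilde{\Sigma})-1\neq 1$ and so is not a disc. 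Since $B'_i$ is a disc bounded by $\partial B'_i$, it must be the first region, giving $D_c\subset B'_i$. (The symmetric argument rules out the nesting order $\partial D_c,\partial B_i,\partial B'_i$: it would make $B_i$ equal to a disc contained in the region bounded by $\partial B_i$ on the $D_c$-side, which already excludes $\partial B'_i$, contradicting $\partial B'_i\subset\overline{B_i}$.) So the claim you single out is correct, but your justification of it is wrong; the admissibility bounds are a red herring, and the hypothesis you actually need is $\tilde{\Sigma}\neq S^2$.
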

To prove Theorem~\ref{lmRelChar} we first prove the following Lemmas some of which will also be used later.

\begin{lm}\label{lmCleanannB}
Let $I=B\setminus B'$ be an admissible trivial annulus. Then both $B$ and $B'$ are clean. Furthermore, $I$ is conjugation invariant if and only if both $B$ and $B'$ are.
\end{lm}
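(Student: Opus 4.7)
The plan is to split into two cases depending on whether the center $z$ is fixed by the antiholomorphic involution $\sigma : w \mapsto \overline w$. Throughout, the metric $h$ is conjugation invariant (by uniqueness of constant curvature conformal metrics in positive genus, and by construction in the genus zero case via Lemma~\ref{LmSpSphMet}), so $\sigma$ is an isometry of $h$ and $\sigma(B_r(w;h)) = B_r(\overline w;h)$ for all $w,r$.

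In the symmetric case $z = \overline z$ (possible only when $\partial\Sigma\neq\emptyset$ and $z\in\partial\Sigma$), one reads off immediately that $\overline B = B$ and $\overline{B'} = B'$, so both are conjugation invariant and hence clean, and $I = B\setminus B'$ is conjugation invariant as well.

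The substantive case is $z\neq\overline z$. Since $r\leq\frac{1}{3}\inj(\tilde{\Sigma};h,z)$, the balls $B,\overline B$ (resp.\ $B',\overline{B'}$) are centered at distinct points within each other's injectivity radius, so $\overline B \neq B$, $\overline{B'}\neq B'$, and in particular $\overline I\neq I$. Cleanness of $I$ then forces $I\cap\overline I=\emptyset$. The goal is $B\cap\overline B=\emptyset$, which combined with $B'\subset B$ also yields $B'\cap\overline{B'}=\emptyset$, proving cleanness of both $B$ and $B'$. If $\partial\Sigma=\emptyset$ the claim is immediate since $B$ and $\overline B$ lie in distinct connected components of $\Sigma_\C$. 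Otherwise, I argue by contradiction: if $B\cap\overline B\neq\emptyset$, then $d := d(z,\overline z;h)<2r$ is bounded by the injectivity radius of $z$, so there is a unique minimizing geodesic $\alpha$ from $z$ to $\overline z$ with midpoint $m$. Since $\sigma$ is an isometry carrying $\alpha$ to the unique minimizing geodesic from $\overline z$ to $z$, it fixes $\alpha$ setwise and fixes $m$.

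The crucial geometric input is that the fixed locus $\partial\Sigma$ of $\sigma$ is totally geodesic and has tangent space at $m$ equal to the $+1$-eigenspace of $d\sigma_m$; since $d\sigma_m$ acts as $-1$ on $\alpha'(d/2)$ (because $\sigma$ reverses $\alpha$), the perpendicular direction $v\in T_m\tilde{\Sigma}$ lies in $T_m\partial\Sigma$. Hence the geodesic $\beta(s):=\exp_m(sv)$ lies in $\partial\Sigma$, is pointwise fixed by $\sigma$, and satisfies
\[
d(\beta(s),z;h)=d(\beta(s),\overline z;h)=:\varphi(s).
\]
The function $\varphi$ is continuous with $\varphi(0)=d/2<r$, and by the second variation formula applied in normal coordinates at $m$ it is strictly increasing in $|s|$ for small $s$, so it exceeds $r$ well within the injectivity ball of $z$. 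Since $r'<r$, for some $s$ one has $\varphi(s)\in(r',r)$, which places $\beta(s)$ in $(B\cap\overline B)\setminus(B'\cup\overline{B'})=I\cap\overline I$, a contradiction. The final equivalence is now immediate from the case split: $I$ is conjugation invariant iff $z=\overline z$ iff both $B$ and $B'$ are conjugation invariant. The main obstacle is identifying the perpendicular $v$ with a direction tangent to $\partial\Sigma$; once this is in hand, the rest reduces to a radius comparison.
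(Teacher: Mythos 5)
Your proof is correct, but it takes a genuinely different route from the paper's. The paper argues topologically: if $I\cap\overline I=\emptyset$ it lies in one component $A$ of $\tilde\Sigma\setminus\partial\Sigma$, and since $\partial B$ is a component of $\partial I\subset\overline A$, if $B$ met $\partial\Sigma$ it would have to contain a full component of $\partial\Sigma$ — impossible because $B$ is contractible while components of $\partial\Sigma$ are noncontractible closed geodesics when $genus(\tilde\Sigma)>0$, with a brief appeal to the definition of admissibility for genus $0$. If $I=\overline I$, the paper observes $\partial I$ is conjugation invariant and its components cannot be swapped across $\partial\Sigma$ (by the same topological observation), so each of $\partial B,\partial B'$ is individually conjugation invariant. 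You instead work metrically: exploiting that $\sigma$ is an isometry, you split on whether $z$ is fixed, and in the case $z\neq\overline z$ with $B\cap\overline B\neq\emptyset$ you locate the midpoint $m$ of the unique minimizing geodesic from $z$ to $\overline z$, identify the perpendicular direction at $m$ with $T_m\partial\Sigma$ via the eigenvalue decomposition of $d\sigma_m$, and use the constant-curvature Pythagorean monotonicity of $s\mapsto d(\exp_m(sv),z)$ to manufacture an explicit point of $I\cap\overline I$, contradicting cleanness. Your approach handles all genera uniformly (no separate appeal to contractibility or a case distinction at genus $0$), at the cost of more explicit differential-geometric bookkeeping; the paper's argument is shorter and more topological but leaves the genus-$0$ step implicit. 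One small polish: rather than invoking the second variation formula, you can get the strict monotonicity and the fact that $\varphi$ crosses $r$ inside the injectivity ball directly from the spherical/Euclidean/hyperbolic law of cosines for the right-angled triangle $z,m,\beta(s)$, which makes the existence of $s$ with $\varphi(s)\in(r',r)$ immediate by the intermediate value theorem.
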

\begin{proof}
If $I\cap\overline{I}=\emptyset$ there is a component $A$ of $\tilde{\Sigma}\setminus\partial\Sigma$ such that $I\subset A$. Suppose by contradiction that $B\not\subset A$ then $B\cap\partial\Sigma\neq\emptyset$. Since $\partial B\subset\partial I$ this implies there is a component of $\partial\Sigma$ contained in $B$. When $genus(\tilde{\Sigma})>0$ this is a contradiction since $B$ is contractible whereas each component of $\partial\Sigma$ is a closed geodesic. When $genus(\tilde{\Sigma})=0$ it is straightforward to verify by definition that $I$ cannot be admissible. Thus $B\cap\overline{B}=\emptyset$. Since $B'\subset B$ the same is true for $B'$.

If $I=\overline{I}$ then $\partial I$ is conjugation invariant. Thus, either each component of $\partial I$ is conjugation invariant, or each component of $\partial I$ is contained in different component of $\tilde{\Sigma}\setminus\partial\Sigma$. But this latter case is ruled as in the previous paragraph. In particular we get that $\partial B$ and $\partial B'$ are each conjugation invariant. So, the same is true for $B$ and $B'$. Thus we have proven the first part of the lemma and one direction of the second part. The other direction is obvious.
\end{proof}
\begin{lm}\label{lmBallsContr}
Let $p_i\in\tilde{\Sigma}$ and $r_i\in(0,\frac1{3}\inj(\tilde{\Sigma};h,p_i))$ and write $B_i=B_{r_i}(p_i;h)$ for $i=1,2$. If $B_1\cap B_2\neq\emptyset$, then
\begin{enumerate}
\item \label{it:lmBallsContr1} $B_1\cup B_2$ is contained in a geodesic disc.
\item \label{it:lmBallsContr2} The closure of $B_1 \cap B_2$ is homeomorphic to the closed disc.
\item \label{it:lmBallsContr3} The closure of $B_1\cup B_2$ is homeomorphic to the closed disc.
\end{enumerate}
\end{lm}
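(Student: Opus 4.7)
The plan is to first prove (\ref{it:lmBallsContr1}) by exhibiting an explicit embedded geodesic ball containing $B_1\cup B_2$, and then to transfer the picture via the exponential map to a simply connected constant curvature model, where convexity of geodesic balls makes (\ref{it:lmBallsContr2}) and (\ref{it:lmBallsContr3}) essentially immediate.

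For (\ref{it:lmBallsContr1}), I would assume without loss of generality that $r_1\geq r_2$. Since $B_1\cap B_2\neq\emptyset$, the triangle inequality gives $d(p_1,p_2)<r_1+r_2$. Setting $R:=r_1+2r_2$, any $x\in B_2$ satisfies $d(x,p_1)<r_2+(r_1+r_2)=R$, so $B_1\cup B_2\subset B_R(p_1)$. The numerical point is that $R=r_1+2r_2\leq 3r_1<\inj(\tilde\Sigma;h,p_1)$, so $B_R(p_1)$ is embedded; this is precisely where the factor $\tfrac13$ in the admissibility bound $r_i<\tfrac13\inj$ is used.

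For (\ref{it:lmBallsContr2}) and (\ref{it:lmBallsContr3}), the exponential map at $p_1$ gives an isometry from $B_R(p_1)$ onto the open ball of radius $R$ in the simply connected 2-dimensional model surface $M_K^2$ of curvature $K\in\{-1,0,1\}$. Under this identification, $B_1$ and $B_2$ correspond to geodesic balls of radii $r_1$ and $r_2$. The bound $r_i<\tfrac13\inj$ yields $r_i<\tfrac\pi3<\tfrac\pi2$ in the spherical case ($K=1$), where one uses $\inj\leq\pi$ on $S^2$; thus in all three curvatures $B_1$ and $B_2$ are geodesically convex in $M_K^2$. Then (\ref{it:lmBallsContr2}) follows at once, since $B_1\cap B_2$ is a non-empty open convex subset of the model surface, hence homeomorphic to an open disc, with closure homeomorphic to a closed disc. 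For (\ref{it:lmBallsContr3}), I would fix $q\in B_1\cap B_2$; given $x\in B_1\cup B_2$ lying in some $B_i$, convexity of $B_i$ implies the geodesic segment from $q$ to $x$ stays in $B_i\subset B_1\cup B_2$, so $B_1\cup B_2$ is star-shaped with respect to $q$. Its topological boundary $(\partial B_1\setminus B_2)\cup(\partial B_2\setminus B_1)$ is a piecewise smooth simple closed curve, and Schoenflies gives that $\overline{B_1\cup B_2}$ is a closed disc.

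The only delicate point is the spherical case, where convexity of geodesic balls requires radii strictly less than $\pi/2$; the $\tfrac13$-factor in the admissibility radius is exactly what guarantees this, once one recalls $\inj(S^2)=\pi$. Degenerate configurations where one ball contains the other, or where the bounding circles are tangent, reduce trivially since then the union is a single geodesic ball and the intersection is the smaller one.
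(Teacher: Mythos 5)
Your proof is correct and follows essentially the same route as the paper: part (\ref{it:lmBallsContr1}) via the triangle inequality together with the factor $\tfrac13$ in the hypothesis, and parts (\ref{it:lmBallsContr2})--(\ref{it:lmBallsContr3}) via geodesic convexity of small balls in a constant-curvature surface. The only cosmetic difference is in the last step of (\ref{it:lmBallsContr3}): you argue via star-shapedness and Schoenflies, while the paper deduces simple connectedness of $B_1\cup B_2$ from Van Kampen and then appeals to the classification of compact surfaces with boundary --- both are standard and equivalent here.
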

\begin{proof}
Let
\[
r=\inj(\Sigma;h,p_1),
\]
and without loss of generality assume

 \[
r\geq \inj(\Sigma;h,p_2).
\]
 Then $r_2< \frac13 r$ and so, since $d(p_1,p_2)\leq r_1+r_2<\frac23 r$, we have
 \[
 B_1\cup B_2\subset B_r(p_1).
 \]
 This gives part~\ref{it:lmBallsContr1}. Since the curvature is constant, the sizes of the $r_i$ imply that the balls $B_i$ are geodesically convex. Thus, $B_1\cap B_2$ is geodesically convex and therefore simply connected. It follows from Van Kampen's theorem that $B_1\cup B_2$ is also simply connected. Clearly, the closures of $B_1\cup B_2$ and $B_1\cap B_2$ are topological surfaces with boundary. Parts~\ref{it:lmBallsContr2} and~\ref{it:lmBallsContr3} follow.
\end{proof}

\begin{lm}\label{lmRelDCon}
Let $I_i=A(r_i,r'_i;p_i)\in\mathcal{A}_h$ for $i=1,2$. Suppose
\[
B^c_{r'_1}(p_1)\cap B^c_{r'_2}(p_2)\neq\emptyset.
\]
Let
\[
I=B_{r_1}(p_1)\cup B_{r_2}(p_2)\setminus B^c_{r'_1}(p_1)\cap B^c_{r'_2}(p_2).
\]
Then $I$ is doubly connected. Furthermore, if $I_1\cup I_2$ is clean, so is $I$.
\end{lm}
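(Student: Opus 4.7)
The plan is to set $U := B_{r_1}(p_1)\cup B_{r_2}(p_2)$ and $V := B^c_{r'_1}(p_1)\cap B^c_{r'_2}(p_2)$, so that $I=U\setminus V$, and to establish the two assertions separately. For doubly connectedness, the admissibility bounds $r_i\leq \tfrac13\inj$ and $r'_i\leq\tfrac15 r_i$, together with the hypothesis $V\neq\emptyset$ (which forces $B_{r_1}\cap B_{r_2}\neq\emptyset$ since $r'_i<r_i$), place us in the hypotheses of Lemma~\ref{lmBallsContr}. That lemma gives that $\overline{U}$ is a topological closed disc; a second application (or a direct convexity argument if the open balls $B_{r'_1}, B_{r'_2}$ only meet along their closures) shows that $V$ is a topological closed disc. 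Since $B^c_{r'_i}$ is compactly contained in $B_{r_i}$, $V$ sits in the interior of $U$, so $I$ is a topological open annulus, in particular doubly connected.

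For cleanness, the plan is to invoke Lemma~\ref{lmCleanUnion}: cleanness of $I_1\cup I_2$ forces one of four structural conditions. First, the nesting condition reduces to the others, since if $I_1\subset I_2$ then conjugation invariance of $I_1$ forces conjugation invariance of $I_2$ (else $I_1\subset I_2\cap\overline{I_2}=\emptyset$), while non-conjugation-invariance of $I_2$ forces $I_i\cap\overline{I_j}=\emptyset$ for all $i,j$ via $\overline{I_1}\subset\overline{I_2}$ and $I_2\cap\overline{I_2}=\emptyset$. In the mutual conjugation invariance case, Lemma~\ref{lmCleanannB} gives that each $B_{r_i}$ and $B^c_{r'_i}$ is conjugation invariant, so $U$ and $V$ are, and therefore so is $I$. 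In the case $I_1=\overline{I_2}$, the annulus $I_i$ determines its center and two radii, so $B_{r_1}=\overline{B_{r_2}}$ and $B^c_{r'_1}=\overline{B^c_{r'_2}}$, which again makes $U$ and $V$ conjugation invariant.

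In the remaining case $I_i\cap\overline{I_j}=\emptyset$ for all $i,j$, I will use the geometric observation that for concentric geodesic balls centered at $p_i$, conjugation invariance of $B_{r_i}$ is equivalent to conjugation invariance of $B^c_{r'_i}$, and in turn to $p_i\in\partial\Sigma$ (the fixed locus of conjugation). Hence each $B_{r_i}$ fails to be conjugation invariant and, being clean by Lemma~\ref{lmCleanannB}, satisfies $B_{r_i}\cap\overline{B_{r_i}}=\emptyset$, confining it to a single component $A_i$ of $\tilde{\Sigma}_\C\setminus\partial\Sigma$. Since $\emptyset\neq V\subset B_{r_1}\cap B_{r_2}$, we have $A_1=A_2$, so $U\subset A_1$ and $I\cap\overline{I}\subset U\cap\overline{U}=\emptyset$. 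The main obstacle is this last case: cleanness of $I_i$ alone is a priori compatible with $B_{r_i}$ being conjugation invariant while $B^c_{r'_i}$ is not, and the reduction of cleanness from the annulus to the balls is what lets the component-tracking run; the geometric fact that a conjugation-invariant geodesic ball must be centered on $\partial\Sigma$ is precisely what rules out the pathological possibility.
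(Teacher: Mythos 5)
Your proposal follows the same two-part strategy as the paper: apply Lemma~\ref{lmBallsContr} to identify the topology of $U=B_1\cup B_2$ and $V=B'_1\cap B'_2$ and conclude double connectedness, then run a case analysis through Lemma~\ref{lmCleanUnion} for cleanness. The double-connectedness argument is essentially the paper's (the paper makes the last step precise with Mayer--Vietoris where you appeal to the Jordan--Schoenflies picture; both are fine). Your treatment of the mutual conjugation invariance case and of the all-intersections-empty case match the paper's, and your treatment of $I_1=\overline{I_2}$---observing that an admissible trivial annulus determines its center and radii, so $U$ and $V$ are themselves conjugation invariant---is a clean, slightly more direct variant of the paper's argument that $I_1=I_2$.

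The gap is in your reduction of the nesting case. You argue that $I_1\subset I_2$ reduces to the earlier cases via two implications: conjugation invariance of $I_1$ forces conjugation invariance of $I_2$, and non-conjugation-invariance of $I_2$ forces $I_i\cap\overline{I_j}=\emptyset$ for all $i,j$. These leave uncovered the subcase where $I_1$ is \emph{not} conjugation invariant but $I_2$ \emph{is}, and your text gives no argument there. That subcase is in fact vacuous under the hypothesis $B'_1\cap B'_2\neq\emptyset$: by Lemma~\ref{lmCleanannB}, conjugation invariance of $I_2$ gives that of $B'_2$; the inclusion $I_1\subset I_2$ together with $B'_1\cap B'_2\neq\emptyset$ forces $B'_2\subset B'_1$ (writing $\tilde{\Sigma}\setminus I_1=B'_1\sqcup(\tilde{\Sigma}\setminus B_1)$ and using that $B'_2$ is connected and meets $B'_1\subset B_1$); and then $\emptyset\neq B'_2=\overline{B'_2}\subset B'_1\cap\overline{B'_1}$ contradicts the non-conjugation-invariance of $B'_1$ that your geometric observation derives from that of $I_1$. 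You should supply such an argument, or instead dispatch the nesting case directly as the paper does: $I_1\subset I_2$ and $B'_1\cap B'_2\neq\emptyset$ give $B'_2\subset B'_1$ and $B_1\subset B_2$, hence $I=I_2$, which is clean by hypothesis.
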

\begin{proof}
Write $B_i=B_{r_i}(p_i)$ and $B'_i=B^c_{r'_i}(p_i)$ for $i=1,2$. By Lemma~\ref{lmBallsContr} parts~~\ref{it:lmBallsContr2} and~\ref{it:lmBallsContr3}, $B'_1\cap B'_2$ is homeomorphic to the closed disc and $B_1\cup B_2$ is homeomorphic to the open disc. Denoting by $I^c$ the closure of $I$, the Mayer Vietoris sequence implies that $H_1(I^c;\mathbb{Z})=\mathbb{Z}$. The only orientable surface with boundary satisfying this is the annulus.

To see that $I$ is clean if $I_1\cup I_2$ is, distinguish between the possibilities for $I_1$ and $I_2$ according to Lemma \ref{lmCleanUnion}.
\begin{enumerate}
\item
$I_i$ is conjugation invariant for $i=1,2$. By Lemma \ref{lmCleanannB}, so are $B_i$ and $B'_i$ and therefore, so is $I$.
\item
$I_i\cap\overline{I_j}=\emptyset$ for $i,j=1,2$. By Lemma \ref{lmCleanannB}, $B_i\cap\overline{B_i}=\emptyset$. Since $B_1\cap B_2\neq\emptyset$, $B_1$ and $B_2$ belong to the same component of $\tilde{\Sigma}\setminus\partial\Sigma$. Therefore,  $I\subset B_1\cup B_2$ belongs to one component of $\tilde{\Sigma}\setminus\partial\Sigma$. In particular, $I\cap\overline{I}=\emptyset$.
\item
$I_1=\overline I_2$. We claim that $I_1$ is conjugation invariant and so $I=I_1=I_2$. Indeed if $I_1$ is not conjugation invariant, then Lemma \ref{lmCleanannB} implies $B'_1$ is clean and not conjugation invariant.  So, $B'_1\cap \overline{B}_1' =\emptyset$. But since $I_1=\overline{I}_2$, $\overline{B}'_1=B'_2$. We thus get a contradiction to the assumption $B'_1\cap B'_2\neq\emptyset$.
\item
Without loss of generality $I_1\subset I_2$. In this case it is clear that $B'_2\subset B'_1$, so $I=I_2$.
\end{enumerate}
\end{proof}

 \begin{proof}[Proof of Theorem~\ref{lmRelChar}]
Assume Condition \ref{lmRelChar1} holds. Then $\mathcal{C}(\gamma)$ plays the role of $I$ in Definition \ref{dfTopRel}. Assume Condition \ref{lmRelChar2}, holds and let
\[
I=B_1\cup B_2\setminus (B'_1\cap B'_2).
\]
By Lemma \ref{lmRelDCon}, $I$ is clean and doubly connected.  Clearly, $I_i$ is nontrivially embedded in $I$ for $i=1,2$. Thus $I_1$ and $I_2$ are topologically related.

Conversely, let  $I_1$ and $I_2$ be embedded nontrivially in a clean $I\subset\tilde{\Sigma}$. $I_1$ and $I_2$ are homologous in $I$ to a homology generator of $I$. This implies that $I_1$ and $I_2$ are either both trivially embedded or both nontrivially embedded in $\tilde{\Sigma}$. These correspond to the cases where $I$ is embedded trivially and nontrivially respectively.

In the first case, if $genus(\tilde{\Sigma})\leq 1$ the only non-trivial annuli are the sub-cylinders of $\mathcal{C}(\alpha_1)$ and $\mathcal{C}(\partial\Sigma)$ which are clean. If $genus(\tilde{\Sigma})>1$, there are simple closed geodesics $\gamma_i$, for $i=1,2$, such that $I_i$ is a sub-cylinder of $\mathcal{C}(\gamma_i)$. We have that $\gamma_i$ is freely homotopic to any component of $\partial I_i$ which in turn is freely homotopic to any component of $\partial I$. So $\gamma_1$ is freely homotopic to $\gamma_2$. Since there is a unique simple closed geodesic in each free homotopy class, this implies $\gamma_1=\gamma_2$. Clearly, $\ell(\gamma)<2\sinh^{-1}(1)$ since $\mathcal{C}(\gamma)$ contains admissible cylinders as sub cylinders. Therefore, by Lemma~\ref{lmCleanMC}, $\mathcal{C}(\gamma)$ is clean.

Now consider the case where $I_i$ are trivial for $i=1,2$. Then $I$ must be trivially embedded in $\tilde{\Sigma}$. Since $\tilde{\Sigma}$ is not a sphere, $\tilde{\Sigma}\setminus I$ has exactly one component $A$ with the topology of a disc. Clearly, $A\subset B'_1\cap B'_2$. This gives the first part of~\ref{lmRelChar2}. Now, if $I\cap \overline{I}=\emptyset$ then clearly $I_1\cup I_2\subset I$ is clean. If $I$ is conjugation invariant then by Lemma \ref{lmCleanConjEmb}, so are $I_1$ and $I_2$. This implies that so is $I_1\cup I_2$, giving the second part of~\ref{lmRelChar2}.
\end{proof}

Let  $I_1$ and $I_2$ be topologically related. We associate with $I_1$ and $I_2$ two clean doubly connected sub-surfaces $M(I_1,I_2)$ and $m(I_1,I_2)$ in which both are non-trivially embedded. One should think of $M(I_1,I_2)$ as the minimal annulus in $\tilde{\Sigma}$ in which $I_1$ and $I_2$ are nontrivially embedded. On the other hand, $m(I_1,I_2)$ should be thought of as the maximal \textit{admissible} annulus which is nontrivially embedded in $M(I_1,I_2).$

Formally, the definitions are as follows. When $I_1$ an $I_2$ are sub-cylinders of $\mathcal{C}(\gamma)$ for a simple closed geodesic $\gamma$, suppose that $I_i$ is given in $(\rho,\theta)$ coordinates\footnote{See the discussion subsequent to Definition~\ref{dfCNeighbGeo}.} by
\[
I_i:=\left\{z\in\mathcal{C}(\gamma)|\rho_{0,i}<\rho(z)<\rho_{1,i}\right\}.
\]
Let $\rho_0=\min\left\{\rho_{0,1},\rho_{0,2}\right\}$ , $\rho_1=\max\left\{\rho_{1,1},\rho_{1,2}\right\}$, $\rho=\max\left\{|\rho_0|,|\rho_1|\right\}$. Define
\[
M(I_1,I_2):=\begin{cases}
\left\{z\in\mathcal{C}(\gamma)|-\rho<\rho(z)<\rho\right\},& \mbox{$\gamma\subset\partial\Sigma$ and $0\in[\rho_0,\rho_1]$},\\
\left\{z\in\mathcal{C}(\gamma)|\rho_0<\rho(z)<\rho_1\right\},&\mbox{otherwise.}
\end{cases}
\]
When $I_1$ and $I_2$ are trivial, write $I_i=B_i\setminus B'_i$ and take
\[
M(I_1,I_2):=B_1\cup B_2\setminus B'_1\cap B'_2.
\]

We now define $m(I_1,I_2)$. If $I_1$ and $I_2$ are nontrivial take $m(I_1,I_2):=M(I_1,I_2)$. Otherwise, suppose $I_i=B_{r_i}(p_i)\setminus B^c_{r'_i}(p_i)$ and assume without loss of generality that $r_2\leq r_1$. Then define
\[
m(I_1,I_2):=A\left(d(p_2,\partial B_1;h),r'_2,p_2;h\right).
\]
\begin{lm}\label{lmI1sharpI2}
$M(I_1,I_2)$ and $m(I_1,I_2)$ are clean and doubly connected.

\end{lm}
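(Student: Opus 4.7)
The plan is to split by the cases of Theorem~\ref{lmRelChar}, establishing double-connectedness first (which is essentially by construction) and then cleanness. In the nontrivial case, $m(I_1,I_2)=M(I_1,I_2)$ is a sub-cylinder of $\mathcal{C}(\gamma)$, so it is doubly connected. By Theorem~\ref{lmRelChar}\ref{lmRelChar1}, $\mathcal{C}(\gamma)$ is clean, so either $\mathcal{C}(\gamma)\cap\overline{\mathcal{C}(\gamma)}=\emptyset$ (in which case $M\subset\mathcal{C}(\gamma)$ is disjoint from its conjugate) or $\mathcal{C}(\gamma)=\overline{\mathcal{C}(\gamma)}$. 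In the invariant case I first observe that $\gamma=\overline{\gamma}$ forces $\gamma\cap\partial\Sigma\neq\emptyset$, since otherwise $\gamma\subset\Sigma\cap\overline{\Sigma}=\partial\Sigma$ gives a contradiction; together with Lemma~\ref{lmCleanMC} this yields $\gamma\subset\partial\Sigma$ or $\gamma\perp\partial\Sigma$. Reading the conjugation in cylindrical coordinates on $\mathcal{C}(\gamma)$, it reverses $\rho$ in the former subcase and preserves $\rho$ in the latter. The branching in the definition of $M$ is calibrated exactly to these: when $\gamma\perp\partial\Sigma$ any interval $\{\rho_0<\rho(z)<\rho_1\}$ is already invariant; when $\gamma\subset\partial\Sigma$ and $0\in[\rho_0,\rho_1]$ the symmetric interval $\{-\rho<\rho(z)<\rho\}$ is invariant under $\rho\mapsto-\rho$; and when $\gamma\subset\partial\Sigma$ but $0\notin[\rho_0,\rho_1]$, the set $\{\rho_0<\rho(z)<\rho_1\}$ lies on one side of $\partial\Sigma$ and is disjoint from its conjugate on the other.

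In the trivial case, Theorem~\ref{lmRelChar}\ref{lmRelChar2} supplies $B'_1\cap B'_2\neq\emptyset$ and cleanness of $I_1\cup I_2$, so Lemma~\ref{lmRelDCon} immediately yields that $M(I_1,I_2)=B_1\cup B_2\setminus(B'_1\cap B'_2)$ is clean and doubly connected. For $m(I_1,I_2)$, assuming $r_2\leq r_1$, double-connectedness holds once one checks $r'_2<d(p_2,\partial B_1)$, which follows from $d(p_1,p_2)\leq r'_1+r'_2\leq 2r_1/5$ (using $r'_i\leq r_i/5$). For cleanness I use the inclusion $m\subset B_{d(p_2,\partial B_1)}(p_2)\subset B_1$ together with a case analysis on $I_1\cup I_2$ via Lemma~\ref{lmCleanUnion}: when all mutual conjugates are disjoint, $B_1$ is clean by Lemma~\ref{lmCleanannB} and hence so is $m$; when both $I_i$ are conjugation invariant, Lemma~\ref{lmCleanannB} forces $p_2=\overline{p_2}$ and $B_1=\overline{B_1}$, making $m$ conjugation invariant; the case $I_1=\overline{I_2}$ with neither invariant would imply $B'_1\cap B'_2=B'_1\cap\overline{B'_1}=\emptyset$, contradicting topological relatedness; and in the containment case $m$ reduces to one of the $I_i$, which is clean by admissibility.

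I expect the main obstacle to be the nontrivial case, specifically verifying that the dichotomy ``$\gamma\subset\partial\Sigma$'' versus ``$\gamma\perp\partial\Sigma$'' is exactly mirrored by the dichotomy ``conjugation reverses $\rho$'' versus ``conjugation preserves $\rho$'' on $\mathcal{C}(\gamma)$, and in particular ruling out the \emph{a priori} possible free antipodal action $(\theta,\rho)\mapsto(\theta+\pi,-\rho)$, which would be incompatible with the definition of $M$. The observation that $\gamma=\overline{\gamma}$ forces $\gamma\cap\partial\Sigma\neq\emptyset$ is the key tool for this.
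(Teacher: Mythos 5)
Your proof is correct and tracks the paper's argument closely, with two divergences worth noting. In the nontrivial case, where the paper merely asserts that cleanness of $M(I_1,I_2)$ ``follows from the definition, from Theorem~\ref{lmRelChar}, and from Lemma~\ref{lmRelDCon},'' you actually verify how conjugation acts on $\mathcal{C}(\gamma)$ (preserving $\rho$ when $\gamma\perp\partial\Sigma$, reversing $\rho$ when $\gamma\subset\partial\Sigma$, with a free antipodal action excluded because conjugation fixes $\mathcal{C}(\gamma)\cap\partial\Sigma$) and match this against the branching in the definition of $M$. This is a genuine and useful unpacking of a step the paper treats as obvious. One small caution: you invoke Lemma~\ref{lmCleanMC} for the dichotomy ``$\gamma\subset\partial\Sigma$ or $\gamma\perp\partial\Sigma$,'' but that lemma states it only for $g\leq 1$; for $g>1$ the admissibility bound $\ell(\gamma)<2\sinh^{-1}(1)$ rules out transversal intersection with $\partial\Sigma$ altogether (by Buser 4.1.2), so only $\mathcal{C}(\gamma)\cap\overline{\mathcal{C}(\gamma)}=\emptyset$ or $\gamma\subset\partial\Sigma$ can occur there.

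For cleanness of $m(I_1,I_2)$ in the trivial case you take a somewhat different route than the paper: you enumerate the possibilities for $I_1\cup I_2$ from Lemma~\ref{lmCleanUnion}, whereas the paper argues directly from $m\cap\overline{m}\neq\emptyset$, deducing via Lemmas~\ref{lmCleanannB} and~\ref{lmCleanConjEmb} that $B_1$ and $B_2$ are both conjugation invariant, hence $p_2\in\partial\Sigma$ and $m$ is conjugation invariant. Your route works, but your containment subcase is stated imprecisely: if, say, $I_2\subsetneq I_1$ with $p_1\neq p_2$, then $m=A(r_1-d(p_1,p_2),r'_2,p_2)$ generally properly contains $I_2$ and need not equal either $I_i$. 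Cleanness still holds in that subcase --- since $m\subset B_1$, either $B_1\cap\overline{B_1}=\emptyset$ so $m$ is disjoint from its conjugate, or $B_1$ is conjugation invariant, which forces $p_2\in\partial\Sigma$ by the same chain of lemmas --- but you should say this explicitly rather than asserting $m$ reduces to one of the $I_i$. The paper's direct argument sidesteps the case split entirely, which is cleaner here.
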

\begin{proof}
That $M(I_1,I_2)$ is clean and doubly connected follows from the definition, from Theorem \ref{lmRelChar}, and from Lemma~\ref{lmRelDCon}. To prove the same for $m(I_1,I_2)$ we may assume $I_1$ and $I_2$ are trivial as otherwise $m(I_1,I_2)=M(I_1,I_2)$. Clearly, $m(I_1,I_2)$ is doubly connected. We show that it is clean. If $m(I_1,I_2)\cap\overline{m(I_1,I_2)}=\emptyset$, we are done. Otherwise, let $I_i=B_i\setminus B'_i$ for $i=1,2$. We claim that $B_1$ and $B_2$ are each conjugation invariant. Indeed, since $m(I_1,I_2)\subset B_1\cup B_2$, we have that $B_1\cup B_2$ meets $\partial\Sigma$. Without loss of generality, $B_1$ meets $\partial\Sigma$. Since by Lemma \ref{lmCleanannB} $B_1$ is clean, it must be conjugation invariant. Thus, $I_1$ meets $\partial\Sigma$. So, $I_1$ is conjugation invariant. By definition of topological relatedness and by Lemma \ref{lmCleanConjEmb}, $I_2$ is also conjugation invariant. By Lemma \ref{lmCleanannB} again, $B_2$ is conjugation invariant. In particular, the centers $p_1,p_2$ of $B_1$ and $B_2$ lie on $\partial\Sigma$. It is now clear by construction that $m(I_1,I_2)$ is conjugation invariant.
\end{proof}

\begin{lm}\label{lmNonTrivEithOr}
Suppose the pairs $(I_1,I_2)$ and $(I_2,I_3)$ are topologically related.
\begin{enumerate}
\item
If $I_i$ is trivial for $i=1,2,3,$ then
\[
M(I_1,I_3)\subset M(I_1,I_2)\cup M(I_2,I_3).
\]
\item
If $I_i$ is nontrivial for $i=1,2,3,$ then one of the following holds.
    \begin{enumerate}
    \item\label{lmNonTrivEithOrcase1}
    $M(I_1,I_3)\subset M(I_1,I_2)\cup M(I_2,I_3),$
    \item\label{lmNonTrivEithOrcase2}
    $M(I_1,I_3)\subset M(I_1,\overline{I_1})$ and $M(I_2,I_3)=\overline{M(I_2,I_3)}$.
    \item\label{lmNonTrivEithOrcase3}
    $M(I_1,I_3)\subset M(I_3,\overline{I_3})$ and $M(I_1,I_2)=\overline{M(I_1,I_2)}.$
    \end{enumerate}
\end{enumerate}
 \end{lm}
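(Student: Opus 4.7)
The plan is to handle the trivial and nontrivial cases separately, exploiting Theorem~\ref{lmRelChar}: topological relatedness forces the two annuli in a related pair to be of the same type (both trivial under condition~\ref{lmRelChar2}, both nontrivial under condition~\ref{lmRelChar1}). Applied to the pairs $(I_1,I_2)$ and $(I_2,I_3)$, this makes the two cases of the statement exhaustive.

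For the trivial case, I would write $I_i=B_i\setminus B'_i$ so that $M(I_i,I_j)=(B_i\cup B_j)\setminus(B'_i\cap B'_j)$, and verify the inclusion by a set-theoretic chase. Given $x\in M(I_1,I_3)$, without loss of generality $x\notin B'_1$. Either $x\in B_1\setminus B'_1=I_1\subset M(I_1,I_2)$, or $x\in B_3$; in the latter case, splitting on whether $x\in B'_2$ places $x$ in $M(I_2,I_3)$ (when $x\notin B'_2$) or in $M(I_1,I_2)$ (when $x\in B'_2\subset B_2$, combined with $x\notin B'_1\cap B'_2$).

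For the nontrivial case, I would first show that all three $I_i$ are sub-cylinders of a single collar $\mathcal{C}(\gamma)$. Theorem~\ref{lmRelChar} yields geodesics $\gamma_{12}$ and $\gamma_{23}$ for the two related pairs; since the core of $I_2$ is freely homotopic to both and simple free homotopy classes have unique geodesic representatives, $\gamma_{12}=\gamma_{23}$, and I call this common geodesic $\gamma$. Writing $I_i=\{\rho\in[a_i,b_i]\}\times S^1$ in the axially symmetric coordinates on $\mathcal{C}(\gamma)$, I would split on whether $\gamma\subset\partial\Sigma$. If $\gamma\not\subset\partial\Sigma$, every $M(I_i,I_j)$ equals $\{\rho\in[\min(a_i,a_j),\max(b_i,b_j)]\}\times S^1$, and conclusion (a) follows from elementary interval arithmetic, using that $I_2$ is common to $M(I_1,I_2)$ and $M(I_2,I_3)$ so that their union is a single interval containing $[\min(a_1,a_2,a_3),\max(b_1,b_2,b_3)]\supset M(I_1,I_3)$.

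The hard part is the subcase $\gamma\subset\partial\Sigma$, where $M(I_i,I_j)$ may be either the asymmetric interval above or the conjugation invariant set $\{|\rho|<r^{(ij)}\}$ with $r^{(ij)}=\max\{|a_k|,|b_k|:k\in\{i,j\}\}$. I would set $S_{ij}=1$ in the symmetric case and $0$ otherwise, and observe two dichotomies by tracking the signs of endpoints: $S_{13}=0$ forces $S_{12}=S_{23}$, while $S_{13}=1$ forces $S_{12}=1$ or $S_{23}=1$. The subcases $S_{13}=0$ and $S_{13}=1$ with $S_{12}=S_{23}=1$ both yield (a) by direct $\max$-comparisons of the various $r^{(ij)}$. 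The crux is the remaining subcase, say $S_{12}=1$, $S_{23}=0$ (the reflected case $S_{12}=0$, $S_{23}=1$ gives (b) by an identical argument). After placing $I_2,I_3$ on the positive $\rho$-side (possible because $S_{23}=0$) and noting that $S_{13}=1$ forces $a_1\le 0$, I would dichotomize on $b_3$ versus $r^{(12)}$. If $b_3\le r^{(12)}$ then $r^{(13)}\le r^{(12)}$, so $M(I_1,I_3)\subset M(I_1,I_2)$ and (a) holds. If $b_3>r^{(12)}$ then $b_3$ exceeds $|a_1|$, $|b_1|$, and $b_2$, forcing $r^{(13)}=b_3=\max(|a_3|,|b_3|)$, so $M(I_1,I_3)=M(I_3,\overline{I_3})$; combined with the conjugation invariance of $M(I_1,I_2)$ from $S_{12}=1$, this yields (c).
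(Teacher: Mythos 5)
Your proof is correct and uses the same key ingredients as the paper: Theorem~\ref{lmRelChar} to place all three annuli in a common collar $\mathcal{C}(\gamma)$ in the nontrivial case, and interval arithmetic in the $\rho$-coordinate with a case split on whether $\gamma\subset\partial\Sigma$. The only difference is organizational: the paper argues by contradiction (assume conclusion (a) fails, deduce $\gamma\subset\partial\Sigma$ and $\rho_{0,1}\rho_{1,3}<0$, then derive (b) or (c)), while you do forward exhaustive casework with the $S_{ij}$ bookkeeping; both come down to the same $\max$-comparisons of interval endpoints.
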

\begin{proof}
\begin{enumerate}
\item
This is straightforward set theory.
\item
By Theorem~\ref{lmRelChar} there is a simple closed geodesic $\gamma$ such that
\[
I_i=\left\{z\in\mathcal{C}(\gamma)|\rho_{0,i}<\rho(z)<\rho_{1,i}\right\},
\]
for $i=1,2,3$. Suppose without loss of generality that
\[
\rho_{0,1}\leq\rho_{0,3}.
\]
Assume
\begin{equation}\label{lmlmNonTrivEithOrAss}
M(I_1,I_3)\not\subset M(I_1,I_2)\cup M(I_2,I_3).
\end{equation}
Considering the definition of $M(\cdot,\cdot)$, this assumption implies that $\gamma\subset\partial\Sigma$ and that
\begin{equation}\label{eqBothSides}
\rho_{0,1}\rho_{1,3}<0.
\end{equation}
We claim, further, that
\begin{equation}\label{OrderOfIi}
\rho_{0,1}\leq\rho_{0,2}<\rho_{1,2}\leq\rho_{1,3}.
\end{equation}
Indeed, if $\rho_{1,2}>\rho_{1,3}$ then combining the definition of $M(\cdot,\cdot)$ and inequality~\eqref{eqBothSides} it would follow that $M(I_1,I_3)\subset M(I_1,I_2)$. Similarly, if $\rho_{0,2}<\rho_{0,1}$ we would get that $M(I_1,I_3)\subset M(I_2,I_3)$. Now, if $|\rho_{0,1}|\geq|\rho_{1,3}|$, inequality~\eqref{OrderOfIi} implies condition~\ref{lmNonTrivEithOrcase2}. Indeed, the first part of condition~\ref{lmNonTrivEithOrcase2} is immediate. Furthermore, we must have in this case $\rho_{0,2}\rho_{1,3}<0$ for otherwise we would have $M(I_1,I_2)=M(I_1,\overline{I_1})$. By the first part of condition~\ref{lmNonTrivEithOrcase2} this would contradict equation~\eqref{lmlmNonTrivEithOrAss}. Thus $M(I_2,I_3)$ meets $\partial\Sigma$. Since $M(I_2,I_3)$ is clean, this implies $M(I_2,I_3)$ is conjugation invariant. If $|\rho_{0,1}|\leq|\rho_{1,3}|$ we get condition~\ref{lmNonTrivEithOrcase3} by switching the roles of $I_1$ and $I_3$.
\end{enumerate}
\end{proof}
\begin{lm}\label{ConjInvTrans}
Suppose $I_1$ and $I_2$ are topologically related. Then $I_1$ is conjugation invariant if and only if $I_2$ is.
\end{lm}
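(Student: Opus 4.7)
The plan is to reduce to Lemma~\ref{lmCleanConjEmb} by interposing the annulus $M(I_1,I_2)$ from the discussion preceding Lemma~\ref{lmI1sharpI2}. By Theorem~\ref{lmRelChar} there are two cases for topologically related pairs, and in both cases Lemma~\ref{lmI1sharpI2} guarantees that $M(I_1,I_2)$ is clean and doubly connected and that each $I_i$ is nontrivially embedded in it.

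In the trivial case of Theorem~\ref{lmRelChar}, the key observation is that an admissible trivial annulus $B_i \setminus B_i'$ sits inside a geodesically convex disk of radius at most $\tfrac{1}{3}\inj$, and by Lemma~\ref{lmBallsContr}\ref{it:lmBallsContr1} the same holds for $M(I_1,I_2) \subseteq B_1 \cup B_2$. Such a convex disk in a constant curvature surface cannot contain any simple closed geodesic, so in particular it contains no component of $\partial\Sigma$. Therefore Lemma~\ref{lmCleanConjEmb} applies to the nontrivial inclusions $I_i \hookrightarrow M(I_1,I_2)$ and yields that $I_1$ is conjugation invariant if and only if $M(I_1,I_2)$ is, if and only if $I_2$ is.

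In the nontrivial case, I would split on whether $\mathcal{C}(\gamma)$ is conjugation invariant. If it is not, cleanness gives $\mathcal{C}(\gamma) \cap \overline{\mathcal{C}(\gamma)} = \emptyset$; since each $I_i \subseteq \mathcal{C}(\gamma)$, one gets $I_i \cap \bar I_i = \emptyset$, so neither $I_i$ is conjugation invariant and the equivalence is vacuous. If $\mathcal{C}(\gamma)$ is conjugation invariant, the $M(I_1,I_2)$ argument goes through as in the trivial case, except when $\gamma \subseteq \partial\Sigma$: a sub-cylinder of $\mathcal{C}(\gamma)$ containing $\gamma$ itself contains a component of $\partial\Sigma$, and falls outside the scope of Lemma~\ref{lmCleanConjEmb}. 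I would treat that edge case by the elementary observation that any clean doubly connected set meeting the conjugation fixed set $\partial\Sigma$ is automatically conjugation invariant, so such a sub-cylinder lies on the conjugation invariant side of the equivalence and can be handled by hand.

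The principal obstacle I anticipate is the bookkeeping in this last sub-case, where Lemma~\ref{lmCleanConjEmb} does not apply directly; the cleanness trick for sub-cylinders meeting $\partial\Sigma$, together with conjugation invariance of $\mathcal{C}(\gamma)$ to propagate the property to the partner annulus, is what lets one close the argument.
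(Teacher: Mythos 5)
Your overall strategy matches the paper's: reduce to Lemma~\ref{lmCleanConjEmb} through the intermediate annulus $M(I_1,I_2)$, using Lemma~\ref{lmI1sharpI2} to ensure $M(I_1,I_2)$ is clean and doubly connected, and then apply Lemma~\ref{lmCleanConjEmb} twice. You also correctly flag the place where that reduction breaks down: when $\gamma\subset\partial\Sigma$ and a sub-cylinder or $M(I_1,I_2)$ contains a component of $\partial\Sigma$, the hypothesis of Lemma~\ref{lmCleanConjEmb} fails. The paper's own proof passes over this silently, so spotting it is genuinely useful.

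However, your proposed repair does not close the gap. The observation that a clean doubly connected set meeting $\partial\Sigma$ is conjugation invariant tells you something about the annulus that contains $\gamma$, but nothing about its partner, and conjugation invariance of $\mathcal{C}(\gamma)$ does not ``propagate'' to sub-cylinders. In fact the statement fails in precisely this configuration: in $(\rho,\theta)$ coordinates on $\mathcal{C}(\partial\Sigma)$ with $\partial\Sigma=\{\rho=0\}$, take $I_1=\{-c<\rho<c\}$ and $I_2=\{d<\rho<e\}$ with $0<d<e$. Both are clean open sub-cylinders, hence admissible nontrivial annuli, and both embed nontrivially in the clean doubly connected $\mathcal{C}(\partial\Sigma)$, so they are topologically related; yet $I_1$ is conjugation invariant while $I_2\cap\overline{I_2}=\emptyset$. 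So there is nothing to ``close by hand''---the implication is simply false there. The reason this does no damage to the paper is that Lemma~\ref{ConjInvTrans} is only invoked in the proof of Lemma~\ref{lmRelEq}, where the $I_i$ are trivial admissible annuli; a trivial admissible annulus is contractible in $\tilde{\Sigma}$ and therefore can never contain a boundary geodesic, so the problematic case never arises. A correct write-up should either restrict the statement to trivial admissible annuli, or add the hypothesis that $M(I_1,I_2)$ contains no component of $\partial\Sigma$.
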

\begin{proof}
If $I_1$ is conjugation invariant then by Lemmas~\ref{lmI1sharpI2} and~\ref{lmCleanConjEmb} it follows that $M(I_1,I_2)$ is conjugation invariant. Again applying Lemma \ref{lmCleanConjEmb} it follows that $I_2$ is conjugation invariant. The converse is obtained by exchanging the roles of $I_1$ and $I_2$.
\end{proof}
\begin{lm}\label{lmRelEq}
For $i=1,2,3$ let $I_i$ be admissible annuli. Write $I_i=B_i\setminus B'_i$ where $B'_i$ is a clean closed disc, $B_i$ a clean open disc and $B'_i$ is concentric with $B_i$. Suppose that the pairs $(I_1,I_2)$ and $(I_2,I_3)$ are topologically related. If $B'_1\nsubseteq M(I_2,I_3)$ and $B'_3\nsubseteq M(I_1,I_2)$ then $I_1$ and $I_3$ are topologically related.
\end{lm}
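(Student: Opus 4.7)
The plan is to verify the second case of Theorem~\ref{lmRelChar} for the pair $(I_1,I_3)$. Since each $I_i$ is a trivial admissible annulus of the form $B_i\setminus B'_i$, topological relatedness with another trivial admissible annulus is equivalent to condition~\ref{lmRelChar2}. Thus, from the hypotheses, we extract $B'_1\cap B'_2\neq\emptyset$, $B'_2\cap B'_3\neq\emptyset$, and cleanness of both $I_1\cup I_2$ and $I_2\cup I_3$. It remains to prove $B'_1\cap B'_3\neq\emptyset$ and that $I_1\cup I_3$ is clean.

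For the intersection, I would argue by contradiction. Suppose $B'_1\cap B'_3=\emptyset$, so $B'_1\cap B'_2\cap B'_3=\emptyset$. Write $p_i$ for the center of $B_i$ and $r_i\geq r'_i$ for the radii, recalling from Definition~\ref{dfAdmAnn} that $r'_i\leq r_i/5$. Under the contradiction assumption, the hypothesis $B'_1\nsubseteq M(I_2,I_3)$ furnishes a point $p_\star\in B'_1$ with $p_\star\notin B_2\cup B_3$. Since $B'_1\cap B_2\supseteq B'_1\cap B'_2\neq\emptyset$, connectedness of $B'_1$ forces it to cross $\partial B_2$; applying the reverse triangle inequality at such a crossing gives $d(p_1,p_2)\geq r_2-r'_1$. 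Combined with $d(p_1,p_2)\leq r'_1+r'_2$ and admissibility, this yields $r_1\geq 2r_2$. On the other hand, $p_\star\notin B_3$ gives $d(p_1,p_3)\geq d(p_\star,p_3)-d(p_\star,p_1)\geq r_3-r'_1$, while the triangle inequality provides
\[
d(p_1,p_3)\leq d(p_1,p_2)+d(p_2,p_3)\leq r'_1+2r'_2+r'_3\leq (r_1+2r_2+r_3)/5,
\]
whence $2r_3\leq r_1+r_2$. The symmetric argument from $B'_3\nsubseteq M(I_1,I_2)$ yields $r_3\geq 2r_2$ and $2r_1\leq r_2+r_3$. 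Summing the two triangle-type inequalities gives $r_1+r_3\leq 2r_2$, contradicting $r_1+r_3\geq 4r_2$.

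For cleanness of $I_1\cup I_3$, I would invoke Lemma~\ref{ConjInvTrans}: topological relatedness of the two given pairs forces $I_1,I_2,I_3$ to share a common conjugation-invariance status. If all are conjugation invariant, so is $I_1\cup I_3$, hence clean. Otherwise, by Lemma~\ref{lmCleanannB} each $B_i$ is clean and not conjugation invariant, hence contained in a single component of $\tilde\Sigma\setminus\partial\Sigma$; the nonempty overlaps $B'_1\cap B'_2$ and $B'_2\cap B'_3$ then place all three $B_i$ in the same component, so $I_1\cap\overline{I_3}=\emptyset=I_3\cap\overline{I_1}$, and cleanness follows from Lemma~\ref{lmCleanUnion}\ref{lmCleanUnioncond2}. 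Applying Theorem~\ref{lmRelChar} to $(I_1,I_3)$ then closes the proof.

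The hard part is the metric bookkeeping in the second paragraph: the contradiction relies crucially on the admissibility ratio $r'_i\leq r_i/5$, which is just tight enough that the triangle-type inequalities combine with the straddling lower bounds to contradict $r_1,r_3\geq 2r_2$. The cleanness argument in the third paragraph is routine bookkeeping with the conjugation structure.
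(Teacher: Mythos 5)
Your proof is correct. It uses the same toolkit as the paper — Theorem~\ref{lmRelChar}, the cleanness lemmas (Lemmas~\ref{ConjInvTrans},~\ref{lmCleanannB},~\ref{lmCleanUnion}), and the $1{:}5$ admissibility ratio — and your cleanness argument is essentially identical to the paper's. But the organization of the arithmetic is genuinely different. The paper first proves an unconditional structural dichotomy (either $B'_1\subset B_2\cup B_3$ or $B'_3\subset B_1\cup B_2$), by taking a single point $x\in B'_1$ and showing that $x\notin B_2\cup B_3$ forces, via a chain of inner-radius bounds $r'_2<\tfrac12 r'_1$ and $r'_3<\tfrac34 r'_1$, the containment $B'_3\subset B_1$. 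Only afterwards does it invoke the hypotheses $B'_i\nsubseteq M(\cdot,\cdot)$ together with the contradiction assumption $B'_1\cap B'_2\cap B'_3=\emptyset$ to finish. You instead assume $B'_1\cap B'_3=\emptyset$ up front, extract straddling points from \emph{both} hypotheses simultaneously, and convert everything into outer-radius inequalities ($r_1\geq 2r_2$, $r_3\geq 2r_2$, $2r_3\leq r_1+r_2$, $2r_1\leq r_2+r_3$) which you sum to get a purely numerical contradiction. This symmetric packaging is arguably cleaner and makes the role of the admissibility ratio more transparent; the paper's version, by contrast, establishes the strictly stronger intermediate fact that $B'_1\cap B'_2\cap B'_3\neq\emptyset$ and exhibits the covering dichotomy as a reusable structural statement.
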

\begin{proof}
We show first that $I_1\cup I_3$ is clean. If $I_1$ is conjugation invariant then by Lemma~\ref{ConjInvTrans} so is $I_3$. If $I_1$ is not conjugation invariant then there is a component $A$ of $\tilde{\Sigma}\setminus\partial\Sigma$ which contains $I_1$. By Theorem~\ref{lmRelChar}\ref{lmRelChar2}, $B'_2\cap B'_1\neq\emptyset$. Since $I_2$ is not conjugation invariant it follows that $I_2\subset A$. By repeating this argument, $I_3\subset A$. Thus $I_1\cup I_3$ is clean.

We now wish show that $B'_1\cap B'_3\neq\emptyset$. For this we show first that either $B'_1\subset B_2\cup B_3$ or $B'_3\subset B_1\cup B_2$. By Theorem \ref{lmRelChar}, $B'_1\cap B'_2\neq\emptyset$ and $B'_2\cap B'_3\neq\emptyset$. That is,
\[
d(p_1,p_2)\leq r_1'+r_2'
\]
and
\[
d(p_2,p_3)\leq r_2'+r_3'.
\]
Let now $x\in B'_1.$ Then
\[
d(x,p_2)\leq d(x,p_1)+d(p_1,p_2)\leq 2r'_1+r'_2,
\]
and
\begin{equation}\label{lmRelEqEstrt30}
d(x,p_3)\leq d(x,p_1)+d(p_1,p_2)+d(p_2,p_3)\leq 2r'_1+2r'_2+r'_3.
\end{equation}
Suppose $x\not\in B_2\cup B_3$ then $2r'_1+r'_2> r_2\geq5r'_2$. This implies
\begin{equation}\label{lmRelEqEstrt1}
r'_2<\frac1{2}r'_1.
\end{equation}
Combining estimates~\eqref{lmRelEqEstrt30} and~\eqref{lmRelEqEstrt1} and the estimate $r_3\geq 5r_3'$, we get
\begin{equation}\label{lmRelEqEstrt2}
r'_3<\frac{3}{4}r'_1.
\end{equation}
On the other hand, for \textit{any} $x\in B'_3$ we have
\[
d(x,p_1)\leq d(x,p_3)+d(p_1,p_3)\leq r'_3+d(p_1,p_3).
\]
Therefore, combining estimates \eqref{lmRelEqEstrt1} and \eqref{lmRelEqEstrt2},
\[
d(p_1,p_3)\leq r'_1+2r'_2+r'_3<(2+\frac{3}{4})r'_1< r_1-r'_3.
\]
That is, $B'_3\subset B_1\subset B_1\cup B_2$ as claimed.

We use this to show that $B'_1\cap B'_2\cap B'_3\neq\emptyset$. Suppose by contradiction
\begin{equation}\label{lmRelEqContAs}
B'_1\cap B'_2\cap B'_3=\emptyset.
\end{equation}
Then in case $B'_1\subset B_2\cup B_3$, assumption \eqref{lmRelEqContAs} implies $B'_1\subset M(I_2,I_3)$. Similarly in case $B'_3\subset B_1\cup B_2$, assumption  \eqref{lmRelEqContAs} implies $B'_3\subset M(I_1,I_2)$. In any case we get a contradiction to the assumptions of the lemma.
\end{proof}

\subsection{Essential disjointeness}
\begin{tm}\label{tmEssDisj}
There is a constant $K_1$ with the following significance. Let $I_1,I_2\in\hat{\mathcal{A}}_h$. Suppose $C(K_1,K_1;I_1)\cap C(K_1,K_1;I_2)\neq\emptyset.$ Then either $\tilde{\Sigma}$ is a torus covered by $I_1$ and $I_2$, or
\[
b_1(I_1\cup I_2)\leq 1.
\]
Here for a topological space $X$, $b_1(X)$ denotes the first Betti number of $X$.
\end{tm}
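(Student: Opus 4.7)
The plan is to choose $K_1$ large enough that any point of $C(K_1, K_1; I)$ lies geometrically deep in $I$ with a definite buffer from $\partial I$, and then to proceed by a case analysis on whether each $I_i$ is a trivial admissible annulus $I_i = B_{r_i}(p_i; h) \setminus \bar B_{r'_i}(p_i; h)$ or a nontrivial sub-cylinder $I_i \subset \mathcal{C}(\gamma_i)$, as in Definition~\ref{dfAdmAnn}. In each case the aim is to embed $I_1 \cup I_2$ into a doubly or simply connected subset of $\tilde{\Sigma}$, so that $b_1(I_1 \cup I_2) \leq 1$, with the torus case as the only exception.

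If both $I_i$ are nontrivial, write $I_i \subset \mathcal{C}(\gamma_i)$. When the genus of $\tilde\Sigma$ exceeds $1$, the collar theorem (Theorem~\ref{TmThTh}\ref{it:coll}) forces the collars of distinct short closed geodesics to be disjoint; for $K_1$ large, $C(K_1, K_1; I_i)$ lies well inside $\mathcal{C}(\gamma_i)$, so deep centers can meet only when $\gamma_1 = \gamma_2$, and then $I_1 \cup I_2 \subset \mathcal{C}(\gamma)$ has $b_1 \leq 1$. When the genus of $\tilde\Sigma$ is at most $1$, the only candidate geodesics are $\partial\Sigma$, $\alpha_1$, and (in $\hat{\mathcal{A}}_h$) $\alpha_0$; the one situation in which deep centers from two distinct collars can meet is on the torus, where $\mathcal{C}(\alpha_0) \cup \mathcal{C}(\alpha_1) = \tilde\Sigma$, yielding the stated exception.

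If both $I_i$ are trivial, the bounds $r'_i \leq r_i/5$ and $r_i \leq \inj(\tilde\Sigma; h, p_i)/3$ make geodesic and cylindrical coordinates on $I_i$ comparable up to a universal factor, so any $x \in C(K_1, K_1; I_i)$ has $d_h(x, p_i)$ sandwiched between $c\, e^{K_1} r'_i$ and $c^{-1} e^{-K_1} r_i$ for a universal $c > 0$. If $B'_1 \cap B'_2 \neq \emptyset$, Theorem~\ref{lmRelChar}\ref{lmRelChar2} and Lemma~\ref{lmRelDCon} give $I_1 \cup I_2 \subset M(I_1, I_2)$, an annulus. If $B'_1 \cap B'_2 = \emptyset$, a triangle-inequality analysis (together with cleanness via Lemma~\ref{lmCleanUnion}) forces each inner disk $B'_i$ either to be contained in the opposite annulus $I_j$ or to meet the opposite outer disk $B_j$ in a connected crescent; $I_1 \cup I_2$ is then a disk, an annulus, or a disk with a connected closed set removed, each having $b_1 \leq 1$.

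Finally, if $I_1$ is trivial and $I_2 \subset \mathcal{C}(\gamma_2)$ is nontrivial, contractibility of $I_1$ in $\tilde\Sigma$ together with non-contractibility of $\gamma_2$ means $I_1$ cannot straddle $\gamma_2$; since its deep center meets the interior of $\mathcal{C}(\gamma_2)$, the disk $B_1$ must be embedded (trivially) inside $\mathcal{C}(\gamma_2)$, so $I_1 \cup I_2 \subset \mathcal{C}(\gamma_2)$ and $b_1 \leq 1$. The main technical obstacle throughout is the trivial--trivial case with $r_1 \gg r_2$: here $B_2$ may sit deep inside $B_1$ in various configurations relative to $B'_1$, and Lemmas~\ref{lmCleanUnion}--\ref{lmCleanConjEmb} must be invoked carefully to track cleanness and conjugation invariance while identifying an annular or disk-with-hole container for $I_1 \cup I_2$.
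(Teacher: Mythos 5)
Your overall strategy—case analysis on trivial/nontrivial annuli, controlling the geometric depth of $C(K_1,K_1;I_i)$ via a sandwich on $d_h(x,p_i)$, and packaging $I_1\cup I_2$ inside a nice container—is reasonable and in part parallels the paper's own division into Lemmas~\ref{EsDisjCyl}, \ref{EsDisjAn}, and \ref{EsDisjAnCyl}. However, at three separate points you pass from ``$I_1\cup I_2$ is contained in an annulus (or in a collar $\mathcal{C}(\gamma)$)'' to ``$b_1(I_1\cup I_2)\leq 1$,'' and this implication is false in general: a connected open subset of an annulus can have arbitrarily large first Betti number (think of a disk with several holes). The statement you need, and what the paper's lemmas actually deliver, is that $I_1\cup I_2$ \emph{is} an annulus, a disk, or $I_2$ itself—not merely a subset of one.

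Concretely, the gap is most serious in the mixed case. You assert ``$I_1\cup I_2\subset\mathcal{C}(\gamma_2)$ and $b_1\leq 1$,'' but even granting $B_1\subset\mathcal{C}(\gamma_2)$ (which itself needs an argument when $genus(\tilde\Sigma)\geq 1$, since $B_1$ can protrude from a collar when $p_1$ is near $\partial\mathcal{C}(\gamma_2)$), the union of a sub-cylinder with a punctured disk inside the collar is not obviously doubly connected. The paper's Lemma~\ref{EsDisjAnCyl} proves the sharper fact that, unless $I_1\subset I_2$, the entire overlap of $I_1$ with $\mathcal{C}(\gamma_2)$ lies in a band of uniformly bounded modulus, and hence misses the deep core $C(K,K;I_2)$ for $K$ large; so when the deep centers do meet one has $I_1\subset I_2$ and thus $I_1\cup I_2=I_2$ outright. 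Your route would need a replacement for this, e.g.\ lifting $B_1$ and $I_2$ to the universal cover of $\mathcal{C}(\gamma_2)$ to show $B_1\cap I_2$ is contractible and then computing $b_1$ by Mayer--Vietoris; none of that is present. A similar but milder gap appears in the trivial--trivial case: when $B'_1\cap B'_2\neq\emptyset$ you conclude from $I_1\cup I_2\subset M(I_1,I_2)$ that $b_1\leq1$, but you actually need the equality $I_1\cup I_2=M(I_1,I_2)$, which requires showing $B'_i\subset B_{3-i}$ for both $i$ (this does follow from your sandwich estimate when the deep centers meet, but it must be said). Note also that the paper argues this case by contraposition in Lemma~\ref{EsDisjAn}—assuming $b_1\geq2$ and deducing essential disjointness—which sidesteps the classification of $I_1\cup I_2$ altogether; your direct approach is viable but demands that you nail down the actual topology of $I_1\cup I_2$, not just a containing set.
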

The proof of Theorem~\ref{tmEssDisj} spans this subsection.
\begin{df}\label{dfEsDisj}
$I_1$ and $I_2$ are said to be \textbf{essentially disjoint} if
\[
C(K_1,K_1;I_1)\cap C(K_1,K_1;I_2)=\emptyset,
\]
where $K_1$ is a constant satisfying Theorem \ref{tmEssDisj} that is fixed once and for all. In later uses it will be convenient to assume further that $K_1\geq c_2+\pi$.
\end{df}

\begin{lm}\label{EsDisjCyl}
Let $I_1$ and $I_2$ be admissible nontrivial annuli. If
\[
I_1\cap I_2\neq\emptyset
\]
then each component\footnote{The possibility of more than one component appears when $genus(\tilde{\Sigma})=1.$}  of $I_1\cap I_2$ is a sub-cylinder of $I_i$ for $i=1,2$.
\end{lm}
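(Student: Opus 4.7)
The plan is to show that $I_1$ and $I_2$ are sub-cylinders of a common collar and then read off their intersection in cylindrical coordinates. By Definition~\ref{dfAdmAnn}, each $I_i$ is an open sub-cylinder of some collar $\mathcal{C}(\gamma_i)$, where $\gamma_i$ is a simple closed geodesic of a type prescribed by $\mathrm{genus}(\tilde{\Sigma})$.

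The first step is to establish that $\gamma_1=\gamma_2$. In the cases $\mathrm{genus}(\tilde\Sigma)=0$ and $\mathrm{genus}(\tilde\Sigma)=1$, Definition~\ref{dfAdmAnn} forces $\gamma$ to equal a single prescribed geodesic ($\partial\Sigma$ or $\alpha_1$), so there is nothing to prove. In the case $\mathrm{genus}(\tilde\Sigma)>1$, both $\gamma_i$ have length $<2\sinh^{-1}(1)$, and I would argue by contradiction: assume $\gamma_1\neq\gamma_2$. Just as in the proof of Lemma~\ref{lmCleanMC}, \cite[4.1.2]{Bu} prevents two distinct simple closed geodesics of length $<2\sinh^{-1}(1)$ from intersecting transversally, since the product $\sinh(\ell(\gamma_1)/2)\sinh(\ell(\gamma_2)/2)$ would then be strictly less than $1$. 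Hence $\gamma_1$ and $\gamma_2$ would be disjoint; extending $\{\gamma_1,\gamma_2\}$ to a pair-of-pants decomposition and applying Theorem~\ref{TmThTh}\ref{it:coll} would then force $\mathcal{C}(\gamma_1)\cap\mathcal{C}(\gamma_2)=\emptyset$, contradicting $I_1\cap I_2\neq\emptyset$. Therefore $\gamma_1=\gamma_2=:\gamma$.

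With a common $\gamma$ in hand, I would use the cylindrical coordinates $(\rho,\theta)$ on $\mathcal{C}(\gamma)$ supplied by Definition~\ref{dfCNeighbGeo} (and in the higher genus case by Theorem~\ref{TmThTh}\ref{it:spco}). Each $I_i$, being an open sub-cylinder of $\mathcal{C}(\gamma)$, takes the form $I_i=\{(\rho,\theta):\rho\in K_i\}$ for an open sub-interval $K_i$ of the $\rho$-range. Therefore
\[
I_1\cap I_2=\{(\rho,\theta):\rho\in K_1\cap K_2\},
\]
and each connected component of this set is of the shape $\{\rho\in K\}$ for an open interval $K\subseteq K_1\cap K_2$. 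Any such piece is visibly a sub-cylinder of $\mathcal{C}(\gamma)$, and since $K\subseteq K_i$, it is a sub-cylinder of $I_i$ for $i=1,2$, as required.

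The main obstacle is the first step in the higher genus case, where the identification $\gamma_1=\gamma_2$ rests on the quantitative Buser estimate in conjunction with the collar lemma; this is precisely where the length bound $<2\sinh^{-1}(1)$ imposed on admissible cylinders in Definition~\ref{dfAdmAnn} is used. Once this is secured, the description of $I_1\cap I_2$ reduces to intersecting two open intervals in a single coordinate chart and involves no further analytic input.
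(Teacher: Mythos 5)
Your proof follows essentially the same route as the paper's: in genus $>1$ you rule out $\gamma_1\neq\gamma_2$ by combining the collar lemma (disjoint short geodesics have disjoint collars, so $I_1\cap I_2=\emptyset$) with the Buser estimate (transversal intersection forces $\sinh(\ell(\gamma_1)/2)\sinh(\ell(\gamma_2)/2)\geq 1$, incompatible with admissibility), and then read off the intersection in $\rho$-coordinates on the common collar. This is exactly the paper's argument for that case, and your genus $0$ argument also agrees with the paper.

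The one place you dispose of the work too quickly is $\mathrm{genus}(\tilde\Sigma)=1$. You write that Definition~\ref{dfAdmAnn} forces $\gamma=\alpha_1$ for both $I_i$, ``so there is nothing to prove'' — but then your coordinate computation gives $I_1\cap I_2=\{\rho\in K_1\cap K_2\}$, a \emph{single} sub-cylinder, which would make the footnote on the statement (``the possibility of more than one component appears when $\mathrm{genus}(\tilde\Sigma)=1$'') vacuous. In fact the lemma is invoked in the proof of Theorem~\ref{tmEssDisj} with $I_1,I_2\in\hat{\mathcal{A}}_h$, where in genus $1$ one of the collars may be $\mathcal{C}(\alpha_0)$ and the other $\mathcal{C}(\alpha_1)$; these are the torus with $\alpha_1$ (resp.\ $\alpha_0$) removed, they overlap on both sides, and sub-cylinders of each can meet in two components. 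Your argument is still repaired by the same observation the paper makes — $\alpha_0$ and $\alpha_1$ are parallel, so $\mathcal{C}(\alpha_0)$ and $\mathcal{C}(\alpha_1)$ share a cylindrical coordinate system on their intersection, and each component of $I_1\cap I_2$ is a $\rho$-interval in both — but that step needs to be stated explicitly rather than dismissed as vacuous.

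One small bibliographic point in your favor: the paper's own proof cites \cite[4.1.1]{Bu} for the transversality–length estimate, whereas elsewhere (Lemma~\ref{lmCleanMC}) it cites \cite[4.1.2]{Bu} for the same fact; your reference to 4.1.2 appears to be the internally consistent one.
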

\begin{proof}
By definition \ref {dfAdmAnn} there are simple closed geodesics $\gamma_i$ such that $I_i$ is a sub-cylinder of $\mathcal{C}(\gamma_i)$ for $i=1,2$. First assume $genus(\tilde{\Sigma})>1$. The assumption
\[
I_1\cap I_2\neq\emptyset
\]
and Theorem \ref{TmThTh}\ref{it:coll} imply that $\gamma_1\cap\gamma_2\neq\emptyset$. If $\gamma_1$ intersects $\gamma_2$ transversally in a nonempty set then by \cite[4.1.1]{Bu} there is an $i\in\{1,2\}$ such that $\ell(\gamma_i)\geq 2\sinh^{-1}(1)$. This is contrary to the definition of admissibility. Since $\gamma_1$ and $\gamma_2$ are geodesics which intersect non transversally, $\gamma_1=\gamma_2$. The intersection of sub-cylinders of a given cylinder is a sub-cylinder. Thus the claim follows.

Assume now that $genus(\tilde{\Sigma})=0$. Then by definition $\partial\Sigma\neq\emptyset$ and $I_i$ are both sub-cylinders of the $\mathcal{C}(\partial\Sigma)$. So, the claim follows as before. Finally, assume $genus(\tilde{\Sigma})=1$. We claim that $\gamma_1$ is parallel to $\gamma_2$. Indeed, the alternative is that $\gamma_1$ intersects $\gamma_2$ transversally. But then
\[
\ell(\gamma_1)\ell(\gamma_2)>Area(\Sigma;h)=1>\sinh^{-1}(1),
\]
contradicting the admissibility of $I_1$ and $I_2$. This implies the claim.
\end{proof}

\begin{lm}\label{lmBettyNmuDiscAn}
Let $I$ and $B$ be a geodesic annulus and a geodesic disc, respectively, in the hyperbolic disc, in the Riemann sphere, or in the flat plane. Then $b_1(I\cup B)\leq 1$.
\end{lm}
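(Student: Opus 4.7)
The plan is to compute $X\setminus(I\cup B)$ in the ambient simply connected surface $X \in \{\mathbb{H}^2,\mathbb{R}^2,S^2\}$ and show that this complement has at most two connected components; since the first Betti number of any planar open set is bounded above by the number of bounded components of its complement (or the number of components of the complement minus one, in the $S^2$ case), this will yield $b_1(I\cup B)\leq 1$. Write $I=B_2\setminus\overline{B_1}$ with $B_i=B_{r_i}(p)$ concentric geodesic discs, $r_1<r_2$. Since $\overline{B_1}\subset B_2$, the two sets
\[
Z_1:=\overline{B_1}\setminus B,\qquad Z_2:=X\setminus(B_2\cup B),
\]
are disjoint and together make up $X\setminus(I\cup B)$.

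The heart of the argument is to show that each $Z_i$ is connected (or empty). For $Z_1=\overline{B_1}\cap(X\setminus B)$: in the flat and hyperbolic cases $\overline{B_1}$ is convex, and removing the open convex disc $B$ from it yields either the empty set, a crescent bounded by two arcs, or an annulus (when $\overline{B}\subset B_1$), all connected. In the spherical case $X\setminus B$ is itself a closed geodesic disc, so $Z_1$ is an intersection of two closed geodesic discs in $S^2$; I claim any such intersection is connected. One checks this by cases on how the two bounding small circles meet: disjointly (with sub-configurations including the annular band around $S^2$ that arises when both discs are large), tangentially at one point, or transversally at two points. For $Z_2=X\setminus(B_2\cup B)$: in $\mathbb{R}^2$ or $\mathbb{H}^2$, two overlapping open geodesic discs have a union that is a Jordan domain (its boundary is two circular arcs joined at their endpoints), whose complement in the plane is connected; the disjoint and nested sub-cases are immediate. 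In $S^2$, $Z_2$ is again an intersection of two closed geodesic discs by passing to complements, hence connected.

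To conclude, embed $X$ in $S^2$ (trivially if $X=S^2$, by one-point compactification otherwise). In the non-spherical cases, the added point at infinity lies in $Z_2$, which already contains a neighborhood of infinity, so $Z_2\cup\{\infty\}$ stays connected and $S^2\setminus(I\cup B)$ has at most two components. By the standard identification for connected open $U\subset S^2$, $b_1(U)=|\pi_0(S^2\setminus U)|-1$, we obtain $b_1(I\cup B)\leq 1$. The degenerate disjoint case $I\cap B=\emptyset$ is handled directly: $b_1(I\cup B)=b_1(I)+b_1(B)=1+0=1$.

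The main obstacle is the spherical sub-claim that any two closed geodesic discs in $S^2$ meet in a connected set; for large radii the discs fail to be convex and the intersection can be an annular band around the sphere rather than a convex lens, and one needs to dispatch this configuration along with the standard lens, containment, and disjoint cases.
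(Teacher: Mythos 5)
Your proof is correct, but it takes a genuinely different route from the paper's. The paper argues via Mayer--Vietoris: if $b_1(I\cup B)\geq 2$ then $I\cap B$ must have at least two components, which (after a short combinatorial step) would force some boundary circle of $I$ to meet $\partial B$ in at least four points; but in the conformal disc/plane/sphere models every geodesic circle is an ordinary Euclidean circle, and two distinct circles meet in at most two points. This is a three-line contradiction whose only geometric input is the ``geodesic circles are round circles'' fact. Your proof instead works on the complement side: you decompose $X\setminus(I\cup B)$ as the disjoint union $Z_1\sqcup Z_2$, show each $Z_i$ is connected (or empty) by a case analysis on how the bounding circles meet, pass to the one-point compactification, and invoke $b_1(U)=|\pi_0(S^2\setminus U)|-1$ for connected open $U\subsetneq S^2$. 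Both arguments ultimately rest on the same elementary fact about circle intersections, but you access it through a disc-intersection connectedness claim that requires dispatching several spherical sub-cases (nested, disjoint, lens, the annular band when both discs exceed a hemisphere, tangencies), whereas the paper's Mayer--Vietoris reduction bypasses all of that by working directly with boundary circles rather than discs. What your version buys is explicitness: it exhibits the two complementary regions and makes the topology fully transparent, at the cost of the extra bookkeeping on $S^2$. One small stylistic point: the identity $b_1(U)=|\pi_0(S^2\setminus U)|-1$ you invoke is an equality for connected proper open $U\subset S^2$, not merely an upper bound; you should also note, as you implicitly do by excluding $I\cup B=S^2$, that it degenerates in that boundary case (where $b_1=0$ anyway).
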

\begin{proof}
Suppose by contradiction otherwise. Then, by the Meyer Vietoris sequence, $I\cap B$ has at least two components. In particular, there is a boundary component of $\gamma\subset\partial I$ such that $\gamma\cap\partial B$ consists of at least four points. On the other hand, any two geodesic circles are also circles with respect to the flat metric on the disc. Any two such circles intersect in at must two points. A contradiction.
\end{proof}
\begin{lm}\label{EsDisjAn}
There is a constant $K$ with the following significance. Let $I_1$ and $I_2$ be admissible trivial annuli. Assume that $b_1(I_1\cup I_2)\geq 2$ Then
\[
C(K,K;I_1)\cap C(K,K;I_2)=\emptyset.
\]
\end{lm}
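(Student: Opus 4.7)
The plan is to argue by contradiction: assume $b_1(I_1 \cup I_2) \geq 2$ and $C(K,K;I_1) \cap C(K,K;I_2) \neq \emptyset$ simultaneously, and derive a contradiction for $K$ sufficiently large. Write $I_i = B_i \setminus B_i'$ with $B_i = B_{r_i}(p_i)$ and $B_i' = B_{r_i'}(p_i)$ concentric and $r_i' \leq r_i/5$. Since the middle parts share a point, $I_1 \cap I_2 \neq \emptyset$ and hence $B_1 \cap B_2 \neq \emptyset$, so by Lemma~\ref{lmBallsContr}(a) $B_1 \cup B_2$ sits inside a single geodesic disc $D$ identified with a domain in the flat plane, hyperbolic disc, or Riemann sphere.

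First I would carry out a topological analysis. Since $B_1 \cup B_2$ is a topological disc by Lemma~\ref{lmBallsContr}(c), Alexander duality identifies $b_1(I_1 \cup I_2)$ with the number of components of
\[
(B_1 \cup B_2) \setminus (I_1 \cup I_2) = (B_1' \cap B_2') \cup (B_1' \setminus B_2) \cup (B_2' \setminus B_1).
\]
Applying the intermediate value theorem to the radial distance from $p_j$ shows that any path in $I_j^c$ connecting the piece $B_1' \cap B_2'$ to the piece $B_i' \setminus B_j$ must pass through $I_j$; thus distinct pieces lie in distinct components whenever nonempty. The case $B_1' \cap B_2' \neq \emptyset$ together with, say, $B_1' \setminus B_2 \neq \emptyset$ forces simultaneously $d(p_1,p_2) < r_1' + r_2'$ and $d(p_1,p_2) > r_2 - r_1'$, giving $r_2 < 2 r_1' + r_2' \leq 3 r_2/5$, a contradiction to admissibility. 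Therefore $b_1(I_1 \cup I_2) \geq 2$ forces $B_1' \cap B_2' = \emptyset$ together with $B_1' \not\subset B_2$ and $B_2' \not\subset B_1$.

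The second step converts this topological information into incompatible distance estimates. Assuming without loss of generality $r_1 \leq r_2$, the condition $B_1' \not\subset B_2$ produces a point of $B_1'$ outside $B_2$, whence by the triangle inequality $d(p_1,p_2) > r_2 - r_1' \geq 4 r_2/5$. In the other direction, using equation~\eqref{eq:forhth}, the middle part $C(K,K;I_i)$ is contained in a geodesic ball of radius $\rho_K^{(i)}$ around $p_i$, with $\rho_K^{(i)} = r_i e^{-K}$ in the flat case, $\rho_K^{(i)} = 2\tanh^{-1}(e^{-K}\tanh(r_i/2))$ in the hyperbolic case, and $\rho_K^{(i)} = 2\arctan(e^{-K}\tan(r_i/2))$ in the spherical case. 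Using the inequalities $\tanh(u),\tan(u) \lesssim u$ in the admissible range (where $r_i \leq \inj/3$ keeps the spherical $r_i$ below $\pi/3$ and keeps the arguments of $\tanh^{-1}$ and $\arctan$ bounded away from their singularities once $K$ exceeds an absolute constant), one checks a uniform estimate $\rho_K^{(i)} \leq C r_i e^{-K}$ in all three geometries. Middle parts intersecting then gives $d(p_1,p_2) \leq \rho_K^{(1)} + \rho_K^{(2)} \leq 2 C r_2 e^{-K}$. Combined with $d(p_1,p_2) > 4 r_2 / 5$ this forces $e^{-K} > 2/(5C)$, so choosing $K$ greater than this universal threshold yields a contradiction. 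The main technical obstacle is verifying the uniform estimate $\rho_K^{(i)} \leq C r_i e^{-K}$ uniformly across the three constant curvature geometries, handling separately the small-$r_i$ and large-$r_i$ regimes via the respective asymptotics of the inverse hyperbolic/circular functions.
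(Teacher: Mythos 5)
Your Step 2 — the claim that $b_1(I_1\cup I_2)\geq 2$ forces $B_1'\cap B_2'=\emptyset$ — is false, and this is exactly the case the paper's proof has to treat separately. Take the flat plane with $p_1=0$, $r_1=1$, $r_1'=1/5$, $p_2=1/4$, $r_2=2/5$, $r_2'=2/25$ (all admissibility constraints met). Then $d(p_1,p_2)=1/4$, and one checks: $B_1'\cap B_2'\neq\emptyset$ (since $1/4<1/5+2/25$), while also $B_1'\not\subset B_2$ (since $1/4+1/5>2/5$), and $B_2'\subset B_1$. So $(B_1\cup B_2)\setminus(I_1\cup I_2)$ has the two nonempty components $B_1'\cap B_2'$ and $B_1'\setminus B_2$ and $b_1(I_1\cup I_2)=2$, yet the inner balls overlap. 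The step where your reasoning breaks is the inequality ``$2r_1'+r_2'\leq 3r_2/5$'': you have $2r_1'+r_2'\leq 2r_1/5+r_2/5$, which is $\leq 3r_2/5$ only if $r_1\leq r_2$. But at this point no such normalization has been (or can harmlessly be) imposed, because the label of the overhanging inner ball is forced by geometry, not by choice: if you normalize $r_1\leq r_2$, you land in the case $B_2'\setminus B_1\neq\emptyset$, which gives only $r_1<r_1'+2r_2'$, i.e.\ $r_1<r_2/2$, not a contradiction. Concretely, the configuration ``larger annulus' inner ball overhangs the smaller annulus' outer disc, while the inner balls still meet'' is topologically a figure-eight complement ($b_1=2$) and genuinely occurs.

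The upshot is that you are missing the second half of the argument. The paper runs exactly the dichotomy you set up, but keeps both branches: (i) $B_1'\cap B_2'=\emptyset$, handled essentially as in your Step 3 (separating the middle cylinders by expanding outward from the $p_i$), and (ii) $B_1'\cap B_2'\neq\emptyset$, where instead of a contradiction one derives $d(p_1,p_2)+r_2\leq 4r_1'$ (WLOG $B_1'\not\subset B_2$) and shows $I_2$ is swallowed by the small annulus $J=A(4r_1',r_1';p_1)$ of bounded modulus, so $C(\mathrm{Mod}\,J,\mathrm{Mod}\,J;I_1)\cap I_2=\emptyset$. Your Step 3 technique can in fact be adapted to branch (ii) by using the \emph{inner} radius of the middle cylinder (which grows like $r_i'e^{K}$) instead of the outer radius, combined with $d<r_1'+r_2'$; but as written, your proof simply asserts branch (ii) is vacuous, which is not the case.
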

\begin{proof}
Write $I_i=A(r_i,r'_i;p_i)$, $B_i=B_{r_i}(p_i)$, and $B'_i=B^c_{r'_i}(p_i)$ for $i=1,2$. First assume $B'_1\cap B'_2=\emptyset$. Note that the assumption on $I_1\cup I_2$ implies
\begin{align}\label{EqEsDisj2}
B'_i\not\subset B_{i\bmod{2} +1}.
\end{align}
Indeed, suppose for example that $B'_1\subset B_2\setminus B'_2$ then
\[
I_1\cup I_2=I_2\cup B_1.
\]
By Lemma \ref{lmBettyNmuDiscAn} this would imply $b_1(I_1\cup I_2)\leq 1$ contradicting the assumption.

It follows that $r_1< d(p_1,p_2)+r'_2$ and $r_2< d(p_1,p_2)+r'_1$. The combination of these inequalities with the condition $r'_i\leq\frac1{5}r_i$ in the definition of admissibility implies that
 \begin{align}\label{EqEsDisj3}
 d(p_1,p_2)> \frac{2}{5}(r_1+r_2).
 \end{align}
 Let now $s_i=\frac{2r_id(p_1,p_2)}{3(r_1+r_2)}$ for $i=1,2$. Write $J_i=A(s_i,r'_i;p_i)$. By equation~\eqref{EqEsDisj3},
 \[
 s_i>\frac1{5}r_i\geq r'_i.
 \]
In particular, $J_i\neq\emptyset$. We have $s_1+s_2<d(p_1,p_2)$, so
\begin{align}\label{EqEsDisj4}
J_1\cap J_2=\emptyset.
\end{align}
 It thus suffices to show that there is a universal constant $K$ such that $C(K,K;I_i)\subset J_i$ for $i=1,2$. Write $L_i:=A(r_i,s_i;p_i)$. $L_i$ is a sub-cylinder of $I_i$, so $I_i\backslash L_i=C(0,Mod(L_i);I_i)$. Note that $J_i=I_i\setminus L_i$, so $C(Mod(L_i),Mod(L_i);I)\subset J_i$. It therefore suffices to uniformly bound $Mod(L_i)$. We have
\begin{align}\label{EqEsDisjj5}
Mod(L_i)=\int_{s_i}^{r_i}\frac{dr}{h_{\theta}(r)}
\end{align}
We have either ${h_{\theta}(r)}={\sin(r)}$ and $r_i\leq\pi/2$, or ${h_{\theta}(r)}={r}$, or ${h_{\theta}(r)}={\sinh(r)}$, so we need only verify the boundedness of expression~\eqref{EqEsDisjj5} when $s_i\rightarrow 0$. But $s_i/r_i\geq\frac{4}{15}$, so this is obvious.

Now assume $B'_1\cap B'_2\neq\emptyset$. First note that the assumption on $I_1\cup I_2$ implies that  either $B_1'\not\subset B_2$ or $B_2'\not\subset B_1$. Indeed, otherwise
\[
I_1\cup I_2=B_1\cup B_2\setminus B'_1\cap B'_2.
\]
By Lemma \ref{lmRelDCon} we would then have that $b_1(I_1\cup I_2)=1$ in contradiction to the assumption of the Lemma. Thus we may, without loss of generality, assume $B_1'\not\subset B_2$. Since $B'_1\cap B'_2\neq\emptyset$, $d(p_1,p_2)\leq r_1'+r_2'$. On the other hand , since $B'_1\not\subset B_2$, $r_2\leq d(p_1,p_2)+r'_1.$ The combination of these two inequalities implies that $r_2\leq \frac{5}{2}r'_1$. We thus have
 \[
 d(p_1,p_2)+r_2\leq r_1'+r_2'+\frac{5}{2}r'_1\leq4r'_1.
  \]
 Therefore, letting $J=A(4r'_1,r'_1;p_1)$, we have $(I_1\setminus J)\cap I_2=\emptyset$. On the other hand
 \[
C(Mod(J),Mod(J);I_1)\subset I_1\setminus J.
\]
We have that $Mod(J)$ is bounded from above by some constant $K$ which is independent of $r'_1$. The claim follows.
\end{proof}

\begin{lm}\label{EsDisjAnCyl}
There is a constant $K$ with the following significance. Let $I_1=A(r_1,r'_1;p)\in\mathcal{A}_h$ be trivial and $I_2\in\hat{\mathcal{A}}_h$ be nontrivial. Suppose $I_1\not\subset I_2$. Then $I_1\cap C(K,K;I_2)=\emptyset$.
\end{lm}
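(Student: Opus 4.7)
The plan is to show that when lifted to the universal cover of $\mathcal{C}(\gamma)$, the disc $B=B_{r_1}(p;h)$ with $I_1=B\setminus B'$ has universally bounded $h_{st}$-diameter, where $\gamma$ is the simple closed geodesic with $I_2$ a sub-cylinder of $\mathcal{C}(\gamma)$. Combined with $I_1\not\subset I_2$ and the connectedness of $B$, this forces $I_1\cap C(K,K;I_2)=\emptyset$ for $K$ slightly larger than the diameter bound.

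The first step is the topological input: any loop $\ell\subset B\cap\mathcal{C}(\gamma)$ is null-homotopic in the disc $B\subset\tilde\Sigma$, so $n[\gamma]=0$ in $\pi_1(\tilde\Sigma)$, where $n$ is the winding number of $\ell$ around $\gamma$ in $\mathcal{C}(\gamma)$. In every relevant case $[\gamma]$ has infinite order in $\pi_1(\tilde\Sigma)$: by torsion-freeness of $\pi_1$ when $genus(\tilde\Sigma)\geq 1$, and in the remaining genus-$0$ case by the observation that nontrivial admissible annuli only exist when $\tilde\Sigma$ is a cylinder generated by $[\gamma]=[\partial\Sigma]$. Hence $n=0$, so each connected component of $B\cap\mathcal{C}(\gamma)$ maps trivially into $\pi_1(\mathcal{C}(\gamma))$ and therefore lifts to the universal cover $\widetilde{\mathcal{C}(\gamma)}$.

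The second step is the geometric bound. In cylindrical coordinates $(\rho,\theta)$ on $\mathcal{C}(\gamma)$ with $h=d\rho^2+f(\rho)^2d\theta^2$, the conformal factor between $h$ and $h_{st}$ equals $f(\rho)$. The explicit formula for $\inj$ in each constant-curvature case -- for instance $\sinh\inj(z;h)=\sinh(\ell(\gamma)/2)\cosh(\rho(z))$ in the hyperbolic collar via Theorem~\ref{TmThTh2}\ref{it:fml} -- yields a universal bound $\inj(z;h)\leq C\,f(\rho(z))$. Admissibility gives $r_1\leq\inj(p;h)/3$, and $f$ varies by a bounded multiplicative factor across any $h$-ball of radius $\leq\inj/3$, so both the $\theta$-extent and the $s$-extent of any lift $\widetilde B$ of a component of $B\cap\mathcal{C}(\gamma)$ in $\widetilde{\mathcal{C}(\gamma)}$ are bounded by a universal constant $K_0$.

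With these in hand, take $K:=K_0+1$ and suppose for contradiction that $q\in I_1\cap C(K,K;I_2)$. Let $C_q$ be the connected component of $B\cap I_2$ containing $q$, choose a lift $\tilde q$ of $q$, and let $\widetilde{C_q}\subset\widetilde B$ be the lift of $C_q$ through $\tilde q$. Then $\widetilde{C_q}$ has $h_{st}$-diameter at most $K_0$, while $\tilde q$ lies at $h_{st}$-distance at least $K>K_0$ from the lifts of $\partial I_2$, so $\overline{\widetilde{C_q}}$ is disjoint from every such lift. Projecting back, $\overline{C_q}\cap B$ is disjoint from $B\cap\partial I_2$, so $C_q$ is clopen in $B$; connectedness of $B$ then forces $C_q=B$, hence $B\subset I_2$ and $I_1\subset I_2$, contradicting the hypothesis. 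The main obstacle is the universal $h_{st}$-diameter bound of the third paragraph: it requires a case-by-case calculation in the three constant-curvature models, but in each it reduces to the elementary relation $\inj\sim f(\rho)$ in the collar.
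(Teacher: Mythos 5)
Your proof is correct, and it follows the same underlying strategy as the paper's: in both arguments the essential point is the relation $\inj(z;h)\leq C\,h_\theta(\rho(z))$ along $\mathcal{C}(\gamma)$, which, combined with the admissibility bound $r_1\leq\tfrac{1}{3}\inj(\tilde\Sigma;h,p)$, gives a uniform bound on the conformal ($h_{st}$) extent of $B\cap\mathcal{C}(\gamma)$, after which connectedness of $B$ forces $I_1\cap C(K,K;I_2)=\emptyset$. The presentations differ in two ways. You pass to the universal cover of $\mathcal{C}(\gamma)$ to speak of an $h_{st}$-diameter, which requires the preliminary winding-number observation (that $[\gamma]$ has infinite order so components of $B\cap\mathcal{C}(\gamma)$ lift); the paper instead works directly with the globally defined coordinate $\rho$ on $\mathcal{C}(\gamma)$, bounding $\rho_1-\rho_0$ by $\sup_I\inj$ (treating separately the cases $p\in I$ and $p\notin I$) and then integrating $d\rho/h_\theta$ over the slice. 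Since only the $\rho$-coordinate enters the comparison with $\partial I_2$, the lift is avoidable, though certainly sound. You also handle genus $0$ uniformly with the other cases, where the paper instead appeals to the method of Lemma~\ref{EsDisjAn}, and your clopen argument at the end makes explicit the passage from the modulus bound to the stated conclusion, which the paper leaves implicit. Finally, the identity you quote, $\sinh\inj=\sinh(\ell(\gamma)/2)\cosh\rho$, does follow from formula~\eqref{injEq} of Theorem~\ref{TmThTh2} after substituting $d=w(\gamma)-|\rho|$ together with $\sinh w(\gamma)=1/\sinh(\ell(\gamma)/2)$, so that step is fine.
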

\begin{proof}
First assume $genus(\tilde{\Sigma})\geq 1$. By definition, there is a simple closed geodesic $\gamma$ such that $I_2$ is a sub-cylinder in $\mathcal{C}(\gamma)$. Recall the definition of $(\rho, \theta)$ coordinates on $\mathcal{C}(\gamma)$. Write $\rho_0=\inf\{\rho(z)|z\in I_1\cap I_2\}$ and $\rho_1=\sup \{\rho(z)|z\in I_1\cap I_2\}$. Let $I\subset\mathcal{C}(\gamma)$ be given in $(\rho,\theta)$ coordinates by
\[
\{(\rho,\theta)|\rho_0\leq\rho\leq\rho_1\}.
\]
Denote by $\beta_i$ the components $\{\rho=\rho_i\}$ of $\partial I$ for $i=0,1$. Now note that since $B_{r_1}(p_1)$ is a disc of radius $r<\frac1{3}\inj(\tilde{\Sigma};p_1)$, we have
\begin{align}\label{eqRhoinj}
\rho_1-\rho_0<\sup_{z\in I}\inj(\tilde{\Sigma};z).
\end{align}
Indeed, this is obvious if $p_1\in I$. Otherwise, suppose without loss of generality that $\beta_0$ lies between $p_1$ and $\beta_1$, and let $p'$ be the intersection of the perpendicular from $p_1$ with $\beta_0$. Then $B_{\rho_1-\rho_0}(p')\subset B_r(p_1)$. In particular, $\rho_1-\rho_0<\inj(\tilde{\Sigma};p')$. This establishes inequality~\eqref{eqRhoinj}.

Now, $\inj(\tilde{\Sigma},\cdot)$ is either constant or has no local maximum in $I$. When $genus(\tilde{\Sigma})>1$ this can be seen from relation~\eqref{injEq}. Else $\inj$ is constant. We may therefore assume without loss of generality that $\inj(\tilde{\Sigma},\cdot)$ attains its supremum at $\rho_0$ (we no longer make the assumption from the previous paragraph about $\beta_0$). By the assumption on the genus, $\beta_0$ is not contractible. Therefore,
\[
\inj(\Sigma;(\rho_0,\theta(\cdot)))\leq \frac12\ell(\beta_0)=\pi h_{\theta}(\rho_0).
\]
Thus, we have the estimate
\[
Mod (I)\leq\int_{\rho_0}^{\rho_0+\pi h_{\theta}(\rho_0)}\frac{d\rho}{h_{\theta}(\rho)}.
\]
The last expression is bounded by a universal constant $K$. Indeed, in case $genus(\tilde{\Sigma})=1$, $h_{\theta}$ is constant and the bound is obvious.
Otherwise, using Theorem~\ref{TmThTh}\ref{it:spco}, the last expression is estimated by $Ce^{\ell(\gamma)\pi\cosh(\rho_0)}$ for an a priori constant $C$. Using the definition of $\rho_0$ and $\mathcal{C}(\gamma)$ we have
\[
\ell(\gamma)\pi\cosh(\rho_0)\leq \ell(\gamma)\pi\cosh(w(\gamma))\leq C'.
\]
Here $w(\gamma)$ is as defined in Theorem~\ref{TmThTh}\ref{it:coll} and $C'$ is an a priori constant.

When $genus(\tilde{\Sigma})=0$, $\mathcal{C}(\gamma)$ is an annulus in $\tilde{\Sigma}$ and the claim follows with slight modification in the same way  as Lemma \ref{EsDisjAn}.
\end{proof}

\begin{proof}[Proof of Theorem \ref{tmEssDisj}]
Let $K_1$ be a constant as in Lemmas \ref{EsDisjAn} and \ref{EsDisjAnCyl}. Suppose $C(K_1,K_1;I_1)\cap C(K_1,K_1;I_2)\neq\emptyset$. If $I_1$ and $I_2$ are both trivial annuli, the theorem is just a restatement of Lemma \ref{EsDisjAn}. If both $I_1$ and $I_2$ are nontrivial and $\Sigma_\C$ is not a torus covered by $I_1$ and $I_2$, Lemma \ref{EsDisjCyl} implies $I_1$ and $I_2$ intersect in a sub-cylinder $I$. So, $I_1\cup I_2$ is a sub-cylinder of $\mathcal{C}(\gamma)$ for some simple closed geodesic $\gamma$. In particular $b_1(I_1\cup I_2)=1$. Finally, if $I_1$ is trivial and $I_2$ is nontrivial. Then by Lemma~\ref{EsDisjAnCyl}, $I_1\cup I_2=I_2$.
\end{proof}

\subsection{Long annuli}
\begin{lm}\label{lmExLNdisj}
There is a constant $K_2$ with the following significance. Let $L\geq K_2$ and let $I_1,I_2\in\mathcal{A}_h$ be essentially disjoint. Suppose $Mod(I_i)>4L$ for $i=1,2$. Suppose further that $I_1$ and $I_2$ are topologically related. Then
\begin{align}\label{eqlmExLNdisj}
Mod (m(I_1,I_2))>\max\{Mod(I_1),Mod(I_2)\}+2L.
\end{align}
\end{lm}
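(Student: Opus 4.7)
The proof splits along Theorem~\ref{lmRelChar}: either both $I_i$ are nontrivial sub-cylinders of a common $\mathcal{C}(\gamma)$, or both are trivial annuli $I_i = B_{r_i}(p_i)\setminus B^c_{r'_i}(p_i)$ with $B'_1\cap B'_2 \neq \emptyset$. In either case the strategy is the same: essential disjointness of the cores $C(K_1,K_1;I_i)$ together with the lower bound $\mathrm{Mod}(I_i)>4L$ pins down the relative position of $I_1,I_2$ tightly enough to yield a quantitative lower bound on $\mathrm{Mod}(m(I_1,I_2))$. I expect $K_2 = K_1+C$ with $C$ an absolute constant to suffice.

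In the nontrivial case $m(I_1,I_2)=M(I_1,I_2)$. Use the biholomorphism $\mathcal{C}(\gamma)\simeq[0,T]\times S^1$ to write each $I_i=(\tilde a_i,\tilde b_i)\times S^1$, so that $\mathrm{Mod}(I_i)=\tilde b_i-\tilde a_i>4L$ and $C(K_1,K_1;I_i)=(\tilde a_i+K_1,\tilde b_i-K_1)\times S^1$. Essential disjointness of the cores combined with $\mathrm{Mod}(I_i)>2K_1$ rules out a nested configuration (which would force $\mathrm{Mod}(I_j)\leq 2K_1$), so after relabeling $\tilde a_1\leq\tilde a_2$, $\tilde b_1\leq\tilde b_2$ and $\tilde b_1-\tilde a_2\leq 2K_1$. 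Away from the symmetric boundary sub-case, $\mathrm{Mod}(M)=\tilde b_2-\tilde a_1$ and
\[
\mathrm{Mod}(M)-\max_i\mathrm{Mod}(I_i)\;\geq\;\min_i\mathrm{Mod}(I_i)-2K_1\;>\;4L-2K_1\;>\;2L
\]
for $L>K_1$. In the symmetric boundary sub-case, if both $I_i$ straddle $\gamma\subset\partial\Sigma$ then both cores contain $\gamma$, violating essential disjointness; the mixed symmetric/one-sided sub-case is handled by the same kind of direct estimate using the explicit form of $M$ from its definition.

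In the trivial case, set $\Psi(a,b)=\int_a^b d\rho/h_\theta(\rho)$, WLOG $r_2\leq r_1$, and $C(K_1,K_1;I_i)=A(\alpha_i,\beta_i,p_i)$ with $\Psi(r'_i,\alpha_i)=\Psi(\beta_i,r_i)=K_1$. From $B'_1\cap B'_2\neq\emptyset$ one gets $B_{\alpha_1}(p_1)\cap B_{\alpha_2}(p_2)\neq\emptyset$, so the inner discs of the cores meet. Disjointness of the two core annuli, combined with this meeting, leaves (via a planar-style case analysis in 2-dimensional constant curvature) three possibilities: (a) $B_{\beta_2}(p_2)\subset B_{\alpha_1}(p_1)$, i.e.\ $d(p_1,p_2)+\beta_2\leq\alpha_1$; (b) the symmetric inclusion $B_{\beta_1}(p_1)\subset B_{\alpha_2}(p_2)$; or (c) a side-by-side configuration with $d(p_1,p_2)\geq\max\{\alpha_1,\alpha_2\}$. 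Configuration (c) contradicts $d(p_1,p_2)<r'_1+r'_2$ as soon as $K_1$ is large enough that $\alpha_i>r'_1+r'_2$, which is ensured by $K_1\geq c_2+\pi$. Configuration (b) forces $\beta_1\leq\alpha_2$, and $\Psi$-additivity then gives $\Psi(r'_2,r_2)\leq\Psi(r'_2,\alpha_2)+\Psi(\beta_1,r_1)\leq 2K_1$, contradicting $\mathrm{Mod}(I_2)>4L$. In configuration (a), the same $\Psi$-additivity applied to $r'_2<\beta_2\leq\alpha_1$ forces $r'_2\leq r'_1$ with $\Psi(r'_2,r'_1)>4L-2K_1$, while $d(p_2,\partial B_1)\geq r_1-\alpha_1$ and the smallness of $\alpha_1$ relative to $r_1$ (from $\mathrm{Mod}(I_1)>4L$ making $\Psi(r_1-\alpha_1,r_1)$ negligible) yield
\[
\mathrm{Mod}(m)-\mathrm{Mod}(I_1)\;\geq\;\Psi(r'_2,r'_1)-\Psi(r_1-\alpha_1,r_1), \qquad \mathrm{Mod}(m)-\mathrm{Mod}(I_2)\;\geq\;\Psi(r_2,r_1-\alpha_1),
\]
and each right-hand side exceeds $2L$ for $L\geq K_2$.

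The main obstacle is carrying out the inequality chain in configuration (a) uniformly across the three constant curvatures $K\in\{0,\pm 1\}$. The flat case reduces all manipulations of $\Psi$ to elementary logarithms; the hyperbolic and spherical cases are controlled using the admissibility bound $r\leq\tfrac{1}{3}\mathrm{inj}$, which keeps radii away from the degenerations of $h_\theta$, together with inequalities~\eqref{eq:bdrc1}--\eqref{eq:bdrc2} which reduce the non-flat $\Psi$ to a logarithm up to bounded multiplicative constants.
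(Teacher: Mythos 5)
Your overall strategy coincides with the paper's: split along Theorem~\ref{lmRelChar} into the cylinder (nontrivial) and geodesic-annulus (trivial) cases, and use essential disjointness of the cores to obtain the modulus bound, choosing $K_2$ to absorb $K_1$ plus an error term. For cylinders the argument is the same coordinate computation the paper calls ``straightforward.'' For trivial annuli the paper is more economical: it writes $\operatorname{Mod}(m(I_1,I_2))=\int_{r'_2}^{r}dr/h_\theta$ with $r=d(p_2,\partial B_1)\geq r_1-2r'_1$, and directly estimates
$\int_{r'_2}^{r_1-2r'_1}dr/h_\theta\geq \operatorname{Mod}(I_1)+\operatorname{Mod}(I_2)-2K_1-\Delta$ where $\Delta=\int_{r_1-2r'_1}^{r_1}dr/h_\theta$, which is bounded uniformly by $r'_1/h_\theta(3r'_1)$ using admissibility $r'_1\leq r_1/5$ and monotonicity of $h_\theta$, with no case split. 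Your configuration analysis essentially supplies the justification (left implicit in the paper) for the step $\int_{r'_1}^{r_2}dr/h_\theta\leq 2K_1$, so it is compatible in spirit. A few things to tighten: your configuration~(c) cannot actually occur once $B'_1\cap B'_2\neq\emptyset$ and the cores are disjoint --- the standard lens argument for two intersecting discs in constant curvature leaves only the nested possibilities (a) and (b), so listing (c) is harmless but unnecessary; the assertion that $\Psi(r_1-\alpha_1,r_1)$ is ``negligible'' is not uniform in $L$ as written (it requires $L$ large relative to $K_1$), whereas the paper's $\Delta$ with lower endpoint $r_1-2r'_1$ depends only on admissibility and so is $L$-independent, which is what makes $K_2=K_1+\Delta/2$ an honest constant; and your claim in the boundary sub-case that ``both cores contain $\gamma$'' needs the cleanness/conjugation-invariance of the $I_i$ (which forces $\tilde a_i=-\tilde b_i$), not merely that the $I_i$ straddle $\gamma$.
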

\begin{proof}
When $I_1$ and $I_2$ are cylinders, it is straightforward to verify that the claim holds whenever $K_2\geq K_1$ where $K_1$ is the constant from Definition \ref{dfEsDisj}. Otherwise, for $i=1,2$, write $I_i= A(r_i,r'_i,p_i)$ and $B'_i=B^c_{r'_i}(p_i)$. Assume without loss of generality that $r'_2\leq r'_1$. Then Theorem~\ref{lmRelChar}\ref{lmRelChar2} implies
\[
d(p_2,\partial B_1)\geq r_1-2r'_1,
\]
and admissibility implies $p_2\in B_1$. Let $r=d(p_2,\partial B_1)$ and write
\[
I:=m(I_1,I_2)=B_r(p_2)\backslash B'_2.
\]
Let
\[
\Delta=\int_{r_1-2r_1'}^{r_1}\frac{dr}{h_{\theta}(r)}.
\]
Then, applying equation~\eqref{ModEq},
\begin{align}
Mod(I)&=\int_{r'_2}^{r}\frac{dr}{h_{\theta}(r)}\notag\\
&\geq\int_{r_2'}^{r_1-2r_1'}\frac{dr}{h_{\theta}(r)}\notag\\
&=\int_{r'_2}^{r_1}\frac{dr}{h_{\theta}(r)}-\Delta\notag\\
&\geq Mod(I_1)+Mod(I_2)-2K_1-\Delta\notag.
\end{align}
The claim of the lemma will follow if we find a $K_2$ such that
\[
\Delta+2K_1\leq 2K_2.
\]
In other words it suffices to bound $\Delta$ uniformly from above. By the restrictions on the range of $r_1$ in the definition of admissibility, $h_{\theta}$ is monotone increasing. See equation~\eqref{eq:pch}. Therefore,
\[
\Delta\leq \frac{2r'_1}{h_{\theta}(r_1-2r'_1)}\leq\frac{r'_1}{h_{\theta}(3r'_1)}.
\]
But whether the curvature of $h$ is positive, negative or vanishing,
\[
\lim_{r\rightarrow 0} \frac{r}{h_{\theta}(r)}=1.
\]
Relying again on equation~\eqref{eq:pch} we get that $\Delta$ is uniformly bounded from above whenever the curvature is non-positive. When the curvature is positive, we still have that  $\Delta$ is uniformly bounded in the range of admissibility
\[
r'_1\in(0,\frac{\pi}{15}].
\]
\end{proof}

\begin{lm}\label{lmI1Isharp2I}
Let $I_i=A(r_i,r_i';p_i)\in\mathcal{A}_h$ for $i=1,2$. Suppose $r_2\leq r_1$, $I_1$ is topologically related to $I_2$, and $M(I_1,I_2)\neq I_1\cup I_2$. Then
\begin{enumerate}
\item\label{EqRtagEst}
\begin{align}
d(p_1,p_2)\leq r'_1+r'_2,\notag
\end{align}
\item\label{Eqr3r1tagest}
\begin{equation}
r_2<\frac{5}{2}r_1',\notag
\end{equation}
\item\label{eqJII1}
\[
M(I_1,I_2)= I_1\cup m(I_1,I_2).
\]
\end{enumerate}

\end{lm}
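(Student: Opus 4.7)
The approach is to unpack the definitions of $M(I_1,I_2)$ and $m(I_1,I_2)$ for trivial admissible annuli, writing $I_i = B_i \setminus B_i'$ with $B_i'$ a closed disc concentric with the open disc $B_i$, and then perform a set-theoretic analysis governed by the triangle inequality and the admissibility constraint $r_i' \leq r_i/5$.

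Part (a) is immediate from Theorem~\ref{lmRelChar}\ref{lmRelChar2}: topological relatedness of trivial annuli forces $B_1' \cap B_2' \neq \emptyset$, and the triangle inequality applied to any point in this intersection yields $d(p_1,p_2) \leq r_1' + r_2'$.

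For (b), the key observation is that
\[
M(I_1,I_2) \setminus (I_1 \cup I_2) \;=\; (B_1' \setminus B_2) \,\cup\, (B_2' \setminus B_1),
\]
which one verifies by a direct case analysis on whether $x \in B_1 \cap B_2$, $x \in B_1 \setminus B_2$, or $x \in B_2 \setminus B_1$. By hypothesis this set is non-empty. If some $x$ were to lie in $B_2' \setminus B_1$, then
\[
r_1 \leq d(x,p_1) \leq d(x,p_2) + d(p_1,p_2) \leq r_2' + r_1' + r_2' \leq \tfrac{1}{5}r_1 + \tfrac{2}{5}r_1 = \tfrac{3}{5}r_1,
\]
using $r_2 \leq r_1$ together with $r_i' \leq r_i/5$; this contradiction rules out that set. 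Hence we may pick $x \in B_1' \setminus B_2$, which gives $r_2 \leq d(x,p_2) \leq r_1' + d(p_1,p_2) \leq 2r_1' + r_2'$, and combining with $r_2' \leq r_2/5$ yields $r_2 \leq \tfrac{5}{2}r_1'$, as claimed in (b).

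For (c), parts (a) and (b) together give $d(p_1,p_2) \leq r_1' + r_2' \leq \tfrac{3}{2}r_1'$. Then any $y \in B_2$ satisfies $d(y,p_1) \leq r_2 + d(p_1,p_2) \leq 4r_1' \leq \tfrac{4}{5}r_1 < r_1$, so $B_2 \subseteq B_1$ and $M(I_1,I_2) = B_1 \setminus (B_1' \cap B_2')$. With $s := d(p_2,\partial B_1) = r_1 - d(p_1,p_2)$, we have $m(I_1,I_2) = B_s(p_2) \setminus B_2'$. The inclusion $I_1 \cup m(I_1,I_2) \subseteq M(I_1,I_2)$ is immediate from $B_s(p_2) \subseteq B_1$. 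For the reverse inclusion, take $y \in M(I_1,I_2)$: if $y \notin B_1'$ then $y \in I_1$, and otherwise $y \in B_1' \setminus B_2'$, in which case $d(y,p_2) \leq r_1' + d(p_1,p_2)$ while $s = r_1 - d(p_1,p_2)$, and the inequality $2d(p_1,p_2) + r_1' \leq 4r_1' \leq \tfrac{4}{5}r_1 < r_1$ places $y \in B_s(p_2) \setminus B_2' = m(I_1,I_2)$.

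The main conceptual step is identifying the structure of $M(I_1,I_2) \setminus (I_1 \cup I_2)$ as $(B_1' \setminus B_2) \cup (B_2' \setminus B_1)$ and using the asymmetry $r_2 \leq r_1$ to eliminate one side; every remaining obstacle is careful but routine triangle-inequality bookkeeping within the admissibility ratio $r_i'/r_i \leq 1/5$.
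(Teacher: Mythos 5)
Your proof is correct and follows essentially the same approach as the paper's. You identify $B_1'\cap B_2'\neq\emptyset$ from Theorem~\ref{lmRelChar}\ref{lmRelChar2}, establish the set-theoretic identity $M(I_1,I_2)\setminus(I_1\cup I_2)=(B_1'\setminus B_2)\cup(B_2'\setminus B_1)$, use the asymmetry $r_2\leq r_1$ and admissibility to rule out $B_2'\setminus B_1$, deduce $(b)$ from a nonempty $B_1'\setminus B_2$, and then establish $B_2\subset B_1$ and $M(I_1,I_2)\setminus I_1=B_1'\setminus B_2'$ to prove $(c)$; this is the paper's argument. The only cosmetic differences: you prove equality where the paper is content with the inclusion $M(I_1,I_2)\setminus(I_1\cup I_2)\subset(B_1'\setminus B_2)\cup(B_2'\setminus B_1)$, and you show $B_2'\subset B_1$ by contradiction rather than directly. (One very small caveat shared with the paper: with $B_i'$ closed and $B_i$ open, the triangle-inequality chain delivers $r_2\leq\tfrac52 r_1'$ rather than the stated strict inequality; this has no effect on how the lemma is used.)
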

\begin{proof}
Write $B_i=B_{r_i}(p_i)$, $B'_i=B^c_{r'_i}(p_i)$ and $J=M(I_1,I_2)$. By Lemma \ref{lmRelEq},
\begin{align}\label{eqbtag1btag2}
B'_1\cap B'_2\neq\emptyset.
\end{align}
Part~\ref{EqRtagEst} is an immediate consequence.
To prove part~\ref{Eqr3r1tagest}, verify using $J=B_1\cup B_2\setminus \left(B'_1\cap B'_2\right)$ that
\begin{align}\label{seteq}
J\setminus (I_1\cup I_2)\subset \left(B'_1\setminus B_2\right)\cup \left(B'_2\setminus B_1\right).
\end{align}
On the other hand, relation~\eqref{eqbtag1btag2}, admissibility, and the assumption $r_2\leq r_1$, imply $B'_2\subset B_1$. Indeed, we have for any $q\in B'_2$
\[
d(q,p_1)\leq 2r'_1+r'_2\leq \frac25r_1+\frac15r_2\leq\frac35 r_1<r_1.
\]
So, by relation~\eqref{seteq}, $B'_1\setminus B_2\neq\emptyset$. Let $q\in B'_1\setminus B_2$. Then $d(q,p_1)<r'_1$ and $r_2<d(q,p_2)$. Combining these inequalities we get
\begin{equation}\label{lmI1Isharp2IEq111}
r_2<d(q,p_2)\leq d(q,p_1)+d(p_1,p_2)<2r'_1+r'_2.
\end{equation}
Since $I_2$ is admissible, $r'_2\leq\frac1{5}r_2$. Thus, inequality~\eqref{lmI1Isharp2IEq111} implies part~\ref{Eqr3r1tagest}.

We prove part~\ref{eqJII1}. By definition, $I_1$ and $m(I_1,I_2)$ are subsets of $M(I_1,I_2)$. We prove the reverse inclusion. Using part~\ref{Eqr3r1tagest} and admissibility, one verifies that $B_2\subset B_1$. Therefore,
 \[
 M(I_1,I_2)\setminus I_1=B_1\setminus \left((B'_1\cap B'_2)\cup I_1\right)=B'_1\setminus B'_2.
\]
Write $r=d(p_2,\partial B_1)$. We need to show that
\[
B'_1\setminus B'_2\subset m(I_1,I_2)=B_r(p_2)\setminus B'_2.
\]
For this it suffices to show that $B'_1\subset  B_{r}(p_2)$. That is,
\[
d(p_1,p_2)+r_1'< r.
\]
But by parts~\ref{EqRtagEst} and~\ref{Eqr3r1tagest} we get
\begin{align}
r&=d(p_2,\partial B_1)\notag\\& \geq d(p_1,\partial B_1)-d(p_1,p_2)\notag\\
&\geq r_1-r'_1-r'_2\notag\\&\geq3r'_1\notag\\&>d(p_1,p_2)+r_1'\notag.
\end{align}
\end{proof}
\begin{lm}\label{lmK3def}
There is a constant $K_3$ with the following significance. Let  $I_i=A(r_i,r'_i;p_i)\subset \mathcal{A}_h$, for $i=1,2$. Let $L\geq K_3$ and suppose
\[
Mod I_i\geq2L.
\]
Suppose $I_1$ is topologically related to $I_2$ and let $I=m(I_1,I_2)$. Then
\begin{enumerate}
\item\label{lmK3defit1} $M(I_1,I_2)=I_1\cup I_2\cup C(L,L;I)$.
\item\label{lmK3defit2} $I\setminus C(L,L;I)\subset I_1\cup I_2.$
\end{enumerate}
\end{lm}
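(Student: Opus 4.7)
My plan is to first observe that Parts~\ref{lmK3defit1} and~\ref{lmK3defit2} of the lemma are essentially equivalent in all sub-cases I need to handle, so the work is concentrated in Part~\ref{lmK3defit2}. In the nontrivial case $m(I_1,I_2) = M(I_1,I_2)$ directly, while in the trivial case Lemma~\ref{lmI1Isharp2I}\ref{eqJII1} gives $M(I_1,I_2) = I_1 \cup m(I_1,I_2)$ whenever $M(I_1,I_2) \neq I_1 \cup I_2$, and the remaining sub-case $M = I_1 \cup I_2$ forces $I \subset M \subset I_1 \cup I_2$ trivially. In each instance, the inclusion $I_1 \cup I_2 \cup C(L,L;I) \subset M$ is immediate, and Part~\ref{lmK3defit2} combined with $M \subset I_1 \cup I$ (or $M = I$, or $M = I_1 \cup I_2$) gives the reverse inclusion. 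So I reduce to proving $I \setminus C(L,L;I) \subset I_1 \cup I_2$.

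For the nontrivial case, $I = M$ is a sub-cylinder of some $\mathcal{C}(\gamma)$, parameterised by $\rho$, with boundary components either equal to boundary components of some $I_i$ or, in the symmetric branch ($\gamma \subset \partial\Sigma$, $0 \in [\rho_0, \rho_1]$), reflections of them across $\gamma$. In the first case, a point at conformal distance less than $L$ from the boundary of $I$ that comes from $I_i$ lies in the $\rho$-interval of $I_i$ because $\operatorname{Mod}(I_i) \geq 2L$ means the full width of $I_i$ already exceeds $L$. In the symmetric case, cleanness of $I_i$ forces any $I_i$ that meets $\gamma$ in its interior to be conjugation invariant, hence symmetric around $\rho = 0$, and the previous argument applies to each boundary after observing that points on the opposite side of $\gamma$ are automatically deep inside $C(L,L;I)$ by the $\operatorname{Mod}(I_i) \geq 2L$ bound.

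For the trivial case with $M \neq I_1 \cup I_2$, I apply Lemma~\ref{lmI1Isharp2I}. After relabeling so that $r_2 \leq r_1$, this gives $d(p_1, p_2) \leq r'_1 + r'_2$, $r_2 < \tfrac{5}{2} r'_1$ (so $r'_2 < \tfrac{1}{2} r'_1$), and $M = I_1 \cup I$ where $I = B_r(p_2) \setminus B^c_{r'_2}(p_2)$ with $r = d(p_2, \partial B_1) \geq r_1 - r'_1 - r'_2$. For $q \in I$ at radial distance $\rho = d(q, p_2)$ from $p_2$: if the conformal distance from $q$ to the inner boundary at radius $r'_2$ is less than $L$, then $\int_{r'_2}^{\rho} 1/h_\theta < L$, and since $\operatorname{Mod}(I_2) = \int_{r'_2}^{r_2} 1/h_\theta \geq 2L$ we conclude $\rho < r_2$, placing $q$ inside $I_2$. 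If instead the conformal distance to the outer boundary at radius $r$ is less than $L$, I aim to show $\rho > 2r'_1 + r'_2$; then by the triangle inequality $d(q, p_1) \geq \rho - (r'_1 + r'_2) > r'_1$, so $q \in B_{r_1}(p_1) \setminus B^c_{r'_1}(p_1) = I_1$.

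The heart of the matter — and the main obstacle — is the last bound. It reduces to showing $\int_{2r'_1 + r'_2}^{r} 1/h_\theta \geq L$, which together with $\int_{\rho}^{r} 1/h_\theta < L$ forces $\rho > 2r'_1 + r'_2$. Using $2r'_1 + r'_2 < \tfrac{5}{2} r'_1$ and $r \geq r_1 - \tfrac{3}{2} r'_1$, I split $\operatorname{Mod}(I_1) = \int_{r'_1}^{r_1} 1/h_\theta$ into the three sub-intervals $[r'_1, \tfrac{5}{2} r'_1]$, $[\tfrac{5}{2} r'_1, r_1 - \tfrac{3}{2} r'_1]$, and $[r_1 - \tfrac{3}{2} r'_1, r_1]$, and bound the two end-pieces by a universal constant $C$, exactly as in the proof of Lemma~\ref{lmExLNdisj}: this uses the admissibility bound $r'_1 \leq r_1/5$, the monotonicity of $h_\theta$ in the admissibility range, and the universal limit $r/h_\theta(r) \to 1$ as $r \to 0$ across $K = 0, \pm 1$. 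Then $\int_{\tfrac{5}{2}r'_1}^{r_1 - \tfrac{3}{2} r'_1} 1/h_\theta \geq 2L - 2C$, which is at least $L$ as soon as $L \geq K_3 := 2C$, and the lemma follows.
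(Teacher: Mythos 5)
Your proof is correct and follows essentially the same route as the paper's: establish Part~\ref{lmK3defit2} first by showing that each end-annulus of $I = m(I_1,I_2)$ lands in one of the $I_i$ via a modulus computation that splits $Mod(I_1)$ into a large middle piece and two universally bounded end pieces, then deduce Part~\ref{lmK3defit1} from Lemma~\ref{lmI1Isharp2I}\ref{eqJII1}. The only differences are cosmetic — you argue pointwise rather than about the whole end-annulus, choose slightly different breakpoints ($\tfrac52 r'_1$ and $r_1 - \tfrac32 r'_1$ rather than $3r'_1$ and $\tfrac12 r_1$), and include a superfluous discussion of nontrivial annuli, which the lemma's hypothesis $I_i = A(r_i,r'_i;p_i)$ already excludes.
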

\begin{proof}
Write $B_i=B_{r_i}(p_i)$, $B'_i=B^c_{r'_i}(p_i)$, and $J=M(I_1,I_2)$. If
\[
J=I_1\cup I_2
\]
there is nothing to prove. We thus assume $J\neq I_1\cup I_2$. We first show that for some fixed $K_3$ chosen large enough, part~\ref{lmK3defit2} holds. Let $J_1=I\setminus C(0,L;I)$ and $J_2=I\setminus C(L,0;I)$. Assume without loss of generality that $r_2\leq r_1$. Then $I$ is centered at $p_2$. Since, furthermore, $Mod (I_2)>L$, it follows that $J_2\subset I_2$\footnote{See remark~\ref{COrientConv}.}. It remains to show that $J_1\subset I_1$. Let $r=d(p_2,\partial B_1)$. There is a real number $r'$ such that $J_1=A(r,r',p_2)$. Clearly, $B_r(p_2)\subset B_1$. Therefore, to show the inclusion $J_1\subset I_1$ it suffices to show that $B^c_{r'_1}(p_1)\subset B^c_{r'}(p_2)$. That is, it suffices to show that
\[
r'-r'_1>d(p_1,p_2).
\]
By parts~\ref{EqRtagEst} and~\ref{Eqr3r1tagest} of Lemma~\ref{lmI1Isharp2I}, it suffices that $r'>3r'_1$. Considering the definition of $r'$, this is equivalent to the claim
\[
Mod(A(r,3r'_1;p_2))> L.
\]
Let $L':=Mod(A(r,3r'_1;p_2)).$ Since $B'_1\cap B'_2\neq\emptyset,$ it is clear that
\[
r\geq r_1-2r'_1.
\]
So,
\begin{align}
L'&\geq Mod\left(A(r_1-2r'_1,3r'_1;p_2)\right)\notag\\
&=Mod\left(A(r_1-2r'_1,3r'_1;p_1)\right)\notag\\
&>Mod\left(A\left(\frac1{2}r_1,3r'_1;p_1\right)\right)\notag\\
&=Mod(I_1)-Mod\left(A\left(r_1,\frac1{2}r_1;p_1\right)\right)-Mod\left(A(3r'_1,r'_1;p_1)\right)\notag\\
&\geq  2L-\int_{\pi/6}^{\pi/3}\frac{dr}{\sin r}-\int_{\pi/9}^{\pi/3}\frac{dr}{\sin r}\notag
\end{align}
For the last line, see equations~\eqref{ModEq},~\eqref{eq:pch} and the definition of admissibility. Choosing
\[
K_3=\int_{\pi/6}^{\pi/3}\frac{dr}{\sin r}+\int_{\pi/9}^{\pi/3}\frac{dr}{\sin r},
\]
we thus get $J_1\subset I_1.$ This establishes part \ref{lmK3defit2}. We prove part~\ref{lmK3defit1}. The inclusion
 \[
 M(I_1,I_2)\supset I_1\cup I_2\cup C(L,L;I),
 \]
follows from definitions. The reverse inclusion is an immediate consequence of Lemma~\ref{lmI1Isharp2I}\ref{eqJII1} and part~\ref{lmK3defit2}.

\end{proof}
\section{Construction of bubble decomposition}
Let $L_0>\max\{c_2,\log3/c_3\}$. It follows from the cylinder inequality that for any $(\Sigma,\mu)\in\mathcal{M}$ and any clean $I\subset\Sigma_{\mathbb{C}}$ with $\mu(I)\leq\delta_2$ and $Mod(I)>2L_0$, we have
\begin{align}\label{eqAnEnPart}
 \mu\left(C(L_0,L_0;I)\right)\leq\frac{\mu(I)}{3}.
 \end{align}
Let $L_0$ satisfy, further, $L_0>\max\{K_1,K_2,K_3\}$. Here, $K_1,$ $K_2$ and $K_3$ are the constants from Definition \ref{dfEsDisj}, Lemma \ref{lmExLNdisj} and Lemma \ref{lmK3def}, respectively.

\begin{df}
A \textbf{neck} is an $I\in\mathcal{A}_h$ with the property that $\tilde{\Sigma}\setminus I$ is $\mu$-stable. Write $L_1=4L_0$. A \textbf{long neck} is a neck $I$ which satisfies $\mu(I)\leq\delta_2/6$ and $Mod(I)\geq L_1$. \end{df}

When $genus(\tilde{\Sigma})\neq 1$, let $LN$ denote the set of long necks. Otherwise, recall the definition of $I_0$ appearing in the definition of $\alpha_0.$ Define $LN$ to be the set of long necks contained in $\Sigma\setminus I_0$. If $Mod(I_0)\geq L_1$, let $\tilde{LN}:=LN\cup\{I_0\}$. In all the other cases, whatever the genus, define $\tilde{LN}:=LN$.

\begin{df}\label{defMaxBubDecom}
A \textbf{maximal $\mu$-decomposition} is a bubble decomposition $\mathcal{B}$ with the following properties.
\begin{enumerate}
\item\label{tmMaxBubDecom1}
There exists a set $\tilde{\mathcal{B}}$ of pairwise essentially disjoint elements of $ \tilde{LN}$ such that
\[
\mathcal{B}:=\{C(2K_1,2K_1;I)|I\in\tilde{\mathcal{B}}\}.
\]
\item\label{tmMaxBubDecom2}
For any $v\in V_{{\mathcal{B}}}$, $\Sigma_v$ is $\mu$-stable.
\item\label{tmMaxBubDecom3}
For any $v\in V_{{\mathcal{B}}}$, $\Sigma_v$ contains no long necks.
\end{enumerate}
\end{df}

\begin{tm}\label{tmMaxBubDecom}
For any $(\Sigma,\mu)\in\mathcal{M}$ satisfying Assumption~\ref{as1}, there exists a maximal $\mu$-decomposition.
\end{tm}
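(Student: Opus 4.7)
The plan is to realize $\tilde{\mathcal{B}}$ as a family of maximal cardinality within the collection $\mathcal{P}$ of finite sets of pairwise essentially disjoint long necks in $\tilde{LN}$ whose associated bubble decomposition has all thick components $\mu$-stable. I would first verify that $\mathcal{P}$ is nonempty by checking that $\emptyset \in \mathcal{P}$, which reduces to showing $\tilde{\Sigma}$ itself is $\mu$-stable. This holds under Assumption~\ref{as1}: topologically when $\text{genus}(\tilde{\Sigma}) > 1$, via the mass bound $\mu(\tilde{\Sigma}) > \delta_2$ together with the topology or boundary components when $\text{genus}(\tilde{\Sigma}) = 1$, and by the mass concentration furnished by Lemma~\ref{LmSpSphMet} in the non-trivial genus $0$ case.

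Next I would establish a uniform cardinality bound on elements of $\mathcal{P}$: the $\mu$-stability condition, combined with an Euler characteristic count as in Remark~\ref{rmfin}, bounds $|\pi_0(Thick(\mathcal{B}))|$ linearly in $\text{genus}(\tilde{\Sigma}) + \mu(\tilde{\Sigma})/\delta_2$, which in turn bounds $|\tilde{\mathcal{B}}|$. Since cardinality takes integer values, a $\tilde{\mathcal{B}}_* \in \mathcal{P}$ of maximal cardinality exists; set $\mathcal{B}_*$ to be its associated bubble decomposition. Conditions~\ref{tmMaxBubDecom1} and~\ref{tmMaxBubDecom2} of Definition~\ref{defMaxBubDecom} then hold by construction.

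The main obstacle is to verify condition~\ref{tmMaxBubDecom3}, that no thick component of $\mathcal{B}_*$ contains a long neck. I would argue by contradiction: suppose some $\Sigma_v$ contains a long neck $J$. Since $J \subset \Sigma_v$, it is automatically disjoint from every $C(2K_1,2K_1;I)$ with $I \in \tilde{\mathcal{B}}_*$. If $J$ is moreover essentially disjoint from every such $I$, then $\tilde{\mathcal{B}}_* \cup \{J\} \in \mathcal{P}$ --- stability of the refined thick components following from the defining property of a long neck that $\tilde{\Sigma} \setminus J$ is $\mu$-stable, intersected with the already stable $\Sigma_v$ --- contradicting maximality.

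The remaining case is where $J$ fails essential disjointness with some $I \in \tilde{\mathcal{B}}_*$. Theorem~\ref{tmEssDisj} then forces $b_1(J \cup I) \leq 1$ (the torus alternative is excluded by Assumption~\ref{as1}\ref{eq:deg}, since two long necks covering a torus would yield $\mu(\tilde{\Sigma}) \leq \delta_2/3 < \delta_2$), so $J$ and $I$ are topologically related. Invoking Lemma~\ref{lmExLNdisj} and Lemma~\ref{lmK3def}, I would replace $I$ by a carefully chosen sub-cylinder of $m(J,I)$ lying in $\tilde{LN}$ and of modulus strictly exceeding $Mod(I)$; the structural description in Lemma~\ref{lmK3def}\ref{lmK3defit2} ensures the replacement remains essentially disjoint from the rest of $\tilde{\mathcal{B}}_*$ while the refined complement stays $\mu$-stable. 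Iterating this replacement finitely many times --- termination follows since moduli strictly grow by $2L_0$ at each step and are bounded above by the conformal geometry of $\tilde{\Sigma}$ --- one eventually reaches a family in $\mathcal{P}$ to which $J$ can be added, again contradicting the maximality of $|\tilde{\mathcal{B}}_*|$.
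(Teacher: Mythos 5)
Your proposal takes a genuinely different route from the paper: you extremize over the collection $\mathcal{P}$ of families with stable thick components (maximizing cardinality), whereas the paper introduces the equivalence relation $\sim$ on $LN$ and takes a maximal-\emph{modulus} representative of each equivalence class (then verifies the three conditions via Lemmas~\ref{lmBubDecompmustab}, \ref{lmcConjInv}, and~\ref{lmEsdDisjInEq}). Unfortunately both halves of your contradiction argument contain genuine gaps.

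In your ``Case A'' (where $J \subset \Sigma_v$ is essentially disjoint from all of $\tilde{\mathcal{B}}_*$), you assert that stability of the refined thick components follows because $\tilde{\Sigma}\setminus J$ is stable and $\Sigma_v$ is stable. But $\mu$-stability is not preserved by intersection. Concretely, take $\Sigma_v$ a genus-$0$ thick component with one boundary component bordered by some $I\in\tilde{\mathcal{B}}_*$ and $\mu(\Sigma_v)\geq\delta_1/2$, and $J=B\setminus B'$ a trivial long neck inside $\Sigma_v$. The piece $\Sigma_v^{(1)}$ containing $B'$ is fine since $\mu(B')\geq\delta_1/2$. But the other piece $\Sigma_v^{(2)}$ is an annulus bordered by $C(2K_1,2K_1;J)$ and $C(2K_1,2K_1;I)$, and nothing prevents $\mu(\Sigma_v^{(2)})<\delta_2/6$, so $\tilde{\mathcal{B}}_*\cup\{J\}\notin\mathcal{P}$ and no contradiction is produced. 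This is precisely the situation the paper's equivalence relation is built to catch: if $\mu$ of the region between $J$ and $I$ is small then $J\sim I$, and Lemma~\ref{lmEsdDisjInEq} says that a maximal-modulus representative of the class cannot be essentially disjoint from another member of the class. Your construction does not maintain anything analogous to the maximal-modulus property, so this mechanism is unavailable to you.

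In your ``Case B'' (where $J$ fails essential disjointness with some $I\in\tilde{\mathcal{B}}_*$), you invoke Lemma~\ref{lmExLNdisj} to get the $2L$ growth of $Mod(m(J,I))$. But the hypothesis of Lemma~\ref{lmExLNdisj} is exactly essential disjointness of the two necks, which is exactly what fails in Case B. Without it you have no strict modulus growth, so your iteration has no termination certificate (and, in fact, $m(J,I)$ might have modulus barely exceeding $\max\{Mod(J),Mod(I)\}$ when $J$ and $I$ overlap heavily). The claimed consequences you draw from Lemma~\ref{lmK3def}\ref{lmK3defit2} (that the replacement stays essentially disjoint from the rest of $\tilde{\mathcal{B}}_*$ and keeps the complement stable) are also not what that lemma asserts; it describes the set-theoretic structure of $m(I_1,I_2)$ for topologically related trivial annuli, not anything about disjointness from a third family. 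Filling these two gaps appears to require reintroducing $\sim$, the existence of maximal-modulus representatives, and something like Lemma~\ref{lmBubDecompmustab} -- at which point the argument essentially becomes the paper's.
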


To prove Theorem \ref{tmMaxBubDecom}, we define a relation $\sim$ on $LN$ as follows. For $I_1,I_2\in LN$, $I_1\sim I_2$ if and only if $I_1$ and $I_2$ are topologically related and $\mu(M(I_1,I_2))\leq\delta_2/2$.
\begin{lm}\label{lmIntRDisj}
Let $I_1\in LN$ be trivial. Write $I_1=B_1\setminus B_1$ for appropriate concentric discs in $\tilde{\Sigma}$. Let $I_2\in LN$. Then $B'_1\not\subset I_2$. Furthermore, if $I_3\in LN$ and $I_2\sim I_3$ then $B'_1\not\subset M(I_2,I_3)$.
\end{lm}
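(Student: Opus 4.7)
The plan is to leverage the $\mu$-stability of $\tilde{\Sigma}\setminus I_1$ that is built into the definition of a long neck. Since $I_1=B_1\setminus B'_1$ is a trivial admissible annulus, it is contractible in $\tilde{\Sigma}$, so its complement splits into exactly two connected components: the inner disc $B'_1$ and the outer region $\tilde{\Sigma}\setminus B_1$. Because $I_1$ is a long neck, $\tilde{\Sigma}\setminus I_1$ is $\mu$-stable, and hence $B'_1$ itself must satisfy one of the three $\mu$-stability conditions in Definition~\ref{dfmuStable}. The disc $B'_1$ has genus $0$ and a single boundary component, so
\[
2\,genus(B'_1)+\#\pi_0(\partial B'_1)=1,\qquad \#\pi_0(\partial B'_1)=1,
\]
ruling out conditions (ii) and (iii). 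Condition (i) must therefore hold, giving the mass bound $\mu(B'_1)\geq\delta_1/2$.

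With this bound in hand, both claims reduce to immediate contradictions. For the first claim, assume $B'_1\subset I_2$. Monotonicity gives $\mu(I_2)\geq\mu(B'_1)\geq\delta_1/2$; but $I_2\in LN$ forces $\mu(I_2)\leq\delta_2/6$, and the standing assumption $\delta_2<\tfrac12\delta_1$ makes $\delta_2/6<\delta_1/12<\delta_1/2$, a contradiction. For the second claim, assume $B'_1\subset M(I_2,I_3)$ with $I_2\sim I_3$. Then $\mu(M(I_2,I_3))\geq\mu(B'_1)\geq\delta_1/2$, while the definition of $\sim$ requires $\mu(M(I_2,I_3))\leq\delta_2/2<\delta_1/4$, again a contradiction.

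I do not expect a real obstacle here. The only point requiring any care is the identification of $B'_1$ as a genuine connected component of $\tilde{\Sigma}\setminus I_1$ (rather than, say, being swallowed into the outer region through $\partial\Sigma$); this is exactly what the contractibility clause in Definition~\ref{dfAdmAnn} for trivial admissible annuli guarantees, and when $genus(\tilde\Sigma)>0$ it is automatic. Once the inner disc is isolated as a component forced by topology to carry $\mu$-mass at least $\delta_1/2$, both containment hypotheses of the lemma collide with measure bounds that are strictly smaller than $\delta_1/2$, and the result follows.
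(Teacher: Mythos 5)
Your proof is correct and follows exactly the paper's argument: identify $B'_1$ as a $\mu$-stable connected component of $\tilde\Sigma\setminus I_1$, rule out the topological stability conditions since $B'_1$ is a disc, deduce $\mu(B'_1)\geq\delta_1/2$, and compare against the mass bounds $\mu(I_2)\leq\delta_2/6$ and $\mu(M(I_2,I_3))\leq\delta_2/2$ together with $\delta_2<\tfrac12\delta_1$. The paper compresses the first step into a single sentence, but the logic is identical.
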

\begin{proof}
The component $B'_1$  of $\tilde{\Sigma}\setminus I_1$ is $\mu$-stable. That is,
\[
\mu(B'_1)\geq\delta_1/2>\delta_2/6.
\]
On the other hand, $\mu(I_2)\leq\delta_2/6$, and, by $\sim$-equivalence,
\[
\mu(M(I_2,I_3))\leq\delta_2/2<\delta_1/2.
\]
Both parts of the claim follow.
\end{proof}

\begin{lm}\label{lmSimEq}
The relation $\sim$ is an equivalence relation.
\end{lm}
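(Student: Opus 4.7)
Reflexivity and symmetry are immediate. For reflexivity, $I \sim I$ since $M(I,I) = I$ is tautologically doubly connected and clean with $\mu(I) \leq \delta_2/6 \leq \delta_2/2$ by the definition of $LN$. Symmetry is clear because the definition of topological relatedness is symmetric and $M(I_1,I_2) = M(I_2,I_1)$.

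The substance of the proof is transitivity. Assume $I_1 \sim I_2$ and $I_2 \sim I_3$. I would proceed in two steps.

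\emph{Step 1: topological relatedness of $I_1$ and $I_3$.} First, I would observe that if $I_2$ is nontrivial, then by Theorem~\ref{lmRelChar}(1) each of $I_1$ and $I_3$ is a sub-cylinder of the same $\mathcal{C}(\gamma)$ as $I_2$ (uniqueness of the geodesic in the free homotopy class forces the two $\gamma$'s to coincide), so topological relatedness is automatic. If instead all three $I_i$ are trivial, write $I_i = B_i \setminus B'_i$ and apply Lemma~\ref{lmRelEq}. The hypotheses $B'_1 \not\subset M(I_2,I_3)$ and $B'_3 \not\subset M(I_1,I_2)$ follow from Lemma~\ref{lmIntRDisj}: each $B'_i$ is $\mu$-stable as a component of $\tilde\Sigma \setminus I_i$ (since $I_i \in LN$), so $\mu(B'_i) \geq \delta_1/2 > \delta_2/2 \geq \mu(M(I_j,I_k))$.

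\emph{Step 2: the measure bound $\mu(M(I_1,I_3)) \leq \delta_2/2$.} I would invoke Lemma~\ref{lmNonTrivEithOr}. In the generic case this yields $M(I_1,I_3) \subset M(I_1,I_2) \cup M(I_2,I_3)$; the exceptional cases \ref{lmNonTrivEithOrcase2} and \ref{lmNonTrivEithOrcase3} of Lemma~\ref{lmNonTrivEithOr}(2) force $\gamma \subset \partial\Sigma$ together with a conjugation-invariant intermediate annulus and will be handled separately using the symmetry. Assuming the generic inclusion, I would expand $\mu(M(I_1,I_3))$ via inclusion–exclusion on $M(I_1,I_2) \cup M(I_2,I_3)$, note that $I_2$ sits inside both, and then use Lemma~\ref{lmK3def} to refine each $M(I_i,I_j)$ to $I_i \cup I_j \cup C(L_0,L_0; m(I_i,I_j))$. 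The cylinder inequality in the form of~\eqref{eqAnEnPart}, applied to the long neck $I_2$ (whose modulus is at least $L_1 = 4L_0$ and whose $\mu$-mass is at most $\delta_2/6 < \delta_2$), yields $\mu(C(L_0,L_0;I_2)) \leq \mu(I_2)/3$. Iterating this exponential decay on the "middle'' parts of $M(I_1,I_2)$ and $M(I_2,I_3)$ (each of which is itself a clean doubly connected set of measure $< \delta_2$), I would extract enough savings from the overlap containing $I_2$ to upgrade the naive bound $\delta_2/2 + \delta_2/2 = \delta_2$ down to $\delta_2/2$.

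The main obstacle will be precisely this last measure bound: the crude union estimate gives only $\delta_2$, so one must exploit the exponential decay of the cylinder inequality in an essential way, which is why $L_0$ (and hence $L_1 = 4L_0$) must be chosen large, and why the paper's remark pinpoints \emph{the cylinder inequality and the stability condition} as the two ingredients. The remaining bookkeeping—the fact that $M(I_1,I_3)$ inherits cleanness from Lemma~\ref{lmI1sharpI2} and that the exceptional conjugation-invariant cases do not produce any larger measure than the symmetric configurations already dominated by the generic bound—should be straightforward but tedious.
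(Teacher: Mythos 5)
Your Step 1 (topological relatedness of $I_1$ and $I_3$) is correct and matches the paper: in the nontrivial case it follows from uniqueness of the simple closed geodesic in a free homotopy class, and in the trivial case from Lemma~\ref{lmRelEq} together with Lemma~\ref{lmIntRDisj}.

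Step 2, however, has a genuine gap. You propose to improve the crude union bound $\mu(M(I_1,I_3))\leq\delta_2$ by extracting savings from the \emph{overlap} $M(I_1,I_2)\cap M(I_2,I_3)$ via inclusion--exclusion, and then apply the cylinder inequality to $I_2$. This cannot work: the overlap need only contain $I_2$, whose measure is at most $\delta_2/6<\delta_2/2$, so the inclusion--exclusion correction can be far too small; and the cylinder inequality applied to $I_2$ only \emph{upper}-bounds $\mu(C(L_0,L_0;I_2))$ by $\mu(I_2)/3$, i.e.\ it makes a small quantity smaller, which is the wrong direction for producing a savings. The actual mechanism is the opposite of what you describe: one applies the cylinder inequality not to $I_2$ but to $J=M(I_1,I_3)$ itself (or to $m(I_1,I_3)$ in the trivial case). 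First one uses Lemma~\ref{lmNonTrivEithOr} (and, in the exceptional conjugation-invariant configurations, a reflection argument) to get the crude bound $\mu(J)\leq\delta_2$, which licenses~\eqref{eqAnEnPart}. Then one observes that $J\setminus C(L_0,L_0;J)$, the two \emph{end slabs} of $J$, lie inside one of $I_1\cup I_3$, $I_1\cup\overline{I}_1$, or $I_3\cup\overline{I}_3$, hence have measure at most $\delta_2/3$. Inequality~\eqref{eqAnEnPart} then propagates this smallness of the ends inward, giving
\[
\mu(J)\leq\tfrac32\,\mu\bigl(J\setminus C(L_0,L_0;J)\bigr)\leq\tfrac32\cdot\tfrac{\delta_2}{3}=\tfrac{\delta_2}{2}.
\]
So the decisive point is the smallness of the ends of $M(I_1,I_3)$, not the size of the overlap around $I_2$; your plan, as written, would need to be discarded and replaced by this argument. (In the trivial case the paper runs the same idea through $I=m(I_1,I_3)$ using Lemma~\ref{lmK3def}\ref{lmK3defit2} to place $I\setminus C(L_0,L_0;I)$ inside $I_1\cup I_3$, arriving at $\mu(C(L_0,L_0;I))\leq\delta_2/6$ and hence $\mu(J)\leq\delta_2/3+\delta_2/6=\delta_2/2$.)
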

\begin{proof}
Symmetry and reflexivity are obvious, so we need only establish transitivity. Let $I_i\in LN$  for $i=1,2,3$, and suppose $I_1\sim I_2$ and $I_2\sim I_3$. It follows from Theorem~\ref{lmRelChar}\ref{lmRelChar1} that either the three annuli are all trivial or all nontrivial. Suppose all are non trivial. Observe, using \ref{lmRelChar}\ref{lmRelChar1}, that $I_1$ and $I_3$ are topologically related.

Let now $J=M(I_1,I_3)$. We show first that
\begin{equation}\label{EqJdelta2Est}
\mu(J)\leq \delta_2.
\end{equation}
Let $I'=M(I_1,I_2)$ and $I''=M(I_2,I_3)$. If $J\subset I'\cup I''$ we have
\[
\mu(J)\leq \mu(I')+\mu(I'')\leq\delta_2/2+\delta_2/2=\delta_2.
\]
Otherwise, by Lemma \ref{lmNonTrivEithOr} we may assume without loss of generality that $J\subset M(I_1,\overline{I}_1)$ and $I''=\overline{I''}$. Let $J'=J\cap\Sigma$ and $J''=\overline{J'}$. It is easy to verify that $J'\subset I'\cup I''$. So, $\mu(J')\leq\delta_2$. Similarly, one verifies that
\[
J'\setminus C(L_0,L_0;J')\subset I_1\cup C(L_0,L_0;I'').
\]
Applying inequality \eqref{eqAnEnPart} we get
\[
\mu(J')\leq \frac{3}{2}(\delta_2/6+\delta_2/6)=\delta_2/2.
\]
Similarly, $\mu(J'')\leq\delta_2/2$. Inequality~\eqref{EqJdelta2Est} follows.

Applying inequality \eqref{eqAnEnPart} again,
\begin{align}\label{eqsimEquivLNEnEst}
\mu(J)&\leq \frac{3}{2}\mu(J\setminus C(L_0,L_0;J)).
\end{align}
Now note that by definition of $J$, $J\setminus C(L_0,L_0;J)$ is contained within one the following sets: $I_1\cup I_3$, $I_1\cup\overline{I}_1$, or $I_3\cup\overline{I_3}$. But
\[
\mu(I_i)=\mu(\overline{I}_i)\leq\frac{\delta_2}{6}
\]
 for $i=1,3$. Thus in any case we get that
 \[
 \mu(J\setminus C(L_0,L_0;J))\leq\frac{\delta_2}{3}.
 \]
Therefore by inequality~\eqref{eqsimEquivLNEnEst}
\[
\mu(J)\leq\frac{\delta_2}{2}
\]
as was to be proven.

Let now $I_i$ all be trivial. Write $I_i$ =$B_{i}\setminus B'_{i}$ where $B_i=B_{r_i}(p_i)$, $B'_i=B^c_{r'_i}(p_i)$ for some $p_i\in\tilde{\Sigma}$, $r'_i<r_i\in(0,\infty)$, and $i=1,2,3$. By Lemmas \ref{lmRelEq} and \ref{lmIntRDisj}, $I_1$ and $I_3$ are topologically related. Let $J=M(I_1,I_3)$. We need to show that $\mu(J)\leq\delta_2/2$. By Lemma \ref{lmK3def},
\[
J=I_1\cup I_3\cup C(L_0,L_0;I)
\]
where $I=m(I_1,I_3)$. Thus, $\mu(J)\leq\delta_2/3+\mu(C(L_0,L_0;I))$. To finish the proof we need to show that $\mu(C(L_0,L_0;I))\leq\delta_2/6$. Then
\[
I\subset J\subset M(I_1,I_2)\cup M(I_2,I_3).
\]
So, $\mu(I)\leq\delta_2$. On the other hand, by Lemma \ref{lmK3def},
\[
I\setminus C(L_0,L_0;I)\subset I_1\cup I_3.
\]
 Therefore, by definition of $L_0$,
\[
\mu(C(L_0,L_0;I))\leq\frac1{3}(\mu(I_1\cup I_3)+\mu(C(L_0,L_0;I))).
\]
So,
\[
\mu(C(L_0,L_0;I))\leq\frac12\mu(I_1\cup I_3)\leq\frac{\delta_2}{6}
\]
as required.
\end{proof}

\begin{lm}
For every $\sim$-equivalence class $c$ there is an $I\in c$ such that $Mod(I)$ is maximal in $c$ .
\end{lm}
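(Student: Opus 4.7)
Set $M := \sup\{Mod(I) : I \in c\}$ and pick a maximizing sequence $I_n \in c$. By Theorem~\ref{lmRelChar}, topologically related annuli share their geometric type, so either every $I_n$ is a sub-cylinder of a single collar $\mathcal{C}(\gamma)$ (the nontrivial case) or every $I_n = A(r_n, r'_n; p_n)$ is a geodesic annulus (the trivial case). I treat the two cases in turn and show in each that a limit of (a subsequence of) $I_n$ lies in $c$ and realizes $M$.

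In the nontrivial case, each $I_n$ is determined by a pair $(\rho_{0,n}, \rho_{1,n})$ in the compact interval $[0, Mod(\mathcal{C}(\gamma))]$ in the cylindrical coordinates on the collar, so $M \leq Mod(\mathcal{C}(\gamma)) < \infty$. Passing to a subsequence, $(\rho_{0,n}, \rho_{1,n}) \to (\rho_0, \rho_1)$, and the candidate is the open sub-cylinder $I$ corresponding to $(\rho_0, \rho_1)$, which has $Mod(I) = M$.

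In the trivial case, I first bound $M$. By Theorem~\ref{lmRelChar}\ref{lmRelChar2}, $B'_n \cap B'_m \neq \emptyset$ for all $n, m$, giving $d(p_n, p_m) \leq r'_n + r'_m$. The long-neck condition that $\tilde{\Sigma} \setminus I_n$ be $\mu$-stable, applied to the inner disc component $B'_n$, forces $\mu(B'_n) \geq \delta_1/2$, since the purely topological alternatives in Definition~\ref{dfmuStable} fail for a disc. Now if $r'_n \to 0$, the pairwise distance bound makes $(p_n)$ Cauchy with some limit $p$, so $B'_n$ shrinks to $\{p\}$ and $\mu(B'_n) \to 0$ by absolute continuity of $\mu$, contradicting the stability lower bound. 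Hence $r'_n$ is bounded below; combined with the admissibility bound $r_n \leq \frac{1}{3}\inj(\tilde{\Sigma}; h, p_n)$, which is uniformly bounded since the $p_n$ stay in a bounded region of $\tilde{\Sigma}$, this gives $M < \infty$. Pass to a further subsequence with $(p_n, r_n, r'_n) \to (p, r, r')$, $r' > 0$, and set $I := A(r, r'; p)$.

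It remains to verify $I \in c$ in both cases. Admissibility is a closed condition on the defining parameters, and cleanness of $I$ survives after first pigeonholing the $I_n$ onto a single conjugation-invariance type. The long-neck conditions for $I$ — namely $Mod(I) \geq L_1$, $\mu(I) \leq \delta_2/6$, and $\mu$-stability of each component of $\tilde{\Sigma} \setminus I$ — together with $\mu(M(I, I_1)) \leq \delta_2/2$ for a fixed $I_1 \in c$ (yielding $I \sim I_1$), all follow by continuity of $\mu$: since $\mu$ has a continuous density, the boundaries of the limiting regions carry zero $\mu$-measure, so $\mu$ of the limiting set equals the limit of $\mu$ on the corresponding $I_n$-based regions. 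The main subtlety is tracking cleanness and the topological type of complement components through the limit, but both are discrete invariants and hence constant along a suitable subsequence, reducing the remainder to routine measure continuity.
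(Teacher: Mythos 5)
Your proposal takes the same route as the paper: both reduce the claim to a compactness-plus-closedness argument on the space of long necks, and both hinge on a uniform lower bound for the inner radius of a trivial long neck coming from $\mu$-stability of the inner disc. Your derivation of that lower bound (via Cauchyness of the centers and absolute continuity) is a minor variant of the paper's (via boundedness of $d\mu/d\nu_h$ on the compact $\Sigma$) but equally valid. Where your write-up leaves a genuine gap is the step ``$\mu(M(I,I_1))\leq\delta_2/2$ for a fixed $I_1\in c$ (yielding $I\sim I_1$).'' To invoke $M(I,I_1)$ at all you must first know $I$ and $I_1$ are topologically related, which in the trivial case means $B'(I)\cap B'_1\neq\emptyset$ together with cleanness of $I\cup I_1$ (Theorem~\ref{lmRelChar}\ref{lmRelChar2}). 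This is precisely the one point the paper flags as non-trivial: the argument is that $\mu(B'_n\cap B'_1)\geq\delta_1/2-\delta_2/2>0$ for all $n$ (since $\mu(B'_n),\mu(B'_1)\geq\delta_1/2$ while $\mu(M(I_n,I_1))\leq\delta_2/2$), and this measure lower bound survives the limit, forcing $B'(I)\cap B'_1\neq\emptyset$. Your ``continuity of $\mu$'' principle is the right tool, but as written it is applied to conclusions that presuppose relatedness rather than to establish it. You should also note that in the nontrivial, genus-$0$ case the collar $\mathcal{C}(\partial\Sigma)$ is an open cylinder, so the parameter space is not a priori compact; the same stability estimate keeps the limiting sub-cylinder away from the punctures, exactly as in the trivial case. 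With those two points made explicit, the proof is complete and matches the paper's.
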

\begin{proof}
Write $LN_0$ for the set of trivial long necks, and $LN_1$ for the set of nontrivial long necks. Let $R:=\max_{z\in\Sigma} \inj(\Sigma,z;h)$.
To give a trivial element of $\mathcal{A}_h$ is to give a point and two real numbers $r_1,r_2$ subject to some restrictions. This induces on $LN_0$ the topology of a subset of the compact bordered manifold
\[
X_0=\Sigma\times\left[0,\frac1{3}R\right]\times\left[0,\frac1{10}R\right].
\]
To give a nontrivial element of $\mathcal{A}_h$ is to give a simple closed geodesic and two real numbers subject to some restrictions. Thus, when
\[
genus(\tilde{\Sigma})>1,
\]
$LN_1$ can be assigned the topology of a subset of the compact bordered manifold
\[
X_1=\bigcup_{\{\gamma|\ell(\gamma)<\sinh^{-1}(1)\}}\left[-\frac1{2}Mod(\mathcal{C}(\gamma)),\frac1{2}Mod(\mathcal{C}(\gamma))\right]^2.
\]
$X_1$ is indeed compact since the number of simple closed geodesics $\gamma$ for which $\ell(\gamma)<\sinh^{-1}(1)$ is finite. See \cite{Bu}.  When $genus(\tilde{\Sigma})=0$ we have that $LN_1$ can be thought of as  a subset of
\[
[-\pi,\pi]^2.
\]
Finally, when $genus(\tilde{\Sigma})=1$, $LN_1$ is a subset of
\[
X_1=\left[-\frac1{2}Mod(\mathcal{C}(\alpha_0)),\frac1{2}Mod(\mathcal{C}(\alpha_0))\right].
\]
We show that $LN_i$ is closed in $X_i$. The conditions of stability, length and cleanness are closed conditions. However, admissibility alone is not a closed condition for trivial annuli because the inner radius of a trivial annulus must be positive. For nontrivial annuli it is not closed when $genus(\tilde{\Sigma})=0$, since $\mathcal{C}(\partial\Sigma)$ is not closed in this case. We show that the intersection of the set of admissible annuli with those having stable complement is closed.

First we show this for trivial annuli. The non-admissible points of $X_0$ in the closure of the trivial admissible annuli are  points of the form $(p,r,0)$. That is, annuli with internal radius $0$. Let
\[
r=\inf\left\{r'\in\left[0,\frac1{10}R\right]\Big|p\in\Sigma,\mu(B_{r'}(p))\geq\delta_1/2\right\}.
\]
Since $\Sigma$ is compact, $\frac{d\mu}{d\nu_h}$ is bounded. So, $r>0$. Thus, any trivial element of $LN_0$ has internal radius no less than $r$. The claim follows. For nontrivial annuli the claim follows in a similar manner.

Now we need to show that on $LN,$ the condition of equivalence is closed. The only non trivial point is to show that topological relatedness is a closed condition. By Theorem~\ref{lmRelChar} it suffices to show that there is an $a>0$ such that for any two equivalent trivial long necks of the form $I_i=B_{r_i}(p_i)\setminus B^c_{r'_i}(p_i)$, $i=1,2$, we have
\begin{equation}\label{eqr1r2intest}
A:=Area(B^c_{r'_1}(p_1)\cap B^c_{r'_2}(p_2);h_{can})\geq a.
\end{equation}
Write $B_i=B_{r_i}(p_i)$ and $B'_i=B^c_{r'_i}(p_i)$. We have $\mu(B'_i)\geq\delta_1/2$ and
\[
\mu(B'_1\cup B'_2\setminus B'_1\cap B'_2)=\mu(M(I_1,I_2))\leq\delta_2/2.
\]
Therefore,
\[
\mu(B^c_{r'_1}(p_1)\cap B^c_{r'_2}(p_2))\geq \delta_1/2-\delta_2/2\geq\delta_2/2.
\]
Since $\Sigma$ is compact, $\frac{d\mu}{d\nu_h}$ is bounded on $\Sigma$ by some constant $d$. Clearly,
\[
\delta_2/2\leq \mu(B^c_{r'_1}(p_1)\cap B^c_{r'_2}(p_2))\leq Ad.
\]
Inequality \eqref{eqr1r2intest} follows.

Finally, equation~\eqref{ModEq} shows that $Mod:LN\to\mathbb{R}$ is continuous with respect to the topology on $LN.$
\end{proof}

\begin{lm}\label{lmExLNdisj2}
Let $I_i\in LN$ for $i=1,2$. Suppose $I_1\sim I_2$  and $I_1$ is essentially disjoint from $I_2$. Write $I=m(I_1,I_2)$. Then $C(L_0,L_0;I)$ is a long neck which is $\sim$-equivalent to each of the $I_i$.
\end{lm}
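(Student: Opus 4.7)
The plan is to verify four facts about $J := C(L_0, L_0; I)$: (i) $J \in \mathcal{A}_h$, (ii) $\mathrm{Mod}(J) \geq L_1$, (iii) $\mu(J) \leq \delta_2/6$ and $\tilde{\Sigma} \setminus J$ is $\mu$-stable, and (iv) $J \sim I_i$ for $i=1,2$. Facts (i)-(iii) establish that $J$ is a long neck, and (iv) is the remaining claim. Throughout, I use that $\sim$-equivalence gives topological relatedness and $\mu(M(I_1, I_2)) \leq \delta_2/2$, so by Lemma~\ref{lmI1sharpI2} both $I := m(I_1, I_2)$ and $M(I_1, I_2)$ are clean and doubly connected.

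For the modulus, since $I_1, I_2$ are essentially disjoint, topologically related, and $\mathrm{Mod}(I_i) \geq L_1 = 4L_0$ with $L_0 \geq K_2$, Lemma~\ref{lmExLNdisj} gives $\mathrm{Mod}(I) > \max\{\mathrm{Mod}(I_1), \mathrm{Mod}(I_2)\} + 2L_0 \geq 6L_0$, hence $\mathrm{Mod}(J) = \mathrm{Mod}(I) - 2L_0 > 4L_0 = L_1$. Admissibility of $J$ then holds: in the non-trivial case $J$ remains a sub-cylinder of the same $\mathcal{C}(\gamma)$ as $I_1, I_2$; in the trivial case $J$ has the form $A(r'', r''', p_2; h)$, with $r''$ admissible (since $B_{r''}(p_2) \subset B_{r_1}(p_1)$, an embedded disc of radius $\leq \tfrac13 \inj(p_1)$) and $r'''/r'' \leq 1/5$ forced by the largeness of $\mathrm{Mod}(J) > 4L_0$.

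For the energy bound, $\mu(I) \leq \mu(M(I_1, I_2)) \leq \delta_2/2 < \delta_2$, and since $\mathrm{Mod}(I) > 2L_0 > 2c_2$, the cylinder-inequality consequence \eqref{eqAnEnPart} yields $\mu(J) \leq \mu(I)/3 \leq \delta_2/6$. For $\mu$-stability of $\tilde{\Sigma} \setminus J$, apply Lemma~\ref{lmK3def} to conclude $I \setminus J \subset I_1 \cup I_2$, so that $\tilde{\Sigma} \setminus J = (\tilde{\Sigma} \setminus I) \cup (I \setminus J)$ with the end pieces of $I \setminus J$ absorbed into $I_1 \cup I_2$. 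The claim is then that every component $\Sigma'$ of $\tilde{\Sigma} \setminus J$ contains a component $A_j$ of $\tilde{\Sigma} \setminus I_j$ for some $j$; since $I_j$ is a long neck, $A_j$ is $\mu$-stable, and each of the three stability conditions (mass $\geq \delta_1/2$, or $\#\pi_0(\partial) \geq 2$ with mass $\geq \delta_2/6$, or $2\,\mathrm{genus} + \#\pi_0(\partial) \geq 3$) is inherited by the larger $\Sigma'$. For $\sim$-equivalence, $J$ is non-trivially embedded in $M(I_1, I_2)$ via $J \subset I = m(I_1, I_2) \subset M(I_1, I_2)$, so $J$ and $I_i$ are both non-trivially embedded in the common clean doubly connected witness $M(I_1, I_2)$; the inclusion $M(J, I_i) \subset M(I_1, I_2)$ (using minimality of $M(\cdot,\cdot)$ inside a given ambient annulus) then yields $\mu(M(J, I_i)) \leq \delta_2/2$.

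The main obstacle is the $\mu$-stability step: tracing through components of $\tilde{\Sigma} \setminus J$ and matching each to a stable component of $\tilde{\Sigma} \setminus I_j$ requires casework on whether $I$ separates $\tilde{\Sigma}$, on how the end pieces of $I \setminus J$ attach to components of $\tilde{\Sigma} \setminus I$, and on whether the combinatorics of boundary components inherited from $A_j$ survives in $\Sigma'$ without being identified away. The crux is the observation that the added annular ends are contained in $I_j$, so they attach to the $I_j$-boundary components of $A_j$ without creating new discs or merging boundaries in a way that could violate any of the three stability criteria.
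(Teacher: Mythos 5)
Your four-step skeleton (admissibility, modulus bound, energy bound, $\sim$-equivalence, plus stability of the complement) matches the structure of the paper's proof, and the modulus and energy estimates are handled the same way (Lemma~\ref{lmExLNdisj} plus inequality~\eqref{eqAnEnPart}). However, there is a concrete gap in your $\mu$-stability step, which you yourself flag as the ``main obstacle'' without resolving it, and the gap is real. You invoke Lemma~\ref{lmK3def} to conclude $I\setminus J\subset I_1\cup I_2$, but that lemma is stated and proved \emph{only for trivial annuli} $I_i=A(r_i,r_i';p_i)$; it does not apply when the $I_i$ are cylinders, i.e.\ sub-cylinders of some $\mathcal{C}(\gamma)$. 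Moreover, your blanket claim that ``each of the three stability conditions is inherited by the larger $\Sigma'$'' is not automatic: the boundary-count criterion $\#\pi_0(\partial A_j)\geq 2$ can be lost when passing from $A_j$ to a component $\Sigma'\supset A_j$ of $\tilde\Sigma\setminus J$, and the inclusion ``$\Sigma'$ contains a component of $\tilde\Sigma\setminus I_j$'' itself needs to be justified (it does hold, but requires checking the inclusions of the various discs).

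The paper resolves the cylinder case not via an $I\setminus J$ containment, but by a genus case-split. For $I$ a cylinder and $genus(\tilde\Sigma)>1$, stability of the complement of $J$ is automatic from the topology of essential annuli in higher-genus surfaces: every complementary component of a non-contractible annulus has positive genus (else it would be a disc, contradicting non-contractibility), hence satisfies $2\,genus + \#\pi_0(\partial)\geq 3$. For $genus(\tilde\Sigma)=1$, the complement of $J$ is connected with two boundary circles and stability follows from Assumption~\ref{as1}. For $genus(\tilde\Sigma)=0$ (with $\partial\Sigma\neq\emptyset$, so cylinders are sub-cylinders of $\mathcal{C}(\partial\Sigma)$), the argument reduces to the same disc-containment reasoning as the trivial case. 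Only in the trivial-annulus case does one actually need the ``every component of $\tilde\Sigma\setminus J$ contains a component of $\tilde\Sigma\setminus I_i$'' observation, and there it follows directly from the disc inclusions $B''\subset B_1$ and $B_2'\subset B'''$ rather than from Lemma~\ref{lmK3def}. You should also tighten the admissibility of $J$ in the trivial case: $B_{r''}(p_2)\subset B_{r_1}(p_1)$ gives $r''<\inj(p_2)$, but admissibility requires $r''\leq\tfrac13\inj(p_2)$; this does follow, but via $\inj(p_2)\geq\inj(p_1)-d(p_1,p_2)\geq 3r_1-d(p_1,p_2)$ and $r''=r_1-d(p_1,p_2)$, not directly from the containment you cite.
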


\begin{proof}
Relying on Lemma \ref{lmExLNdisj} one verifies that $I\in \mathcal{A}_h$ and, furthermore, that $Mod(I)>L_1$. We have
\[
I\subset M(I_1,I_2),
\]
so
\[
\mu(I)\leq\mu(M(I_1,I_2))\leq\delta_2/2.
\]
Therefore, $\mu(C(L_0,L_0;I))\leq\delta_2/6$. We show that each component of
\[
\tilde{\Sigma}\setminus C(L_0,L_0;I)
\]
is $\mu$-stable. Let $A$ be one such connected component.  If $I$ is an admissible annulus, then, by construction, $A$  contains a component of $\tilde{\Sigma}\setminus I_i$ for either $i=1$ or $i=2$. If $I$ is an admissible cylinder then if
\[
genus(\tilde{\Sigma})>1,
\]
stability is automatic. It is left to treat the exceptional cases. When $genus(\tilde{\Sigma})=0$ the claim follows as in the case of trivial annuli. When $genus(\tilde{\Sigma})=1,$ the complement of $C(L_0,L_0;I)$ consists of a single component and so the claim follows by Assumption~\ref{as1}. We thus showed that $I$ is a long neck. For the remaining part of the claim, $I$ and each of the $I_i$ are nontrivially embedded in $M(I_1,I_2)$ and so are topologically related. Furthermore, we have $M(I_i,I))=M(I_1,I_2)$, so $\mu(M(I_i,I))\leq\delta_2$. The claim follows.
\end{proof}

\begin{lm}\label{lmIntAnRel}
Let $I_1,I_2\in LN$ and suppose $b_1(I_1\cup I_2)=1$. Then $I_1\sim I_2$.
\end{lm}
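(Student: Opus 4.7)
The plan is to verify the two conditions defining $\sim$: topological relatedness of $I_1, I_2$, and the measure bound $\mu(M(I_1,I_2))\leq\delta_2/2$. I will argue by case analysis on whether each $I_i$ is a trivial admissible annulus or a nontrivial admissible cylinder, and in each case I will describe $M(I_1,I_2)$ explicitly enough that the measure bound reduces to adding up two copies of $\delta_2/6$.

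First, I would rule out the mixed case where one of $I_1, I_2$ is trivial and the other is nontrivial. The core of a trivial admissible annulus is null-homologous in $\tilde{\Sigma}$, while the core of a nontrivial cylinder is not. Mayer--Vietoris with $b_1(I_1 \cup I_2) = 1$ requires the generators of $H_1(I_1)$ and $H_1(I_2)$ to map proportionally onto the single generator of $H_1(I_1 \cup I_2) = \mathbb{Z}$, and this proportionality cannot survive passage to $H_1(\tilde{\Sigma})$ unless the class of the trivial $I_i$ already dies in $I_1 \cup I_2$. That possibility would force the disc $B_i' \subset I_{3-i}$, whence $\mu(B_i') \leq \mu(I_{3-i}) \leq \delta_2/6$, contradicting the stability lower bound $\mu(B_i') \geq \delta_1/2$ guaranteed by $I_i \in LN$.

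For both $I_1, I_2$ nontrivial, the same connectivity argument forces $I_1 \cap I_2 \neq \emptyset$, and Lemma~\ref{EsDisjCyl} places them as sub-cylinders of a common $\mathcal{C}(\gamma)$. Writing $I_i = \{\rho_{0,i} < \rho < \rho_{1,i}\}$ in the cylindrical coordinates on $\mathcal{C}(\gamma)$, I would read off $M(I_1,I_2)$ from its definition. When $I_1, I_2$ lie on one side of $\gamma$ with neither conjugation invariant, or when both are conjugation invariant, $M(I_1,I_2) = I_1 \cup I_2$ and $\mu(M(I_1,I_2)) \leq \delta_2/3$. In the asymmetric subcase, $M(I_1,I_2)$ is the conjugation symmetrization of $I_1 \cup I_2$ and the extra part is the reflection of a subset of $I_2$ across $\gamma$; the conjugation invariance of $\mu$ then still gives $\mu(M(I_1,I_2)) \leq \mu(I_1) + 2\mu(I_2) \leq \delta_2/2$.

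The most delicate case is when both $I_i = B_i \setminus B_i'$ are trivial. The crux is to prove that $b_1(I_1 \cup I_2) = 1$ together with stability forces $B_1' \subset B_2$ and $B_2' \subset B_1$. I would first observe (using Lemma~\ref{lmBallsContr}) that $B_1 \cup B_2$ is a topological disc, inside which the complement of $I_1 \cup I_2$ decomposes into the three pairwise disjoint pieces $B_1' \cap B_2'$, $B_1' \setminus B_2$, and $B_2' \setminus B_1$. The crucial geometric observation is that the closures of these three pieces are mutually disjoint, since $B_2' \subsetneq B_2$ separates $\overline{B_1' \setminus B_2}$ from $\overline{B_1' \cap B_2'}$, and the symmetric argument handles the other pair. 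Hence connectedness of this complement, which is equivalent to $b_1(I_1 \cup I_2) = 1$, forces exactly one piece to be nonempty. Stability excludes both subcases in which $B_1' \cap B_2' = \emptyset$ (each reduces to $B_i' \subset I_{3-i}$ exactly as in the mixed case), so only $B_1' \setminus B_2 = B_2' \setminus B_1 = \emptyset$ remains. These inclusions collapse $M(I_1,I_2) = (B_1 \cup B_2) \setminus (B_1' \cap B_2')$ to $I_1 \cup I_2$, whence $\mu(M(I_1,I_2)) \leq \delta_2/3 < \delta_2/2$. In every case $M(I_1,I_2)$ is clean and doubly connected by Lemma~\ref{lmI1sharpI2} and each $I_i$ is nontrivially embedded in it, giving the topological relatedness, so $I_1 \sim I_2$ as claimed.
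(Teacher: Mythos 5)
Your proposal follows the same two-pronged case analysis as the paper (both trivial, both nontrivial) and reaches the same conclusions, including the key identities $M(I_1,I_2)=I_1\cup I_2$ (trivial and clean nontrivial cases) and $M(I_1,I_2)=I_1\cup I_2\cup\overline{I}_2$ (unclean nontrivial case) with the resulting bound $\mu(I_1)+2\mu(I_2)\le\delta_2/2$. You also add a third case — one $I_i$ trivial and the other nontrivial — which the paper's proof silently skips. That case does need ruling out, so including it is a genuine improvement; you obtain the correct contradiction from $B_i'\subset I_{3-i}$ and stability, which is the same mechanism the paper isolates in Lemma~\ref{lmIntRDisj}.

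There is, however, one imprecision in your mixed-case argument. You reason through $H_1(\tilde{\Sigma})$: the trivial core maps to $0$, the nontrivial core is assumed to map to something nonzero, and proportionality in $H_1(I_1\cup I_2)$ then fails. But the core of a nontrivial admissible cylinder need not be non-nullhomologous in $\tilde{\Sigma}$: the cylinder $\mathcal{C}(\gamma)$ around a \emph{separating} simple closed geodesic $\gamma$ is a legitimate nontrivial admissible cylinder, and its core vanishes in $H_1(\tilde{\Sigma})$. In that subcase your homological contradiction evaporates. The correct invariant is not homology but contractibility (or isotopy class of the core): a simple closed curve in the annulus $I_1\cup I_2$ is either inessential there or isotopic to the core, and the latter would force the core of the trivial $I_i$ and the core of the nontrivial $I_{3-i}$ to be freely homotopic in $\tilde{\Sigma}$, contradicting that exactly one is contractible. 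The inessential alternative is exactly your $B_i'\subset I_{3-i}$ escape hatch. With this repair the mixed case is genuinely closed off.

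On the trivial-trivial case, your three-piece decomposition of $(B_1\cup B_2)\setminus(I_1\cup I_2)$ into $B_1'\cap B_2'$, $B_1'\setminus B_2$, $B_2'\setminus B_1$, together with the observation that these are disjoint closed subsets of the disc $B_1\cup B_2$ so that $b_1(I_1\cup I_2)=1$ forces exactly one to be nonempty, is a cleaner and more self-contained route to the inclusions $B_1'\subset B_2$, $B_2'\subset B_1$ than the paper's terse step ``Mayer--Vietoris gives $b_1(I_1\cap I_2)=1$, from which it follows that\ldots'' In the nontrivial-nontrivial case you additionally invoke Lemma~\ref{EsDisjCyl} to see that $I_1,I_2$ are sub-cylinders of a common $\mathcal{C}(\gamma)$ before computing $M(I_1,I_2)$; the paper leaves this implicit. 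All lemma citations you use are available at this point. Overall: same approach, more complete, with one fixable slip from homology to homotopy in the mixed case.
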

\begin{proof}
Suppose first that $I_1$ and $I_2$ are both trivial. Let $I_i=A(r_i,r'_i;p_i)$. By the Mayer Vietoris sequence, the assumption implies that
\[
b_1(I_1\cap I_2)=1.
\]
From this it follows that $B^c_{r'_1}(p_1)\subset B_{r_2}(p_2)$ and $B^c_{r'_2}(p_2)\subset B_{r_1}(p_1)$. It easily follows that $I_1\cup I_2$ is clean. Since $I_1$ and $I_2$ are nontrivially embedded in $I_1\cup I_2$, they are topologically related. Furthermore, $M(I_1,I_2)=I_1\cup I_2$. In particular
\[
\mu\left(M(I_1,I_2)\right)\leq\delta_2/3\leq\delta_2/2.
\]
Suppose now that $I_1$ and $I_2$ are both non trivial. If $I_1\cup I_2$ is clean then it is straightforward that $M(I_1,I_2)=I_1\cup I_2$. Otherwise, $I_1$ or $I_2$ is conjugation invariant. Without loss of generality assume $I_1$ is conjugation invariant. Then $M(I_1,I_2)=I_1\cup I_2\cup\overline{I}_2$. In any case, $\mu(M(I_1,I_2))\leq\delta_2/2$.
\end{proof}

\begin{lm}\label{lmLNb1}
For any $I_1,I_2\in LN$, $b_1(I_1\cup I_2)>0$.
\end{lm}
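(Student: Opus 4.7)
The plan is to suppose $b_1(I_1\cup I_2)=0$ and derive a contradiction; write $U:=I_1\cup I_2$. If $I_1\cap I_2=\emptyset$ then $b_1(U)=b_1(I_1)+b_1(I_2)=2$, already contradicting the assumption, so I may assume $U$ is connected.

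The first step is to upgrade $b_1(U)=0$ to simple-connectedness of $U$. Since $U$ is an open subset of the orientable $2$-manifold $\tilde\Sigma$, it is itself an orientable surface without boundary. If $\tilde\Sigma$ is non-compact --- which under Assumption~\ref{as1} happens precisely in the $genus(\Sigma')=0$ case --- then $U$ is also non-compact (non-empty open subsets of connected non-compact manifolds are non-compact), so $\pi_1(U)$ is free and the vanishing of its abelianization forces $\pi_1(U)=0$. If $\tilde\Sigma$ is compact, either $U\subsetneq\tilde\Sigma$ and the same argument applies, or $U=\tilde\Sigma$; in the latter case $b_1(U)=b_1(\tilde\Sigma)>0$ because Assumption~\ref{as1} excludes $\tilde\Sigma\cong S^2$, giving an immediate contradiction. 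So I may assume $U$ is simply connected.

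Next I would split on whether some $I_i$ is non-trivial. If so, the core $\gamma_i$ of $I_i$ is freely isotopic to a simple closed geodesic of positive length in $\tilde\Sigma$ --- namely $\gamma$, $\alpha_1$, or $\partial\Sigma$ depending on $genus(\tilde\Sigma)$ --- and is therefore not null-homotopic in $\tilde\Sigma$. But $\gamma_i\subset U$ with $U$ simply connected, a contradiction.

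The remaining trivial-trivial case is the main obstacle, since for a trivial long neck the core $\gamma_1$ \emph{is} null-homotopic in $\tilde\Sigma$ and a topological contradiction must be replaced by a measure-theoretic one. Write $I_i=B_i\setminus B_i'$. Stability of the disk $B_i'$ rules out options (b) and (c) of Definition~\ref{dfmuStable}, forcing $\mu(B_i')\geq\delta_1/2$, while the long-neck condition gives $\mu(I_i)\leq\delta_2/6<\delta_1/2$. In $\tilde\Sigma$, the simple closed curve $\gamma_1$ separates the surface into an open disk $D_1^{in}\supset B_1'$ and a complementary region; in the simply-connected $U\cong\mathbb{R}^2$, $\gamma_1$ bounds an open disk $D\subset U$ by the Jordan curve theorem. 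Connectedness of $D$ places it on one side of $\gamma_1$ in $\tilde\Sigma$. If $D$ lies in the complementary region, then $\bar D\cup\overline{D_1^{in}}$ is a closed subsurface of $\tilde\Sigma$ homeomorphic to $S^2$; by invariance of domain it is also open, hence clopen, and so equals $\tilde\Sigma$ by connectedness, contradicting $\tilde\Sigma\not\cong S^2$. Otherwise $D\subset D_1^{in}$, and two closed topological disks sharing the Jordan boundary $\gamma_1$ with one nested in the other must coincide, whence $B_1'\subset\overline{D_1^{in}}=\bar D\subset U$. Since $B_1'\cap I_1=\emptyset$, this forces $B_1'\subset I_2$ and hence $\delta_1/2\leq\mu(B_1')\leq\mu(I_2)\leq\delta_2/6$, the final contradiction.
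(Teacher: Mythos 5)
Your proposal is correct and follows essentially the same route as the paper: the non-trivial case is dismissed via the non-contractible core, and the trivial--trivial case reduces to the measure-theoretic bound $\mu(B_1')\geq\delta_1/2>\delta_2/6\geq\mu(I_2)$, which is precisely Lemma~\ref{lmIntRDisj}. You have merely spelled out the topological step (simple-connectedness of $U$ and the Jordan-curve/clopen argument forcing $B_1'\subset U$) that the paper leaves implicit when it says the claim ``is a consequence of Lemma~\ref{lmIntRDisj}.''
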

\begin{proof}
If $I_1$ is nontrivial, this is immediate. Otherwise, the claim is a consequence of Lemma \ref{lmIntRDisj}.
\end{proof}
\begin{lm}\label{lmEsdDisjInEq}
Let $c$ be a $\sim$-equivalence class. Let $I_1\in c$ have maximal modulus. Let $I_2\in LN$. $I_1$ and $I_2$ are essentially disjoint if and only if $I_2\not\in c$.
\end{lm}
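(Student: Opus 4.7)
The plan is to prove both implications separately, using the large modulus of $m(I_1,I_2)$ in one direction and the characterization of topological relatedness in terms of $b_1$ in the other.

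For the direction \emph{$I_2\in c \Rightarrow I_1,I_2$ not essentially disjoint}, I argue the contrapositive: if $I_1\in c$ has maximal modulus and $I_1\sim I_2$, they cannot be essentially disjoint. Assume for contradiction they are. Since $I_1\sim I_2$, both lie in $LN$ and are topologically related with $\mu(M(I_1,I_2))\leq\delta_2/2$. Set $I:=m(I_1,I_2)$. By Lemma~\ref{lmExLNdisj2}, $C(L_0,L_0;I)$ is a long neck $\sim$-equivalent to $I_1$, so it belongs to $c$. On the other hand, by Lemma~\ref{lmExLNdisj} (which applies since $Mod(I_i)\geq L_1=4L_0\geq 4K_2$ and since $K_1\leq L_0$ so essential disjointness in the sense of Definition~\ref{dfEsDisj} matches the hypothesis there),
\[
Mod(I)>\max\{Mod(I_1),Mod(I_2)\}+2L_0,
\]
so $Mod(C(L_0,L_0;I))=Mod(I)-2L_0>Mod(I_1)$. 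This contradicts the maximality of $Mod(I_1)$ in $c$.

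For the direction \emph{$I_1,I_2$ essentially disjoint $\Rightarrow I_2\notin c$}, again I argue the contrapositive. Suppose $I_2\in c$ is not the case, i.e. assume $I_2\notin c$ and aim to derive essential disjointness — sorry, let me orient this correctly. Assume $I_1$ and $I_2$ are essentially disjoint and suppose for contradiction $I_2\in c$. Then $I_1\sim I_2$, and the previous paragraph immediately gives a contradiction. Conversely, if we want to rule out that $I_2\notin c$ fails under essential disjointness, the argument above suffices. So the \textbf{if and only if} reduces to this single contradiction argument.

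Actually, to prove the full biconditional cleanly, the remaining content is the direction \emph{$I_2\notin c \Rightarrow I_1,I_2$ essentially disjoint}. Suppose by contradiction $I_1$ and $I_2$ are not essentially disjoint, so $C(K_1,K_1;I_1)\cap C(K_1,K_1;I_2)\neq\emptyset$. By Theorem~\ref{tmEssDisj}, either $\tilde{\Sigma}$ is a torus covered by $I_1$ and $I_2$, or $b_1(I_1\cup I_2)\leq 1$. The torus case is excluded because, when $genus(\tilde{\Sigma})=1$ and $\alpha_0$ (hence $I_0$) is defined, the set $LN$ is by definition contained in $\tilde{\Sigma}\setminus I_0$, so no two elements of $LN$ can cover the torus; in the other genus-$1$ subcase there are no short geodesics and the argument is analogous. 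In the remaining case, Lemma~\ref{lmLNb1} gives $b_1(I_1\cup I_2)>0$, so $b_1(I_1\cup I_2)=1$. Lemma~\ref{lmIntAnRel} then yields $I_1\sim I_2$, i.e. $I_2\in c$, contradicting the hypothesis.

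The main obstacle I anticipate is handling the torus case precisely; the rest is a direct combination of Theorem~\ref{tmEssDisj}, Lemma~\ref{lmLNb1}, Lemma~\ref{lmIntAnRel}, Lemma~\ref{lmExLNdisj}, and Lemma~\ref{lmExLNdisj2}. The key insight making the proof short is that essential disjointness together with $\sim$-equivalence lets us \emph{enlarge} representatives via $m(I_1,I_2)$, which contradicts maximality; conversely, non essential disjointness forces topological relatedness via the Betti-number dichotomy of Theorem~\ref{tmEssDisj}, which in turn forces $\sim$-equivalence.
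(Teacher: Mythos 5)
Your proof is correct and follows essentially the same route as the paper: both directions hinge on Lemma~\ref{lmExLNdisj2} plus Lemma~\ref{lmExLNdisj} to contradict maximality, and on Theorem~\ref{tmEssDisj} together with Lemmas~\ref{lmLNb1} and~\ref{lmIntAnRel} to force $\sim$-equivalence when essential disjointness fails. The only difference is cosmetic: you spell out the exclusion of the torus case and cite Lemma~\ref{lmIntAnRel} explicitly, where the paper leaves these implicit (and in fact has a small typo in that sentence).
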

\begin{proof}
Suppose  $I_1$ and $I_2$ are essentially disjoint and suppose by contradiction $I_2\in c$. Write $I:=C(L_0,L_0;m(I_1,I_2))$. By Lemma \ref{lmExLNdisj2}, $I\in LN\cap c$. By Lemma \ref{lmExLNdisj}, $Mod(I)>Mod(I_1)$. This is a contradiction. Conversely, suppose $I_1$ is not essentially disjoint from $I_2$. Recall that we excluded the trivial case $\tilde{\Sigma}\neq I_1\cup I_2$. Therefore, combining Theorem \ref{tmEssDisj} and Lemma \ref{lmLNb1}, we have $b_1(I_1\cup I_2)=1$ as long as $\tilde{\Sigma}\neq I_1\cup I_2$, we conclude $I_1\sim I_2$.
\end{proof}

\begin{lm}\label{lmcConjInv}
Let $c$ be a $\sim$-equivalence class.
\begin{enumerate}
\item
The elements of $c$ are either all trivial or all nontrivial. In the first case we say that $c$ is trivial, in the second case we say that it is nontrivial.
\item
If $c$ is trivial and then either all elements of $c$ are conjugation invariant or there is a component $A$ of $\Sigma_{\C}\setminus \partial\Sigma$ such that they are all contained in $A$.
\item
If $c$ is nontrivial then either the maximal elements of $c$ are conjugation invariant or there is a component $A$ of $\Sigma_{\C}\setminus \partial\Sigma$ such that they are all contained in $A$.
\end{enumerate}
\end{lm}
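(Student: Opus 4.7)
The plan is to reduce the three claims to two preservation properties of $\sim$-equivalence: that it preserves the trivial/nontrivial type of admissible annuli, and that it preserves conjugation invariance.

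For part (1), I would appeal directly to Theorem~\ref{lmRelChar}. Its two conditions characterising topological relatedness are mutually exclusive: condition~(\ref{lmRelChar1}) forces both annuli to be sub-cylinders of a common $\mathcal{C}(\gamma)$, hence nontrivial, while condition~(\ref{lmRelChar2}) presents both as differences of concentric geodesic discs, hence trivial. So topologically related admissible annuli are of the same type, and the transitivity of $\sim$ propagates this through the whole class $c$.

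For parts (2) and (3) I would invoke Lemma~\ref{ConjInvTrans}, which says that conjugation invariance is preserved under topological relatedness, and hence under $\sim$-equivalence. Consequently either every element of $c$ is conjugation invariant---giving the first alternative in both (2) and (3)---or none is. Assume the latter, and pick $I_1, I_2 \in c$ (maximal in part (3)). Topological relatedness produces the clean, doubly connected envelope $M(I_1, I_2)$ by Lemma~\ref{lmI1sharpI2}. Since $I_1$ embeds nontrivially in $M(I_1, I_2)$, Lemma~\ref{lmCleanConjEmb} forces $M(I_1, I_2)$ to share the conjugation-invariance status of $I_1$, so it is not conjugation invariant either. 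By cleanness, $M(I_1, I_2) \cap \overline{M(I_1, I_2)} = \emptyset$, so $M(I_1, I_2)$ lies in a single component $A$ of $\Sigma_\C \setminus \partial\Sigma$; in particular $I_1, I_2 \subset A$. Ranging over all pairs from (the maximal elements of) $c$ yields a common $A$, giving the second alternative.

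The main obstacle is justifying the hypothesis of Lemma~\ref{lmCleanConjEmb}, namely that neither $I_1$ nor $M(I_1, I_2)$ contains a component of $\partial\Sigma$. For trivial annuli in part (2) this is immediate from admissibility, since each such annulus sits inside a geodesic disc of radius less than $\frac{1}{3} \inj$, and $M(I_1,I_2)$ is of the same form. For nontrivial annuli in part (3), Lemma~\ref{lmCleanMC}\ref{lmCleanMC2} rules out the offending case: a clean sub-cylinder of $\mathcal{C}(\gamma)$ containing $\gamma \subset \partial\Sigma$ would have to be symmetric about $\gamma$ and hence conjugation invariant, contrary to our standing assumption. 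This is where the restriction to \emph{maximal} elements in part (3) pays off, since it ensures that the envelope $M(I_1,I_2)$---built from two sub-cylinders of the same $\mathcal{C}(\gamma)$---inherits the absence of boundary components, letting the Lemma~\ref{lmCleanConjEmb} argument go through.
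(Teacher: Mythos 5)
Your treatment of parts (1) and (2) tracks the paper's argument closely and is correct: part (1) falls out of the mutual exclusivity of the two alternatives in Theorem~\ref{lmRelChar}, and part (2) uses the envelope $M(I_1,I_2)$ together with Lemma~\ref{lmCleanConjEmb}, where the hypothesis on boundary components holds because a trivial admissible annulus and the resulting $M(I_1,I_2)$ sit inside an embedded geodesic disc in $\tilde{\Sigma}$ and hence cannot contain a (non-contractible) component of $\partial\Sigma$.

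Part (3) is where you diverge from the paper, and the divergence has a gap. You want to run the same $M(I_1,I_2)$-plus-Lemma~\ref{lmCleanConjEmb} argument that works for trivial annuli, and you correctly flag the danger: for sub-cylinders $I_1,I_2$ of $\mathcal{C}(\gamma)$ with $\gamma\subset\partial\Sigma$, the envelope $M(I_1,I_2)$ is, by its definition, the \emph{symmetric} cylinder $\{|\rho|<\rho\}$ whenever $0\in[\rho_0,\rho_1]$, and in that case $M(I_1,I_2)\supset\gamma$ and Lemma~\ref{lmCleanConjEmb} simply does not apply. Your remedy---"maximality ensures $M(I_1,I_2)$ inherits the absence of boundary components"---is asserted, not argued. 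Maximality of both annuli does give, via Lemma~\ref{lmEsdDisjInEq}, that $I_1$ and $I_2$ are not essentially disjoint and hence lie on the same side of $\gamma$, which yields $\rho_0\geq 0$; but the boundary case $\rho_0=0$ (a maximal sub-cylinder abutting $\gamma$) still satisfies $0\in[\rho_0,\rho_1]$ and produces a conjugation-invariant $M(I_1,I_2)$ containing $\gamma$, and nothing in your argument excludes it. The paper sidesteps the whole issue with a different contradiction: assuming a maximal $I_1$ is not conjugation invariant (so $I_1\subset A$) and some $I_2\in c$ lies outside $A$, it sets $J:=C(L_0,L_0;M(I_1,I_2))$, shows $J=M(I_1,\overline{I}_1)$, and concludes $J\in LN\cap c$ with $Mod(J)>Mod(I_1)$, contradicting maximality. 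That modulus-growth argument avoids Lemma~\ref{lmCleanConjEmb} altogether and absorbs the edge case. To salvage your route you would have to actually prove that a maximal element of $c$ cannot abut $\gamma\subset\partial\Sigma$, i.e. rule out $\rho_0=0$, and that requires its own argument rather than a one-line appeal to maximality.
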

\begin{proof}
\begin{enumerate}
\item
This follows from Theorem~\ref{lmRelChar} since $\sim$-equivalence entails topological relatedness.
\item
For any $I_1,I_2\in c$, $I_1$ and $I_2$ embed nontrivially in $M(I_1,I_2)$ which is clean and doubly connected. Since these annuli are all trivial, none of them contains a component of $\partial\Sigma$. It thus follows from Lemma \ref{lmCleanConjEmb} that $I_1$ and $I_2$ are either both conjugation invariant or both contained in the same component of $\Sigma_{\C}\setminus\partial\Sigma$.
\item
Suppose there is a maximal element $I_1$ of $c$ that is not conjugation invariant. Since $I_1$ is clean, there is a component $A$ of $\Sigma_{\C}\setminus \partial\Sigma$ such that $I_1\subset A$. Suppose by contradiction that there is an $I_2\in c$ such that $I_2\not\subset A$. Write $J=C(L_0,L_0;M(I_1,I_2))$. Then maximality of $I_1$ easily implies
\[
J=M(I_1,\overline{I}_1).
\]
But then we get the contradiction $J\in LN\cap c$ and
\[
Mod(J)>Mod(I_1).
\]
\end{enumerate}
\end{proof}

\begin{lm}\label{lmBubDecompmustab}
Let $\mathcal{B}$ be a bubble decomposition consisting of elements of $LN$. Let $I=\Sigma_v$ for some $v\in V_{\mathcal{B}}$ and suppose $b_1(I)=1$. Suppose $I$ is bordered by  $\sim$-inequivalent elements $I_1,I_2\in\mathcal{B}\subset LN$. Then $I$ is $\mu$-stable.
\end{lm}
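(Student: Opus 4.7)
The topological part is immediate. Since $I$ is bordered by the two distinct long necks $I_1,I_2$, we have $\#\pi_0(\partial I)\geq 2$; combined with $b_1(I)=1$ this forces $I$ to be a topological annulus with $genus(I)=0$ and $\#\pi_0(\partial I)=2$. Consequently condition (iii) of Definition~\ref{dfmuStable} fails for $I$, so to establish $\mu$-stability it suffices to prove $\mu(I)\geq\delta_2/6$ in order to invoke condition (ii).

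I would argue the contrapositive: assume $\mu(I)<\delta_2/6$, and derive $I_1\sim I_2$, contradicting the hypothesis. Let $J$ denote the interior of $\overline{I_1}\cup\overline{I}\cup\overline{I_2}$. Because $I$ is an annulus with one boundary circle in $\partial I_1$ and the other in $\partial I_2$, and each $I_j$ is an annulus glued to $I$ along exactly one of those circles, $J$ is a doubly connected subset of $\tilde\Sigma$; its cleanness follows from two applications of Lemma~\ref{lmCleanUnion} to the clean sets $I_1,I,I_2$. Both $I_1$ and $I_2$ are nontrivially embedded in $J$, since their generating loops are freely homotopic inside $J$ to a generator of $H_1(J)$. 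Thus $I_1$ and $I_2$ are topologically related in the sense of Definition~\ref{dfTopRel}.

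The crucial estimate is $\mu(M(I_1,I_2))\leq\delta_2/2$. By Theorem~\ref{lmRelChar}, either (a) $I_1,I_2$ are concentric-disc trivial annuli, or (b) they are both sub-cylinders of a common $\mathcal{C}(\gamma)$. In case (a), disjoint closures together with $B'_1\cap B'_2\neq\emptyset$ and the admissibility ratio $r'_i\leq r_i/5$ force a nested configuration (say $B_2\subset B'_1$). A direct computation from $M(I_1,I_2)=B_1\cup B_2\setminus(B'_1\cap B'_2)$ then gives $M(I_1,I_2)=J$, hence
\[
\mu(M(I_1,I_2))\leq\mu(I_1)+\mu(I)+\mu(I_2)<3\cdot\delta_2/6=\delta_2/2.
\]
In case (b), the identification $M(I_1,I_2)=J$ likewise holds, except in the sub-case $\gamma\subset\partial\Sigma$ with $0\in[\rho_0,\rho_1]$, where $M(I_1,I_2)=\{-\rho<\rho(z)<\rho\}\supsetneq J$ is the conjugation-symmetric enlargement.

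The hard part will be this symmetric-extension sub-case, which I plan to handle by adapting the technique from the proof of Lemma~\ref{lmSimEq}. First, $M(I_1,I_2)\subseteq J\cup\overline{J}$, so the conjugation symmetry $\mu(\overline U)=\mu(U)$ of $\mu$ yields the rough bootstrap $\mu(M(I_1,I_2))\leq 2\mu(J)<\delta_2$. The cylinder inequality~\eqref{eqAnEnPart} then applies to $M(I_1,I_2)$ and gives $\mu(C(L_0,L_0;M(I_1,I_2)))\leq \tfrac{1}{3}\mu(M(I_1,I_2))$. Because $Mod(I_j)\geq L_1\geq L_0$, one of the two $L_0$-ends of $M(I_1,I_2)$ sits inside $I_1$ (or $I_2$), while by the symmetry $\rho\mapsto-\rho$ the opposite end is the conjugate image of a sub-cylinder of that same $I_j$; the conjugation symmetry of $\mu$ shows the two ends together carry mass at most $2\mu(I_j)\leq\delta_2/3$. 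Rearranging produces $\mu(M(I_1,I_2))\leq\tfrac{3}{2}\cdot\delta_2/3=\delta_2/2$.

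Topological relatedness together with the bound $\mu(M(I_1,I_2))\leq\delta_2/2$ then yields $I_1\sim I_2$, contradicting the hypothesis. Therefore $\mu(I)\geq\delta_2/6$, and $I$ is $\mu$-stable.
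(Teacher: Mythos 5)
Your overall strategy matches the paper's: reduce $\mu$-stability to showing $\mu(I)\geq\delta_2/6$, argue by contradiction that $\mu(I)<\delta_2/6$ forces $I_1\sim I_2$, split according to Theorem~\ref{lmRelChar}, and in the conjugation-symmetric subcase use the cylinder inequality~\eqref{eqAnEnPart} to bootstrap the estimate $\mu(M(I_1,I_2))\leq\delta_2/2$. The computations in each branch (the counting $\mu(M)\leq\mu(I_1)+\mu(I)+\mu(I_2)<\delta_2/2$ in the non-symmetric branches, and the rearrangement $\tfrac{2}{3}\mu(M)\leq 2\mu(I_2)\leq\delta_2/3$ in the symmetric branch) are correct.

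However, there is a genuine gap in the step establishing topological relatedness. You assert that $J=\mathrm{int}\bigl(\overline{I_1}\cup\overline{I}\cup\overline{I_2}\bigr)$ is clean ``by two applications of Lemma~\ref{lmCleanUnion},'' but this is false precisely in the symmetric-extension subcase that you later single out. Suppose $I_1$ is conjugation invariant (a collar of a boundary geodesic) while $I$ and $I_2$ are not. Then $I_1\cup I$ satisfies none of the four conditions of Lemma~\ref{lmCleanUnion}: the sets are not both conjugation invariant; $I_1\cap\overline{I_1}=I_1\neq\emptyset$, so the disjointness condition fails; $I_1\neq\overline{I}$ since one is thin and the other thick; and neither contains the other. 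One sees directly that $J\cap\overline{J}$ contains $I_1$, so $J\cap\overline{J}\neq\emptyset$, while $J\neq\overline{J}$ because $I\neq\overline{I}$; hence $J$ is not clean, and Definition~\ref{dfTopRel} cannot be verified using $J$. The fix is short: in the nontrivial case topological relatedness should be drawn from Lemma~\ref{lmCleanMC}, which shows the ambient $\mathcal{C}(\gamma)$ containing $I_1,I_2$ as sub-cylinders is clean (and this is exactly what the paper does); once that is in hand your estimate of $\mu(M(I_1,I_2))$ for the symmetric case goes through. A smaller phrasing issue: in that symmetric case the two $L_0$-ends of $M(I_1,I_2)$ lie in the \emph{non-}conjugation-invariant annulus and its conjugate (in your normalization, $I_2\cup\overline{I_2}$), so writing ``$I_1$ (or $I_2$)'' blurs which annulus carries the ends; being definite here is what makes the bound $2\mu(I_2)\leq\delta_2/3$ legitimate.
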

\begin{proof}
First we claim that $I_1$ and $I_2$ are topologically related. To see this note first that $I_1$ and $I_2$ freely homotopic, so they are either both trivial or both non-trivial. If both are nontrivial, the claim follows from Lemma~\ref{lmCleanMC}. Suppose both are trivial. Write $I_i=B_i\setminus B'_i$ where $B_i$ and $B'_i$ are concentric discs. Clearly we may assume with no loss of generality that $B_2\subset B'_1$ and so, $I=B'_1\setminus B_2$. In particular $B'_1\cap B'_2\neq\emptyset$. Further, $I_1\cap I_2$ is clean. Indeed, the only alternative is that $B_1$ is conjugation invariant while $B_2$ is not, but in that case $I$ is not conjugation invariant. Since by definition $\mathcal{B}$ is conjugation invariant, this is a contradiction. By Theorem~\ref{lmRelChar}\ref{lmRelChar2}, $I_1$ is topologically related to $I_2$.

We now show that $I$ is $\mu$-stable. In case the $I_i$ are trivial,
\[
I=M(I_1,I_2)\setminus (I_1\cup I_2).
\]
So, by $\sim$-inequivalence, $\mu(I)>\delta_2/6$. Suppose the $I_i$ are nontrivial. If $I_1\cup I_2$ is clean, we have $I= M(I_1,I_2)\setminus (I_1\cup I_2)$ and the claim follows as before. Otherwise, without loss of generality $I_1$ is conjugation invariant while $I_2$ is not. Then
\[
M(I_1,I_2)= I_1\cup I \cup I_2\cup \overline{I\cup I_2}.
\]
Suppose by contradiction that $\mu(I)<\delta_2/6$. Write $M=M(I_1,I_2)$. Then $\mu(M)<\delta_2$. Let $N=M\setminus C(L_0,L_0;M)$. We have $N\subset I_2\cup\overline{I}_2$. In particular $\mu(N)\leq\delta_2/3$. By definition of $L_0$ it follows that
\[
\mu(M)<\delta_2/2.
\]
That is, $I_1\sim I_2$. A contradiction.
\end{proof}
\begin{proof}[Proof of Theorem \ref{tmMaxBubDecom}]
Denote by $S$ the set of $\sim$ equivalence classes.  Pick a component $A\subset\tilde{\Sigma}\setminus\partial\Sigma$. For each $c\in S$ whose elements lie in $A$ or which is conjugation invariant assign an element $I_c\in c$ of maximal modulus. For any other $c$ define $I_{c}:=\overline{I}_{\overline{c}}$. Let now
\[
\tilde{\mathcal{B}}=\{I_c|c\in S\}.
\]
In the exceptional case where $genus(\tilde{\Sigma})=1$ and $Mod(I_0)\geq L_1,$ we add $I_0$ to $\tilde{\mathcal{B}}$.  It is follows from Lemma~\ref{lmcConjInv} that $\tilde{\mathcal{B}}$ is conjugation invariant. By Lemma \ref{lmEsdDisjInEq} the elements of $\tilde{\mathcal{B}}$ are pairwise essentially disjoint.
Now let
\[
\mathcal{B}=\{C(2K_1,2K_1;I)|I\in \tilde{\mathcal{B}}\}.
\]
We show that $\mathcal{B}$ is a maximal $\mu$-decomposition. We check the stability condition. Let $v\in V_{\mathcal{B}}$. We distinguish between the following cases.
\begin{enumerate}
\item $2genus(\Sigma_v)+|\pi_0(\partial\Sigma_v)|\geq 3$. In this case stability is automatic.
\item $genus(\Sigma_v)=1$ and $\partial\Sigma_v=\emptyset$. Stability is a consequence of Assumption~\ref{as1}.
\item
$genus(\Sigma_v)=0$ and $|\pi_0(\partial\Sigma_v)|=2$. Then if $\Sigma_v$ is bordered by two inequivalent elements of $LN$, this case is covered by Lemma \ref{lmBubDecompmustab}. Otherwise, we must have that $genus(\tilde{\Sigma})=1,$ and either $\Sigma_v$ is the complement of a long neck, or $\Sigma_v$ is bordered by $I_0$ and some element $I_1$ of $LN$. In the first case, stability follows by definition of long necks. We treat the second case. Write $I=\Sigma_v$ and suppose by contradiction that $\mu(I)<\delta_2/6$.  Let
\[
M=I_0\cup I_1\cup I\cup\overline{I_1\cup I},
\]
and let $N=C(L_0,L_0;M).$ Suppose first $\partial\Sigma=\emptyset.$ Then $Mod(N)>Mod(I_0)$ and $\mu(N)\leq \delta_2/6$. This contradicts the choice of $I_0$. Suppose now $\partial\Sigma\neq\emptyset$. Then $I_0\subset N$ and $M\setminus N\subset I_1\cup \overline{I}_1$. By assumption $\mu(M)<\delta_1,$ so
\[
\mu(I_0)\leq\mu(N)\leq\delta_2/9.
\]
This again contradicts the choice of $I_0$.
\item
$|\pi_0(\partial\Sigma_v)|=1$. In this case $\Sigma_v$ is a component of the complement of a long neck and so the claim follows by definition.
\end{enumerate}

It remains to check the maximality condition. Suppose by contradiction that $\Sigma_v$ contains a long neck $I'$. Then there is a $c\in S$ such that $I'\in c$. Clearly,
 \[
 C(K_1,K_1;I')\cap C(K_1,K_1;I_c)=\emptyset.
 \]
That is, $I'$ and $I_c$ are essentially disjoint. Since $I_c$ has maximal modulus in $c$, this contradicts Lemma \ref{lmEsdDisjInEq}.

\end{proof}

\section{$(\mu,h)$-adaptedness}\label{SEcMuAdapt}

\begin{tm}\label{tmBubDecEst}
Let $\mathcal{F}$ be a uniformly thick thin family. Let $(\Sigma,\mu)\in \mathcal{F}$ satisfy Assumption~\ref{as1}.
\begin{enumerate}
\item
If $\partial\Sigma=\emptyset,$ there is a conformal constant curvature metric $h$ on $\Sigma$ and a $(\mu,h)$-adapted bubble decomposition $\mathcal{B}$ of $\Sigma$ with constants independent of $(\Sigma,\mu)$.
\item
If $\partial\Sigma\neq\emptyset,$ there is a conjugation invariant conformal constant curvature metric $h$ on $\Sigma_\C$ and a conjugation invariant $(\mu,h)$- adapted bubble decomposition $\mathcal{B}$ of $\Sigma_\C$ with constants independent of $(\Sigma,\mu)$.
\end{enumerate}
\end{tm}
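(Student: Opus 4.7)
The plan is to take the maximal $\mu$-decomposition $\mathcal{B}$ produced by Theorem~\ref{tmMaxBubDecom}, with the metric $h$ specified in Section~4.1 (via Lemma~\ref{LmSpSphMet} in the genus 0 case), and verify that the three conditions of Definition~\ref{dfBubDecEst} hold with universal constants $a,b,\delta$. Conjugation invariance in the bordered case is already built in to both $h$ and the construction of $\tilde{\mathcal{B}}$ in the proof of Theorem~\ref{tmMaxBubDecom}, so only the estimates remain.

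First, stability (Definition~\ref{dfBubDecEst}\ref{dfBubDecEst12}) is immediate from Definition~\ref{defMaxBubDecom}\ref{tmMaxBubDecom2}: each $\Sigma_v$ is $\mu$-stable, which, upon unpacking Definition~\ref{dfmuStable}, yields either $\mu_v \geq \delta_2/6$ or $2\,genus(\Sigma_v) + \#\pi_0(\partial\Sigma_v) \geq 3$. Taking $\delta = \delta_2/6$ gives the required dichotomy. Second, the exponential decay \eqref{eqExpDec} in the thin part is essentially Lemma~\ref{ExpCylDEst}: by construction every $I \in \mathcal{B}$ has the form $C(2K_1, 2K_1; I')$ for some $I' \in \tilde{LN}$ with $\mu(I') \leq \delta_2/6 < \delta_2$, and Lemma~\ref{ExpCylDEst} applied to $I'$ translates, after reparametrizing by the standard cylindrical coordinates on $I$, into the bound \eqref{eqExpDec} with universal $a,b$.

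The bounded geometry estimates (Definition~\ref{dfBubDecEst}\ref{dfBubDecEst13}(ii)--(iv)) use the maximality condition (Definition~\ref{defMaxBubDecom}\ref{tmMaxBubDecom3}): $\Sigma_v$ contains no long neck. If the injectivity radius at some $p \in \Sigma_v$ were very small, or a boundary component very short, or two boundary components very close, then Theorems~\ref{TmThTh}--\ref{TmThTh2} and the collar lemma would furnish a clean doubly connected subset of $\Sigma_v$ (either a standard collar of a short interior geodesic, or a neighborhood of a short boundary geodesic) of arbitrarily large modulus; combined with the $\mu$-stability of $\Sigma_v$ and the cylinder inequality, such a subset, after trimming, would be a long neck inside $\Sigma_v$, contradicting maximality. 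This gives lower bounds on these geometric quantities in the unrescaled metric $h$ depending only on $\mu_v$ through the cylinder inequality. The rescaling $h_v = s_v^2 h$ is benign: since $s_v$ depends only on $g_v$ and $d_v$, and $d_v$ is in turn controlled using the absence of long necks together with a standard covering argument in bounded geometry (as in Remark~\ref{rmcpt}), the rescaled quantities satisfy the required $a e^{-b(\mu_v + n_v)}$ bounds.

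The derivative bound \eqref{eqThickDerEst} follows the sketch in Section~1.6, and I expect this to be the main obstacle because of the interaction with the boundary. Let $p \in \Sigma_v$ maximize $d := \frac{d\mu}{d\nu_{h_v}}$. If $d$ is below a universal threshold we are done; otherwise, $r_d$ (as in \eqref{EqArDee}) is smaller than both the injectivity radius at $p$ and the distance to $\partial\Sigma_v$, as guaranteed by the geometry bounds just established. By Lemma~\ref{rmGradApp} the ball $B_{r_d}(p; h_v)$ carries mass $\geq \delta_1$, so the annulus $A = B_{1/2}(p;h_v) \setminus B_{r_d}(p;h_v)$ has $\Sigma_v \setminus A$ stable. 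The absence of long necks inside $\Sigma_v$ forces every sub-annulus of $A$ of modulus $\geq L_1$ to carry mass $\geq \delta_2/6$; subdividing $A$ into $\lfloor Mod(A)/L_1 \rfloor$ such sub-annuli and summing yields
\[
\log d \; \lesssim \; Mod(A) \; \lesssim \; \mu(\Sigma_v) + \text{boundary correction},
\]
where the boundary correction is handled via $n_v$ using the distance-to-boundary estimate from the previous step. This gives \eqref{eqThickDerEst}. The trickiest point is making the chain-of-annuli argument work uniformly across the trivial and nontrivial admissible cases of Definition~\ref{dfAdmAnn}, and in particular ensuring the annuli remain clean so the cylinder inequality applies --- this is exactly what Theorem~\ref{lmRelChar} and the essential-disjointness machinery of Section~3 were developed to provide.
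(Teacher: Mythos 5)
Your high-level plan is the same as the paper's: take the maximal $\mu$-decomposition from Theorem~\ref{tmMaxBubDecom} together with the metric chosen via Lemma~\ref{LmSpSphMet}, read off stability from Definition~\ref{defMaxBubDecom}\ref{tmMaxBubDecom2} and Definition~\ref{dfmuStable}, get the exponential decay from Lemma~\ref{ExpCylDEst}, and derive the remaining estimates from maximality. Conjugation invariance is indeed built in, as you say. The stability and exponential-decay steps are fine. However, the bounded-geometry sketch has a real gap, and the derivative estimate as you phrase it misses a necessary case.

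On the bounded geometry (Definition~\ref{dfBubDecEst}\ref{dfBubDecEst13}(b)--(d)): the claim that absence of long necks ``gives lower bounds on these geometric quantities in the unrescaled metric $h$'' is false, and the follow-up that ``the rescaling $h_v = s_v^2h$ is benign'' (with $d_v$ supposedly controlled by a covering argument as in Remark~\ref{rmcpt}) is where the argument breaks. If $\Sigma_v$ is a piece of the collar of a short geodesic $\gamma$ with $\ell(\gamma;h)=\epsilon$, then $\inj(\cdot;h)\sim\epsilon$ can be arbitrarily small even though $\mathrm{Mod}(\Sigma_v)$ is bounded and $\Sigma_v$ contains no long neck. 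The actual output of the no-long-neck condition is a \emph{modulus} bound, Lemma~\ref{lmNoLoN}: $\mathrm{Mod}(Cl(\Sigma_v))\lesssim \mu_v+n_v+1$. What Lemma~\ref{GammaEsts} then does, case by case on $m(v)=2\,genus(Cl(\Sigma_v))+|E_v|$, is convert this modulus bound into \emph{ratio} bounds such as $\ell_{\min}/d_v$ and $\inj(\cdot;h)/d_v$ (cf.~\eqref{EqRatio}, \eqref{diamRatio}, \eqref{diamModEst}). The rescaling by $s_v=2(g_v+1)/d_v$ is then not an afterthought but the entire mechanism: it turns those scale-invariant ratio estimates into the absolute bounds required. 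Remark~\ref{rmcpt} is a \emph{consequence} of the estimates, not a tool for producing $d_v$, so invoking it here is circular. You also skip the distinction between external boundary components $E_v$ (handled directly in Lemma~\ref{GammaEsts}) and internal ones $F_v$ (handled via Lemma~\ref{InGammaEst} and Corollary~\ref{CyInGammaEst}), which is what makes the $n_v=|F_v|$ dependence come out right.

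On the derivative estimate~\eqref{eqThickDerEst}: you assert that ``$r_d$ is smaller than $\ldots$ the distance to $\partial\Sigma_v$, as guaranteed by the geometry bounds just established,'' but both quantities scale like $e^{-b(\mu_v+n_v)}$, so there is no a priori dominance; $B_{r_d}(p;h_v)$ can poke out of $\Sigma_v$ when $p$ lies near $\partial\Sigma_v$. The paper's Lemma~\ref{cyDerEst} makes a case split you omit: if $d(p,\partial Cl(\Sigma_v);h_v)$ is below the threshold $f_6e^{-f_7(\mu_v+n_v)}$, then $p$ sits inside a long neck's offset sub-cylinder $C(c_2+\pi,c_2+\pi;I)$ and the bound follows from the exponential-decay Lemma~\ref{ExpCylDEst} together with the conformal-factor estimate of Corollary~\ref{cyCFEst}; only in the far-from-boundary case does the gradient-inequality argument with $B_{r_d}(p)$ and Lemma~\ref{InGammaEst} apply. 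Without that split the argument does not close. (Your worry about cleanness and essential disjointness, by contrast, is not really the bottleneck here -- the annuli you subdivide are concentric and already clean.)
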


For the rest of this section fix a $(\Sigma,\mu)\in\mathcal{F}$ satisfying Assumption~\ref{as1} with the understanding that all constants depend only on $\mathcal{F}$ and not on the particular $(\Sigma,\mu)$ we chose. Let $\mathcal{B}$ be a maximal $\mu$-decomposition as in Theorem \ref{tmMaxBubDecom}.  To prove that $\mathcal{B}$ satisfies the estimates in part~\ref{dfBubDecEst13} of Definition~\ref{tmBubDecEst}, we need to introduce some notation.

Associate to $\mathcal{B}$ a graph $G_{\mathcal{B}}$ as follows. As the vertex set of $G_{\mathcal{B}}$ take $V_{\mathcal{B}}.$ Add an outgoing half edge $l$ from $v$ for each element of $\pi_0(\partial\Sigma_v)$. For any $v\in V_{\mathcal{B}},$ denote by $\mathcal{H}_v$ the set of half edges going out of $v$. For $l\in\mathcal{H}_v$ denote by $\gamma_l$ the boundary component corresponding to $l$. Half edges $l_1$ and $l_2$ are connected to one another in $G_\mathcal{B}$ if and only if there is an element of $I\in\mathcal{B}$ such that $\partial I=\gamma_{l_1}\cup\gamma_{l_2}$. There is thus a two to one correspondence between half edges and elements of $\mathcal{B}$. For $l\in\mathcal{H}_v$, write $I_l$ for the corresponding element of $\mathcal{B}.$

Let $v\in V_{\mathcal{B}}$. An external boundary component of $\Sigma_v$ is an element $\gamma\in\pi_0(\partial\Sigma_v)$ such that $\gamma$ is either not contractible in $\tilde{\Sigma}$ or satisfies
\[
Diam(\gamma;h)=Diam(\Sigma_v;h)\footnote{Note that if $\Sigma_v$ is formed by removing any number of small discs from sphere, then $E_v=\emptyset$.}.
\]
Let $E_v\subset \mathcal{H}_v$ denote the half edges corresponding to the external boundary components of $\Sigma_v$, and let
\[
F_v:=\mathcal{H}_v\setminus E_v.
\]
For each $l\in F_v$, $\gamma_l$ is the boundary of a disc $B_l\subset\tilde{\Sigma}$. Write
\[
Cl(\Sigma_v):=\Sigma_v\bigcup_{\{l\in F_v\}}B_l \subset \Sigma_\C.
\]

\begin{lm}\label{lmNoLoN}
There is a constant $f_1$ with the following significance. Let $v\in V_{\mathcal{B}}$ and let $I\subset Cl(\Sigma_v)$ be a neck. In case $I=B\setminus B'$ for discs $B'\subset B\subset\Sigma$, suppose that $\partial B'\subset\Sigma_v$. Let
\[
n(I):=\left|\{l\in F_v:\gamma_l\cap I\neq \emptyset\}\right|.
\]
Then $Mod(I)\leq f_1(\mu(I\cap \Sigma_v)+n(I)+1)$.
\end{lm}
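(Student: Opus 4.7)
The plan is to decompose $I$ into sub-cylinders, most of which lie entirely in $\Sigma_v$, and then invoke the maximality condition on $\mathcal{B}$ to bound each piece. To set up, I would fix a biholomorphism $I \cong (0, Mod(I)) \times S^1$ and let $\pi$ denote projection to the first factor. For each $l \in F_v$ with $\gamma_l \cap I \neq \emptyset$, let $[a_l, b_l]$ be the convex hull of $\pi(\gamma_l \cap I)$, and let $J_1, \ldots, J_k$ be the connected components of $(0, Mod(I)) \setminus \bigcup_l [a_l, b_l]$; note $k \leq n(I) + 1$. The strips $V_j := J_j \times S^1$ are sub-cylinders of $I$ that avoid every $\gamma_l$ with $l \in F_v$.

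The first key step is to show each $V_j \subset \Sigma_v$. Since $V_j$ is connected and avoids $\bigcup_l \gamma_l$, while $Cl(\Sigma_v) \setminus \bigcup_l \gamma_l = (\Sigma_v \setminus \partial \Sigma_v) \sqcup \bigsqcup_l B_l^\circ$, we have $V_j \subset \Sigma_v$ or $V_j \subset B_l^\circ$ for some $l$. The latter would place an essential core circle of $V_j \subset I$ inside a disc: in the nontrivial case this is impossible because the core of $I$ is non-contractible in $\tilde\Sigma$; in the trivial case $I = B \setminus B'$, the hypothesis $\partial B' \subset \Sigma_v$ together with $\Sigma_v \cap B_l^\circ = \emptyset$ forces the disc $\tilde B' \supset B'$ bounded in $\Sigma$ by this core circle to coincide with $B_l$, giving $B' = B_l$ and hence $V_j \subset B_l = B'$, contradicting $V_j \subset B \setminus B'$.

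With $V_j \subset \Sigma_v$ in hand, I would apply the maximality condition. Any sub-cylinder of $V_j$ of modulus $\geq L_1$ is admissible (immediate for nontrivial $I$; for trivial $I$, large modulus forces the inner-to-outer radius ratio to be less than $1/5$), and its complement in $\tilde\Sigma$ is $\mu$-stable because attaching annular pieces of $I \setminus V_j$ to the $\mu$-stable components of $\tilde\Sigma \setminus I$ preserves both genus and the number of boundary components. Such a sub-cylinder is therefore a candidate long neck inside $\Sigma_v$, so by Definition~\ref{defMaxBubDecom}\ref{tmMaxBubDecom3} it must satisfy $\mu > \delta_2/6$. Subdividing each $V_j$ into consecutive slabs of modulus $L_1$ yields $Mod(V_j) \leq L_1 + (6L_1/\delta_2)\mu(V_j)$, and summing gives
\[
\sum_j Mod(V_j) \leq (n(I)+1) L_1 + \frac{6L_1}{\delta_2} \mu(I \cap \Sigma_v).
\]

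The main obstacle is to bound $|U| := \sum_l (b_l - a_l)$ uniformly in terms of $n(I)$. Each $\gamma_l$ is a boundary circle of $C(2K_1, 2K_1; I_l') \in \mathcal{B}$, a horizontal circle at depth $2K_1$ inside the admissible annulus $I_l' \in \tilde{\mathcal{B}}$. I expect the desired uniform bound $b_l - a_l \leq C$ to come from a conformal comparison: essential disjointness of elements of $\tilde{\mathcal{B}}$ (Theorem~\ref{tmEssDisj}), together with the gradient and cylinder inequalities applied to $\mu$, should translate the bounded $h$-length of $\gamma_l$ into bounded vertical extent in the cylindrical coordinates of $I$, since a large vertical spread of $\gamma_l$ inside $I$ would force $I$ and $I_l'$ to overlap in a way incompatible with essential disjointness or with the $\mu$-stability of $\tilde\Sigma \setminus I$. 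Combining $|U| \leq C n(I)$ with the estimate above then gives $Mod(I) \leq f_1(\mu(I \cap \Sigma_v) + n(I) + 1)$ with $f_1 := \max\{L_1 + C, 6L_1/\delta_2\}$.
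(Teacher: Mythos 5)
Your decomposition is structurally the same as the paper's: both split $I$ into a "good" part lying in $\Sigma_v$ (where the no-long-neck maximality condition controls the modulus in terms of $\mu$) and a "bad" part meeting the filled-in discs $B_l$, whose total conformal extent must be bounded by a constant times $n(I)$. Your treatment of the good part (show $V_j\subset\Sigma_v$, then subdivide into $L_1$-slabs, each of which is a candidate long neck and so carries mass $>\delta_2/6$) is sound and matches the paper's handling of the set $S_1$.

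The gap is exactly where you flag it: the bound $b_l-a_l\leq C$ is not proved, and the mechanism you gesture at — "translate the bounded $h$-length of $\gamma_l$ into bounded vertical extent" via the gradient/cylinder inequalities — does not work as stated. The conformal factor $d\nu_h/d\nu_{h_{st}}$ on $I$ is not uniformly bounded below, so a curve of small $h$-length can a priori have arbitrarily large $\rho$-extent in the $h_{st}$-coordinates of $I$; nothing in the gradient or cylinder inequalities rules this out, since those control $\mu$, not $h$. What is actually needed is a conformal-topological argument comparing the cylindrical structure of $I$ with the cylindrical structure of the long neck $J_1\in\tilde{\mathcal{B}}$ whose central circle $\gamma_l$ is. The paper does this as follows: it subdivides $I$ into slabs $I_i=S(iL_1,(i+1)L_1;I)$ and shows that a single $B_l$ cannot meet three consecutive slabs. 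The key steps are (i) if $B_l$ meets $I_{i\pm1}$ and does not contain $I_i$ (which is forced by your hypothesis $\partial B'\subset\Sigma_v$ in the trivial case and by non-contractibility in the nontrivial case), then $\gamma_l$ must cross the central part $C(K_1,K_1;I_i)$; (ii) $\gamma_l$ sits in the middle of a modulus-$2K_1$ sub-cylinder $J=S(Mod(J_1)-3K_1,Mod(J_1)-K_1;J_1)$, so this forces $C(K_1,K_1;J)\cap C(K_1,K_1;I_i)\neq\emptyset$, i.e.\ $J$ and $I_i$ are not essentially disjoint; (iii) but $b_1(I_i\cup J)\geq 2$ (because $B_l\not\subset I_i$ and $I$ is a neck so its inner disc has mass $\geq\delta_1/2>\mu(J_1)$), which by Theorem~\ref{tmEssDisj} says they \emph{are} essentially disjoint — a contradiction. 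This Betti-number criterion for essential disjointness is the missing ingredient in your proposal; without some substitute for it the bound on the gaps does not follow, and that bound is the substance of the lemma.
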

\begin{proof}
Let $L=Mod(I)$.  For any integer $0\leq i< \lfloor {L/L_1}\rfloor$ let
\[
I_i:= S(iL_1,(i+1)L_1;I).
\]
Since $\Sigma_v$ contains no long necks, we must have
\[
\mu(I_i)>\delta_2/6.
\]
Let $S_1$ denote the set of those  $0\leq i< \lfloor {L/L_1}\rfloor$ that satisfy $I_i\subset\Sigma_v$ and let $S_2$ be the rest. Clearly,
\[
\frac{L}{L_1}\leq |S_1|+|S_2|+1,
\]
and
\[
|S_1|\leq \frac{6\mu(I\cap \Sigma_v)}{\delta_2}.
\]
To complete the proof we need to bound $|S_2|$.

If $i\in S_2$, there is an $l\in F_v$ such that $I_i\cap B_l\neq \emptyset$. We show that there as at most one $j\neq i$ such that  $B_l$ meets $I_j$. For this, let $J_0\in \mathcal{B}$ be the unique element such that
\[
\gamma_l\subset\partial J_0.
\]
There is a $J_1\in LN$ such that $J_0=C(2K_1,2K_1;J_1)$. Let
\[
J=S(Mod(J_1)-3K_1,Mod(J_1)-K_1;J_1).
\]
Suppose now by contradiction that $B_l$ meets three successive sub-cylinders $I_{i-1},$ $I_{i}$ and $I_{i+1}$. By the assumption of the lemma, $B_l$ does not contain any of the $I_i$. Therefore, $\gamma_l\cap C(K_1,K_1;I_{i})\neq\emptyset$. But $\gamma_l\subset C(K_1,K_1;J)$. So, $J$ and $I_i$ are not essentially disjoint.

On the other hand, we show that the fact that $B_l\not\subset I_i$ implies that $J$ and $I_i$ are essentially disjoint. Let $k:=b_1(I_i\cup J)$.  By Theorem \ref{tmEssDisj} it suffices to show that $k>1$. Suppose by contradiction that $k\leq 1$. If $k=0$ then $I$ is trivial and its interior disc $B$ is contained in $J$. But since $I$ is a neck, $\mu(B)\geq\delta_1/2$ whereas $\mu(J)\leq\mu(J_1)\leq\delta_2/6$. Suppose now that $k=1$. Since we are assuming $B_l\not\subset I_i$, this is only possible if $I$ is trivial and $B_l\cap B\neq\emptyset$. But then by the assumption of the Lemma we have that $B_l\subset B$, in contradiction to $I_i\cap B\neq\emptyset.$ We conclude that $I_i$ and $J$ are essentially disjoint. The contradiction shows that $B_l$ meets at most two sub-cylinders. We thus conclude that
\[
|S_2|\leq 2n(I).
\]

\end{proof}
For any $v\in V_\mathcal{B}$ let $n_v=|F_v|$ and $\mu_v=\mu(\Sigma_v)$.
\begin{lm}\label{GammaEsts}
There are constants $f_i$, for $i=2,...,9$, with the following significance. Let $v\in V_{\mathcal{B}}$ and let $l\in E_v$.
\begin{enumerate}
\item\label{GammaEsts1}
\[
\ell(\gamma_l;h_v)\geq f_2e^{-f_3(\mu_v+n_v)}.
\]
\item\label{GammaEsts2}
For all $x\in\Sigma_v$
\[
\inj(\Sigma_v, x;h_v)\geq f_4e^{-f_5(\mu_v+n_v)}.
\]
\item\label{GammaEsts3}
Let $l'\neq l\in \mathcal{H}_v$. Then
\[
d(\gamma_l,\gamma_{l'};h_v)\geq f_6e^{-f_7(\mu_v+n_v)}.
\]
\end{enumerate}

\end{lm}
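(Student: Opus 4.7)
The plan is to prove all three estimates by the same strategy: if the lower bound fails, construct an admissible annulus in $Cl(\Sigma_v)$ whose $\tilde\Sigma$-complement is $\mu$-stable (hence a neck in the sense of the definition preceding Lemma~\ref{lmNoLoN}) whose modulus exceeds $f_1(\mu_v+n_v+1)$, contradicting Lemma~\ref{lmNoLoN}. The rescaling $h_v=s_v^2h$ with $s_v=2(g_v+1)/d_v$ is used to translate between quantities measured in the two metrics; to close up the estimates I also need an a priori upper bound on $d_v$ of the form $d_v\leq f_8 e^{f_9(\mu_v+n_v)}$, which is obtained by the same contradiction argument applied to a long minimizing arc in $\Sigma_v$: thickening such an arc (when the injectivity radius along it is not too small) produces a trivial admissible annulus whose modulus grows with the length of the arc.

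For part~\ref{GammaEsts1}, given $l\in E_v$, use the constant-curvature collar formulas (Theorem~\ref{TmThTh}\ref{it:spco} in the hyperbolic case, and the analogous flat/spherical formulas elsewhere) to show that if $\ell(\gamma_l;h)$ is small, then a one-sided tubular neighborhood of $\gamma_l$ inside $\Sigma_v$ extends to an admissible annulus of $Cl(\Sigma_v)$ of modulus of order $\log(1/\ell(\gamma_l;h))$, whose $\tilde\Sigma$-complement is $\mu$-stable because one of its components contains a stable component of $\tilde\Sigma\setminus I_l$. Applying Lemma~\ref{lmNoLoN} yields $\ell(\gamma_l;h)\geq c\,e^{-c'(\mu_v+n_v)}$, from which the desired bound on $\ell(\gamma_l;h_v)=s_v\ell(\gamma_l;h)$ follows via the diameter bound above. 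For part~\ref{GammaEsts2}, if $\inj(\Sigma_v,x;h_v)$ is small, the definition produces a short non-trivial loop through $x$; if the loop encircles a short simple closed geodesic $\beta\subset\tilde\Sigma$, a sub-collar of $\mathcal{C}(\beta)$ intersected with $Cl(\Sigma_v)$ is an admissible neck of large modulus (use equation~\eqref{injEq} to relate $\inj$ to collar width), while if the loop is homotopic to a boundary component the claim reduces to part~\ref{GammaEsts1}. For part~\ref{GammaEsts3}, a minimizing geodesic $\sigma$ from $\gamma_l$ to $\gamma_{l'}$ together with an arc of $\gamma_l$ or $\gamma_{l'}$ produces a short loop through a midpoint of $\sigma$, so that $\inj(\Sigma_v,x;h_v)\leq d_{h_v}(\gamma_l,\gamma_{l'})+\tfrac12\max(\ell(\gamma_l;h_v),\ell(\gamma_{l'};h_v))$, and the bound follows by combining parts~\ref{GammaEsts1} and~\ref{GammaEsts2}. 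Alternatively, a thin tubular neighborhood of $\sigma$ gives directly a trivial admissible annulus of large modulus in $Cl(\Sigma_v)$ when $d_{h_v}(\gamma_l,\gamma_{l'})$ is small.

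The main obstacle is verifying that the annuli constructed in each case satisfy all the admissibility conditions of Definition~\ref{dfAdmAnn}, in particular the radius restrictions and the cleanness requirement, and that their $\tilde\Sigma$-complements are $\mu$-stable. This is especially delicate when $genus(\tilde\Sigma)\leq 1$, where the non-trivial admissible annuli are restricted to sub-cylinders of $\mathcal{C}(\alpha_1)$ or $\mathcal{C}(\partial\Sigma)$ and the special cylinder $I_0$ must be accounted for, and in the bordered setting when $\gamma_l$ is conjugation-invariant, since the construction must then be compatible with the anti-holomorphic involution for the resulting annulus to be clean.
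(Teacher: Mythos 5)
Your high-level strategy---deriving all three bounds by producing an admissible annulus of large modulus in $Cl(\Sigma_v)$ and contradicting Lemma~\ref{lmNoLoN}---is indeed the engine behind the paper's proof, but the way you propose to close the estimates has a real gap. The scaling $s_v=2(g_v+1)/d_v$ contains a genus factor, and your proposed a priori bound $d_v\leq f_8\,e^{f_9(\mu_v+n_v)}$ (with $d_v$ measured in $h$) is false in general: if $\Sigma_v$ has large genus $g_v$ with, say, no short geodesics and small $\mu_v$ and $n_v$, then $d_v$ grows at least linearly in $g_v$, which is not controlled by $\mu_v+n_v$ at all. The lemma is nonetheless true because $s_v$ carries the compensating factor $g_v+1$; the paper therefore never tries to bound $d_v$ and $\ell(\gamma_l;h)$ separately, but instead bounds the \emph{ratio} $\ell(\gamma_l;h)/d_v$ directly (see the chain of estimates \eqref{EqRatio}--\eqref{diamModEst} in the case $m(v)=2$, $g>1$), or splits $\Sigma_v$ along $Thick(\Sigma;h)$ and $Thin(\Sigma;h)$ when $m(v)>2$, handling each collar piece by that ratio argument and the thick part by the uniform hyperbolic injectivity bound. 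Moreover, the construction you propose for the diameter bound---thickening a long minimizing arc---does not obviously produce a trivial admissible annulus $A(r,r',p)$ of large modulus: the radius $r$ of the outer ball is capped at $\frac13\inj(\tilde\Sigma;h,z)$ by Definition~\ref{dfAdmAnn}, and on a large-genus thick surface this cap is $O(\log g_v)$, so the available modulus does not grow with the arc length.

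A second inaccuracy concerns the length--modulus relation in part~\ref{GammaEsts1}. You claim that a one-sided tubular neighborhood of $\gamma_l$ has modulus of order $\log(1/\ell(\gamma_l;h))$, but this is the flat-cylinder scaling; in a hyperbolic collar $\mathcal{C}(\gamma)$ the modulus of a sub-cylinder from $\gamma_l$ out to the end of the collar is of order $1/\ell(\gamma_l;h)$ when $\gamma_l$ sits near the core, not $\log(1/\ell)$. This is why the paper does not attempt a one-size-fits-all formula but distinguishes the cases $g=0,1,>1$ via Theorem~\ref{TmThTh}\ref{it:spco} and formula \eqref{injEq} (the latter also being essential for part~\ref{GammaEsts2}, which you sketch only loosely). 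You correctly identify the admissibility and cleanness checks as the delicate points, but the more fundamental missing ingredient is a case analysis on $m(v)=2\,genus(Cl(\Sigma_v))+|E_v|$ and $g=genus(\tilde\Sigma)$ that keeps track of the ratio structure dictated by the rescaling to $h_v$.
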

\begin{rem}\label{remInjBoundary}
$\inj(\Sigma_v, x;h_v)$ is defined as the supremum of all $r$ such that any unit speed geodesic ray
\[
\alpha:\left[0,\min\left\{r,d_{h_v}(p,\partial\Sigma_v)\right\}\right]\to\Sigma_v
\]
emanating from $p$ minimizes length.
\end{rem}
\begin{proof}
Let let $g=genus(\tilde{\Sigma})$.  Let
\[
m(v):=2genus(Cl(\Sigma_v))+|E_v|.
\]
We distinguish between various possibilities for $m(v)$ and $g$.
\begin{enumerate}
\item
$m(v)=0$. In this case $E_v=\emptyset$, so only part~\ref{GammaEsts2} is not vacuous. But part~\ref{GammaEsts2} is obvious.
\item
$m(v)=1$ and $g=0$. By carefully inspecting the definition of external boundary parts \ref{GammaEsts1} and ~\ref{GammaEsts2}are seen to hold. We show part~\ref{GammaEsts3}. By construction, there are $J,J'\in LN$ such that
\[
I_l=C(2K_1,2K_1;J),
\]
and
\[
I_{l'}=C(2K_1,2K_1;J').
\]
Let $N$ be the component of
\[
C(K_1,K_1;J)\setminus I_l,
\]
for which $\gamma_l\subset\partial N$. $N$ is a tubular neighborhood of $\gamma_l$. By essential disjointness of $J_l$ and $J_{l'}$ we have that $N\cap \gamma_{l'}=\emptyset$. We have
\[
Mod(N)=K_1.
\]
Denote by $r$ the metric width of $N$. That is, the distance between the two boundary components. Then
\[
K_1=\frac1{s_v}\int_0^r\frac{dx}{h_{\theta,FS}}\leq \int_0^r\frac{dx}{h_{\theta,FS}},
\]
where $h_{\theta,FS}$ is Fubini Study metric in appropriate coordinates. Take $f_6$ to be the solution of
\[
K_1=\int_0^{f_6}\frac{dx}{h_{\theta,FS}}.
\]
$f_7$ may be taken to vanish.

\item
$m(v)=1$ and $g>0.$ This case is similar to the previous case.
\item
$m(v)=2$ and $g=0.$ In this case it can be verified that $Cl(\Sigma_v)=B\setminus B'$ for two concentric discs in $\Sigma_\C$. Suppose first that $Cl(\Sigma_v)$ is contained in a hemisphere. Then the only additional thing to address after the case $m(v)=1$ is to estimate $\ell(\partial B';h_v)$. Applying Lemma~\ref{lmNoLoN} to $Cl(\Sigma_v)$ we have
\[
Mod(cl(\Sigma_v))\leq  f_1(\mu(I\cap \Sigma_v)+n(I)+1).
\]
On the other hand we denote by $r$ and $r'$ the radii of $B$ and $B'$ with respect to $h_v$, then
\[
\log(r/r')\leq cMod(I),
\]
for an appropriate constant. Now note that $r=1$, so the claim follows. If $Cl(\Sigma_v)$ is not contained in a hemisphere, cut $Cl(\Sigma_v)$ in two along a concentric equator and repeat the same argument.
\item
$m(v)=2$ and $g=1$. Only part~\ref{GammaEsts2} is not vacuous. But $d_v$ in this case is proportional to the modulus of $\Sigma_v$ which is appropriately bounded by Lemma \ref{lmNoLoN}.
\item
$m(v)= 2$ and $g>1$. Let $e\in E_v$. Then there is a simple closed geodesic $\gamma$ such that $\gamma_e\subset\mathcal{C}(\gamma)$. Write $I=Cl(\Sigma_v)$ and let $\gamma_1$ and $\gamma_0$ be the components of $\partial I$. It is easy to see that $I$ is a sub-cylinder of $\mathcal{C}(\gamma)$. Therefore, $\gamma_0$ and $\gamma_1$ have constant $\rho$ coordinates $x_0$ and $x_1$, respectively. For $r\in[x_0,x_1]$ let
\[
\gamma_r:=\{z\in I|\rho(z)=r\},
\]
and let $r_{\min}\in [x_0,x_1]$ be the point where $\ell(\gamma_r)$ obtains its minimum, $\ell_{\min}$. Without loss of generality, assume $|x_0|\leq |x_1|$. We have
\begin{align}\label{EqRatio}
\ln\frac{\ell_{\min}}{\ell_1}&=\ln\frac{h_{\theta}(r_{\min})}{h_{\theta}(x_1)}\\
&\geq-\int_{x_0}^{x_1}\frac{h_{\theta}'(x)}{h_{\theta}(x)}dx\notag\\
&\geq -\int_{x_0}^{x_1}\frac{1}{\pi h_{\theta}(x)}dx\notag\\
&=-\frac1{\pi}ModI\notag\\
&\geq -\frac1{\pi}f_1(\mu_v+n_v).\notag
\end{align}
Here we rely on the inequality ${h_{\theta}'(x)}=\ell(\gamma)\sinh x/(2\pi)\leq 1/\pi$ for $x\in w(\gamma)$. On the other hand,
 \begin{equation}\label{diamRatio}
 \frac{d_v}{{\ell(\gamma_1;h)}}\leq \frac{|x_1-x_0|+\ell(\gamma_0;h)}{\ell(\gamma_1;h)}.
\end{equation}
 But
\begin{equation}\label{diamModEst}
|x_1-x_0|\leq \frac{\ell(\gamma_1;h)}{2\pi}\int_{x_0}^{x_1}\frac{dx}{h_{\theta}(x)}= \frac{\ell(\gamma_1;h)}{2\pi}Mod(I),
\end{equation}
where for the inequality we relied on the equation
\[
h_{\theta}(x)=\frac1{2\pi}\ell(\{\rho=x\};h)\leq \ell_1.
\]
Combining estimates~\eqref{EqRatio},~\eqref{diamRatio} and~\eqref{diamModEst}, we obtain
\[
\frac{\ell_{min}}{d_v}\geq\frac{exp\left(-\frac1{\pi}f_1(\mu_v+n_v)\right)}{Mod(I)+1}.
\]
Together with Lemma~\ref{lmNoLoN}, this implies part~\ref{GammaEsts1}

Part~\ref{GammaEsts2} is a consequence of Eq. \eqref{injEq} as follows. For any $p\in I$, let $x=\rho(p)$ and $d=w(\gamma)-|x|$. We have,
\begin{align}
\inj(p;\Sigma,h)&=\sinh^{-1}(\cosh\frac1{2}\ell(\gamma)\cosh d-\sinh d)\\
&=\sinh^{-1}(e^{-d}+(\cosh\frac1{2}\ell(\gamma)-1)\cosh d)\notag\\
&\geq \sinh^{-1}(e^{-d})\notag\\
&=\ln(e^{-d}+\sqrt{e^{-2d}+1})\notag\\
&=e^{-d}+o(e^{-d}).\notag
\end{align}
Let $\xi= |x_1|-|x|\in[0,|x_1|-|x_0|]$. We have
\begin{align}
\inj(p;\Sigma,h_{v})&\geq c\frac{e^{-d}}{\frac{d_v}{\ell_1}\ell_1}\notag \\
&\geq c\frac{e^{-d}}{\ell(\gamma)\cosh x_1 (Mod(I)+1)}\notag\\
&\geq c\frac{e^{-w(\gamma)}}{\ell(\gamma)(Mod(I)+1)}e^{-\xi}.\notag
\end{align}
It is straightforward to verify that there is lower bound on the expression $\frac{e^{-w(\gamma)}}{\ell(\gamma)}$ which is independent of $\gamma$. Since $\xi\leq |x_1|-|x_0|<diam(I;h)$, the claim follows.

Given the estimate on $\ell(\gamma_i)$ the proof of part~\ref{GammaEsts3} in the current case is similar to that of the case $m(v)=1$ and $g=0$. We omit the details.
\item
$m(v)>$ and $g=0$. Considering the definition of external boundary components, there is no such case.
\item
$m(v)>2$ and $g>1$. Decompose
\[
\Sigma_v =(Thick(\Sigma;h)\cap \Sigma_v)\cup (Thin(\Sigma;h)\cap \Sigma_v).
\]
The components of $(Thin(\Sigma;h)\cap \Sigma_v)$ behave exactly as the case $m(v)=2$ and $g=1$ and contain all the external boundary components. It remains to estimate on $\inj$ and $(Thick(\Sigma;h)\cap \Sigma_v)$, but this is a tautology.
\end{enumerate}
\end{proof}
To establish the rest of the estimates in Definition~\ref{dfBubDecEst}, we introduce some further notation. For any $v\in V_{\mathcal{B}}$ let
\[
r_v:=\begin{cases}
\frac13\min_{z\in {Cl(\Sigma_v)}}\inj(\Sigma_\C,z;h),&genus(\Sigma_\C)>0,\\
\min\left\{\sqrt{\frac{\delta_1}{2\pi K_0}},\frac{\pi}{3}\right\},&genus(\Sigma_\C)=0.
\end{cases}
\]
Let $B=B_r(p;h_v)\subset Cl(\Sigma_v)$ be a clean geodesic disc. Define
\[
r_B:=\begin{cases}
\min\{s_vr_v,d(p,\partial Cl(\Sigma_v);h_v)\},& p\in\partial\Sigma,\\
\min\{s_vr_v,d(p,\partial Cl(\Sigma_v);h_v),\frac1{2} d(p,\overline{p};h_v)\},&p\not\in\partial\Sigma.
\end{cases}
\]

\begin{lm}\label{argammaest}
There is a constant $f_2$ with the following significance. Let $B=B_r(p;h_v)\subset Cl(\Sigma_v)$ be a clean disc of radius $r$ satisfying
\[
\mu(B)\geq\delta_1/2.
\]
Then
\[
r\geq \frac15r_Be^{-f_2(\mu_v+n_v)}.
\]
\end{lm}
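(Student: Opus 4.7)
The plan is to argue by contradiction. Suppose $r<\tfrac{1}{5}r_Be^{-f_2(\mu_v+n_v)}$ for a constant $f_2$ to be chosen, set $R:=r_B$, and form the geodesic annulus $A:=\{y\in\tilde\Sigma:\,r<d_{h_v}(y,p)<R\}$ centered at $p$. First I verify that $A$ is an admissible trivial annulus in $\mathcal{A}_h$. The three cutoffs built into $r_B$---namely $R\leq s_vr_v$, $R\leq d(p,\partial Cl(\Sigma_v);h_v)$, and (when $p\notin\partial\Sigma$) $R\leq\tfrac12 d(p,\overline p;h_v)$---respectively give the outer-radius bound $R/s_v\leq\tfrac13\inj(\tilde\Sigma;h,p)$, the containment $A\subset Cl(\Sigma_v)\subset\tilde\Sigma$, and cleanness (either $A\cap\overline A=\emptyset$, or $A$ is conjugation invariant when $p\in\partial\Sigma$). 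The admissibility ratio $r\leq R/5$ is built into the contradiction assumption as soon as $f_2\geq\log 5$.

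Second, I show that $A$ is a neck. The inner closed disc $\overline B=B^c_r(p;h_v)$ has $\mu$-mass $\geq\delta_1/2$ by hypothesis, so it is $\mu$-stable. The outer component $\tilde\Sigma\setminus B_R(p;h_v)$ is $\mu$-stable either by the topological condition $2\,genus(\tilde\Sigma)+\#\pi_0(\partial\tilde\Sigma)\geq 3$ when $genus(\tilde\Sigma)\geq 1$, or by the last assertion of Lemma~\ref{LmSpSphMet} when $genus(\tilde\Sigma)=0$ (the cutoff $R/s_v\leq r_v$ falls within the threshold there).

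Third, I apply Lemma~\ref{lmNoLoN} to $A$, obtaining the upper bound $Mod(A)\leq f_1(\mu(A\cap\Sigma_v)+n(A)+1)\leq f_1(\mu_v+n_v+1)$. The hypothesis ``$\partial\overline B\subset\Sigma_v$'' of that lemma may fail when the inner disc straddles or lies inside some attached disc $B_l$, $l\in F_v$; however, the combinatorial step of its proof---that each $B_l$ meets at most two successive modulus-$L_1$ sub-cylinders of $A$---still goes through, because the mass hypothesis $\mu(B)\geq\delta_1/2$ rules out any $B_l$ enclosing a modulus-$L_1$ sub-cylinder of $A$ (such enclosure would force $\overline B\subset B_l$, which is incompatible with the stability bookkeeping that makes long necks and their interiors carry mass $\geq\delta_1/2$ while each modulus-$L_1$ sub-cylinder of the long neck bounding $B_l$ carries mass $\leq\delta_2/6$).

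Finally, equation~\eqref{ModEq} together with the conformal-radius lower bounds~\eqref{eq:bdrc1}--\eqref{eq:bdrc2} give $Mod(A)\geq\log(R/r)-C_0$ for an absolute constant $C_0$. Combining with the upper bound, $\log(R/r)\leq f_1(\mu_v+n_v+1)+C_0$, which contradicts the assumed $\log(R/r)>\log 5+f_2(\mu_v+n_v)$ once $f_2$ is chosen large enough (e.g.\ $f_2:=f_1+C_0+\log 5+1$). The main obstacle is the third step: faithfully adapting the combinatorial count of Lemma~\ref{lmNoLoN} to cases where its boundary hypothesis is not literally satisfied, using the $\mu$-mass lower bound on $B$ to preserve the ``$B_l$ meets $\leq 2$ consecutive sub-cylinders'' estimate.
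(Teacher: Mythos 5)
Your overall plan matches the paper's proof of this lemma: form the annulus $A(r_B,r,p)$ (the paper phrases it directly rather than by contradiction, but that is cosmetic), check that it is admissible and a neck, invoke Lemma~\ref{lmNoLoN} for an upper bound on its modulus, and close with the elementary modulus lower bound in terms of $\log(r_B/r)$. You have also correctly noticed a point the paper does not justify: Lemma~\ref{lmNoLoN}'s hypothesis requires $\partial B_r(p)\subset\Sigma_v$, while the present lemma only gives $B_r(p)\subset Cl(\Sigma_v)$, so $\partial B_r(p)$ could a priori wander into a filled-in disc $B_l$, $l\in F_v$. Flagging this is a real contribution; the paper simply asserts ``$I$ satisfies the condition of Lemma~\ref{lmNoLoN}''.

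Unfortunately your patch of that hypothesis does not hold up. You argue that if some $B_l$ contained a modulus-$L_1$ sub-cylinder $I_i$ of $A$ then $\overline B\subset B_l$ and this would be ``incompatible with the stability bookkeeping''. It is not incompatible: by construction $B_l$ is precisely the union of a collar with the stable complementary component of a long neck, so $\mu(B_l)\geq\delta_1/2$ by the stability condition for discs, and this is perfectly consistent with $\overline B\subset B_l$ and $\mu(\overline B)\geq\delta_1/2$. No contradiction is produced, so the key combinatorial estimate --- each $B_l$ meets at most two consecutive sub-cylinders --- is not restored; the step is a genuine gap. A workable repair would be to replace $r$ by the smallest $r'\geq r$ for which $\partial B_{r'}(p)\subset\Sigma_v$, apply Lemma~\ref{lmNoLoN} to $A(r_B,r',p)$, and then control $\log(r'/r)$ separately, or to rerun the counting argument while discarding the sub-cylinders swallowed by a single $B_l$ (which do not contribute to $\mu(I\cap\Sigma_v)$ in any case). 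A secondary point: your asserted lower bound $Mod(A)\geq\log(R/r)-C_0$ with an absolute $C_0$ relies on inequality~\eqref{eq:bdrc2}, which is only stated for radii below a fixed $\kappa$; in the hyperbolic case you must justify that the outer radius $r_B/s_v$ is uniformly bounded, which does not follow obviously from the definition of $r_v$. The paper's looser phrasing ``$>c\log\tfrac{r_B}{2r}$ for an appropriate constant'' sidesteps claiming an additive absolute constant.
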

\begin{proof}
Let
\[
I=A(r_B,r, p).
\]
If $r_B< 5r$ we are done, so suppose $r_B\geq 5r$. It follows that $I\in\mathcal{A}_h$. Also, $I\subset Cl(\Sigma_v).$ We claim that $I$ is a neck.  For this we need to verify that both $B$ and $\Sigma':=\Sigma_{\C}\backslash B_{r_B}(p;h_v)$ are stable. But $B$ is stable by assumption. In the case where
\[
genus(\Sigma_{\C})>0,
\]
$\Sigma'$ is immediately seen to be stable. When $genus(\Sigma)=0$, stability of $\Sigma'$ follows from the fact that $h$ satisfies the condition of Lemma \ref{LmSpSphMet}. Furthermore, $I$ satisfies the condition of Lemma \ref{lmNoLoN}. So,
\[
f_1\{\mu(Cl(I))+n(Cl(I))+1\}\geq Mod(Cl(I))>c\log\frac{r_B}{2r},
\]
for an appropriate constant $c$. This inequality gives the claim.
\end{proof}
\begin{lm}\label{InGammaEst}
There are constants $f_i$, $10\leq i\leq 13$, with the following significance. Let $B=B_r(p;h_v)\subset Cl(\Sigma_v)$ be a clean disc such that $\mu(B)\geq\delta_1/2$. Suppose
\begin{equation}\label{ineqdpbd}
d(p,\partial Cl(\Sigma_v);h_v)>f_6e^{-f_7(\mu_v+n_v)}.
\end{equation}
 Then
\[
r\geq f_{10}e^{-f_{11}(\mu_v+n_v)}.
\]
\end{lm}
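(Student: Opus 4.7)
The plan is to bootstrap Lemma~\ref{argammaest}, which yields $r \geq \tfrac{1}{5} r_B e^{-f_2(\mu_v + n_v)}$, by lower-bounding $r_B$ exponentially in $\mu_v + n_v$. Recall that $r_B$ is a minimum over $s_v r_v$, the distance $d(p, \partial Cl(\Sigma_v); h_v)$, and (when $p \notin \partial\Sigma$) $\tfrac{1}{2} d(p, \overline{p}; h_v)$. The middle term is immediately controlled by hypothesis~\eqref{ineqdpbd}, so the real work is bounding the first and third terms.

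For $s_v r_v$ I split on the genus of $\Sigma_{\C}$. In genus zero, $r_v$ is an absolute constant and $s_v = 2/d_v$, so it suffices to bound the diameter $d_v$ above by a polynomial in $\mu_v + n_v + 1$; this follows from Lemma~\ref{lmNoLoN} applied to annuli covering $Cl(\Sigma_v)$. In positive genus, $s_v r_v$ is (up to a factor of $3$) the minimum over $Cl(\Sigma_v)$ of the injectivity radius of $\Sigma_{\C}$ measured in $h_v$. Lemma~\ref{GammaEsts}\ref{GammaEsts2} gives the required lower bound for points in $\Sigma_v$. A short closed loop through $Cl(\Sigma_v)$ that is not entirely in $\Sigma_v$ must cross a filled-in disc $B_l$, $l \in F_v$; the maximality of the bubble decomposition forbids long necks in $\Sigma_v$, so Lemma~\ref{lmNoLoN} applied to a candidate annular neighbourhood precludes such a loop from being arbitrarily short. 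Contributions from short geodesics in neighboring external necks are handled by combining parts~\ref{GammaEsts1} and~\ref{GammaEsts3} of Lemma~\ref{GammaEsts}.

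For the term $\tfrac{1}{2} d(p, \overline{p}; h_v)$ when $p \notin \partial\Sigma$, I argue by reduction to the boundary case. If this quantity already exceeds the intended lower bound, we are done. Otherwise let $p'\in\partial\Sigma$ minimize $d(p, \partial\Sigma; h_v) = \tfrac{1}{2} d(p, \overline{p}; h_v)$; by cleanness of $B$ this minimum is at most $r$, which we may assume is small relative to $f_6 e^{-f_7(\mu_v + n_v)}$. The ball $B' := B_{2r}(p'; h_v)$ is conjugation-invariant and contains $B \cup \overline{B}$, so $\mu(B') \geq 2\mu(B) \geq \delta_1 > \delta_1/2$. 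Applying Lemma~\ref{argammaest} to $B'$ at $p' \in \partial\Sigma$, where $r_{B'}$ has only two summands, yields
\[
2r \geq \tfrac{1}{5} r_{B'} e^{-f_2(\mu_v + n_v)}.
\]
Bound $s_v r_v$ as before, and use the triangle inequality $d(p', \partial Cl(\Sigma_v); h_v) \geq d(p, \partial Cl(\Sigma_v); h_v) - r$ together with hypothesis~\eqref{ineqdpbd}; the latter remains effective under our smallness assumption on $r$.

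The main obstacle will be the injectivity transfer in the second paragraph: Lemma~\ref{GammaEsts}\ref{GammaEsts2} bounds the injectivity of $\Sigma_v$ as a surface with boundary, whereas $s_v r_v$ involves the injectivity of the ambient double $\Sigma_{\C}$ at points of $Cl(\Sigma_v)$. Verifying that the added internal bubble discs do not harbour uncontrollably short loops, and that the bordering long-neck geodesics are bounded below by the previously established exponential estimates, will require a case-analysis by genus that parallels the structure of the proof of Lemma~\ref{GammaEsts}.
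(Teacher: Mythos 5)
Your overall plan is the paper's: lower-bound $r_B$ and invoke Lemma~\ref{argammaest}, reducing the case $p\notin\partial\Sigma$ to the boundary case by shifting the center to $\partial\Sigma$. That much is correct. But the reduction step contains a concrete error.

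You write that ``by cleanness of $B$ this minimum [$d(p,\partial\Sigma;h_v)=r'$] is at most $r$.'' The inequality is the other way. Since $p\notin\partial\Sigma$ and $B$ is clean, $B=\overline{B}$ is impossible (it would force $p=\overline p$), so $B\cap\overline{B}=\emptyset$. The midpoint $q$ of the minimizing geodesic from $p$ to $\overline p$ is conjugation-invariant, hence $q\notin B$, which gives $r'=d(p,q)\geq r$, not $r'\leq r$. Consequently $B':=B_{2r}(p')$ need not contain $B$ at all: for $x\in B$ one only has $d(x,p')\leq r+r'$, which is $\leq 2r$ only when $r'\leq r$. So the assertion $\mu(B')\geq 2\mu(B)\geq\delta_1$ is unfounded and the reduction collapses. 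The repair is to take $B':=B_{r+r'}(p')$ (as the paper does); then $B\subset B'$ by the triangle inequality, $B'$ is conjugation-invariant because $p'\in\partial\Sigma$, and $\mu(B')\geq\mu(B)\geq\delta_1/2$ already suffices for Lemma~\ref{argammaest}. Correspondingly, the dichotomy has to be on $r+r'$ rather than on $r'$ alone (so that $B'\subset Cl(\Sigma_v)$, using~\eqref{ineqdpbd} together with the smallness of $r'$); Lemma~\ref{argammaest} then bounds $r+r'$ below, and one feeds that back through the $r_B$ estimate to bound $r$ itself.

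Your first two paragraphs are in order, modulo the point you yourself flag at the end: Lemma~\ref{GammaEsts}\ref{GammaEsts2} controls $\inj(\Sigma_v,\cdot\,;h_v)$ on $\Sigma_v$, whereas $s_vr_v$ involves $\inj(\Sigma_\C,\cdot\,;h_v)$ over all of $Cl(\Sigma_v)$, and a short geodesic loop could in principle leave $\Sigma_v$ through a filled disc $B_l$. That transfer is asserted in one line in the paper's proof as well, so you are not behind the paper here; but it does need an argument, and your sketch (Lemma~\ref{lmNoLoN} applied to a collar of the offending loop) is the right place to look for one. (The genus-zero subcase is actually trivial: there $d_v\leq\pi$ is automatic, so no appeal to Lemma~\ref{lmNoLoN} is needed.)
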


\begin{proof}
By Lemma \ref{GammaEsts} we have that
\[
\inj(x;h_v)\geq f_4e^{-f_5(\mu_v+n_v)}
\]
for all $x\in Cl(\Sigma_v)$. So, by assumption \eqref{ineqdpbd}, when $B$ is conjugation invariant we have
\[
r_B\geq \min\left\{f_4e^{-f_5(\mu_v+n_v)},\frac1{2}f_6e^{-f_7(\mu_v+n_v)},s_v\sqrt{\frac{\delta_1}{2\pi K_0}},s_v\frac{\pi}{3}\right\}.
\]
the claim now follows by Lemma \ref{argammaest}.

To prove the claim for any clean $B$ we need to further bound
\[
r':=\frac1{2}d(p,\overline{p};h_v)
\]
from below by an exponent in $\mu_v+n_v$. In fact, to prove the Lemma, it suffices to estimate $r+r'$ by such an exponent. We may suppose
\begin{equation}\label{DummyAssum}
r+r'<\frac1{2}f_6e^{-f_7(\mu_v+n_v)},
\end{equation}
for otherwise we are done. Let $p'$ be the midpoint of the shortest geodesic segment connecting $p$ with $\overline{p}$. Combining inequality \eqref{DummyAssum} with inequality \eqref{ineqdpbd} we get
\[
d(p',\partial Cl(\Sigma_v)) >\frac1{2}f_6e^{-f_7(\mu_v+n_v)}.
\]
Let $B':=B_{r+r'}(p')$. Then $\subset Cl(\Sigma_v)$. Furthermore,
\[
r_{B'}>\frac1{4}f_6e^{-f_7(\mu_v+n_v)},
\]
By Lemma \ref{argammaest} this implies
\[
r'+r\geq \frac1{20} f_6e^{-(f_7+f_2)(\mu_v+n_v)}.
\]
\end{proof}
\begin{cy}\label{CyInGammaEst}
\begin{enumerate}
\item
For any $l\in F_v$
\[
\ell(\Sigma_v;h_v)\geq  f_{10}e^{-f_{11}(\mu_v+n_v)}.
\]
\item
For any $l_1,l_2\in F_v$ we have
\[
d(\gamma_{l_1},\gamma_{l_2};h_v)\geq f_{12}e^{-f_{13}(\mu_v+n_v)}.
\]
\end{enumerate}
\end{cy}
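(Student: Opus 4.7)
The plan is to bootstrap from Lemma~\ref{InGammaEst} and the collar structure coming from the long necks $J_l \in \tilde{\mathcal{B}}$ that give rise to the elements $I_l = C(2K_1, 2K_1; J_l)$ of $\mathcal{B}$. The key observation, used in both parts, is that for $l \in F_v$ the bubble disc $B_l \subset \tilde\Sigma$ bounded by $\gamma_l$ on the side opposite $\Sigma_v$ satisfies $\mu(B_l) \geq \delta_1/2$. Indeed, let $A_l$ be the component of $\tilde\Sigma \setminus J_l$ lying inside $B_l$. Topologically $A_l$ is a disc, so $\#\pi_0(\partial A_l) = 1$ and $genus(A_l) = 0$, and since $J_l$ is a neck, $A_l$ must be $\mu$-stable in the sense of Definition~\ref{dfmuStable}. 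Only condition~(a) of that definition can possibly hold for $A_l$, forcing $\mu(A_l) \geq \delta_1/2$ and hence $\mu(B_l) \geq \delta_1/2$. Cleanness of $B_l$ is inherited from cleanness of $I_l$ (which in turn comes from conjugation-invariance of $\mathcal{B}$).

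For part~(a), I will apply Lemma~\ref{InGammaEst} directly to $B_l$. In the generic case where $I_l$ is a trivial admissible annulus, $B_l = B_{r_l}(q_l; h_v)$ is a clean $h_v$-geodesic disc. The $\mu$-mass hypothesis was just verified. For the distance-to-boundary hypothesis, any curve from $q_l$ to $\partial Cl(\Sigma_v) = \bigcup_{l' \in E_v}\gamma_{l'}$ must exit $B_l$ through $\gamma_l$ before reaching an external boundary, so
\[
d(q_l, \partial Cl(\Sigma_v); h_v) \geq \min_{l' \in E_v} d(\gamma_l, \gamma_{l'}; h_v) \geq f_6 e^{-f_7(\mu_v + n_v)}
\]
by Lemma~\ref{GammaEsts}\ref{GammaEsts3}. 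Lemma~\ref{InGammaEst} then yields $r_l \geq f_{10} e^{-f_{11}(\mu_v + n_v)}$, and since $\ell(\gamma_l; h_v) = 2\pi h_\theta(r_l) \geq c\, r_l$ on the relevant bounded range of radii, part~(a) follows. The nontrivial case only arises when $genus(\tilde\Sigma) = 0$ with $\gamma_l$ a latitude bounding a spherical cap, where the same argument applies verbatim after reinterpreting $B_l$ as that cap.

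For part~(b), the key object is the sub-cylinder
\[
N_i := \bigl(C(K_1, K_1; J_{l_i}) \setminus I_{l_i}\bigr) \cap \Sigma_v
\]
of modulus $K_1$ inside $\Sigma_v$ attached to $\gamma_{l_i}$. When $J_{l_1} \neq J_{l_2}$, essential disjointness (Definition~\ref{dfEsDisj}) gives $N_1 \cap N_2 \subset C(K_1, K_1; J_{l_1}) \cap C(K_1, K_1; J_{l_2}) = \emptyset$; when $J_{l_1} = J_{l_2}$, the two $N_i$ are the opposite trim-components of the same cylinder $C(K_1, K_1; J_{l_1}) \setminus I_{l_1}$ and are thus also disjoint. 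Any path from $\gamma_{l_1}$ to $\gamma_{l_2}$ in $\overline{\Sigma_v}$ must traverse each $N_i$ from one boundary component to the other, so its $h_v$-length is at least the sum of the $h_v$-metric depths of $N_1$ and $N_2$. Using equations~\eqref{ModEq} and~\eqref{eq:forhth}, a modulus-$K_1$ concentric collar adjacent to a circle of $h_v$-radius $r$ has depth bounded below by $c\,r$ for a positive universal constant $c = c(K_1)$. Substituting the lower bound on $r_{l_i}$ from part~(a) produces $d(\gamma_{l_1}, \gamma_{l_2}; h_v) \geq f_{12} e^{-f_{13}(\mu_v + n_v)}$ for appropriate $f_{12}, f_{13}$.

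The main obstacle I anticipate is the bookkeeping in the nontrivial admissible annulus case: identifying a correct notion of ``radius'' of $\gamma_l$ and ``depth'' of $N_l$ in the cylindrical $(\rho,\theta)$ coordinates of Definition~\ref{dfCNeighbGeo}, and showing that the comparability of depth and radius survives uniformly as $\gamma$ varies. Once these reductions to Lemmas~\ref{InGammaEst} and~\ref{GammaEsts} are in place, the rest is routine verification.
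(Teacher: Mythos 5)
Your proof is correct and follows the same approach as the paper, which simply cites Lemma~\ref{InGammaEst} for part~(a) and ``the same proof as Lemma~\ref{GammaEsts}\ref{GammaEsts3}'' for part~(b); you have fleshed out exactly the steps the paper leaves implicit, namely the verification that $\mu(B_l)\geq\delta_1/2$ via $\mu$-stability of the disc component of $\tilde\Sigma\setminus J_l$, and the use of part~(a) to lower-bound the collar widths in part~(b), which is genuinely needed since $\gamma_{l_i}$ for $l_i\in F_v$ can be exponentially short unlike the external boundaries of Lemma~\ref{GammaEsts}. The one obstacle you flag at the end never arises: for $l\in F_v$ the curve $\gamma_l$ bounds a disc and is therefore contractible in $\tilde\Sigma$, so $J_l$ must be a trivial admissible annulus (a sub-cylinder of $\mathcal{C}(\gamma)$ for a simple closed geodesic $\gamma$ has non-contractible boundary circles, which would put $l$ in $E_v$).
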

\begin{proof}
\begin{enumerate}
\item By Lemma \ref{GammaEsts}\ref{GammaEsts3}, the assumptions of Lemma \ref{InGammaEst} hold in particular for $B=B_l$ where $l\in F_v$.
\item This follows by the same proof as that of Lemma \ref{GammaEsts}\ref{GammaEsts3}.
\end{enumerate}
\end{proof}
In the following, for any $\gamma\in \pi_0(\partial\Sigma),$ let $N_{\gamma}:=B_{f_{12}e^{-f_{13}(\mu_v+n_v)}}(\gamma;h_v)$. Without loss of generality we assume $f_{12}\leq f_6$ and $f_{13}\geq f_7$.
\begin{cy}\label{cyCFEst}
For any $\gamma\in\pi_0(\partial\Sigma_v)$
\begin{align*}
\frac{d\nu_{h_v}}{d\nu_{h_{st}}}\Big|_{N_{\gamma}}\geq f_{10}e^{-f_{11}(\mu_v+n_v)}.
\end{align*}
\end{cy}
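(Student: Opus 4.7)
The plan is to work in the cylindrical coordinates $(\rho,\theta)$ on the neighboring neck $I_l\in\mathcal{B}$, where $l\in\mathcal{H}_v$ is the half-edge corresponding to $\gamma$, in which $h_{st}=d\rho^2+d\theta^2$, and to deduce the pointwise lower bound on the conformal factor $\varphi^2:=d\nu_{h_v}/d\nu_{h_{st}}$ from the $h_v$-length bound on $\gamma$ provided by Corollary~\ref{CyInGammaEst}.

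First I would show that the coordinates $(\rho,\theta)$ extend smoothly across $\gamma$ into $\Sigma_v$ via the shared conformal structure to a neighborhood containing $N_\gamma$. This is legitimate because, by Corollary~\ref{CyInGammaEst} and Lemma~\ref{GammaEsts} (and after adjusting $f_{12}, f_{13}$ if necessary), the $h_v$-width $f_{12}e^{-f_{13}(\mu_v+n_v)}$ of $N_\gamma$ is dominated both by the injectivity radius of $(\Sigma_v,h_v)$ and by the $h_v$-distance from $\gamma$ to any other boundary component. Writing $h_v=\varphi^2 h_{st}$ on this extended region, the identity $\int_0^{2\pi}\varphi(\rho_\gamma,\theta)\,d\theta=\ell(\gamma;h_v)$ combined with Corollary~\ref{CyInGammaEst} gives a lower bound of order $e^{-f_{11}(\mu_v+n_v)}$ on the $\gamma$-average of $\varphi$.

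To convert this averaged bound into a pointwise bound on $N_\gamma$, I would use the constant curvature of $h_v$: in Fermi coordinates $(r,s)$ built from $h_v$ and centered at $\gamma$, the Gauss equation forces $h_v=dr^2+J(r,s)^2 ds^2$, where $J$ satisfies $J(0,s)=1$, $\partial_r J(0,s)=\kappa_g(s)$, and the Jacobi equation $\partial_r^2 J + K_v J=0$ with $K_v\in\{0,\pm s_v^{-2}\}$. Since $N_\gamma$ has $h_v$-radius at most $f_{12}e^{-f_{13}(\mu_v+n_v)}$, and the bounded-geometry estimates of Lemma~\ref{GammaEsts} control $\kappa_g$, $J$ stays within a universal multiplicative factor of $1$ throughout $N_\gamma$. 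A direct coordinate comparison between $(r,s)$ and $(\rho,\theta)$ then yields $\sup_{N_\gamma}\varphi\leq C\inf_{N_\gamma}\varphi$ for a universal $C$, so the pointwise lower bound on $\varphi$ follows from its $\gamma$-average; squaring and absorbing constants into $f_{10}, f_{11}$ completes the argument.

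The main obstacle will be tracking the change of variables between the Fermi frame $(r,s)$ (built from $h_v$) and the cylindrical frame $(\rho,\theta)$ (built from $h_{st}$) without introducing hidden exponential factors in $\mu_v+n_v$, since the Jacobian of this change is itself controlled by $\varphi$. A cleaner alternative is to apply a Harnack inequality directly to the Liouville equation $-\Delta_{h_{st}}\log\varphi = K_v\varphi^2$ satisfied by $\varphi$, on a slightly enlarged neighborhood whose $h_{st}$-modulus is controlled via Lemma~\ref{lmNoLoN}; the resulting Harnack constant is of the required form $e^{O(\mu_v+n_v)}$.
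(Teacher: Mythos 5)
The paper's proof of Corollary~\ref{cyCFEst} is four lines long because it exploits a structural feature you do not use: on $N_\gamma$ both $h_v$ and $h_{st}$ are axially symmetric (rotation-invariant) conformal metrics, since $\gamma$ is either a geodesic distance-circle around a point or a parallel in a geodesic collar. Consequently the conformal factor $\varphi = (d\nu_{h_v}/d\nu_{h_{st}})^{1/2}$ is \emph{constant} along each circle $\gamma_r = \{\rho = r\}$, so $\varphi(r,\theta)$ is literally $\ell(\gamma_r;h_v)/(2\pi)$, and the claim reduces to bounding $\ell(\gamma_r)$ from below for every $r$ in the range of $N_\gamma$ (via Lemma~\ref{InGammaEst} together with Lemma~\ref{GammaEsts}\ref{GammaEsts3} when $\gamma\in F_v$, or Lemma~\ref{GammaEsts}\ref{GammaEsts1} when $\gamma\in E_v$). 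There is no ``averaged-to-pointwise'' step to perform.

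Your proposal treats $\varphi$ as a general positive function and works hard to upgrade the $\gamma$-average of $\varphi$ to a pointwise estimate, which is where the gaps appear. First, the assertion that ``the bounded-geometry estimates of Lemma~\ref{GammaEsts} control $\kappa_g$'' is not right as stated: the geodesic curvature of a circle of $h_v$-radius $\rho$ around a point is of order $1/\rho$, and $\rho$ can be as small as $e^{-O(\mu_v+n_v)}$, so $\kappa_g$ can be exponentially \emph{large} in $\mu_v+n_v$. What is actually true is that $\kappa_g$ times the $h_v$-width of $N_\gamma$ is bounded, but this requires a delicate compatibility between the constants $f_{12},f_{13}$ defining $N_\gamma$ and the constants in the ball-radius estimate Lemma~\ref{InGammaEst}; your argument does not track this. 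Second, and more seriously, even if $J(r,s)\approx 1$ uniformly, the Jacobi equation only controls the $r$-dependence of $J$ for each fixed $s$; it says nothing about how $\varphi$ varies in the $s$ (equivalently $\theta$) direction, which is precisely what is needed to pass from the average over $\gamma$ to a pointwise bound. The observation you are missing is that $\kappa_g$ is \emph{constant} along $\gamma$ (because $\gamma$ is a geodesic circle), which forces $J$ to be independent of $s$; once you notice this, you are back in the axially symmetric situation and the whole Fermi-coordinate detour collapses to the paper's one-line identity. The Harnack alternative via the Liouville equation has the same circularity problem: the Harnack constant for $-\Delta_{h_{st}}\log\varphi = K_v\varphi^2$ on a neighborhood of $\gamma$ depends on the $h_{st}$-modulus of that neighborhood, which is $\int \varphi^{-1}\,dr$ over $N_\gamma$ and hence depends on the very quantity $\varphi$ you are trying to bound; controlling it ``via Lemma~\ref{lmNoLoN}'' is plausible in outline but not obviously free of a vicious circle.
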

\begin{proof}
Using cylindrical coordinates on $N_{\gamma}$ let
\[
\gamma_r =\{z\in N_{\gamma}|\rho(z)=r\}.
\]
We have
\[
\frac{d\nu_{h_v}}{d\nu_{h_{st}}}(r,\theta)=\frac1{2\pi}\ell(\gamma_r).
\]
If $\gamma\in F_v,$ Lemmas \ref{InGammaEst} and \ref{GammaEsts}\ref{GammaEsts3} imply
\[
\ell(\gamma_r)\geq f_{10}e^{-f_{11}(\mu_v+n_v)}.
\]
Otherwise, this is just Lemma \ref{GammaEsts}\ref{GammaEsts1}.
\end{proof}
\begin{lm}\label{cyDerEst}
There are constants $f_{14},f_{15},$ such that for any $p\in\Sigma_v$,
\[
  \frac{d\mu}{d\nu_{h_{v}}}(p)\leq f_{15}e^{f_{14}(\mu_v+n_v)}.
\]
\end{lm}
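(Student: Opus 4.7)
The plan is to follow the outline in Subsection 1.5. Let $p \in \Sigma_v$ attain the maximum of $d\mu/d\nu_{h_v}$ and set $d := (d\mu/d\nu_{h_v})(p)$; the goal is to show $\log d \leq C(\mu_v + n_v) + C'$ for appropriate constants.

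First I would bound $d\mu/d\nu_{h_v}$ on the collar $N := \bigcup_{\gamma \in \pi_0(\partial \Sigma_v)} N_\gamma$. Each $\gamma$ is shared with a long neck $J_l \in \tilde{\mathcal{B}}$ satisfying $\mu(J_l) \leq \delta_2/6$, and the portion of $J_l$ that meets $N_\gamma$ sits inside the range $|\rho| \leq \tfrac{1}{2}\mathrm{Mod}(J_l) - (c_2+\pi)$ (using $K_1 \geq c_2+\pi$), so Lemma~\ref{ExpCylDEst} produces a universal bound on $d\mu/d\nu_{h_{st}}$ there. Combined with the lower bound on $d\nu_{h_v}/d\nu_{h_{st}}$ supplied by Corollary~\ref{cyCFEst}, this gives $d\mu/d\nu_{h_v} \leq f_{10}^{-1} e^{f_{11}(\mu_v + n_v)}$ on $N$. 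Absorbing this into the final constants, I may assume $p \notin N$, whence $d(p, \partial \Sigma_v; h_v) \geq f_{12} e^{-f_{13}(\mu_v + n_v)}$.

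Combining this distance bound with Lemma~\ref{GammaEsts}\ref{GammaEsts2} and, when $\mathrm{genus}(\tilde\Sigma) = 0$, with Lemma~\ref{LmSpSphMet}, I obtain a lower bound $r_B \geq c_1 e^{-\alpha_1(\mu_v + n_v)}$ on the quantity $r_B$ from the paragraph preceding Lemma~\ref{argammaest}, applied at $p$. Provided $d$ exceeds any fixed exponential-in-$(\mu_v+n_v)$ constant, $r_d := 1/\sqrt{d} < \min\bigl(r_B/5,\, \sinh^{-1}(1),\, \inj(\Sigma_\C, p; h_v)\bigr)$, and Lemma~\ref{rmGradApp} gives $\mu(B_{r_d}(p; h_v)) \geq \delta_1$. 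Consider the geodesic annulus $A := A(r_B, r_d, p; h_v) \subset Cl(\Sigma_v)$: cleanness and $r_B \leq \tfrac{1}{3}\inj(\Sigma_\C, p; h)$ are built into $r_B$ via $s_v r_v$, while $r_d \leq r_B/5$ is our assumption, so $A \in \mathcal{A}_h$. The inner complement is $\mu$-stable since $\mu \geq \delta_1$, and the outer complement is $\mu$-stable either topologically when $\mathrm{genus}(\tilde\Sigma) \geq 1$, or by the final assertion of Lemma~\ref{LmSpSphMet} when $\mathrm{genus}(\tilde\Sigma) = 0$ (since $r_B \leq \min(\sqrt{\delta_1/(2\pi K_0)}, \pi/4)$). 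Since $r_d < d(p, \partial\Sigma_v; h_v)$, also $\partial B_{r_d}(p; h_v) \subset \Sigma_v$. Hence $A$ is a neck in $Cl(\Sigma_v)$ satisfying the hypothesis of Lemma~\ref{lmNoLoN}, which together with Definition~\ref{defMaxBubDecom}\ref{tmMaxBubDecom3} yields
\[
\mathrm{Mod}(A) \leq f_1(\mu_v + n_v + 1).
\]
Equations~\eqref{ModEq} and~\eqref{eq:forhth}, together with $r_B$ being bounded above, give $\mathrm{Mod}(A) = \log(r_B/r_d) + O(1) = \tfrac{1}{2}\log d + \log r_B + O(1)$. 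Combining with the lower bound on $r_B$ furnishes $\log d \leq C(\mu_v + n_v + 1)$, as required.

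The main technical obstacle is the first step: verifying that $N_\gamma$ genuinely sits inside the region of $J_l$ where Lemma~\ref{ExpCylDEst} applies. This requires an upper bound on the conformal factor $d\nu_{h_v}/d\nu_{h_{st}} = \ell(\gamma_r)/(2\pi)$ throughout $N_\gamma$, coming from the admissibility constraints on the simple closed geodesic underlying $J_l$ and the exponentially small $h_v$-width of $N_\gamma$. A secondary source of case work is the genus-0 case, where $\mu$-stability of the outer complement depends on the careful choice of Fubini--Study metric from Lemma~\ref{LmSpSphMet}.
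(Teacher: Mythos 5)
Your overall strategy matches the paper's, which proves this lemma by splitting into the cases $p$ near $\partial Cl(\Sigma_v)$ (where it applies Lemma~\ref{ExpCylDEst} and Corollary~\ref{cyCFEst}) and $p$ deep in the interior (where it considers the small ball $B_{1/d}(p;h_v)$, which carries mass $\geq\delta_1$, and cites Lemma~\ref{InGammaEst}). Your first step is exactly the paper's first case, and your annulus-based neck argument is essentially a re-derivation of Lemmas~\ref{argammaest} and~\ref{InGammaEst} rather than a citation of them. That is a legitimate route, but by unfolding Lemma~\ref{InGammaEst} yourself you have dropped its key extra step, and this creates a real gap.

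Specifically, you assert a lower bound $r_B \geq c_1 e^{-\alpha_1(\mu_v+n_v)}$, citing the injectivity-radius bound (Lemma~\ref{GammaEsts}\ref{GammaEsts2}), the collar distance bound, and (in genus $0$) Lemma~\ref{LmSpSphMet}. But when $p\notin\partial\Sigma$, the definition of $r_B$ includes the term $\tfrac12 d(p,\overline{p};h_v)$, which is not controlled by any of those three ingredients and can be arbitrarily small if $p$ lies very close to $\partial\Sigma$ (the Lagrangian boundary, as opposed to $\partial\Sigma_v$). In that regime your neck $A(r_B,r_d,p;h_v)$ has modulus $\approx\tfrac12\log d+\log r_B$, so the inequality $\mathrm{Mod}(A)\leq f_1(\mu_v+n_v+1)$ yields only $\log d\lesssim(\mu_v+n_v)-2\log r_B$, which does not close without a lower bound on $r_B$. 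This is precisely the situation the paper's proof of Lemma~\ref{InGammaEst} is designed to handle: it passes to the conjugation-invariant ball $B'=B_{r+r'}(p')$ centered at the midpoint $p'$ of the geodesic from $p$ to $\overline{p}$, for which $r_{B'}$ is not limited by $d(p',\overline{p'})$ and hence is bounded below, and then applies Lemma~\ref{argammaest} to $B'$. You need that additional step (or you should simply cite Lemma~\ref{InGammaEst} as the paper does) to cover the bordered case of Theorems~\ref{tmBubDecEst2} and~\ref{tmBubDecEst3}; for closed $\Sigma$ the term $\tfrac12 d(p,\overline{p})$ does not appear and your argument is complete.
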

\begin{proof}
 Let $p\in\Sigma_v$ be the point where the supremum of $\frac{d\mu}{d_{\nu_{h_v}}}$ is obtained and let $d$ be its value. Let  $c=f_6e^{-f_7(\mu_v+n_v)}$. If $d(p,\partial Cl(\Sigma_v);h_v)< c$ then, by construction of $\mathcal{B}$, there is a long neck $I$ so that $p$ is contained in $C(\pi+c_2,\pi+c_2;I)\subset C(K_1,K_1;I)$. Thus, by Lemma \ref{ExpCylDEst}, $\frac{d\mu}{d\nu_{h_{st}}}\leq a$. The claim now follows from Corollary \ref{cyCFEst}.

Otherwise, if $d<1/c$ we are done. If $d>1/c$, consider the disc $B=B_{\frac1{d}}(p;h_v)$. Then $\mu(B)>\delta_1$ by Remark~\ref{rmGradApp} and $B\subset\Sigma_v$, so the bound follows immediately from Lemma~\ref{InGammaEst}.
\end{proof}

\begin{proof}[Proof of Theorem~\ref{tmBubDecEst}]
Let $\mathcal{B}$ be a maximal $\mu$-decomposition as in Theorem \ref{tmMaxBubDecom}. Note that this $\mathcal{B}$ satisfies part~\ref{dfBubDecEst11} of Definition~\ref{dfBubDecEst}. Indeed, for any $I$ in $\mathcal{B}$ there is an $I'\in LN$ such that $I=C(K_1,K_1;I')$. But we assumed in Definition \ref{dfEsDisj} that $K_1\geq c_2+\pi$. By definition of $LN$, $\mu(I')<\delta_2$. The claim now follows from Lemma \ref{ExpCylDEst}. That $\mathcal{B}$ satisfies part~\ref{dfBubDecEst12} of Definition~\ref{dfBubDecEst} is just Definition~\ref{defMaxBubDecom}\ref{tmMaxBubDecom2}.
 The estimates of part~\ref{dfBubDecEst}\ref{dfBubDecEst13} are the content of Lemmas  \ref{GammaEsts} and \ref{InGammaEst}, Corollary~\ref{CyInGammaEst}, and Lemma \ref{cyDerEst}.
\end{proof}

\section{Proof of Theorems~\ref{tmBubDecEst1},~\ref{tmBubDecEst2}, and~\ref{tmBubDecEst3}}

\begin{proof}[Proof of Theorems~\ref{tmBubDecEst1},~\ref{tmBubDecEst2}, and~\ref{tmBubDecEst3}]
Let
\[
\mathcal{M}=\{(\Sigma,\mu_u)|(\Sigma,u)\in\mathcal{F}).
\]
According to Theorem 2.8 in \cite{GS13}, the hypotheses of Theorems~\ref{tmBubDecEst1},~\ref{tmBubDecEst2}, and~\ref{tmBubDecEst3} imply that $\mathcal{M}$ is uniformly thick thin. If $(\Sigma,u)\in\mathcal{M}$ satisfies Assumption~\ref{as1} the theorems follow from Theorem~\ref{tmBubDecEst}. Otherwise, let $\mathcal{B}=\emptyset$. If $genus(\Sigma_\C)=0$ we must have a metric $h$ satisfying condition~\ref{LmSpSphMet0} in Lemma \ref{LmSpSphMet}. Stability follows from the fact that $u$ is non-constant and the rest of the claims are obvious. Now assume $genus(\Sigma_\C)=1$. All parts of the theorem hold vacantly except for stability, the derivative estimate and the injectivity radius estimate. Stability follows from the monotonicity inequality as follows. The injectivity radius of $M$ is uniformly bounded away from zero by a constant $r$. Let $p\in(\Sigma)$. Since $u$ represents a nontrivial homology class $u(\Sigma)\not\subset B_r(p;g_J)$. By the boundedness of the curvature and by the monotonicity inequality,
\[
 E(\Sigma_\C;u)>Area(u(\Sigma)\cap B_r(p;g_J))\geq cr^2
 \]
for a constant $c>0$.

To bound the injectivity radius and derivative we need to bound
\[
Diam(\Sigma_\C;h).
\]
For this, it suffices to bound the modulus of $\Sigma_\C$. For any $x>0$, let
\[
L:=c_2+\pi+\frac{\ln\{a(c_2+\pi+x)\}}{c_3},
\]
where the constants are as in  Lemma~\ref{ExpCylDEst}. If $Mod I>2L$,  any point $p\in\Sigma_\C$ is at the center of a cylinder of modulus $2L$. Lemma~\ref{ExpCylDEst} then implies that
\[
\frac{d\mu}{d\nu_{h_{st}}}(p)\leq \frac1{(c_2+\pi+x)}\mu(\Sigma_\C).
\]
Pick $x$ large enough so that
\[
\frac{4\pi L}{c_2+\pi+x}<1.
\]
We then have the contradiction
\[
\mu(\Sigma_\C)\leq 2L\sup_{p\in\Sigma_\C}\frac{d\mu}{d\nu_h}(p)\leq \frac{4\pi L}{c_2+\pi+x}\mu(\Sigma_\C)<\mu(\Sigma_\C).
\]
The derivative estimate is an immediate consequence of Remark~\ref{rmGradApp} and the global bound $\mu(\Sigma_\C)<\delta_2<\delta_1$. The radius on injectivity of $h_v$ is just the inverse of the diameter multiplied by a suitable constant.
\end{proof}

\bibliographystyle{amsabbrvc}
\bibliography{RefTCY2}

\vspace{.5 cm}
\noindent
Institute of Mathematics \\
Hebrew University, Givat Ram \\
Jerusalem, 91904, Israel \\
\end{document}